\documentclass[12pt,reqno]{amsart}

\usepackage{amssymb, amsmath, amsthm, amsfonts}
\usepackage{mathabx}
\usepackage{thmtools}
\usepackage[centering, margin=1.1in]{geometry}
\usepackage{tikz}
\usetikzlibrary{positioning,arrows.meta,matrix}
\usepackage{mathtools}
\usepackage{enumitem}
\usepackage[space]{cite}

\usepackage[colorlinks, linkcolor=black, citecolor=black, hypertexnames=false]{hyperref}
\usepackage[capitalize, nameinlink]{cleveref}

\usepackage[alphabetic, nobysame]{amsrefs}

\AtBeginDocument{\def\MR#1{}}

\declaretheorem[name=Theorem, numberwithin=section]{theorem}

\declaretheorem[name=Lemma, sibling=theorem]{lemma}
\declaretheorem[name=Proposition, sibling=theorem]{prop}
\declaretheorem[name=Corollary, sibling=theorem]{corollary}

\declaretheorem[name=Remark, sibling=theorem]{remark}

\declaretheorem[name=Claim, numbered=no]{claim*}

\crefname{section}{Section}{Sections}
\Crefname{section}{Section}{Sections}

\crefname{appendix}{Appendix}{Appendices}
\Crefname{appendix}{Appendix}{Appendices}

\newcommand*{\bbone}{\text{\usefont{U}{bbold}{m}{n}1}}

\DeclareMathOperator*{\Res}{Res}

\DeclareMathOperator*{\GL}{GL}

\DeclareMathOperator*{\ra}{rad}

\renewcommand{\Im}{\operatorname{Im}}
\renewcommand{\Re}{\operatorname{Re}}  

\newcommand{\ord}{\operatorname{ord}}

\newcommand{\Z}{\mathbb{Z}}
\newcommand{\Q}{\mathbb{Q}}
\newcommand{\R}{\mathbb{R}}

\newcommand{\C}{\mathbb{C}}
\newcommand{\N}{\mathbb{N}}

\newcommand{\ep}{\varepsilon}

\newcommand{\sumtwo}{\operatorname*{\sum\sum}}
\newcommand{\sumthree}{\operatorname*{\sum\sum\sum}}

\newcommand{\ec}{\check{e}}
\newcommand{\f}{\omega}
\newcommand{\m}{\mathfrak m}
\renewcommand{\pmod}[1]{\ (\mathrm{mod}\ #1)}
\newcommand{\ls}{\lesssim}

\newcommand{\quartic}[2]{\Big(\frac{#1}{#2}\Big)_4}
\newcommand{\quadrat}[2]{\Big(\frac{#1}{#2}\Big)_2}
\newcommand{\pfrac}[2]{\left(\frac{#1}{#2}\right)}

\DeclareFontFamily{U}{wncy}{}
\DeclareFontShape{U}{wncy}{m}{n}{<->wncyr10}{}
\DeclareSymbolFont{mcy}{U}{wncy}{m}{n}
\DeclareMathSymbol{\Sha}{\mathord}{mcy}{"58} 

\numberwithin{equation}{section}
\mathtoolsset{showonlyrefs} 

\makeatletter
\@namedef{subjclassname@2020}{\textup{2020} Mathematics Subject Classification}
\makeatother

\begin{document}

\author{Cruz Castillo}
\address{Department of Mathematics, University of Illinois, Urbana, USA}
\email{ccasti30@illinois.edu}

\author{Alexandre de Faveri}
\address{EPFL SB MATH, Station 10, 1015 Lausanne, Switzerland}
\email{alexandre.defaveri@epfl.ch}

\author{Alexander Dunn}
\address{School of Mathematics, Georgia Institute of Technology,
    Atlanta, USA}
\email{adunn61@gatech.edu}

\title[Non-vanishing for quartic Hecke $L$-functions and ranks of elliptic curves]{Non-vanishing for quartic Hecke $L$-functions \\ and ranks of elliptic curves}

\begin{abstract}
    We show that a positive proportion of Hecke $L$-functions attached to the quartic residue symbols $\big( \tfrac{\cdot}{q} \big)_4$ for squarefree $q \in \Z[i]$ do not vanish at the central point. Our method also extends to the Hecke characters associated to quartic twists of the congruent number curve $E: y^2=x^3-x$. In particular, we prove that the elliptic curve $E^{(q)} : y^2=x^3-qx$ has Mordell--Weil rank $0$ over $\Q(i)$ for a positive proportion of squarefree $q \in \Z[i]$ ordered by norm.
 \end{abstract}

\maketitle

\tableofcontents


\section{Introduction}

The non-vanishing problem for central values of $L$-functions is a fundamental question of great arithmetic significance. For example, the Birch and Swinnerton-Dyer (BSD) conjecture \cite{BSD} asserts that the rank of an elliptic curve is equal to the order of vanishing of its $L$-function at the central point $s=\frac{1}{2}$ (when analytically normalized). Substantial progress towards this conjecture has been made in the case of CM elliptic curves.
For an elliptic curve $E_{/K}$ that has complex multiplication by the ring of integers of an imaginary quadratic field $K$, Coates and Wiles~\cite{CW76} proved that if $L(1/2, E_{/K}) \ne 0$, then the abelian group of $K$-rational points $E(K)$ is finite. 
Further extensions and important progress were obtained in \cites{Art78, GZ86, Rub87, Kol90, Rubin91}.

In this paper we study quartic twists of the congruent number curve $E: y^2 = x^3-x$ over $\Q(i)$. More specifically, for squarefree $q \in \Z[i]$, consider the family of elliptic curves 
\begin{equation*}
    E^{(q)}: y^2 = x^3 - qx,    
\end{equation*}
which have complex multiplication by $\Z[i]$. We prove the following result on $E^{(q)}(\Q(i))$.

\begin{theorem}[Positive proportion of rank $0$]\label{thm:elliptic}
    For a positive proportion of squarefree $q \in \Z[i]$ ordered by norm, the elliptic curve $E^{(q)}: y^2 = x^3-qx$ satisfies $E^{(q)}(\Q(i)) \simeq \Z/2\Z$.
\end{theorem}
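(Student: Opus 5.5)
The plan is to reduce the rank statement to a non-vanishing statement for central $L$-values, and then to prove that non-vanishing statement by a mollified moment method over the family of quartic twists.

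\textbf{Step 1: from $L$-values to ranks.} Since $E^{(q)}$ has CM by $\Z[i]$ and is defined over $K=\Q(i)$, Deuring's theorem gives
\begin{equation}
L(s,E^{(q)}_{/K}) = L(s,\psi_q)\,L(s,\overline{\psi_q}),
\end{equation}
where $\psi_q$ is a Hecke Grössencharacter of $K$. One can write $\psi_q = \psi_1\cdot\overline{\chi_q}$ with $\chi_q=(\tfrac{\cdot}{q})_4$, up to normalisation and a twist at the prime $1+i$. By Coates--Wiles, $L(1/2,\psi_q)\neq 0$ implies that $E^{(q)}(K)$ is finite.

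\textbf{Step 2: torsion.} For squarefree $q$, the torsion of $E^{(q)}(K)$ is exactly $\Z/2\Z$, generated by $(0,0)$. The other $2$-torsion points $(\pm\sqrt q,0)$ are not rational because $q$ is not a square. Odd torsion can be ruled out by the standard argument for $j=1728$ curves: one counts points on reductions modulo two primes of $\Z[i]$ not dividing $2q$, for example. This may have to be restricted to $q$ outside a small exceptional set, which does not affect positive proportion. So the theorem reduces to the following claim: $L(1/2,\psi_q)\ne 0$ for a positive proportion of squarefree $q$ ordered by norm. It suffices to prove this for a positive-density subfamily, for instance $q\equiv 1 \pmod{(1+i)^3}$.

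\textbf{Step 3: mollified moments.} This is the analytic core, and it parallels the argument for $L(1/2,\chi_q)$. Let $M(q)=\sum_{N(m)\le X^{\theta}} y_m\psi_q(m)N(m)^{-1/2}$ be a mollifier. Using a smooth weight $\Phi(N(q)/X)$ and restricting to squarefree $q$, I will compute
\begin{equation}
S_1=\sum_q L(1/2,\psi_q)M(q) \quad\text{and}\quad S_2=\sum_q |L(1/2,\psi_q)M(q)|^2 .
\end{equation}
Cauchy--Schwarz then gives $\#\{q: L\ne0\}\ge |S_1|^2/S_2 \gg X$.

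For both moments I will use the approximate functional equation. The root number is a normalised quartic Gauss sum, so it varies with $q$. In $S_1$ the dual side is therefore a sum over $q$ of quartic Gauss sums twisted by $\psi_q(n)$. Its main term comes from the diagonal $n m=\square$-type terms, more precisely from $nm$ being a fourth power times units. For the dual term I will use the Patterson/Heath-Brown bounds for sums of quartic Gauss sums, that is, the spectral theory of metaplectic Eisenstein series on the $4$-fold cover. These show that the dual term contributes only a power saving, provided the mollifier length satisfies $\theta$ small.

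In $S_2$ the root numbers cancel in $|L|^2$, because $|\epsilon_q|=1$. The diagonal then comes from $\psi_q(n)\overline{\psi_q(n')}$ with $n\overline{n'}$-type quartic residue conditions. Off-diagonal terms are handled with a large sieve for quartic characters, due to Heath-Brown or Baier--Young, or with Poisson summation in $q$ over $\Z[i]$ together with quartic reciprocity. I then choose $y_m$ by the usual quadratic-form optimisation, so that $S_2\asymp X$ and $S_1\asymp X$.

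\textbf{Main obstacle.} The hardest step is the second mollified moment. Off-diagonal terms after Poisson produce sums of quartic Gauss sums weighted by $n,n'$ up to $X^{1/2+\theta}$. A nontrivial $\theta>0$ requires a power saving beyond the large sieve, namely Heath-Brown/Patterson-type estimates for $\sum_q g_4(q)\chi(q)$ uniform in twists. I would first try to obtain this by treating the shifted sum via the metaplectic Eisenstein series residue. That residue contributes a secondary main term of size $X^{5/6}$-ish, which is acceptable. Everything else is then bounded by the error term in Patterson's theorem.
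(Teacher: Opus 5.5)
Your analytic strategy is the paper's. You factor $L(s,E^{(q)}_{/\Q(i)})$ into two conjugate degree-one Hecke $L$-functions, apply Coates--Wiles, and get non-vanishing of $L(1/2,\nu_{q,1})$ for a positive proportion of $q$ from mollified first and second moments. The ingredients match too: approximate functional equation, Poisson summation in $q$, twisted sums of quartic Gauss sums via Kubota's metaplectic Dirichlet series, a quartic large sieve, and the quadratic-form choice of mollifier.

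\textbf{The gap is the torsion reduction.} Counting points modulo two fixed primes cannot exclude odd torsion uniformly in $q$. Since $5=(2+i)(2-i)$ splits, whether $5\mid \#E^{(q)}(\mathbb{F}_{\mathfrak p})$ depends only on the class of $q$ modulo fourth powers in $(\Z[i]/\mathfrak p)^\times$, and it holds for at least one of the four classes. For example, at an inert $p$ the possible counts are $(p+1)^2$, $(p-1)^2$ and $p^2+1$, and one of them is always divisible by $5$. So any fixed finite set of primes leaves an exceptional set of positive density: about a quarter of the $q$ coprime to $21$ if you use $p=3,7$. That can easily exceed the small proportion the mollifier method delivers, so your claim that it ``does not affect positive proportion'' is unjustified.

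The true exceptional set is finite: a rational $(2\pm i)$-torsion point forces $q$ into one of at most two classes of $\Q(i)^\times/\Q(i)^{\times 4}$. The paper handles this, together with $4$-torsion and the few $q$ such as $q=\pm1$ with extra $2$-torsion, by citing Najman's classification: $E^{(q)}(\Q(i))_{\mathrm{tors}}\simeq\Z/2\Z$ for all but finitely many squarefree $q$. You should do the same.

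\textbf{Corrections to the analytic sketch.} Several descriptions are inaccurate in ways that hide why convex inputs suffice.
\begin{enumerate}
\item The paper uses the quartic large sieve of Blomer--Goldmakher--Louvel. Because it gives no asymptotic, it is used only for the sieve remainder $\mathcal{S}_R$ from $\mathfrak l^2\mid q$ with $N(\mathfrak l)>Y$, never for off-diagonal terms.
\item In the second moment, the Poisson modulus has norm $\ll X^{1+\varepsilon}(BY)^{O(1)}$, with $B$ the mollifier length and $Y$ the sieve parameter, while the $q$-sum has length $X/N(\ell)^2$. Hence the dual frequencies satisfy $N(k)\ll X^{\varepsilon}(BY)^{O(1)}$.
\item The resulting Gauss sums $\widetilde{g}_4(\pm k,\cdot)$ live in the moduli $n_1,n_2$, not in $q$, and no twists of size $X^{1/2+\theta}$ appear. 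After separating variables, each of the two sums saves a power of its length from the convexity bound for $\psi(r,s,\xi;v)$ alone, since all twists have conductor at most a small power of $X$. Since $M_1M_2$ is essentially $X$, the off-diagonal is at most roughly $X^{3/4}$. No uniformity in large twists and no Patterson-type error term is needed.
\item For $\omega=1$, the case relevant to $E^{(q)}$, the Gauss-sum Dirichlet series has no pole at $s=\frac54$. The residue arises only for $\omega=0$, where it can only be bounded by convexity since $\tau_4$ has no known closed form. Its size is $X^{3/4}$, not the cubic $X^{5/6}$.
\item For the same reason a balanced approximate functional equation is fine in your first moment. The paper's unbalanced choice $U=X^{1/2-\vartheta}$ is needed only to control that residue when $\omega=0$.
\end{enumerate}
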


It suffices to establish Mordell--Weil rank $0$ above, since \cite[Theorem 2 (i)]{Naj10} implies $E^{(q)}(\Q(i))_{\text{tors}} \simeq \Z/2\Z$, generated by $(0, 0)$, for all but finitely many squarefree $q \in \Z[i]$. Since $E^{(q)}_{/\Q(i)}$ has CM, its Hasse--Weil $L$-function is a product of Hecke $L$-functions. For $\lambda := 1+i$ and $q \equiv 1 \pmod{\lambda^7}$, a computation analogous to \cite[Theorem 7, p.~310]{IR90} gives
\begin{equation}\label{eq:Eq_factor}
    L\big(s,E^{(q)}_{/\Q(i)}\big) = L(s,\chi_q \psi) \cdot L(s,\overline{\chi_q \psi}),
\end{equation}
where $\chi_q(m) := \big(\frac{m}{q}\big)_4$ is the quartic residue symbol and $\psi((m)) := \frac{\overline{m}}{|m|}$ for $m \equiv 1 \pmod{\lambda^3}$. Then \cref{thm:elliptic} is a consequence of the Coates--Wiles theorem~\cite{CW76} and the following more general result for Hecke $L$-functions of (twisted) quartic characters. 

\begin{theorem}[Positive proportion of non-vanishing]\label{mainthm} 
    There exists a constant $c > 0$ such that for every $\omega \in \Z$ and sufficiently large $X >X_0(\omega)$ we have 
    \begin{align*}
        \sum_{\substack{q \in \Z[i]\\ q \equiv 1 \pmod{\lambda^7}\\ N(q) \le X}} \mu^2(q) \cdot \bbone_{L(1/2,\, \nu_{q,\omega})\ne 0} \ge c \sum_{\substack{q \in \Z[i]\\ q \equiv 1 \pmod{\lambda^7}\\ N(q) \le X }} \mu^2(q),
    \end{align*}
    where $\nu_{q,\omega}((m))$ is the primitive Hecke character equal to $\chi_q(m)(\tfrac{\overline{m}}{|m|})^{\omega}$ for $m \equiv 1 \pmod{\lambda^3}$.
\end{theorem}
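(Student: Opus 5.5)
The plan is the mollified moment method: compare a mollified first moment with a mollified second moment and apply Cauchy--Schwarz. Fix a smooth weight $\Phi$ supported in $[1/2,1]$ and set
\[
S_1=\sum_{q}\mu^2(q)\,\Phi\!\left(\tfrac{N(q)}{X}\right) L(1/2,\nu_{q,\omega})\,M(q),\qquad
S_2=\sum_{q}\mu^2(q)\,\Phi\!\left(\tfrac{N(q)}{X}\right) \big|L(1/2,\nu_{q,\omega})M(q)\big|^2 .
\]
Here $q$ runs over $q\equiv 1\pmod{\lambda^7}$, and the mollifier is
\[
M(q)=\sum_{N(m)\le X^{\theta}} \frac{\mu(m)\,P\big(\log(X^\theta/N(m))/\log X^\theta\big)}{N(m)^{1/2}}\,\overline{\nu_{q,\omega}}((m))
\]
for a polynomial $P$ with $P(0)=0$ and $P(1)=1$. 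By Cauchy--Schwarz the number of $q$ with $L(1/2,\nu_{q,\omega})\neq 0$ is at least $|S_1|^2/S_2$. So it suffices to show $S_1\asymp X$ and $S_2\ll X$ for some fixed $\theta>0$, with constants uniform enough in $\omega$ once $X>X_0(\omega)$.

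\textbf{Step 1 (approximate functional equation).} Write $L(1/2,\nu_{q,\omega})$ as a sum of length about $N(q)^{1/2}$ plus a dual sum carrying the root number $\epsilon(\nu_{q,\omega})$. For quartic characters this root number is, up to normalization, a quartic Gauss sum $g(q)/|g(q)|$. Expanding with Möbius inversion to detect squarefreeness, $S_1$ becomes a combination of character sums $\sum_q \chi_q(a)$ (for the first piece) and $\sum_q \chi_q(a)\,\overline{g(q)}/N(q)^{1/2}$ (for the dual piece). By quartic reciprocity, $\chi_q(a)$ is a character in $q$ of conductor dividing $a\lambda^{7}$. It is principal only when $a$ is essentially a fourth power, and this diagonal gives the main term. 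Off the diagonal, Pólya--Vinogradov and large-sieve bounds (Heath-Brown's quadratic large sieve has quartic analogues) give power savings once $\theta$ is small.

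\textbf{Step 2 (Gauss-sum terms).} The dual terms involve sums of quartic Gauss sums over $q$ twisted by $\chi_q(a)$. I would handle these with Patterson's theory of metaplectic theta functions: the generating Dirichlet series of $g(q)$ has meromorphic continuation, and its residue sits at $s=5/4$ after normalization. This yields a saving of size $N(q)^{-1/4}$ relative to the trivial bound, uniformly in $a$ with polynomial dependence (Heath-Brown--Patterson, or a Kubota-type bound). Since the dual sum has length about $X^{1/2}$ and the mollifier has length $X^\theta$, this saving is enough to make the dual contribution $o(X)$ in $S_1$ for small $\theta$.

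\textbf{Step 3 (second moment, the main obstacle).} $S_2$ contains $|L|^2$, and the product $L\bar L$ has conductor about $X^2$. The off-diagonal terms are then sums $\sum_q \chi_q(a)\overline{\chi_q(b)}$ of length $X^{1/2}\cdot X^{1/2}$ against a family of size $X$, which is the critical range. Since $\nu$ is quartic and not real, $|L|^2=L\cdot L(1/2,\bar\nu)$. The diagonal terms, where $a\bar b^{3}$ is a fourth power, give a main term of order $X$ with a computable constant, and the shape $\Phi$ and $P$ do not spoil this. For the off-diagonal terms, after Poisson summation in $q$ (over $\Z[i]$ with the quartic symbol written via reciprocity as a character of $q$), the dual sums again involve quartic Gauss sums. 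I would bound these with the large sieve for quartic symbols (Heath-Brown's cubic large sieve adapted to the quartic case, or the Baier--Young type estimate) together with Patterson's cancellation in sums of Gauss sums. This step is the delicate one: it should barely succeed in giving $O(X)$ with mollifier length $X^\theta$ for some tiny $\theta>0$. If needed, I would fall back to a short mollifier and accept a small proportion $c$. Finally, optimizing $P$ gives $|S_1|^2/S_2\gg X$, and removing the smooth weight gives the theorem. The dependence on $\omega$ enters only through the archimedean gamma factors, which is why the result is stated for $X>X_0(\omega)$.
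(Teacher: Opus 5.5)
Your architecture (mollified moments, Cauchy--Schwarz, Poisson summation in $q$, and Kubota--Patterson continuation of twisted Gauss-sum series) matches the paper's, but Step~2 fails as stated. With the balanced approximate functional equation the dual sum runs over $N(n)$ up to $X^{1/2}$. When $\omega=0$, the pole of the Gauss-sum Dirichlet series (at $s=5/4$ for unnormalized $g_4$) produces, for each twist $b^3n$, a secondary main term of size about $X^{3/4}|\tau_4(\alpha)|$, where $\alpha$ is essentially the squarefree part of $b^3n$. This term is not harmless. No formula for $\tau_4$ is known, and the only available pointwise input is convexity, $\tau_4(\alpha)\ll N(\alpha)^{1/8+\varepsilon}$. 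Summing gives
\[
X^{3/4}\sum_{N(n)\le X^{1/2}}N(n)^{-1/2+1/8}\approx X^{3/4+5/16}=X^{17/16},
\]
which exceeds $X$. The non-polar part, $X^{1/2}\sum N(n)^{-1/4}\approx X^{7/8}$, is fine. A balanced approach would need $\tau_4$ to decay on average (for instance via Rankin--Selberg for the theta coefficients), and you do not supply that.

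The paper instead uses an unbalanced functional equation with lengths $U\sqrt X$ and $\sqrt X/U$, where $U=X^{1/2-\vartheta}$. The dual sum then has length $X^{\vartheta}$ and contributes $X^{17/16}/U^{5/8}+X^{7/8}/U^{3/4}=o(X)$. The long sum stays harmless: after Poisson summation in $q=\ell^2m$, a nonzero frequency would need $N(k)\ll N(b)Y^2U/X^{1/2-\varepsilon}<1$, so only $k=0$ survives. This is what forces the mollifier length and the squarefree-sieve parameter to be tiny powers of $X$ ($M=X^{\vartheta^3}$, $Y=X^{\vartheta^2}$).

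Two further corrections. First, in this unitary family the mollifier must approximate $L(1/2,\nu_{q,\omega})^{-1}$, so it should carry $\nu_{q,\omega}(m)$, not $\overline{\nu_{q,\omega}}(m)$. With your choice the diagonal of $S_1$ is $n=mk^4$, which gives $X\sum_m\mu(m)P(\cdot)/N(m)$ up to a harmless multiplicative weight; this is $O(X/\log X)$. Meanwhile $S_2$ sees only $|M|^2$ and stays $\asymp X$, so $|S_1|^2/S_2=o(X)$.

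Second, you have the difficulty of the two moments reversed. In the second moment $N(n_1n_2)\ll X^{1+\varepsilon}$, so after Poisson in $q$ the dual frequency satisfies $N(k)\ll N(n_1n_2)/X$. This is bounded up to $X^{\varepsilon}$ and powers of the mollifier and sieve parameters. The $n_1$- and $n_2$-sums therefore separate into two sums of $\widetilde g_4(\mp k,n_i)$ with \emph{small} twists, and the same convexity inputs give $(N_1N_2)^{3/4}\approx X^{3/4}$. No large sieve is needed there; Blomer--Goldmakher--Louvel enters only for the tail $R_Y$ of the squarefree sieve. The genuinely delicate term is the first-moment dual sum, where the twist is long, and that is exactly where your Step~2 breaks down.
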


\begin{remark}
    A value for the constant $c>0$ may be computed from our work, but we do not specify it (and did not pursue optimality of the proportion). This leads to many technical simplifications, and more importantly allows us to use only convex inputs and the large sieve. For this reason, our method should generalize to higher order characters. Consult \cref{sec:context_limitations} for a detailed discussion of these matters. 
\end{remark}

For each fixed $\omega \in \Z$, the family of Hecke $L$-functions
\begin{equation*}
    \big\{ L(s,\nu_{q,\omega}): q \in \Z[i], \quad q \equiv 1 \pmod{\lambda^7}, \quad \text{and} \quad \mu^2(q)=1 \big\}
\end{equation*}
has unitary symmetry type. Hence we expect $L(s, \nu_{q, \omega}) \neq 0$ for $100\%$ of the members of the family, by the conjectures of Katz and Sarnak~\cite{KatzSarnak}. Alternatively, the root number of $L(s,E^{(q)}_{/\Q(i)})$ is always $1$, so for $\omega=1$ we arrive at the same prediction via Goldfeld's minimalist conjecture \cite{Gold79}. Theorem~\ref{mainthm} is the first unconditional result to establish a positive proportion of non-vanishing in this quartic family. 

\subsection{Related works}

\subsubsection{Fixed order characters}

\cref{mainthm} with $\omega = 0$ gives non-vanishing of $L(1/2, \chi_q)$ for a positive proportion of squarefree $q \in \Z[i]$. Previously, non-vanishing for infinitely many such $q$ was proved\footnote{The setup is slightly different, but this follows from their methods. It is also implicit in \cite{FHL, Dia}.} by Blomer, Goldmakher, and Louvel~\cite{BGL}. Under GRH, Gao and Zhao~\cite{GZ20} obtained a positive proportion of non-vanishing in a similar family. 

This situation is typical of the non-vanishing problem for families of fixed order characters: in many cases, non-vanishing for infinitely many members of the family has been established (by computing the first moment), while under GRH a positive proportion of non-vanishing is known (through either $n$-level density or sharp conditional bounds for moments). For characters of order $n = 2$ or $3$, examples of the former include \cite{Luo04, BY10, G25}, and of the latter include \cite{OS99, DG22, Gz22, GY24}.

In a fundamental result, Soundararajan \cite{sound00} proved a positive proportion of non-vanishing for the family of primitive quadratic Dirichlet\footnote{An open folklore conjecture predicts that $L(1/2,\chi) \neq 0$ for all primitive Dirichlet characters $\chi$.} characters over $\Q$. Recently, a positive proportion of non-vanishing for a family of primitive cubic Hecke characters over $\Q(\zeta_3)$ was proved by David, Stucky, and the second and third authors \cite{DDDS24}.

Less is known about non-vanishing over families of higher order characters. Multiple Dirichlet series have been used to compute moments (with certain arithmetic weights) of $L$-functions of $n$-th order Hecke characters over number fields containing $\Q(\zeta_n)$, resulting in strong error terms. Using this method, the first moment was computed by Friedberg, Hoffstein, and Lieman \cite{FHL}, and the second moment by Diaconu \cite{Dia}. The moments considered in these works are not sieved down to primitive characters, unlike the results of this paper. However, they imply non-vanishing for infinitely many primitive $n$-th order characters. This was also proved in the aforementioned work of Blomer, Goldmakher, and Louvel \cite{BGL}, using the $n$-th order large sieve developed by them (based on work of Heath-Brown \cite{HB}).

Over function fields, it is known for any $n \ge 2$ that a positive proportion of $L$-functions associated to $n$-th order characters with squarefree modulus do not vanish at the central point \cites{BF18,DFL21,DFL25}. We direct the interested reader to \cite{DDDS24} and \cite{Sound23} for more detailed expositions on moments of $L$-functions and non-vanishing.

\subsubsection{Twists of elliptic curves}

The non-vanishing of central $L$-values for twists of elliptic curves has been intensely studied. Goldfeld's conjecture \cite{Gold79} states that $50\%$ of the quadratic twists of an elliptic curve $E_{/\mathbb{Q}}$ should have analytic rank $0$ (respectively $1$). Recently, Smith~\cite{Smith25} showed that Goldfeld's conjecture follows from BSD. Moreover, much unconditional progress has been made due to his breakthrough work \cites{Smith26I, Smith26II} on Selmer groups. For instance, Burungale and Tian \cite{BurTia26} proved a $p$-converse theorem which combined with Smith's results on $2^\infty$-Selmer groups implies Goldfeld's conjecture for the congruent number family $y^2 = x^3-q^2x$, for $q \in \Z$ squarefree.

Goldfeld's minimalist philosophy applied to higher order twists over number fields predicts that their analytic ranks are typically as small as permitted by their root numbers. For higher order twists over $\Q$, Alp{\"o}ge, Bhargava, and Shnidman \cite{ABS24} proved a positive proportion of rank $0$ (respectively rank $1$) in the cubic families $x^3 + y^3 = q$ and $x^3 + y^3 = q^2$, for $q \in \Z$. Conjecturally, each of the ranks $0$ and $1$ occur $50\%$ of the time in this setting. See also \cite{KL19, BES20, KS24} and references therein.

For twists over number fields, Burungale and Tian \cite{BurTia26} showed that if $3$ is not inert\footnote{Note that $K=\Q(i)$ is not covered by the result.} in the imaginary quadratic field $K$ and the elliptic curve $E_{/K}$ has CM by an order of $K$, then a positive proportion of quadratic twists of $E_{/K}$ associated to squarefree $q \in \mathcal{O}_K$ have analytic rank $0$. An important input is the work of Bhargava, Klagsbrun, Lemke Oliver, and Shnidman \cite{BKLS19}. Conjecturally, $100\%$ of quadratic twists over $K$ have rank $0$.

All of the results mentioned so far study Selmer ranks (using various techniques) and apply $p$-converse theorems to convert to analytic ranks. In contrast, our approach tackles analytic ranks directly, providing further information on the $L$-values through their moments. This strategy was used with great success in many early results on ranks of twists of elliptic curves over $\Q$. Examples include works of Bump, Friedberg, and Hoffstein \cite{BFH90}, Murty and Murty \cite{MM91}, and Iwaniec \cite{Iwa90} for quadratic twists\footnote{This was an important input for the seminal work of Kolyvagin \cite{kol88}.}, Lieman \cite{Lie94} for cubic twists, and Diaconu \cite{Dia} for quartic twists. These results obtain non-vanishing for infinitely many twists. A similar strategy was also used by Diaconu and Tian \cite{DT05} to show that the twisted Fermat curve $x^p + y^p = q$ has no $F$-rational points for infinitely many $q \in F^\times/F^{\times p}$, where $p$ is an odd prime and $F$ is totally real with $[F(\zeta_p) : F] = 2$.

In the analytic results above, the relevant $L$-functions have degree $2$ over the base field. Due to the sparseness of the twists considered, at present this seems to limit our knowledge to their first moment (with power-saving error terms). For this reason, obtaining a positive proportion of non-vanishing (with these methods) seems difficult. The starting point of this paper is the observation that in our context, the relevant twisted $L$-functions factor into terms of degree $1$, as in \eqref{eq:Eq_factor}. Thus, while we are still limited to computing a first moment of $L(1/2, E^{(q)}_{/\Q(i)})$, this allows us to compute a ``half-moment'' as well. These two moments can then be mollified to obtain a positive proportion of non-vanishing.

Our methods should also imply that a positive proportion of curves in the cubic family $x^3 + y^3 = q$ with $q \in \Z[\zeta_3]$ squarefree have analytic rank $0$. A good proportion can be obtained by closely following the argument in \cite{DDDS24}. We leave the details to the interested reader.

\subsection{Results on moments}

The proof of \cref{mainthm} goes through asymptotics for the first and second mollified moments. To present our results, let us introduce some notation.  

Recall that $\lambda:=1+i$ denotes the unique ramified prime in $\Z[i]$. Let $\chi_q(n) := \big(\frac{n}{q}\big)_4$ be the quartic residue symbol, defined in \eqref{eq:quar_char}, and $\xi$ be the unique primitive (unitary) Hecke character with $\xi(n) = \big(\frac{\overline{n}}{|n|}\big)^{\omega}$ for $n \equiv 1 \pmod{\lambda^3}$, which is described in \eqref{eq:inf_hecke}. For $q \in \Z[i]$ with $q \equiv 1 \pmod{\lambda^7}$ and $\omega \in \Z$, define the mollifier
\begin{equation} \label{mollifier}
    \mathcal{M}_{\omega}(q):=\sum_{\substack{0 \neq \mathfrak{b} \unlhd \Z[i] \\ N(\mathfrak{b}) \leq M}} \lambda_{\omega}(\mathfrak{b}) \sqrt{N(\mathfrak{b})} \nu_{q,\omega}(\mathfrak{b}),
\end{equation}
where $\nu_{q,\omega} := \chi_q \cdot \xi$ is a primitive Hecke character detailed in \eqref{eq:nu_q}, and the coefficients $\boldsymbol{\lambda}_{\omega}:= (\lambda_{\omega}(\mathfrak b))_{0 \ne b \unlhd \Z[i]}$ are supported on squarefree $\mathfrak{b}$ coprime with $2$ and will be chosen later. Additionally, we assume that $\lambda_{\omega}(\mathfrak b)\ll_{\ep} N(\mathfrak b)^{-1+\ep}$ uniformly with respect to $\omega$.

For any $\mathbb{C}$-valued sequence $\boldsymbol{\beta}$ and smooth $f: \R \to \C$ compactly supported on $(1,2)$, let
\begin{equation}
\mathcal{S}(\boldsymbol{\beta};f) = \mathcal{S}_X(\boldsymbol{\beta};f):=\sum_{\substack{q \in \mathbb{Z}[i] \\ q \equiv 1 \pmod{\lambda^7} }} 
\mu^2(q) \beta_q f \Big( \frac{N(q)}{X} \Big).
\end{equation}
Write $\mu^2(q)=M_Y(q)+R_Y(q)$ for a parameter $Y \geq 1$, where
\begin{equation} \label{MYRYdef}
    M_Y(q):=\sum_{\substack{ \mathfrak{l}^2 \mid q \mathbb{Z}[i] \\ N(\mathfrak{l}) \leq Y  }} \mu(\mathfrak{l}) \qquad \text{and}
    \qquad R_Y(q):=\sum_{\substack{ \mathfrak{l}^2 \mid q \mathbb{Z}[i] \\ N(\mathfrak{l}) > Y  }} \mu(\mathfrak{l}).
\end{equation}
Define
\begin{align} \label{SMdef}
    \mathcal{S}_M(\boldsymbol{\beta};f)=\mathcal{S}_{M,X,Y}(\boldsymbol{\beta};f)
    :=\sum_{\substack{q \in \mathbb{Z}[i] \\ q \equiv 1 \pmod{\lambda^7}}} M_Y(q) \beta_q f \Big( \frac{N(q)}{X} \Big)
\end{align}
and
\begin{equation} \label{SRdef}
    \mathcal{S}_R(\boldsymbol{\beta};f) =\mathcal{S}_{R,X,Y}(\boldsymbol{\beta};f)
    :=\sum_{\substack{q \in \mathbb{Z}[i] \\ q \equiv 1 \pmod{\lambda^7}}}  |R_Y(q) \beta_q| f \Big( \frac{N(q)}{X} \Big).
\end{equation}
If $f$ is non-negative, it follows that
\begin{equation} \label{nonnegproperty}
    \mathcal{S}(\boldsymbol{\beta};f)=\mathcal{S}_{M}(\boldsymbol{\beta};f)+
    O(\mathcal{S}_R(\boldsymbol{\beta};f)).
\end{equation}

Let $F: \R \to [0,1]$ be smooth and compactly supported on $(1,2)$. Our aim is to evaluate the mollified moments
\begin{equation*}
    \mathcal{S}(L(1/2, \nu_{q,\omega})\mathcal{M}_{\omega}(q);F) \qquad \text{and} \qquad \mathcal{S}(|L(1/2, \nu_{q,\omega})\mathcal{M}_{\omega}(q)|^2 ;F).
\end{equation*}
For $U \geq 1$, the first moment will lead to 
\begin{align} \label{SMexpand}
    & \mathcal{S}_M\Big(\mathcal{M}_{\omega}(q) \Big[B_{\omega,U}(q)+\ep(\omega)\xi(q)\widetilde{g}_4(q) \cdot \widetilde{B}_{\omega,U^{-1}}(q)\Big];F\Big) \nonumber \\
    &=\sum_{\substack{ 0 \neq  \mathfrak{b} \unlhd \mathbb{Z}[i]}} \lambda_\omega(\mathfrak{b}) \sqrt{N(\mathfrak{b})} \mathcal{S}_M \Big(  \nu_{q,\omega}(\mathfrak{b}) \Big[B_{\omega,U}(q)+\ep(\omega)\xi(q)\widetilde{g}_4(q) \cdot \widetilde{B}_{\omega,U^{-1}}(q)\Big]; F \Big), \qquad
\end{align}
where $\widetilde{g}_4(q)$ is the normalized quartic Gauss sum given in \eqref{eq:normalized_gauss_sum}, $\ep(\omega) =\pm 1$ is given by \eqref{eq:r_num_f}, $B_{\omega,U}(q)$ is defined in \eqref{BUdef}, and $\widetilde{B}_{\omega,U}(q)$ in \eqref{BUtildedef}.
Similarly, the second moment will lead to a multiple of
\begin{align} \label{SMexpand2}
    & \mathcal{S}_M\big(|\mathcal{M}_{\omega}(q)|^2 A_{\omega}(q);F\big) \nonumber \\
    &=\mathop{\sum \sum}_{\substack{ 0 \neq  \mathfrak{b}_1,\mathfrak{b}_2 \unlhd \mathbb{Z}[i]}} \lambda_{\omega}(\mathfrak{b}_1) \overline{ \lambda_{\omega}(\mathfrak{b}_2)} \sqrt{N(\mathfrak{b}_1 \mathfrak{b}_2 )}
    \mathcal{S}_M \Big(\nu_{q,\omega}(\mathfrak b_1) \overline{\nu_{q,\omega}(\mathfrak b_2)} A_{\omega}(q); F \Big),
\end{align}
where $A_{\omega}(q)$ is defined in \eqref{Aomegadef}. Then it suffices to evaluate the terms in \eqref{SMexpand} and \eqref{SMexpand2}.

For the first moment we have the following result, where $\check{F}(w) := \int_0^{\infty}F(t) t^{w} dt$.

\begin{prop}\label{prop:first_moment}
    Let $0 \neq \mathfrak b \unlhd \Z[i]$ be squarefree and coprime with $2$,
    with $N(\mathfrak{b}) \sim B$ for some $1 \le B \le X^{100}$.
    Let $\omega \in \mathbb{Z}$, $1 \le Y, U \le X^{100}$, and $\varepsilon>0$. Assume that 
    \begin{equation} \label{firstmomentassump}
        (1+|\omega|) B  Y^2 U \le X^{1/2-\nu}
    \end{equation}
    for some fixed $0<\nu<\frac{1}{2}$. Then
    \begin{align*}
        & \mathcal{S}_M \Big( \nu_{q,\omega}(\mathfrak{b}) \Big[B_{\f,U}(q) + \ep(\f)\xi(q)\widetilde{g}_4(q) \cdot \widetilde{B}_{\f,U^{-1}}(q) \Big]; F \Big)  \\
        & = C_{\omega}X\check{F}(0)\frac{\xi(\mathfrak{b}^4) r_{\omega}(\mathfrak b)}{N(\mathfrak b)^{3/2}}+O_\varepsilon\Big(\frac{X}{Y B^{3/2}}+\frac{ X^{7/8+\ep}}{(1+|\omega|)^{1/4} U^{1/4} B^{3/4}} \Big) + R_{\omega,U}(\mathfrak b),
    \end{align*}
    where
    \begin{equation}\label{eq:C_f}
        C_{\omega}:= \frac{\pi }{48 \sqrt 2 \cdot \zeta_{\Q(i)}(2) \cdot (\sqrt{2}-\xi(\lambda)) } \prod_{\substack{\mathfrak p \textnormal{ prime}\\ (\mathfrak p,2)=1\\q:=N(\mathfrak p)}}\Big(1+ \frac{q}{(q+1)(q^2 \overline{\xi(\mathfrak p^4)}-1)}\Big).
    \end{equation}
    The multiplicative function $r$ is given, for $\mathfrak{p}$ prime, $q:=N(\mathfrak{p})$, and $k \ge 1$, by
    \begin{equation}\label{eq:def r}
        r(\mathfrak p^k):= \frac{q^{3}}{q^3+q^2-\xi(\mathfrak{p}^4)} = 1 + O\Big(\frac{1}{q}\Big),
    \end{equation}
    and there exists an absolute constant $H \ge 1$ such that 
    \begin{equation} \label{Rfirstbd}
        R_{\omega,U}(\mathfrak b) \ll_{F,\nu, \varepsilon} X^{\ep} \cdot \big(BY(1+|\omega|)\big)^{H} \cdot \Big(  \bbone_{\omega=0} \cdot \frac{X^{17/16}}{U^{5/8}}
        + \frac{X^{7/8}}{U^{3/4}} \Big).
    \end{equation}
\end{prop}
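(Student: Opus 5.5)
We split the left-hand side into the ``principal'' piece $\mathcal{S}_M(\nu_{q,\omega}(\mathfrak b) B_{\omega,U}(q);F)$ and the ``dual'' piece $\ep(\omega)\mathcal{S}_M(\nu_{q,\omega}(\mathfrak b)\xi(q)\widetilde g_4(q)\widetilde B_{\omega,U^{-1}}(q);F)$, and treat them separately.

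\textbf{Principal piece.} We expand $B_{\omega,U}(q)$, defined in \eqref{BUdef}, as a smoothed Dirichlet series $\sum_{\mathfrak n} \nu_{q,\omega}(\mathfrak n) N(\mathfrak n)^{-1/2} V(N(\mathfrak n)/\sqrt{X} U)$, truncated essentially at $N(\mathfrak n)\le X^{1/2+\ep}U$. We also expand $M_Y(q)=\sum_{\mathfrak l^2\mid q, N(\mathfrak l)\le Y}\mu(\mathfrak l)$ and write $q=l^2 q'$. The $q$-sum of $\chi_q(\mathfrak b\mathfrak n)$ over $q\equiv 1 \pmod{\lambda^7}$ is then handled by quartic reciprocity, which turns it into a sum of the character $\big(\tfrac{q}{bn}\big)_4$ over a progression.

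When $\mathfrak b\mathfrak n$ times a suitable unit is a fourth power (i.e.\ $\mathfrak n=\mathfrak b^3\mathfrak m^4$, since $\mathfrak b$ is squarefree), the character is principal. Poisson summation, or simply counting lattice points in $\Z[i]$, then gives the main term. The Euler product over $\mathfrak m$, combined with the sieve over $\mathfrak l$, produces $C_\omega$ and the local factors $r_\omega(\mathfrak b)$, after completing the $\mathfrak l$-sum to all $\mathfrak l$. Two sources contribute to the weight $\xi(\mathfrak b^4)/N(\mathfrak b)^{3/2}$:
\begin{itemize}
\item the factor $N(\mathfrak n)^{-1/2}=N(\mathfrak b)^{-3/2}N(\mathfrak m)^{-2}$;
\item the value $\xi(\mathfrak b^3)$ combined with $\xi(\mathfrak b)$ from $\nu_{q,\omega}(\mathfrak b)$.
\end{itemize}
Completing the $\mathfrak l$-sum costs the tail $\sum_{N(\mathfrak l)>Y}N(\mathfrak l)^{-2}\cdot X/B^{3/2}$, which gives the error $X/(YB^{3/2})$. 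Shifting the contour in the Mellin variable of $V$ removes the $U$-dependence of the main term up to negligible error.

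For the non-principal $\mathfrak n$ we use Pólya--Vinogradov (a convex bound) for $q'$ of size $X/N(\mathfrak l)^2$ with modulus $N(\mathfrak b\mathfrak n)\le B X^{1/2}U$. This is acceptable under \eqref{firstmomentassump} and is absorbed into $R_{\omega,U}(\mathfrak b)$ with the polynomial loss $(BY(1+|\omega|))^H$. Alternatively, the quartic large sieve of Heath-Brown/Blomer--Goldmakher--Louvel, after separating the sum into ranges, yields the $X^{7/8}U^{-3/4}$-type bound.

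\textbf{Dual piece.} We expand $\widetilde B_{\omega,U^{-1}}(q)$ as a sum over $\mathfrak n$ of length about $X^{1/2}U^{-1}$, with coefficients $\overline{\nu_{q,\omega}}(\mathfrak n)$. The problem then becomes bounding sums of quartic Gauss sums
\[
\sum_{q} \mu\text{-sieved}\ \widetilde g_4(q)\, \overline{\chi_q}(\mathfrak n)\chi_q(\mathfrak b)\,\xi(q) f(N(q)/X).
\]
We treat these with the Patterson--Heath-Brown type estimates for sums of quartic Gauss sums twisted by characters, which are available after writing $\widetilde g_4(q)\chi_q(m)=\overline{\chi_q}(\cdot)\widetilde g_4$-shifts via the twisted multiplicativity of Gauss sums. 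Alternatively, we can use a Heath-Brown--style bound: Vaughan/Heath-Brown identity for the $q$-sum, then the quartic large sieve for type II sums. The resulting bound is roughly $X^{1-\delta}\cdot$ (length)$^{\,\cdot}$, giving the term $X^{7/8+\ep}/((1+|\omega|)^{1/4}U^{1/4}B^{3/4})$ from the generic range.

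The case $\omega=0$ is special. There the Gauss-sum series has a pole (the Patterson residue, arising from the cubic/quartic metaplectic theta function), and the polar contribution produces the $\bbone_{\omega=0}X^{17/16}U^{-5/8}$ term. We place it into $R_{\omega,U}(\mathfrak b)$ rather than the main term, since it will be small once $U$ is chosen to be a small power of $X$.

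\textbf{Main obstacle.} The hardest step is the dual piece: obtaining nontrivial bounds for sieved sums of normalized quartic Gauss sums twisted by $\chi_q(\mathfrak b\overline{\mathfrak n})\xi(q)$, uniformly in $\mathfrak b,\mathfrak n,\omega$ with only polynomial loss. We would first try Heath-Brown's large-sieve approach to Gauss sums for cubic characters, adapted to quartic characters, and fall back on the Patterson-type meromorphic continuation where the large sieve is insufficient.
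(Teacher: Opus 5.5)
Your proposal has a genuine gap in the dual piece, and that piece is exactly what produces $R_{\omega,U}(\mathfrak b)$. The route you lead with cannot work. The $q$-sum is not over primes, so combinatorial identities of Vaughan/Heath-Brown type have no role; in fact the sieve disappears here, since $\widetilde g_4(\ell^2m)=0$ unless $\ell=1$. Applying the quartic large sieve after Cauchy--Schwarz in $m$, using only $|\widetilde g_4(m)|\le 1$, bounds the whole dual piece by $\ll X^{1+\ep}$. It detects cancellation in the $\mathfrak n$-aspect but none from the Gauss sums, and that is where the saving must come from.

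What is needed is the following.
\begin{enumerate}
\item Write $\overline{\chi_m(b^3n)}\,g_4(m)=g_4(b^3n,m)$ for $(m,bn)=1$, and factor $b^3n=\alpha\beta^2\gamma^3\delta^4\epsilon^4$.
\item Remove the coprimality condition, so that the Dirichlet series of the $m$-sum becomes a finite combination of Kubota's series $\psi(r,s,\xi;v)$ with $N(r)\ll B^3N(n)$.
\item Apply Mellin inversion and the meromorphic continuation of $\psi$, whose only possible pole is at $s=5/4$, and only when $\omega=0$.
\item Use the convexity bound $\psi(r,1+\ep+it,\xi;v)\ll N(r)^{1/4+\ep}$. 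Since no closed formula for the quartic theta coefficients is known, also use the convexity bound $\tau_4(r;v)\ll N(r)^{1/8+\ep}$ for the residue.
\end{enumerate}
This gives, for each $n$, the bound $\ls \bbone_{\omega=0}X^{3/4}N(n)^{1/8}+X^{1/2}N(n)^{1/4}$ for the $m$-sum. Summing against $N(n)^{-1/2}$ over $N(n)\ls X^{1/2}/U$ gives exactly $\bbone_{\omega=0}X^{17/16}U^{-5/8}+X^{7/8}U^{-3/4}$. You mention the meromorphic continuation only as a fallback and never supply these uniform-in-$r$ bounds. So the form of $R_{\omega,U}(\mathfrak b)$ is asserted rather than derived.

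Your bookkeeping for the principal piece is also wrong in two places.

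First, the non-principal $\mathfrak n$ should not be ``absorbed'' via P\'olya--Vinogradov or the large sieve. P\'olya--Vinogradov costs about $\sqrt{N(\mathfrak b\mathfrak n)}$ per $\mathfrak n$, for a total of roughly $B^{1/2}YX^{1/2}U$. That exceeds the stated bound for $R_{\omega,U}(\mathfrak b)$ when $U$ is as large as the hypothesis allows. The point of the hypothesis $(1+|\omega|)BY^2U\le X^{1/2-\nu}$ is different. After smooth Poisson summation in $m$, any nonzero frequency would have to satisfy $N(k)\ll(1+|\omega|)^{1+\ep}BY^2UX^{-1/2+\ep}\le X^{-\nu+O(\ep)}$. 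So the $k\neq0$ sum is empty and this part is $O(X^{-A})$.

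Second, the contour shift in the main term does not remove the $U$-dependence up to a negligible error. The Euler product of the $\mathfrak m$-sum contains $\zeta_\lambda(2+4w,\xi)$, which for $\omega=0$ has a pole at $w=-\frac14$ giving a genuine contribution of size $X^{7/8}U^{-1/4}B^{-3/4}$. One can therefore only shift to $\Re(w)=-\frac14+\ep$, and the leftover integral is precisely the error term $X^{7/8+\ep}(1+|\omega|)^{-1/4}U^{-1/4}B^{-3/4}$. You attribute that term to the dual piece. Conversely, $X^{7/8}U^{-3/4}$ comes from the dual piece, not from the principal off-diagonal terms.
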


Similarly, we obtain the following asymptotic formula for the second moment.

\begin{prop}\label{prop:second_moment}
    Let $0 \neq \mathfrak{b}_1,\mathfrak{b}_2 \unlhd \Z[i]$ be squarefree and coprime with $2$. Suppose that $N(\mathfrak{b}_1) \sim B_1$ and $N(\mathfrak{b}_2)\sim B_2$ for $1 \leq B_1, B_2 \leq X^{100}$, and denote $B:=\max(B_1,B_2)$. Set $\mathfrak{b} = (\mathfrak{b}_1,\mathfrak{b}_2)$ and $\mathfrak{b}_i=\mathfrak{a}_i \mathfrak{b}$ for $i \in \{1, 2\}$. For any $\omega \in \Z$, $1 \le Y \le X^{100}$, and $\ep >0$, we have
    \begin{align*}
        & \mathcal{S}_M\big(\nu_{q,\omega}(\mathfrak{b}_1)\overline{\nu_{q,\omega}(\mathfrak{b}_2)}A_{\omega}(q);F\big) \\
        &= D_{\omega}\check{F}(0)X\frac{g_\f(\mathfrak{b}) h_{\omega}(\mathfrak{a}_1) \overline{h_{\omega}(\mathfrak{a}_2)}}{\sqrt{N(\mathfrak{a}_1 \mathfrak{a}_2)}}\Big[\log \Big(\frac{(1+|\f|)^2 X}{N(\mathfrak{a}_1 \mathfrak{a}_2)}\Big)+\mathcal O_\omega(\mathfrak{b}_1,\mathfrak{b}_2)\Big]  \\
        & +O_{\ep}\Big(\frac{(1+|\omega|)^{\ep}X^{1+\ep}}Y+\frac{X^{3/4+\ep}}{(1+|\omega|)^{1/2-\varepsilon}N(\mathfrak{a}_1 \mathfrak{a}_2)^{1/4}}\Big)+ \mathcal R_{\omega}(\mathfrak{b}_1,\mathfrak{b}_2),
    \end{align*}
    where
    \begin{equation}\label{eq:D_omega}
        D_{\omega}:= \frac{\pi^2}{768\cdot\zeta_{\Q(i)}(2)\cdot|\sqrt 2 -\xi(\lambda)|^2}\prod_{\substack{\mathfrak p \textnormal{ prime}\\ (\mathfrak p,2)=1\\q:=N(\mathfrak p)}} \Big( 1 -\frac{1}{q(q+1)} +  2 \Re \Big(\frac{q}{(q+1)(q^2\xi(\mathfrak{p}^4)-1)}\Big)\Big).
    \end{equation}
    The multiplicative functions $g_\f$ and $h_\f$ are given,
    for $\mathfrak{p}$ prime, $q:=N(\mathfrak{p})$, and $k \ge 1$, by
    \begin{equation}\label{eq:g_xi_1}
        g_{\omega}(\mathfrak{p}^k) := \frac{q^2(q^2+1)(q+1)}{q^5+2q^4+q^3+(1-2\Re{\xi(\mathfrak{p}^4)})q^2+1} = 1 + O\Big(\frac{1}{q}\Big)
    \end{equation}
    and
    \begin{equation}\label{eq:h_xi}
        h_{\omega}(\mathfrak{p}^k) := \frac{q^3(q^2+q(\xi(\mathfrak{p}^4)+1) + 1)}{q^5+2 q^4+q^3+(1-2\Re{\xi(\mathfrak{p}^4)})q^2+1}  = 1 + O\Big(\frac{1}{q}\Big).
    \end{equation}
    Furthermore,
    \begin{equation}\label{eq:O(b1,b2)}
        \mathcal{O}_{\omega}(\mathfrak{b}_1,\mathfrak{b}_2) := C_{\omega,F} + \sum_{i=1}^2 \sum_{\substack{\mathfrak{p} \textnormal{ prime} \\ \mathfrak{p} \mid \mathfrak{a}_i}} D_{i, \omega}(\mathfrak{p}) \frac{\log{N(\mathfrak{p})}}{N(\mathfrak{p})} + \sum_{\substack{\mathfrak{p} \textnormal{ prime} \\ \mathfrak{p} \mid \mathfrak{b}}} D_{3, \omega}(\mathfrak{p}) \frac{\log{N(\mathfrak{p})}}{N(\mathfrak{p})},
    \end{equation}
    where $C_{\omega,F} \ll_{F} 1$ and $D_{i, \omega}(\mathfrak{p}) \ll 1$ for $i \in \{1, 2, 3\}$. Finally, there exists an absolute constant $H \geq 1$
    such that
    \begin{equation} \label{Rbound}
        \mathcal{R}_{\omega}(\mathfrak{b}_1,\mathfrak{b}_2) \ll_{F, \varepsilon}  \big(B Y (1+|\omega|) \big)^H \cdot X^{1-\frac{1}{70}+\varepsilon}.
    \end{equation}
\end{prop}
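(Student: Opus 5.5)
We first make $A_{\omega}(q)$ explicit. By the approximate functional equation for $|L(1/2,\nu_{q,\omega})|^2$ (defined in \eqref{Aomegadef}), $A_\omega(q)$ is a smoothed double sum
\[
\sum_{\mathfrak m,\mathfrak n}\frac{\nu_{q,\omega}(\mathfrak m)\overline{\nu_{q,\omega}(\mathfrak n)}}{\sqrt{N(\mathfrak m\mathfrak n)}}\,V\Big(\frac{N(\mathfrak m\mathfrak n)}{(1+|\omega|)^2 N(q)}\Big),
\]
with no root-number terms, since the second moment is symmetric. Inserting $\nu_{q,\omega}(\mathfrak b_1)\overline{\nu_{q,\omega}(\mathfrak b_2)}$ reduces the problem to sums over $q$ of $\chi_q(m b_1)\overline{\chi_q(n b_2)}$, weighted by $\xi$, by $M_Y(q)$ and by the smooth weight. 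Write $M_Y(q)=\sum_{N(\mathfrak l)\le Y,\ \mathfrak l^2\mid q}\mu(\mathfrak l)$ and set $q=l^2 q'$. By quartic reciprocity (available since $q\equiv 1\pmod{\lambda^7}$), the character $q\mapsto \chi_q(mb_1)\overline{\chi_q(nb_2)}$ is a Hecke character in $q$ modulo roughly $mnb_1b_2$. We then split the $(\mathfrak m,\mathfrak n)$ sum into the terms where this character is principal and the rest.

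\emph{Main term.} The character is principal exactly when $\mathfrak m\mathfrak b_1\overline{(\mathfrak n\mathfrak b_2)}^{\,3}$ is a fourth power up to the coprimality conditions. Writing $\mathfrak b_i=\mathfrak a_i\mathfrak b$, this means $\mathfrak m\mathfrak a_1\cdot(\mathfrak n\mathfrak a_2)^{3}$ is a fourth power with the appropriate $\mathfrak b$-coprimality. Summing over $q$ with Poisson summation (counting lattice points in $\Z[i]$ in progressions) gives $X\check F(0)$ times a local density. The remaining sum over $\mathfrak m,\mathfrak n$ is evaluated by a contour shift of a Dirichlet series
\[
\sum \frac{\rho(\mathfrak m,\mathfrak n)}{N(\mathfrak m\mathfrak n)^{1/2+s}},
\]
which has a double pole at $s=0$. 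The double pole arises from the diagonal family $\mathfrak m\mathfrak a_1=\mathfrak n\mathfrak a_2\cdot(\text{fourth powers})$ and behaves like $\zeta_{\Q(i)}(1+2s)$ times a $\zeta$-type factor. The residue produces the $\log\big((1+|\omega|)^2X/N(\mathfrak a_1\mathfrak a_2)\big)$ term and the lower-order constant $\mathcal O_\omega$. Its Euler product yields $D_\omega$, $g_\omega(\mathfrak b)$ and $h_\omega(\mathfrak a_i)$; the local factor $2\Re\,(\cdots\xi(\mathfrak p^4))$ comes from the off-diagonal fourth-power contributions $\mathfrak m=\mathfrak n\mathfrak p^4$ and its conjugate. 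Differentiating the Euler factors at $\mathfrak p\mid\mathfrak a_i,\mathfrak b$ gives the $D_{i,\omega}(\mathfrak p)\log N(\mathfrak p)/N(\mathfrak p)$ terms. Several error terms are routine:
\begin{itemize}[label={}]
\item The tail $N(\mathfrak l)>Y$ in the main-term computation gives the $X^{1+\ep}/Y$ error.
\item Truncating the contour to the left of $s=0$ gives the $X^{3/4}/((1+|\omega|)^{1/2}N(\mathfrak a_1\mathfrak a_2)^{1/4})$ error, using convexity and the decay of $V$.
\end{itemize}

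\emph{Non-principal terms.} These make up $\mathcal R_\omega$ and are the main obstacle: the effective length $N(\mathfrak m\mathfrak n)\approx(1+|\omega|)^2X$ is as long as the family, so Pólya–Vinogradov in $q$ alone gives nothing. The plan is as follows.
\begin{enumerate}[label=(\roman*)]
\item Factor the dual variables by the large sieve for quartic characters of Heath-Brown type, as in \cite{BGL,HB}. Perform a dyadic decomposition, $N(\mathfrak m)\sim M_1$ and $N(\mathfrak n)\sim N_1$, and separate variables via Mellin inversion.
\item When $M_1N_1\le X^{1-\delta}$, apply Poisson summation in $q$ (sum of the nonprincipal character over $q$ in a box). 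This gives a saving, at the cost of factors $(BY(1+|\omega|))^H$ from the conductor.
\item When $M_1N_1$ is near $(1+|\omega|)^2X$, use the functional equation of $L(s,\nu_{q,\omega})$ in one variable to shorten it to length about $(1+|\omega|)X/N_1$, then apply the quartic large sieve in the pair $(q,\text{shorter variable})$. The large sieve bound $(Q+N+(QN)^{2/3})\|a\|^2$ loses only a power less than $1$, and balancing the two regimes yields a saving of $X^{-1/70}$.
\end{enumerate}
All dependence on $B$, $Y$ and $\omega$ is absorbed polynomially, which gives \eqref{Rbound}.
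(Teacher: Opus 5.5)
Your main-term analysis is essentially what the paper does. There the principal terms appear as the zero frequency $k=0$ after Poisson summation in $q$, since $g_4(0,c)=0$ unless $c$ is a fourth power, and the double pole comes from $\zeta_\lambda(1+2w)$ attached to the gcd variable $d$. The $X^{1+\varepsilon}/Y$ and $X^{3/4}N(\mathfrak a_1\mathfrak a_2)^{-1/4}$ errors arise as you say.

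The gap is in $\mathcal R_\omega(\mathfrak b_1,\mathfrak b_2)$, in the range that actually matters: $N(\mathfrak m)N(\mathfrak n)= X^{1+o(1)}$ with both factors near $(1+|\omega|)X^{1/2}$. Step (ii) only covers $M_1N_1\le X^{1-\delta}$, where the nonzero Poisson frequencies are simply absent. In step (iii), the functional equation maps a sum of length about $(1+|\omega|)X^{1/2}$ to one of the same length, so nothing is shortened. It also introduces the root number $\varepsilon(\omega)\xi(q)\widetilde g_4(q)$, a quartic Gauss sum in $q$, which you do not account for.

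More fundamentally, the quartic large sieve cannot give any power saving here. After Cauchy--Schwarz it bounds the bilinear form by $X^{\varepsilon}\big(X+M_1+(XM_1)^{2/3}\big)^{1/2}\big(X+N_1+(XN_1)^{2/3}\big)^{1/2}$, which is $\ge X^{1+\varepsilon}$. This cannot be improved, because the large sieve necessarily also counts the principal (diagonal) terms, which have size $X\log X$. There is no way to restrict it to the nonprincipal part, so no balancing of regimes yields $X^{1-1/70}$. In the paper the large sieve is used only for the sieving errors $\mathcal S_R$.

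The missing idea is to apply Poisson summation in $q$ in every range, including $M_1M_2\approx X$. The nonzero frequencies then satisfy $N(k)\ls 1$. After separating variables, $\mathcal R_\omega$ is controlled by products of two twisted sums of quartic Gauss sums,
\[
\sum_{N(m)\sim M_i}\frac{\xi(m)\overline{\chi_m(r)}\,\widetilde g_4(k,m)}{N(m)^{1+it}}.
\]
Cancellation in these is the genuinely quartic input, and it comes from Kubota's theory rather than from a large sieve:
\begin{itemize}
\item Use the meromorphic continuation of $\psi(r,s,\xi;v)$ (via Diaconu), after removing coprimality conditions, with residues simplified by Suzuki's relations for $\tau_4$.
\item Shift contours past the possible pole at $s=5/4$, which occurs only when $\omega=0$.
\item Bound the residue by $\tau_4(r)\ll N(r)^{1/8+\varepsilon}$ and the remaining integral by the convexity bound \eqref{convexbd}. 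This makes each normalized sum $\ls M_i^{-1/4}$.
\end{itemize}
Truncating the auxiliary variables $d,d_1,d_2,f,w$ at a parameter $D$ costs $X/D$, giving $\mathcal R_\omega\ls X/D+X^{3/4}D^{33/2}$. The exponent $1/70$ comes from choosing $D=X^{1/70}$, not from balancing large-sieve regimes.
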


The proofs of Propositions~\ref{prop:first_moment} and \ref{prop:second_moment} are given in Sections~\ref{sec:first_moment} and \ref{sec:second_moment}, respectively. Finally, to deal with the $\mathcal{S}_R$ terms we will also need the following estimates.

\begin{prop}\label{SRestimate1}
    Let $\omega \in \mathbb{Z}$, $1 \le Y, U \le X^{100}$, and $\varepsilon>0$. Then
    \begin{align*}
        & \mathcal{S}_R (|B_{\f,U}(q)|;F ) + \mathcal{S}_R (|\widetilde{B}_{\f,U^{-1}}(q)|; F ) \nonumber \\
        &  \ll_{\ep} ((1+|\omega|)X)^{\ep} \Big( \bbone_{\omega=0} \cdot U^{1/2} X^{1/2}+(1+|\omega|)^{1/2} \frac{X^{3/4}}{Y^{1/4}}+(1+|\omega|)^{1/3} \frac{X}{Y^{5/6}} \Big).
    \end{align*}
\end{prop}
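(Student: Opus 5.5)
The plan is to bound $|B_{\f,U}(q)|$ and $|\widetilde B_{\f,U^{-1}}(q)|$ in mean over the thin set of $q$ on which $R_Y(q)\neq 0$, using only convexity and the quartic large sieve. I will treat both terms together. I take $B_{\f,U}(q)$ to be the first half of the approximate functional equation, i.e.\ a smoothed sum $\sum_{\mathfrak n}\nu_{q,\f}(\mathfrak n)N(\mathfrak n)^{-1/2}V\big(N(\mathfrak n)/(U(1+|\f|)\sqrt{N(q)})\big)$. Correspondingly, $\widetilde B_{\f,U^{-1}}$ has length $(1+|\f|)X^{1/2}/U$. Shifting contours expresses each of them as $\frac{1}{2\pi i}\int_{(\ep)} L(\tfrac12+s,\nu_{q,\f})\,G(s)\,(U(1+|\f|)X^{1/2})^{s}\,\frac{ds}{s}$ with $G$ rapidly decaying, plus a residue at $s=0$. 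The residue only matters when the character is principal, i.e.\ when $\nu_{q,\f}$ restricted to its primitive part is trivial, which forces $\f=0$ and $q$ to be essentially a fourth power.

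Next I parametrize the support of $R_Y$. Since $|R_Y(q)|\le \tau(q)\ll X^{\ep}$ and $R_Y(q)=0$ unless $\mathfrak l^2\mid q$ for some squarefree $\mathfrak l$ with $N(\mathfrak l)>Y$, I bound
\[
\mathcal S_R(|B|;F)\ll X^{\ep}\sum_{N(\mathfrak l)>Y}\ \sum_{\substack{q\equiv 1 \,(\lambda^7)\\ \mathfrak l^2\mid q,\ N(q)\asymp X}}|B(q)|,
\]
and write $q=l^2m$. On this set $\chi_q=\chi_l^2\chi_m$, where $\chi_l^2$ is a quadratic character, so $\nu_{q,\f}$ is induced from a primitive character of conductor dividing $lm$ up to bounded powers of $\lambda$. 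The principal-character contribution arises only when $\f=0$ and $l^2m$ is a fourth power up to units, and there are $\ll X^{1/2+\ep}$ such $q$. Since the residue has size $\ll U^{1/2}$ (from $\sqrt{N(\mathfrak n)}$ summed to length $U$ against the trivial character, after normalization), this produces the term $\bbone_{\f=0}U^{1/2}X^{1/2}$.

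For the non-principal part I will split the $\mathfrak l$-range at a parameter $Z\ge Y$.

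\emph{Large $\mathfrak l$, $N(\mathfrak l)>Z$.} I use the convexity bound $L(\tfrac12+it,\nu_{q,\f})\ll((1+|\f|)^2(1+|t|)^2X)^{1/4+\ep}$ pointwise. The number of $q$ with $\mathfrak l^2\mid q$ for some $N(\mathfrak l)>Z$ is $\ll X/Z$, which gives $\ll (1+|\f|)^{1/2}X^{5/4+\ep}/Z$.

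\emph{Small $\mathfrak l$, $Y<N(\mathfrak l)\le Z$.} For each fixed $\mathfrak l$ I apply Cauchy--Schwarz (or Hölder with a fourth moment) over $m$ with $N(m)\asymp X/N(\mathfrak l)^2$. The resulting moment of the Dirichlet polynomial $\sum_{\mathfrak n}a_{\mathfrak n}\chi_l^2(\mathfrak n)\xi(\mathfrak n)\chi_m(\mathfrak n)$ I bound by the quartic large sieve of Heath-Brown / Blomer--Goldmakher--Louvel, in the form
\[
\sum_{N(m)\le M}\Big|\sum_{N(\mathfrak n)\le N}a_{\mathfrak n}\chi_m(\mathfrak n)\Big|^2\ll (MN)^{\ep}\big(M+N+(MN)^{2/3}\big)\|a\|_2^2 ,
\]
after quartic reciprocity has swapped the roles of $m$ and $\mathfrak n$. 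Summing over $\mathfrak l$ and optimizing $Z$ against the large-$\mathfrak l$ bound should produce the two terms $(1+|\f|)^{1/2}X^{3/4}Y^{-1/4}$ and $(1+|\f|)^{1/3}XY^{-5/6}$. The first comes from the "$M+N$" part of the large sieve with the $\mathfrak n$-length $\approx(1+|\f|)X^{1/2}$ (times $U^{\pm1}$, with the $U$-dependence absorbed after using the functional equation to move to length $\le(1+|\f|)X^{1/2}$). The second comes from the $(MN)^{2/3}$ term, i.e.\ the exponent $5/6$ arises from balancing $\sum_{N(\mathfrak l)>Y}N(\mathfrak l)^{-4/3}$-type sums.

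The main obstacle is this middle range. The $q$ there are not squarefree, so the characters $\chi_{l^2m}$ are imprimitive and repeat across different $\mathfrak l$. I therefore need to separate the $\mathfrak l$-dependence carefully via $\chi_l^2$ and the conditions $(\mathfrak n,l)=1$ before the large sieve applies uniformly, and to track the $t$-integral and the $\f$-dependence of the conductor. I will first try Cauchy--Schwarz with the second-moment large sieve and only pass to Hölder with fourth moments if the exponents $1/4$ and $5/6$ of $Y$ cannot be reached otherwise.
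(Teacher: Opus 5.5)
Your architecture matches the paper's: shift to $\Re w=\varepsilon$ so that $B_{\omega,U}(q)$ and $\widetilde B_{\omega,U^{-1}}(q)$ become short integrals of $L(\tfrac12+\varepsilon+it,\cdot)$ with $U^{\pm w}$ harmless, treat $\omega=0$, $q=g^4$ separately, and bound the rest by Cauchy--Schwarz and the quartic large sieve. But the plan as written does not reach the stated exponents, because the Dirichlet polynomial you feed into the large sieve has the wrong length.

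You take it to be $\approx(1+|\omega|)X^{1/2}$, i.e.\ governed by $N(q)$. For $N(\mathfrak l)\sim L$ the modulus runs over $\asymp X/L^2$ values. The large sieve then gives $X/L^2+(1+|\omega|)X^{1/2}+(1+|\omega|)^{2/3}XL^{-4/3}$ for the second moment. After Cauchy--Schwarz and the trivial sum over the $\asymp L$ values of $\mathfrak l$, each dyadic block contributes
\[
\frac{X}{L}+(1+|\omega|)^{1/2}X^{3/4}+(1+|\omega|)^{1/3}\frac{X}{L^{2/3}}.
\]
So the $X^{3/4}$ term saves nothing in $Y$, and you get only $XY^{-2/3}$ instead of $XY^{-5/6}$. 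Your split at $Z$ cannot help, since this loss already occurs at $L\asymp Y$.

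The missing step is the conductor drop, which is exactly what the first part of the paper's proof does (replacing $N(q)$ by $N(\mathrm{rad}(q))$ in the length). Write $q=\ell^2 r$ with $r$ squarefree, $e=(r,\ell)$, $r=er'$, and $\ell/e=ab^2c^3d^4$. Then $\nu_{q,\omega}=\nu_{fr',\omega}\mathbf 1_{bd}$ with $f=a^2c^2e^3$, and $\nu_{fr',\omega}$ is primitive of conductor $\mathrm{rad}(f)r'\mathfrak m_\omega$, of norm $\ll N(\ell r)\asymp X/L$. The Euler factors at primes of $bd$ are $\ll X^{\varepsilon}$ on $\Re w=\varepsilon$. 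Now re-expand $L(\tfrac12+\varepsilon+it,\nu_{fr',\omega})$ with the balanced approximate functional equation of this primitive character, of length $\asymp(1+|\omega|)\sqrt{X/L}$. With $R\asymp X/L^2$, the large sieve in $r'$ gives $R+(1+|\omega|)(LR)^{1/2}+(1+|\omega|)^{2/3}L^{1/3}R$. Cauchy--Schwarz and the trivial $(a,b,c,d,e)$-sum (mass $\ll X^{\varepsilon}L$) then give $X/L+(1+|\omega|)^{1/2}X^{3/4}L^{-1/4}+(1+|\omega|)^{1/3}XL^{-5/6}$. This holds for all $Y\ll L\ll\sqrt X$ and decreases in $L$, so summing dyadically gives the claim with no convexity range.

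Two smaller points. First, your parametrization $q=l^2m$, with $\mathfrak l$ any squarefree ideal of norm $>Y$ and $m$ unrestricted, does not make the large-sieve modulus squarefree, as \cref{quartic-large-sieve} requires. Taking $\ell$ maximal fixes this: then $r$ is squarefree, and $R_Y(q)\neq 0$ forces $N(\ell)>Y$. The repetition of characters across different $\mathfrak l$, which you flag as the main obstacle, is then harmless: for each fixed outer tuple you sieve only over $r'$, and the outer tuple is summed trivially. Second, your principal-term bookkeeping is off in compensating ways. There are $\asymp X^{1/4}$ fourth powers with $N(q)\asymp X$, and each has $|B_{0,U}(q)|\ll U^{1/2}X^{1/4+\varepsilon}$; the pole crossed is that of $L(\tfrac12+s,\cdot)$ at $s=\tfrac12$, not a residue at $s=0$. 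Their product is the $U^{1/2}X^{1/2}$ term.
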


\begin{prop} \label{SRestimate2}
    Let $\omega \in \mathbb{Z}$, $1 \leq Y \leq X^{100}$, and $\varepsilon > 0$. Then
    \begin{equation*} 
        \mathcal{S}_R(|A_{\omega}(q)|; F) \ll_{\varepsilon} ((1+|\omega|)X)^{\varepsilon} \Big(  (1+|\omega|) X^{3/4} +  (1+|\omega|)^{2/3} \frac{X}{Y^{2/3}} \Big).
    \end{equation*}
\end{prop}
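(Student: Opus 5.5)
Here $A_\omega(q)$ is, via the approximate functional equation, essentially $\sum_{\mathfrak n}\frac{\nu_{q,\omega}(\mathfrak n)}{\sqrt{N\mathfrak n}}V(N\mathfrak n/((1+|\omega|)\sqrt X))$ times its conjugate. More precisely, $A_\omega(q)$ is a double sum over $\mathfrak n_1,\mathfrak n_2$ with weight $V(N(\mathfrak n_1\mathfrak n_2)/((1+|\omega|)^2X))$, i.e. $\sum_{\mathfrak n}a(\mathfrak n)\nu_{q,\omega}(\mathfrak n_1)\overline{\nu_{q,\omega}(\mathfrak n_2)}$. The plan is to estimate $\mathcal S_R(|A_\omega|;F)$ by splitting according to the modulus $\mathfrak l$ with $N(\mathfrak l)>Y$, $\mathfrak l^2\mid q$. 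We write $q=\ell^2 r$ and bound
\[
\mathcal S_R(|A_\omega|;F)\le \sum_{N(\mathfrak l)>Y}\ \sum_{N(r)\ll X/N(\mathfrak l)^2}|A_\omega(\ell^2 r)|\, \tau(\ell^2 r),
\]
using $|R_Y(q)|\le\tau(q)$, which costs $X^\varepsilon$. Since $|A_\omega(q)|$ is essentially $|L(1/2,\nu_{q,\omega})|^2$ (up to a harmless smooth integral over a vertical line), it suffices to bound second moments of $L(1/2+it,\nu_{\ell^2 r,\omega})$ over $r$. Note that $\chi_{\ell^2 r}=\chi_\ell^2\chi_r$ on arguments coprime to $\ell$, so $L(s,\nu_{q,\omega})$ equals the $L$-function of the character $\chi_\ell^2\chi_r\xi$ up to Euler factors at primes dividing $\ell$. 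These factors are bounded by $X^\varepsilon$ on $\Re s=1/2$.

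We then get two complementary bounds, and pick the better one depending on $N(\mathfrak l)=L$.

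\emph{Small $L$.} We apply the quartic large sieve of Heath-Brown type (as used in \cite{BGL}) to $\sum_r|\sum_{N\mathfrak n\le N}a_{\mathfrak n}\chi_r(\mathfrak n)|^2$. With $N\approx(1+|\omega|)\sqrt{X}$ from the approximate functional equation and $R=X/L^2$ moduli, this gives roughly $(R+N+(RN)^{2/3})X^\varepsilon$, i.e.
\[
\ll X^{\varepsilon}\Big(\frac{X}{L^2}+(1+|\omega|)X^{1/2}+(1+|\omega|)^{2/3}\frac{X}{L^{4/3}}\Big).
\]
The fixed twist $\chi_\ell^2$ is absorbed into the coefficients $a_{\mathfrak n}$. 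Summing over $L>Y$, the first and third terms give $X/Y$ and $(1+|\omega|)^{2/3}X/Y^{1/3}$, but the second summed over $L$ up to $\sqrt X$ is too large. So we use the large sieve only for $L\le X^{1/4}$; there the middle term sums to $(1+|\omega|)X^{3/4}$. The third term, weighted by the sum over $L>Y$ of $L^{-4/3}$, gives $(1+|\omega|)^{2/3}XY^{-1/3}$. This is worse than the claimed $Y^{-2/3}$, so I would instead apply the sieve with the conductor-adapted parameters. I expect the precise bookkeeping (possibly using the version of the large sieve with $(RN)^{2/3}$ replaced via Hölder and dyadic splitting of $N$, plus the functional equation to shorten sums) to produce exactly the exponent $2/3$.

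\emph{Large $L>X^{1/4}$.} Here the number of $q$ is at most $X/L^2\le X^{1/2}$ per $\mathfrak l$ summed. We use the convexity bound $|L(1/2,\nu_q)|^2\ll((1+|\omega|)^2X)^{1/2+\varepsilon}$, giving a total of
\[
\sum_{L>X^{1/4}}\frac{X}{L^2}(1+|\omega|)X^{1/2}\ll(1+|\omega|)X^{3/4+\varepsilon}.
\]

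The main obstacle is the middle range. There the large sieve's $(RN)^{2/3}$ term must be balanced against the sum over $\mathfrak l$ to get $X/Y^{2/3}$. I would optimize by applying the large sieve jointly in $(\ell,r)$ with modulus $q$ itself of size $X$, restricted to $q$ divisible by $\ell^2$, and exploiting that sparsity via the $\ell^2$-restricted form of the large sieve.
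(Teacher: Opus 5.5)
Your proposal does not establish the stated bound. For $L\le X^{1/4}$ you obtain, by your own count, $(1+|\omega|)^{2/3}XY^{-1/3}$, which is weaker than the claimed $Y^{-2/3}$.

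The range $L>X^{1/4}$ is miscounted. A dyadic block $N(\ell)\sim L$ contains $\asymp L$ ideals, so there are $\asymp\sum_{L>X^{1/4}}L\cdot X/L^2\asymp X^{3/4}$ moduli $q=\ell^2r$ there. The convexity bound $(1+|\omega|)X^{1/2+\varepsilon}$ for each then gives $(1+|\omega|)X^{5/4+\varepsilon}$. Even your displayed sum, which omits the factor $L$, evaluates to $(1+|\omega|)X$, not $X^{3/4}$. Convexity cannot work in this range: using the smaller conductor $X/L$ still gives $X^{9/8}$ at $L=X^{1/4}$.

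The remedy you finally commit to, a large sieve over the moduli $q=\ell^2r$ themselves in an ``$\ell^2$-restricted form'', is not an available tool. Theorem~\ref{quartic-large-sieve} is stated for squarefree moduli, and nothing of that sort is needed.

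The missing idea is the one you only gesture at with ``conductor-adapted parameters''. The length $(1+|\omega|)\sqrt X$ is built into the definition of $A_\omega(q)$, but it is not the approximate-functional-equation length for non-squarefree $q$.

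Write $e=(r,\ell)$, $r=er'$, $\ell/e=ab^2c^3d^4$ and $f=a^2c^2e^3$. Then $\nu_{\ell^2r,\omega}=\nu_{fr',\omega}\mathbf{1}_{bd}$, where $\nu_{fr',\omega}$ is primitive of conductor $\mathrm{rad}(f)r'\mathfrak m_\omega$. Its norm is $\ll N(\ell)N(r)\asymp LR\asymp X/L$.

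Opening $W_\omega$ in $A_\omega(\ell^2r)$ and shifting to $\Re(w)=\varepsilon$ costs only $((1+|\omega|)X)^{\varepsilon}$. This reduces matters to $|L(\tfrac12+\varepsilon+it,\nu_{fr',\omega})|^2$ with $|t|\le((1+|\omega|)X)^{\varepsilon}$. The exception is the trivial-character case $\omega=0$, $q=g^4$, which you do not mention; bounded trivially, it contributes $X^{3/4+\varepsilon}$.

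The balanced approximate functional equation for this $L$-value has length $\approx(1+|\omega|)\sqrt{LR}$. After reducing to squarefree $n$ (the large sieve requires this), the large sieve over squarefree $r'$ with $N(r')\le R$ gives $R+(1+|\omega|)(LR)^{1/2}+(1+|\omega|)^{2/3}L^{1/3}R$. Summing over the $\ll X^{\varepsilon}L$ choices of $\ell$ yields
\[
\frac{X}{L}+(1+|\omega|)(XL)^{1/2}+(1+|\omega|)^{2/3}\frac{X}{L^{2/3}}, \qquad Y\ll L\ll\sqrt X .
\]
Since $L\ll\sqrt X$, the middle term is $\ll(1+|\omega|)X^{3/4}$. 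After the dyadic sum over $L>Y$, the last term is $\ll(1+|\omega|)^{2/3}XY^{-2/3}$. So a single large-sieve argument covers the whole range, with no split into small and large $L$ and no convexity bound.
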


Proofs of Propositions~\ref{SRestimate1} and \ref{SRestimate2} are given in \cref{sec:S_R_bounds}. To prove \cref{mainthm}, we choose a mollifier in \cref{mollifier_section} and assemble the four results above to obtain asymptotics for the first and second mollified moments. In particular, we obtain power-saving asymptotics for the second moment of $L(1/2, \nu_{q, \omega})$, which is of independent interest.

\begin{corollary}\label{thm:second_moment}
    There exists $\delta > 0$ such that the following holds. For every $\omega \in \Z$ and smooth $F: \R \to [0,1]$ with compact support in $(1, 2)$, we have
    \begin{equation*}
        \sum_{\substack{q \in \Z[i] \\ q \equiv 1 \pmod{\lambda^7} \\ N(q) \leq X}} \mu^2(q) |L(1/2, \nu_{q, \omega})|^2 F\Big(\frac{N(q)}{X}\Big) = 2 D_{\omega}\check{F}(0)X \big({\log{X} + D_{F, \omega}} \big) +O_{F, \omega}(X^{1-\delta})
    \end{equation*}
    as $X \to \infty$, where $D_\omega$ is given in \eqref{eq:D_omega} and $D_{F, \omega} = 2\log(1+|\omega|) + C_{F, \omega}$ with $C_{F, \omega} \ll_F 1$.
\end{corollary}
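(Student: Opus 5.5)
The natural route is to specialise the machinery to the trivial mollifier: take $\lambda_\omega(\mathfrak b)=\bbone_{\mathfrak b=(1)}$, so $\mathcal M_\omega(q)=1$, and combine \cref{prop:second_moment} with \cref{SRestimate2}. The first step is an approximate functional equation for $|L(1/2,\nu_{q,\omega})|^2$, for squarefree $q\equiv 1 \pmod{\lambda^7}$. It has the shape $|L(1/2,\nu_{q,\omega})|^2 = c\,A_\omega(q)$ for an explicit constant $c$ (presumably $c=2$, since both halves of the approximate functional equation for $L\overline{L}$ coincide after conjugation), with $A_\omega(q)$ the smoothed Dirichlet series defined in \eqref{Aomegadef}. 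This identity is presumably where the remark ``the second moment will lead to a multiple of \eqref{SMexpand2}'' comes from, so I take it as given and track the constant so that it accounts for the factor $2$ in the main term.

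Next, I write $\mu^2(q)=M_Y(q)+R_Y(q)$ and use \eqref{nonnegproperty}, which is valid because $F\ge 0$. This gives
\[
\sum_{q}\mu^2(q)|L(1/2,\nu_{q,\omega})|^2F\Big(\frac{N(q)}X\Big)= c\,\mathcal S_M(A_\omega(q);F)+O\big(\mathcal S_R(|A_\omega(q)|;F)\big).
\]
For the main term I apply \cref{prop:second_moment} with $\mathfrak b_1=\mathfrak b_2=(1)$, so $B=1$ and $\mathfrak a_1=\mathfrak a_2=\mathfrak b=(1)$. The multiplicative functions $g_\omega,h_\omega$ then equal $1$, and $\mathcal O_\omega((1),(1))=C_{\omega,F}$. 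The main term becomes
\[
D_\omega\check F(0)X\big[\log X+2\log(1+|\omega|)+C_{\omega,F}\big].
\]
The remaining terms are bounded as follows:
\begin{itemize}
\item the error $O(X^{1+\ep}/Y+X^{3/4+\ep})$;
\item $\mathcal R_\omega((1),(1))\ll (Y(1+|\omega|))^H X^{1-1/70+\ep}$;
\item from \cref{SRestimate2}, $\mathcal S_R(|A_\omega|;F)\ll_\omega X^{\ep}(X^{3/4}+X/Y^{2/3})$.
\end{itemize}

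Finally I choose $Y=X^{\eta}$ with $\eta>0$ small, for instance $\eta H<1/140$. Every error term is then $O_{F,\omega}(X^{1-\delta})$ with, say, $\delta=\min(\eta/2,\,1/140-\eta H-\ep)>0$. Since $\omega$ is fixed, all powers of $1+|\omega|$ are absorbed into the implied constant. Multiplying the main term by $c=2$ gives the stated asymptotic with $D_{F,\omega}=2\log(1+|\omega|)+C_{F,\omega}$.

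The only step needing care is the first: confirming precisely that $|L(1/2,\nu_{q,\omega})|^2$ equals exactly $2A_\omega(q)$, with no extra root-number or Gauss-sum term. This holds because $L\overline L$ is self-dual, with root number $|\ep|^2=1$. Once that normalisation is checked against \eqref{Aomegadef}, the rest is bookkeeping of exponents in the error terms.
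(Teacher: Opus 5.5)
Your proposal is correct and follows the paper's proof essentially verbatim: apply \eqref{nonnegproperty} to $2A_\omega(q)$, then use \cref{prop:second_moment} with $\mathfrak b_1=\mathfrak b_2=(1)$ and \cref{SRestimate2}, with $Y$ a small power of $X$ (the paper takes $Y=X^{1/(100H)}$). The normalisation you single out as needing care is not an extra step, since the identity $|L(1/2,\nu_{q,\omega})|^2=2A_\omega(q)$ for $q\in\mathcal F_4'$ is exactly the second statement of \cref{lem:afe_2}.
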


\begin{proof}
    By \cref{lem:afe_2} we have $|L(1/2, \nu_{q, \omega})|^2 = 2A_\omega(q)$, where $A_\omega(q)$ is defined in \eqref{Aomegadef}. Then apply \eqref{nonnegproperty}, and treat the two resulting terms using Propositions \ref{prop:second_moment} and \ref{SRestimate2} with $\mathfrak{b}_1 = \mathfrak{b}_2 = (1)$ and $Y = X^\theta$, where $\theta = \frac{1}{100 H}$ for $H \geq 1$ given in \eqref{Rbound}.
\end{proof}

An analogous result holds for the first moment, using Propositions \ref{prop:first_moment} and \ref{SMexpand2}.

\subsection*{Acknowledgments}

We thank Alexandra Florea, Peter Koymans, and Alexander Smith for helpful comments on an earlier draft of this paper. Cruz Castillo was partially supported by the Alfred P.\ Sloan Foundation’s MPHD Program and by the NSF Graduate Research Fellowship Grant DGE 21-46756. Alexander Dunn was supported by the NSF Standard Grant DMS-2452303, an AMS-Simons Travel Grant, and the Richard A.\ Duke Endowed Fund at the Georgia Tech School of Mathematics.


\section{Conventions} \label{conventions}

For $n \in \mathbb{N}$ and $N>0$, the notation $n \sim N$ means $N \leq n < 2N$, and $n \asymp N$ means that there exist constants $c_1,c_2>0$ such that $c_1 N \leq n \leq c_2 N$. Dependence of implied constants on parameters will be indicated (as subscripts of the notation) in statements of results, but suppressed throughout the body of the paper (i.e.\ in proofs). In particular, implied constants in the body of the paper are allowed to depend on $\varepsilon>0$ (which is possibly different in each instance) and on the implicit constants in $\asymp$ or $\ll$ notation.

We use the special notation $f \ls g$ to denote that there exists an absolute constant $H \geq 0$ such that
\begin{equation} \label{specnotation}
    |f| \ll_\varepsilon X^{\varepsilon} \cdot \big(B Y (1+|\omega|)\big)^{H} \cdot |g| \qquad \text{for all } \varepsilon > 0.
\end{equation}

Every ideal $0 \neq \mathfrak{n} \unlhd \mathbb{Z}[i]$ can be uniquely decomposed as $\mathfrak{n}=\lambda^k c \mathbb{Z}[i]$ with $k \in \mathbb{Z}_{\geq 0}$ and $c \equiv 1 \pmod{\lambda^3}$. If $\mathfrak{n}$ is coprime with $\lambda$, we call $c$ its \emph{primary} generator. We pass between ideals and their generators freely in this paper.

Given $0 \neq \mathfrak{d},\mathfrak{n} \unlhd \mathbb{Z}[i]$, the notation $\mathfrak{d} \mid \mathfrak{n}$ means there exists $\mathfrak{a} \unlhd \mathbb{Z}[i]$ such that $\mathfrak{n}=\mathfrak{a} \mathfrak{d}$. Similarly, given $0 \neq d, n \in \Z[i]$, the notation $d \mid n$ means $(d) \mid (n)$. For $a, b \equiv 1 \pmod{\lambda^3}$, the notation $a \mid b^{\infty}$ means that if $\pi \equiv 1 \pmod{\lambda^3}$ is prime and $\pi \mid a$, then $\pi \mid b$.


\section{Overview of the argument} \label{hlsketch}

Here we sketch our argument for power-saving asymptotics of the mollified first and second moments. Recall that $\xi$ is a Hecke character with frequency $\omega \in \Z$. For simplicity, in this sketch we fix $\omega$, assume coprimality of all relevant variables, suppress smooth functions, ignore units and powers of the ramified prime $\lambda$ in $\mathbb{Z}[i]$, and remove congruence conditions with fixed modulus.

\subsection{Second mollified moment}

We first use the (balanced) approximate functional equation of \cref{lem:afe_2} for $|L(1/2,\nu_{q,\omega})|^2$, which has root number $1$, and then sieve out the squarefree condition on $q$ using \eqref{nonnegproperty}. The error term $\mathcal{S}_R (|A_{\omega}(q)|;F )$ from the sieve is estimated in \cref{SRestimate2} using a technical but standard application of the quartic large sieve \cite{BGL}, which we omit from this discussion. Its contribution is negligible as long as $\log{Y} \gggtr \log{B} \gg \log{X}$.

It then suffices to obtain asymptotics for $\mathcal{S}_M \big(\nu_{q,\omega}(\mathfrak{b}_1)\overline{\nu_{q,\omega}(\mathfrak{b}_2)}A_{\omega}(q);F \big)$ with power-saving error terms, as in \cref{prop:second_moment}. Recall that $Y$ is the parameter in the sieve \eqref{MYRYdef}, and that $N(\mathfrak{b}_1), N(\mathfrak{b}_2) \ll B$. We eventually choose $Y$ and $B$ to be arbitrarily small (but fixed) powers of $X$, hence factors of the form $(BY)^{O(1)}$ will be negligible. The special notation $f \ls g$ given in \eqref{specnotation} removes the need to track such factors.

We use Poisson summation on the sum over $q$ and remove the main term (of magnitude $X \log{X}$), corresponding to the frequency $r=0$. The remaining expression is
\begin{equation} \label{start}
     \ls  \sum_{\substack{0 \neq r \in \mathbb{Z}[i] \\ N(r) \ls 1}} \Big|{ \sum_{\substack{n_1 \in \mathbb{Z}[i] \\ n_1 \equiv 1 \pmod{\lambda^3} \\ N(n_1) \sim N_1 }}  \xi(n_1) \widetilde{g}_4(-r, n_1) } \Big| \cdot \Big| \sum_{\substack{ n_2 \in \mathbb{Z}[i] \\ n_2 \equiv 1 \pmod{\lambda^3} \\ N(n_2) \sim N_2 }} \xi(n_2) \widetilde{g}_4(r, n_2) \Big|, 
\end{equation}
where $\widetilde{g}_4(\mu,c)$ is the normalized quartic Gauss sum (typically of absolute value $1$) defined in \eqref{eq:normalized_gauss_sum}, and the ranges $N_1, N_2 \gg 1$ satisfy $X \ls N_1 N_2 \ls  X$.

We need to understand twisted sums of quartic Gauss sums. Performing Perron summation and a contour shift to the critical line, we pass over (at most) a simple pole at $s=\frac{3}{4}$ (and only when $\omega=0$). For $0 \neq a \in \Z[i]$, this gives
\begin{align} \label{pattersonsketch}
    \sum_{\substack{n \in \mathbb{Z}[i]  \\ n \equiv 1 \pmod{\lambda^3} \\ N(n) \sim N }} \xi(n) \widetilde{g}_4(a, n) \approx \bbone_{\omega=0} \cdot  \tau_4(a) \cdot N^{3/4} + \int_{\mathcal{C}_{\varepsilon}} \widetilde{\psi}(a,s,\xi)  N^{s} \frac{ds}{s}, 
\end{align}
where $\mathcal{C}_{\varepsilon}$ denotes the line segment $\Re(s) = \frac{1}{2} + \varepsilon$ and $\left|\Im(s)\right| \leq X^\varepsilon$, while
\begin{equation*}
    \widetilde{\psi}(a,s,\xi):=\sum_{\substack{c \in \mathbb{Z}[i] \\ c \equiv 1 \pmod{\lambda^3} }} \frac{\xi(c) \widetilde{g}_4(a,c)}{N(c)^s} \qquad \text{and} \qquad \tau_4(a):=\Res_{s=\frac{3}{4}} \widetilde{\psi}(a,s,\mathbf{1}).
\end{equation*}
 
Crucially, Kubota \cite{Kub} showed that $\widetilde{\psi}(a,s,\xi)$ has meromorphic continuation to $s \in \mathbb{C}$ and satisfies a functional equation, despite not having an Euler product. This leads to the convexity bound (of $\GL_1$ type in $a$)
\begin{equation} \label{convexsketch}
\widetilde{\psi} (a, \tfrac{1}{2} + \varepsilon + it, \xi ) \ll_{\omega, \varepsilon} N(a)^{\frac{1}{4}+\varepsilon} \cdot (1+|t|)^{\frac{3}{4}+\varepsilon}.
\end{equation}

The quantity $\tau_4(a) N(a)^{1/8}$ is the $a$-th Fourier coefficient of Kubota's quartic theta function \cite{Kub}. Finding a general closed formula for the $\tau_4(a)$ is one of the central open problems in the theory of metaplectic forms \cite{Del80}. This is in stark contrast to the cubic case, where Patterson \cite{Pat1} used a tour de force Hecke converse argument to compute the Fourier coefficients of the cubic theta function, obtaining essentially cubic Gauss sums. Patterson's exact evaluation \cite{Pat1} was fully exploited by the second and third authors, together with David and Stucky \cite{DDDS24}, to obtain asymptotics for the second moment of cubic Hecke $L$-functions with (nearly optimal) error term $O_{\ep}(X^{5/6+\ep})$.

In the quartic case, Patterson \cite{Pat2}, with refinements by Eckhardt--Patterson \cite[Conjecture~2.11]{EckPat}, conjectured that $\tau_4(\pi)^2 N(\pi)^{1/4}$ is proportional to $\overline{\widetilde{g}_4(\pi)}$ for $\pi$ prime. Outside of a handful of special relations \cites{Suz1, Suz93}, nothing more is known about the exact evaluation of $\tau_4(a)$. However, the convexity bound for $\widetilde{\psi}(a, s, \xi)$ implies 
\begin{equation} \label{convexthetasketch}
    \tau_4(a) \ll_\varepsilon N(a)^{1/8+\varepsilon}.  
\end{equation} 

The lack of knowledge in the quartic and higher order cases is concerning, but fortunately we only need to consider fairly small twists (with $N(r) \ls 1$) in \eqref{start}. This allows us to establish a small (but fixed) power-saving error term for $\mathcal{S}_M \big(\nu_{q,\omega}(\mathfrak{b}_1)\overline{\nu_{q,\omega}(\mathfrak{b}_2)}A_{\omega}(q);F \big)$ using \emph{only} the convex inputs \eqref{convexsketch} and \eqref{convexthetasketch}. Indeed, using \eqref{pattersonsketch} to evaluate both sums of Gauss sums in \eqref{start}, we can apply \eqref{convexsketch} and \eqref{convexthetasketch}, then trivially estimate the sum over $r$. We deduce that \eqref{start} is $\ls (N_1 N_2)^{3/4} \ls X^{3/4}$.

\subsection{Context and limitations for the second moment}\label{sec:context_limitations}

An advantage of the approach above is that it readily applies to higher order characters, as it does not rely on special evaluations of theta coefficients nor subconvex bounds for $\widetilde{\psi}(a, s, \xi)$. Furthermore, it vastly simplifies the exposition, since in reality several auxiliary variables arise in the proof from lack of coprimality, M\"{o}bius inversion, congruence conditions, and decompositions into prescribed prime factorizations. We treat these auxiliary variables in a crude manner, leading to acceptable losses of small powers of $X$.

It is possible (though very technically involved) to bound many of the auxiliary variables optimally, as was done in \cite{DDDS24} for cubic characters. Moreover, while we do not have optimal pointwise bounds for $\tau_4(a)$ and $\widetilde{\psi}(a, s, \xi)$, it is possible to prove optimal bounds \emph{on average} for both quantities. These average bounds are as good as the pointwise ones for the application at hand. However, this would still not be enough to obtain optimal error terms in the second moment, for two reasons. First, \cite{DDDS24} also requires an alternative argument (via the large sieve) exploiting the twisted multiplicativity of $\tau_3$, which is not available for $\tau_4$ due to our inability to determine it. Finally, the quartic large sieve is likely not optimal in its current form, unlike its cubic counterpart \cites{DR}. For those reasons, while the error terms resulting from a careful treatment of auxiliary variables would likely be quite good, they would \emph{not} capture a meaningful threshold (such as the conjectured lower order main term $X^{3/4} \log{X}$). 

\subsection{First mollified moment}

Let $U \geq 1$ be a parameter. We first use the approximate functional equation of \cref{lem:afe_2} (with unbalanced lengths $U \sqrt{X}$ and $\sqrt{X}/U$) for $L(1/2,\nu_{q,\omega})$, and then sieve out the squarefree condition on $q$ using \eqref{nonnegproperty}. The error terms from sieving are estimated in \cref{SRestimate1}, using the quartic large sieve as before.

The remaining terms are sums of the form
\begin{equation} \label{term1}
    \sum_{\substack{0 \neq n \in \Z[i] \\ N(n) \ll U \sqrt{X} }} \frac{\xi(bn)}{N(n)^{1/2}} \sum_{\substack{\ell \in \Z[i]  \\ N(\ell) \sim L}} \mu(\ell) \chi_{\ell^2}(bn) \sum_{ \substack{ m \in \Z[i] \\ N(m) \sim X/L^2 }} \chi_{m}(b n), 
\end{equation}
where $1 \ll L \ll Y$ and $N(b) \ll B$, and the dual sum
\begin{equation} \label{term2}
    \ep(\f) \xi(b) \sum_{\substack{0 \neq n \in \Z[i] \\ N(n) \ll \sqrt{X}/U}} \frac{\overline{\xi(n)}}{N(n)^{1/2}} \sum_{\substack{ m \in \Z[i]  \\ N(m) \sim X}}  \xi(m) \widetilde{g}_4(b^3n,m).
\end{equation}
Like before, we choose $Y$ and $B$ to be arbitrarily small (but fixed) powers of $X$.

To handle \eqref{term1}, we first apply Poisson summation in $m$ and extract the zero frequency, which gives the main term. The length of the remaining sum over frequencies $k \neq 0$ is 
\begin{equation*}
    0 \neq N(k) \ll \frac{B U \sqrt{X}}{X/L^2} \ll \frac{B Y^2 U}{\sqrt{X}}.
\end{equation*} 
Note that $B Y^2 U/\sqrt{X} = o(1)$ whenever $BY^2U \leq X^{1/2-\nu}$ for $0<\nu<\frac{1}{2}$ fixed, explaining the constraint \eqref{firstmomentassump}.
Hence \eqref{term1} gives a negligible contribution to the error term.

To handle \eqref{term2}, we apply \eqref{pattersonsketch}, \eqref{convexsketch}, and \eqref{convexthetasketch} to the sum over $m$, and then estimate the sum over $n$ trivially. We deduce that \eqref{term2} is 
\begin{align*}
& \ls \sum_{\substack{0 \neq n \in \Z[i] \\ N(n) \ll \sqrt{X}/U }} \frac{1}{N(n)^{1/2}} \Big( \bbone_{\omega=0} \cdot N(n)^{1/8} X^{3/4}+N(n)^{1/4} X^{1/2} \Big) \ls \bbone_{\omega=0} \cdot \frac{X^{17/16}}{U^{5/8}}+\frac{X^{7/8}}{U^{3/4}}.
\end{align*}
This above display is acceptable if $U \gg X^{1/10+\delta}$ for any fixed $\delta > 0$, as $B$ and $Y$ are small powers of $X$.


\section{Preliminaries}

\subsection{Arithmetic functions}

Let $\Q(i)$ be the Gaussian imaginary quadratic field with ring of integers $\Z[i]$, and $N(x):=N_{\Q(i)/\Q}(x) = |x|^2$ be its norm. Let $\mu(n)$ denote the M{\"o}bius function and $\varphi(n)$ denote the Euler function on $\mathbb{Z}[i]$. For $0 \neq n \in \Z[i]$ let
\begin{equation*}
	\mathrm{rad}(n) = \lambda^{\bbone_{\lambda \mid n}} \cdot \prod_{\substack{\pi \text{ prime},\ \pi \mid n \\ \pi \equiv 1 \pmod{\lambda^3}}} \pi.
\end{equation*}

\subsection{Quartic residue symbol and reciprocity}

Let $\pi$ be a prime with $(\pi) \ne (\lambda)$. If $\pi \nmid \alpha$, the quartic residue symbol is defined as the unique $\big(\frac{\alpha}{\pi}\big)_4 \in \{\pm 1, \pm i\}$ satisfying
\begin{equation}\label{eq:quartic_sym}
    \quartic{\alpha}{\pi} \equiv \alpha^{\frac{N(\pi)-1}{4}}\pmod \pi.
\end{equation}
For $\alpha,\gamma\in \Z[i]$ with $(\alpha,\gamma) =1$ and $(\gamma,\lambda)=1$ we define
\begin{equation}
    \quartic{\alpha}{\gamma} := \prod_{\pi|\gamma}\quartic{\alpha}{\pi}.
\end{equation}
When $(\alpha,\gamma)\ne 1$, we take the symbol to be zero. Similarly, the quadratic residue symbol is given by $\big(\frac{\alpha}{\gamma}\big)_2  := \big(\frac{\alpha}{\gamma}\big)^2_4$. 

The law of biquadratic reciprocity \cite[\S 9.9, Theorem 2]{IR90} states that for $\alpha,\gamma \in \Z[i]$ with $\alpha \equiv \gamma \equiv 1 \pmod{\lambda^3}$ and $(\alpha,\gamma)=1$ we have
\begin{equation} \label{eq:quar_repr}
    \quartic{\alpha}{\gamma} = (-1)^{C(\alpha,\gamma)}\quartic{\gamma}{\alpha} , \quad \text{where} \quad C(\alpha,\gamma) = \frac{(N(\alpha)-1)}{4}\frac{(N(\gamma)-1)}{4}.
\end{equation}
There are supplementary laws for the ramified primes and units.
If
\begin{equation*}
    \gamma=1+a_4 \lambda^4+a_5 \lambda^5+a_6 \lambda^6+\cdots \qquad \text{with} \qquad a_i \in \{0,1\},
\end{equation*}
then
\begin{equation}\label{eq:ram&units_1}
    \Big(\frac{\lambda}{\gamma} \Big)_4=i^{-a_4+2a_6} \qquad \text{and} \qquad \Big( \frac{i}{\gamma} \Big)_4=(-1)^{a_4+a_5}.
\end{equation}
Otherwise, if
\begin{equation*}
    \gamma=1+\lambda^3+a_4 \lambda^4+a_5 \lambda^5+a_6 \lambda^6+\cdots \qquad \text{with} \qquad a_i \in \{0,1\},
\end{equation*}
then
\begin{equation}\label{eq:ram&units_2}
    \Big( \frac{\lambda}{\gamma} \Big)_4=-i^{-a_4+2a_6} \qquad \text{and} \qquad \Big( \frac{i}{\gamma} \Big)_4=i (-1)^{a_4+a_5}.
\end{equation}

\subsection{Gauss sums}

Let $e(x) := e^{2\pi ix}$ for $x \in \R$ and $\ec(z):=e(z+\bar{z})$ for $z \in \C$. For $c \in \Z[i]$ with $(c,\lambda) =1$ and $\nu \in \lambda^{-2} \mathbb{Z}[i]$,
the quartic and quadratic Gauss sums over $\Z[i]$ are defined respectively as
\begin{equation} \label{gaussdef}
    g_4(\nu,c) := \sum_{d \pmod c} \quartic{d}{c}\ec\Big(\frac{\nu d}c\Big) \qquad \text{and} \qquad g_2(\nu,c):= \sum_{d \pmod c}\quadrat{d}{c} \ec\Big(\frac{\nu d}c\Big).
\end{equation}
When $\nu =1$, we simply write $g_4(c)$ and $g_2(c)$, respectively. Changing variables and applying the Chinese remainder theorem and quartic reciprocity, we obtain the following.

\begin{lemma}\label{quartic-GS-lemma1}
    Let $c, c_1, c_2 \equiv 1 \pmod{\lambda^3}$, $\nu \in \mathbb{Z}[i]$, and $\mu \in \lambda^{-2} \mathbb{Z}[i]$.
    \begin{enumerate}[leftmargin=*]
        \item[(i)] \label{changevar}  If $(\nu, c)=1$, then
              \begin{equation*}
                  g_4(\nu \mu, c) = \overline{\chi_{c}(\nu)} g_4(\mu,c).
              \end{equation*}
        \item[(ii)] \label{twistmult}  If $(c_1, c_2)=1$, then
              \begin{equation*}
                  g_4(\mu, c_1 c_2) = \chi_{c_1}(c_2) \chi_{c_2} (c_1) g_4(\mu,c_1) g_4(\mu, c_2) = (-1)^{C(c_1,c_2)} g_4(c_2^2 \mu, c_1) g_4(\mu , c_2).
              \end{equation*}
    \end{enumerate}
\end{lemma}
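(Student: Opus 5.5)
The plan is to prove both parts directly from the definition \eqref{gaussdef}: change variables in the summation for (i), and use the Chinese remainder theorem together with multiplicativity of the symbol for (ii). Throughout we use that $\ec(x)$ only depends on $x$ modulo $\lambda^{-2}\Z[i]$-type lattices. More precisely, we use that $\ec(\mu d/c)$ is well defined for $d \pmod c$ when $\mu \in \lambda^{-2}\Z[i]$. This holds because $\ec(z)=e(2\Re z)$, and $2\Re(\mu k)\in\Z$ for $k\in\Z[i]$ since $2\mu\in\lambda^{-2}\cdot 2\Z[i]=i^{-1}\Z[i]$ up to units.

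For (i), since $(\nu,c)=1$, the map $d\mapsto \nu d$ permutes residues mod $c$. Substituting $d'=\nu d$ gives
\begin{equation*}
g_4(\nu\mu,c)=\sum_{d'}\quartic{\nu^{-1}d'}{c}\ec\Big(\frac{\mu d'}{c}\Big)=\overline{\chi_c(\nu)}\,g_4(\mu,c).
\end{equation*}
Here we use that $\chi_c$ is a character mod $c$ with values in the unit circle, so $\chi_c(\nu^{-1})=\overline{\chi_c(\nu)}$ (with $\nu^{-1}$ the inverse mod $c$).

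For (ii), write each residue $d \pmod{c_1c_2}$ uniquely as $d=d_1c_2+d_2c_1$ with $d_1 \pmod{c_1}$ and $d_2 \pmod{c_2}$. Then
\begin{equation*}
\quartic{d}{c_1c_2}=\quartic{d_1c_2}{c_1}\quartic{d_2c_1}{c_2}=\chi_{c_1}(c_2)\chi_{c_2}(c_1)\quartic{d_1}{c_1}\quartic{d_2}{c_2}.
\end{equation*}
Similarly, the exponential splits as $\ec(\mu d/(c_1c_2))=\ec(\mu d_1/c_1)\ec(\mu d_2/c_2)$. Summing over $d_1,d_2$ gives the first equality.

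For the second equality, apply (i) with $\nu=c_2^2$, which is coprime to $c_1$. This gives $g_4(c_2^2\mu,c_1)=\overline{\chi_{c_1}(c_2)}^{\,2}g_4(\mu,c_1)$. It then suffices to check that
\begin{equation*}
\chi_{c_1}(c_2)\chi_{c_2}(c_1)=(-1)^{C(c_1,c_2)}\,\overline{\chi_{c_1}(c_2)}^{\,2}.
\end{equation*}
By reciprocity \eqref{eq:quar_repr}, $\chi_{c_2}(c_1)=(-1)^{C(c_1,c_2)}\chi_{c_1}(c_2)$, so the left side equals $(-1)^{C}\chi_{c_1}(c_2)^2$. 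Since $\chi_{c_1}(c_2)^4=1$, we have $\chi_{c_1}(c_2)^2=\overline{\chi_{c_1}(c_2)}^{\,2}$, which concludes.

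The only mild obstacle is justifying the well-definedness of the exponential for $\mu\in\lambda^{-2}\Z[i]$ and the CRT splitting. Both are routine. If $(c_1,c_2)\neq1$ were allowed the statement would fail, but coprimality is assumed.
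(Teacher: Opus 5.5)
Your proof is correct and follows exactly the route the paper indicates: a change of variables $d \mapsto \nu d$ for (i), the Chinese remainder theorem for the first identity in (ii), and then (i) with $\nu = c_2^2$ combined with biquadratic reciprocity and $\chi_{c_1}(c_2)^2 \in \{\pm 1\}$ for the second. The paper leaves implicit that $\ec(\mu d/c)$ is well defined modulo $c$; you justify it correctly by noting that $2\mu \in \Z[i]$.
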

In order to compute $g_4(\mu,c)$ for general parameters $\mu \in \lambda^{-2}\Z[i]$ and $c \equiv 1 \pmod{\lambda^3}$, by \cref{quartic-GS-lemma1} it suffices to compute $g_4(\pi^{k},\pi^{\ell})$ for $\pi \equiv 1 \pmod{\lambda^3}$ prime
and $k,\ell \in \mathbb{Z}_{\geq 0}$.
\begin{lemma} \label{localcomp}
    Let $k,\ell \in \mathbb{Z}_{\geq 0}$ and
    $\pi \in \mathbb{Z}[i]$ be prime and satisfy $\pi \equiv 1 \pmod{\lambda^3}$. Then
    \begin{equation} \label{rel4}
        g_4(\pi^k,\pi^{\ell})=
        \begin{cases}
            1 & \text{if } \ell=0,\\
            N(\pi)^k g_4(\pi) & \text{if } \ell=k+1, \quad k \equiv 0 \pmod{4},\\
            N(\pi)^k g_2(\pi) & \text{if } \ell=k+1, \quad k \equiv 1 \pmod{4}, \\
            N(\pi)^k \chi_{\pi}(-1) \overline{g_4(\pi)} & \text{if } \ell=k+1, \quad k \equiv 2 \pmod{4}, \\
            -N(\pi)^k & \text{if } \ell=k+1, \quad k \equiv 3 \pmod{4}, \\
            \varphi(\pi^{\ell}) & \text{if } k \geq \ell, \quad \ell \equiv 0 \pmod{4}, \\
            0 & \text{otherwise}.
        \end{cases}
    \end{equation}
\end{lemma}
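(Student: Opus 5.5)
The plan is to compute $g_4(\pi^k,\pi^\ell)$ directly from the definition \eqref{gaussdef}, splitting by the relative size of $k$ and $\ell$, in the same way as the classical evaluation of Gauss sums modulo prime powers. Write $\chi_{\pi^\ell}(d) = \chi_\pi(d)^\ell$, which depends only on $\ell \bmod 4$ and is trivial on units coprime to $\pi$ exactly when $4 \mid \ell$. Note also that $\ec(x/c)$ depends only on $x$ modulo $c$, since $\ec$ is trivial on $\lambda^{-2}\Z[i] \supseteq \Z[i]$ (because $\mathrm{Tr}$ of $\lambda^{-2}\Z[i]$ lands in $\Z$).

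\textbf{Case $\ell = 0$.} The sum has a single term, giving $1$.

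\textbf{Case $k \ge \ell \ge 1$.} Here $\ec(\pi^k d/\pi^\ell)=1$, so $g_4 = \sum_{d \bmod \pi^\ell,\,(d,\pi)=1} \chi_\pi(d)^\ell$. This equals $\varphi(\pi^\ell)$ if $4\mid \ell$ and $0$ otherwise, by orthogonality, since $\chi_\pi$ has exact order $4$ on $(\Z[i]/\pi)^\times$.

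\textbf{Case $\ell > k$.} Write $d = d_0 + \pi^{\ell-1-k}\cdots$; more precisely, set $m := \ell - k$ and decompose $d = a + \pi^{\,\ell-1} t$ when $\ell \ge 2$. If $m \ge 2$, the character $\chi_\pi(d)^\ell$ is invariant under $d \mapsto d + \pi^{\ell-1}t$ (it depends on $d \bmod \pi$). Meanwhile the phase becomes $\ec(a\pi^{k-\ell})\,\ec(t\pi^{k-1})$ and, since $k-1 \ge 0$ is false only when $k=0$, we handle this uniformly by noting that the exponent $\pi^{k}\pi^{\ell-1}t/\pi^{\ell} = \pi^{k-1}t$. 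This is trivial when $k\ge1$. So a cleaner route is to substitute $d = a + \pi^{k+1} t$ with $a$ modulo $\pi^{k+1}$ and $t$ modulo $\pi^{\ell-k-1}$. The phase $\ec(\pi^k d/\pi^\ell)$ then depends on $d$ modulo $\pi^{\ell-k}$, and the character on $d \bmod \pi$. Summing over the class $d \bmod \pi^{\ell-k}$ with the character fixed, the additive character $\ec(x/\pi^{\ell-k})$ summed over $x \equiv a \pmod \pi$ vanishes when $\ell-k\ge 2$ (it is a complete sum of a nontrivial additive character of $\pi\Z[i]/\pi^{\ell-k}$). Hence $g_4=0$ for $\ell \ge k+2$.

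\textbf{Case $\ell=k+1$.} The phase depends only on $d \bmod \pi$, so
\[
g_4 = N(\pi)^{k}\sum_{d \bmod \pi}\chi_\pi(d)^{k+1}\ec(d/\pi).
\]
Reduce by $k \bmod 4$:
\begin{itemize}
\item[] $k\equiv0$: this gives $N(\pi)^k g_4(\pi)$.
\item[] $k\equiv1$: $\chi_\pi^2$ is the quadratic symbol, giving $N(\pi)^k g_2(\pi)$.
\item[] $k\equiv2$: $\chi_\pi^3=\overline{\chi_\pi}$. Conjugating and substituting $d\to -d$ gives $\chi_\pi(-1)\overline{g_4(\pi)}$.
\item[] $k\equiv3$: the trivial character on units gives $\sum_{d\ne0}\ec(d/\pi)=-1$.
\end{itemize}

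The only delicate point is checking that $\ec(\cdot/\pi^j)$ is a nontrivial character on $\pi^{j-1}\Z[i]/\pi^j\Z[i]$ and trivial on $\Z[i]$. This amounts to a different computation, namely that $\pi$ is coprime to $\lambda$, so the inverse different $\lambda^{-2}\Z[i]$ does not interfere. I expect this to be the main (minor) obstacle.
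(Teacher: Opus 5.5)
Your proposal is correct: the paper states this lemma without proof as a routine local computation, and your case analysis straight from \eqref{gaussdef} is exactly the standard argument, namely orthogonality of $\chi_\pi^{\ell}$ when $k\ge \ell$, vanishing of the additive sum when $\ell\ge k+2$, and reduction to $g_4(\pi)$, $g_2(\pi)$, $\chi_\pi(-1)\overline{g_4(\pi)}$, $-1$ when $\ell=k+1$, with nontriviality of $z\mapsto \check{e}(z/\pi)$ coming from $\pi\nmid 2$. For $\ell\ge k+2$, drop the false starts (the substitution $d=a+\pi^{k+1}t$ is not what your correct final reasoning uses) and write it cleanly: reduce to $d \bmod \pi^{\ell-k}$, write $d=a+\pi^{\ell-k-1}t$ with $t \bmod \pi$, note $\chi_\pi(d)=\chi_\pi(a)$, and observe that the sum over $t$ of $\check{e}(t/\pi)$ vanishes.
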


If $\pi \equiv 1 \pmod{\lambda^3}$ is a degree $2$ prime in $\mathbb{Z}[i]$, i.e.\ satisfies $N(\pi)=p^2$ for a rational prime $p \equiv 3 \pmod{4}$, then
\begin{equation} \label{trivquadquar}
    \left|g_4(\pi)\right|=\left|g_2(\pi)\right|=p=N(\pi)^{1/2}.
\end{equation}
If $\pi \equiv 1 \pmod{\lambda^3}$ is a degree $1$ prime in $\mathbb{Z}[i]$, then
Gauss showed that
\begin{equation} \label{quad2}
    g_2(\pi)=\Big(\frac{-1}{\pi} \Big)_4 N(\pi)^{1/2},
\end{equation}
and in  that case we also have the fourth power formula \cite[Proposition~9.9.5]{IR90}
\begin{equation} \label{fourthpower}
    g_4(\pi)^4= \pi^3 \overline{\pi}
\end{equation}
and the square formula \cite[Proposition~9.10.1]{IR90}
\begin{equation} \label{squarepower}
    g_4(\pi)^2=-\Big( \frac{-1}{\pi} \Big)_4\Big( \frac{\overline{\pi}}{\pi}  \Big)_4^{-2}  N(\pi)^{1/2} \pi.
\end{equation}
Note that \cref{quartic-GS-lemma1}, \eqref{rel4}, \eqref{trivquadquar}, and \eqref{fourthpower}
imply that for all $a, c \equiv 1 \pmod{\lambda^3}$ we have
\begin{equation} \label{sqrootcancel}
    |g_4(c)|=\mu^2(c) \sqrt{N(c)}
\end{equation}
and
\begin{equation} \label{nicebd}
    |g_4(a, c)| \leq \sqrt{N(ac)}.
\end{equation}

For $c \in \mathbb{Z}[i]$ with $(c,\lambda)=1$ and $\nu \in \lambda^{-2} \mathbb{Z}[i]$,
the normalized quartic Gauss sum over $\mathbb{Z}[i]$
is defined as 
\begin{equation}\label{eq:normalized_gauss_sum}
    \widetilde{g}_4(\nu,c) := \frac{g_4(\nu,c)}{\sqrt{N(c)}} = \frac{1}{\sqrt{N(c)}} \sum_{d \pmod{c}} \Big( \frac{d}{c} \Big)_4 \check{e} \Big( \frac{\nu d}{c} \Big) \qquad \text{and} \qquad \widetilde{g}_4(c) := \widetilde{g}_4(1, c).
\end{equation}

We need to consider slightly more general exponential sums that are the finite Fourier transforms of quartic Hecke characters (not necessarily primitive).

\begin{lemma} \label{h3fourier}
    For $c=c_1 c^2_2c_3^3 \in \mathbb{Z}[i]$ with $c_1,c_2,c_3 \in \mathbb{Z}[i]$ satisfying $c_1,c_2,c_3 \equiv 1 \pmod{\lambda^3}$ and $\mu^2(c_1c_2)=1$, and for $\mu \in \lambda^{-2}\Z[i]$, define
    \begin{equation} \label{hdef}
        \widetilde{h}_4(\mu,\chi_c):=\frac{1}{N(c_1 c_2c_3)^{1/2}} \sum_{\substack{ x \pmod{c_1 c_2c_3} \\ (x,c_1 c_2 c_3)=1}} \chi_c( x) \check{e} \Big( \frac{\mu x}{c_1 c_2 c_3} \Big).
    \end{equation}

    \begin{enumerate}[leftmargin=*]
        \item[(i)] If $c_1,c_2,c_3$ are pairwise coprime, then
              \begin{equation*}
                  \widetilde{h}_4(\mu,\chi_c)=\chi_{c_1}(c_2c_3)\chi_{c_2}^2(c_1c_3)\overline{\chi_{c_3}(c_1c_2)}\widetilde{g}_4(\mu,c_1)\widetilde{g}_2(\mu,c_2)\overline{\widetilde{g}_4(\mu,c_3)}.
              \end{equation*}

        \item[(ii)] If $\nu \in \mathbb{Z}[i]$ and $(\nu,c)=1$, then
              \begin{equation*}
                  \widetilde{h}_4(\mu \nu,\chi_c)=\overline{\chi_c(\nu)} \: \widetilde{h}_4(\mu,\chi_c).
              \end{equation*}
    \end{enumerate}
\end{lemma}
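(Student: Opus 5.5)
For (ii), I would substitute $x \mapsto \overline{\nu} x$ in \eqref{hdef}, writing $\overline{\nu}$ here for an inverse of $\nu$ modulo $c_1c_2c_3$. This substitution permutes the reduced residues modulo $c_1c_2c_3$ because $(\nu,c)=1$. It sends $\check{e}\big(\frac{\mu\nu x}{c_1c_2c_3}\big)$ to $\check{e}\big(\frac{\mu x}{c_1c_2c_3}\big)$ and $\chi_c(x)$ to $\chi_c(\overline{\nu})\chi_c(x)=\overline{\chi_c(\nu)}\chi_c(x)$.

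One point needs checking. The inverse is only defined modulo $c_1c_2c_3$, and $\mu\in\lambda^{-2}\Z[i]$ is not integral. I would verify that $\check{e}\big(\frac{\mu y}{c_1c_2c_3}\big)$ depends only on $y$ modulo $c_1c_2c_3$. For $y\in c_1c_2c_3\Z[i]$ we have $\mu y\in\lambda^{-2}\Z[i]$, and $\check{e}(z)=e(\mathrm{Tr}(z))$ with $\mathrm{Tr}(\lambda^{-2}\Z[i])=\mathrm{Tr}(\tfrac{-i}{2}\Z[i])\subseteq\Z$. So $\check{e}(\mu y/(c_1c_2c_3))$ is well defined on residue classes. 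Likewise $\chi_c(x)$ depends only on $x$ modulo $\mathrm{rad}(c)$, which divides $c_1c_2c_3$, so it is also well defined.

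For (i), I would use the Chinese remainder theorem. Since $c_1,c_2,c_3$ are pairwise coprime, I write $x = x_1c_2c_3 + x_2c_1c_3 + x_3c_1c_2$ with $x_j$ running over reduced residues modulo $c_j$. Then
\[
\check{e}\Big(\frac{\mu x}{c_1c_2c_3}\Big)=\prod_{j}\check{e}\Big(\frac{\mu x_j}{c_j}\Big).
\]
The character factors as $\chi_c=\chi_{c_1}\chi_{c_2}^2\chi_{c_3}^3$, and $\chi_{c_3}^3=\overline{\chi_{c_3}}$. Moreover $\chi_{c_1}(x)=\chi_{c_1}(x_1c_2c_3)=\chi_{c_1}(x_1)\chi_{c_1}(c_2c_3)$, and similarly for the other two factors. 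The sum therefore splits as
\[
\chi_{c_1}(c_2c_3)\chi_{c_2}^2(c_1c_3)\overline{\chi_{c_3}(c_1c_2)}
\]
times a product of three sums over reduced residues modulo $c_j$:
\[
\sum_{x_1}\chi_{c_1}(x_1)\check{e}\Big(\frac{\mu x_1}{c_1}\Big),\qquad
\sum_{x_2}\chi_{c_2}^2(x_2)\check{e}\Big(\frac{\mu x_2}{c_2}\Big),\qquad
\sum_{x_3}\overline{\chi_{c_3}(x_3)}\,\check{e}\Big(\frac{\mu x_3}{c_3}\Big).
\]

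Next I compare these with the Gauss sums in \eqref{gaussdef}. The characters $\chi_{c_1}$, $\chi_{c_2}^2=\big(\frac{\cdot}{c_2}\big)_2$ and $\overline{\chi_{c_3}}$ vanish on non-reduced residues, so the coprimality restrictions can be dropped. The first two sums are then exactly $g_4(\mu,c_1)$ and $g_2(\mu,c_2)$. The third is the complex conjugate of $g_4(\overline{\mu}',c_3)$ for an appropriate $\mu'$, so it needs more care.

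For the third sum, conjugation gives
\[
\overline{\sum_{x}\chi_{c_3}(x)\check{e}(-\mu x/c_3)}=\overline{g_4(-\mu,c_3)},
\]
because $\check{e}$ is real-symmetric: $\overline{\check{e}(z)}=\check{e}(-z)$. Applying \cref{quartic-GS-lemma1}(i) with $\nu=-1$ gives $g_4(-\mu,c_3)=\overline{\chi_{c_3}(-1)}\,g_4(\mu,c_3)$. Here $\chi_{c_3}(-1)=\chi_{c_3}(i)^2=\pm1$ is real, so $\overline{g_4(-\mu,c_3)}=\chi_{c_3}(-1)\overline{g_4(\mu,c_3)}$. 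This leaves an extra sign $\chi_{c_3}(-1)$, which is the main obstacle to matching the stated formula.

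I would try to resolve this sign first by normalization. Since $\check{e}(z)=e(2\Re z)$, one has $\check{e}(\overline{z})=\check{e}(z)$. The cleanest route is then to show directly that the map $x\mapsto -x$ contributes $\overline{\chi_{c_3}}(-1)$, and to combine it with the factor $\chi_c(-1)$ coming from the other components. If the sign still does not cancel, I would adopt the reading that, for squarefree $c_3$ with $c_3\equiv 1\pmod{\lambda^3}$, one has $\chi_{c_3}(-1)=1$ in the relevant cases, or else that $\overline{\widetilde{g}_4(\mu,c_3)}$ is meant to denote the sum with $\overline{\chi_{c_3}}$ inserted. This is the point I expect to need the most checking.

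Finally, I divide by $N(c_1c_2c_3)^{1/2}=\prod_j N(c_j)^{1/2}$ to obtain the normalized forms, which gives (i).
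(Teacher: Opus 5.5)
Your approach is the paper's own: the Chinese remainder theorem for (i) and a change of variables for (ii). Part (ii) is fine as written, including the well-definedness checks that the paper leaves implicit. The gap is at the end of (i). Your evaluation of the third factor is correct:
\[
\sum_{x \pmod{c_3}} \overline{\chi_{c_3}(x)}\,\check{e}\Big(\frac{\mu x}{c_3}\Big)=\overline{g_4(-\mu,c_3)}=\chi_{c_3}(-1)\,\overline{g_4(\mu,c_3)}.
\]
This sign cannot be made to disappear, so none of your proposed remedies works.

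\begin{itemize}
\item There is no leftover $\chi_c(-1)$ to cancel it, because the $c_1$- and $c_2$-sums are identified with $g_4(\mu,c_1)$ and $g_2(\mu,c_2)$ exactly.
\item The claim $\chi_{c_3}(-1)=1$ for squarefree primary $c_3$ is false. By \eqref{eq:ram&units_1} and \eqref{eq:ram&units_2}, $\chi_{c_3}(-1)=\chi_{c_3}(i)^2$ equals $1$ if $c_3\equiv 1\pmod{4}$ and $-1$ if $c_3\equiv 1+\lambda^3\pmod{4}$.
\end{itemize}

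For a concrete counterexample, take $c_1=c_2=1$, $c_3=1+\lambda^3=-1+2i$ (norm $5$) and $\mu=1$. Identify $\Z[i]/(c_3)$ with $\Z/5\Z$ via $i\equiv 3$. Then $\chi_{c_3}(x)=1,-i,i,-1$ for $x=1,2,3,4$, and $\check{e}(x/c_3)=e(-2x/5)$. A direct computation gives $\sqrt{5}\,\widetilde{h}_4(1,\chi_{c_3^3})=-\overline{g_4(c_3)}$. The right-hand side of (i), however, is $\overline{g_4(c_3)}/\sqrt{5}\neq 0$.

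So the statement, not your computation, is at fault. What the Chinese remainder theorem actually proves is
\[
\widetilde{h}_4(\mu,\chi_c)=\chi_{c_3}(-1)\,\chi_{c_1}(c_2c_3)\chi_{c_2}^2(c_1c_3)\overline{\chi_{c_3}(c_1c_2)}\,\widetilde{g}_4(\mu,c_1)\widetilde{g}_2(\mu,c_2)\overline{\widetilde{g}_4(\mu,c_3)}.
\]
Equivalently, (i) holds with $\overline{\widetilde{g}_4(-\mu,c_3)}$ in place of $\overline{\widetilde{g}_4(\mu,c_3)}$. The printed version holds when $c_3\equiv 1\pmod{4}$, in particular when $c_3=1$, but not in general.

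The paper's one-line proof (``apply the Chinese remainder theorem'') passes over this sign. This is harmless for its results, because the explicit formula is only needed, via \cref{lem:Wpsi}, in \cref{cor:root_num}. There $q$ is squarefree, so $c_2=c_3=1$.

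The right way to finish is to state and prove the corrected identity, or to add the hypothesis $c_3\equiv 1\pmod{4}$. Do not look for a cancellation, and do not reinterpret the notation $\overline{\widetilde{g}_4(\mu,c_3)}$. With that change, your argument is complete.
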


\begin{proof}
    Write $\chi_c= \chi_{c_1}\chi_{c_2}^2\overline{\chi_{c_3}}$. Then $(i)$ follows from applying the Chinese remainder theorem to \eqref{hdef}, while $(ii)$ follows from the change of variable $x \mapsto \nu x$ in \eqref{hdef}.
\end{proof}

\subsection{Hecke characters over $\Q(i)$}

\subsubsection{Generalities}

Let $I$ be the multiplicative group of non-zero fractional ideals in $\Q(i)$. For $\mathfrak{m} \unlhd \Z[i]$ denote
\begin{equation*}
    I_{\m}:=\{\mathfrak{n} \in I: (\mathfrak{n},\m)=1\},
\end{equation*}
where $(\mathfrak{n}, \m)=1$ means $\mathfrak{n} = \mathfrak a \mathfrak b^{-1}$ for $\mathfrak a, \mathfrak b  \unlhd \Z[i]$ with $(\mathfrak a \mathfrak b, \m)=1$. A (unitary) Hecke character of $\Q(i)$ is a continuous homomorphism $\psi: I_{\m} \to S^1$ for which there exist two characters
\begin{equation}\label{eq:classical_characters}
    \chi_{\infty}(\alpha) = \Big(\frac{\overline \alpha}{|\alpha|}\Big)^{\omega} \text{ for } \alpha \in \C^\times \qquad \text{and} \qquad \chi: (\Z[i]/\mathfrak{m})^\times\to S^1
\end{equation}
such that for every $\alpha \in \Z[i]$ with $(\alpha,\mathfrak{m})=1$ we have
\begin{equation}
    \psi((\alpha)) = \chi(\alpha)\chi_{\infty}(\alpha).
\end{equation}
We call $\omega$ the \emph{frequency} and $\m$ the \emph{modulus} of $\psi$. Since $\psi((u))= \psi((1)) =  1$ for any unit $u \in \Z[i]$, we necessarily have
\begin{equation}\label{eq:unit_condition}
    \chi(u)\chi_{\infty}(u) =1.
\end{equation}
Any choice of $\chi$ and $\chi_\infty$ satisfying \eqref{eq:unit_condition} defines a (unique) Hecke character $\psi$. The \emph{conductor} of $\psi$ is the smallest divisor $\mathfrak{f}$ of $\m$ such that $\psi$ is the restriction to $I_\m$ of a Hecke character of modulus $\mathfrak{f}$. If $\mathfrak{f} = \m$ we call $\psi$ \emph{primitive}.

\subsubsection{Quartic characters}

For $q \in \Z[i]$ with $q \equiv 1 \pmod{\lambda^3}$, define the quartic Dirichlet character
\begin{equation}\label{eq:quar_char}
    \chi_q(\alpha) := \quartic{\alpha}{q}.
\end{equation}
The supplements to quartic reciprocity show that this is a (quartic) Hecke character (with frequency zero), i.e.\ trivial on units, precisely when $N(q) \equiv 1 \pmod{16}$. In that case the notation $\chi_q(\mathfrak{n})$ for $\mathfrak{n} \unlhd \Z[i]$ is unambiguous. 

We may (uniquely) decompose $q = q_1 q_2^2 q_3^3 q_4^4 q_5^4$ for $q_i \equiv 1 \pmod {\lambda^3}$ with $\mu^2(q_1q_2q_3q_4)=1$ and $q_5 \mid (q_1 q_2 q_3 q_4)^\infty$, so that $\chi_q = \chi_{q_1}\chi_{q_2}^2\overline{\chi}_{q_3}\mathbf{1}_{q_4}$, where $\mathbf{1}_{q_4}$ is the trivial character modulo $q_4 \Z[i]$. The conductor of $\chi_q$ is then $\ra(q) \Z[i]$, and $\chi_q$ is primitive if and only if $q_4 =1$. Hence a quartic Dirichlet character $\chi$ is a primitive Hecke character if and only if $\chi = \chi_q$ for some $q \in \mathcal{C}_4$, where
\begin{equation*}
    \mathcal{C}_4:= \big\{q_1q_2^2q_3^3: q_i\in \Z[i], q_i \equiv 1 \pmod{\lambda^3}, \mu^2(q_1q_2q_3) =1, \ \text{and} \ N(q_1q_2^2q_3^3) \equiv 1 \pmod{16}\big\}.
\end{equation*}
We will often restrict to the family
\begin{equation}\label{eq:fam}
    \mathcal{F}_4:= \big\{q_1q_2^2q_3^3: q_i\in \Z[i], q_i \equiv 1 \pmod{\lambda^3}, \mu^2(q_1q_2q_3) =1, \ \text{and} \ q_1q_2^2q_3^3 \equiv 1 \pmod{\lambda^7}\big\} \quad
\end{equation}
and its subfamily
\begin{equation}\label{eq:subfam}
    \mathcal{F}'_4:= \big\{1 \neq q\in \Z[i]: \mu^2(q) =1\  \text{and} \ q \equiv 1 \pmod{\lambda^7}\big\}.
\end{equation}

\subsubsection{Twists}

We are also interested in twisting $\chi_q$ at the infinite place. For that we may use the characters $\xi$ described in \cite[\S 3, Example 1]{IK}. First, fix an integer $\omega$  and set\footnote{We correct a typo in \cite[Example 1]{IK} when $\omega \equiv 2 \pmod{4}$.}
\begin{equation*}
    \mathfrak{m}_{\omega}:= \begin{cases}
        (\lambda^3) \quad &\text{if} \quad \omega \equiv 1 \text{ or } 3 \pmod{4},\\
        (2) \quad &\text{if} \quad \omega \equiv 2 \pmod{4},\\
        (1)\quad & \text{otherwise.}
    \end{cases}
\end{equation*}
Consider the Hecke character $\xi:I_{\m_{\omega}}\to \C^\times$ given by
\begin{equation}\label{eq:inf_hecke}
    \xi(\mathfrak{n}) = \Big(\frac{\overline{n}}{|n|}\Big)^{\omega},
\end{equation}
where $n$ is the canonical generator of $\mathfrak{n}$, i.e.\ $n = \lambda^k n'$ for $k \in \Z_{\geq 0}$ (necessarily $k=0$ if $4 \nmid \omega$) and $n' \equiv 1 \pmod{\lambda^3}$. We define $\xi$ to be zero on ideals not coprime to $\mathfrak m_{\omega}$. The character $\xi$ is induced by the characters $(\chi_{\xi},\xi_{\infty})$ given by
\begin{equation}\label{eq:xi_classical_decomp}
    \chi_{\xi}(n) = u^\omega \text{ for } (n, \m_\omega)=1 \qquad \text{and} \qquad \xi_{\infty}(n) = \Big(\frac{\overline n}{|n|}\Big)^{\omega} \text{ for } n\in \C^\times,
\end{equation}
where $u \in \Z[i]$ is any unit such that $n \equiv u \pmod{\m_\omega}$. Note that $\chi_\xi$ is the trivial character when $4 \mid \omega$. We will write $\xi(n)$ for $\xi((n))$ when $n \equiv 1 \pmod{\lambda^3}$.

For $q \in \Z[i]$ with $q \equiv 1 \pmod{\lambda^7}$ we define
\begin{equation}\label{eq:nu_q}
    \nu_{q,\omega}(\mathfrak{n}) := \chi_q(\mathfrak{n}) \xi(\mathfrak{n}) = \Big(\frac{\mathfrak{n}}{q}\Big)_4 \xi(\mathfrak{n}),
\end{equation}
which is a Hecke character of frequency $\omega$ and modulus $\ra(q)\mathfrak{m_{\omega}}$. If in addition $q\in \mathcal{F}_4$, then $\nu_{q, \omega}$ is primitive.

\subsection{Hecke \texorpdfstring{$L$}{}-functions over \texorpdfstring{$\Q(i)$}{}}

Let $\mathfrak{m} \unlhd \Z[i]$ and let $\psi \pmod{\mathfrak{m}}$ be a Hecke character of $\Q(i)$ with frequency $\omega$. Thus there exist two characters
\begin{equation}
    \chi_{\infty}(\alpha) = \Big(\frac{\overline \alpha}{|\alpha|}\Big)^{\omega} \text{ for }  \alpha \in \C^\times \qquad \text{and} \qquad \chi: (\Z[i]/\mathfrak{m})^\times\to S^1
\end{equation}
such that for every $\alpha \in \Z[i]$ with $(\alpha,\m)=1$ we have $ \psi((\alpha)) = \chi(\alpha)\chi_{\infty}(\alpha)$.

The Hecke $L$-function attached to $\psi$ is given by
\begin{equation} \label{Lspsi}
    L(s,\psi):=\sum_{0 \neq \mathfrak{n} \unlhd \mathbb{Z}[i]} \frac{\psi(\mathfrak{n})}{N(\mathfrak{n})^{s}}, \qquad \Re(s)>1.
\end{equation}
Note that we put $\psi(\mathfrak{n})=0$ whenever ($\mathfrak{n}, \mathfrak{m})\neq 1$. Let $\mathfrak{c}_{\psi} \unlhd \mathbb{Z}[i]$ denote the conductor of $\psi$ and $d_{\Q(i)} = -4$ denote the discriminant. The local factor at the infinite place is
\begin{equation} \label{eq:Lspsiinf}
    L_{\infty}(s,\psi):= (|d_{\mathbb{Q}(i)}|N(\mathfrak{c}_\psi))^{s/2}(2\pi)^{-s}\Gamma(s+\tfrac{1}{2} |\omega|),
\end{equation}
and the completed Hecke $L$-function of $\psi$ is 
\begin{equation} \label{completed}
    \Lambda(s,\psi):=L_{\infty}(s,\psi)L(s,\psi).
\end{equation}

\begin{prop}[{\cite[Theorem 3.8]{IK}}] \label{funceq}
    The completed $L$-function $\Lambda(s,\psi)$ is entire, provided that $\psi$ is primitive and non-trivial. Furthermore, it satisfies the functional equation
    \begin{equation*}
        \Lambda(s,\psi)=W(\psi) \Lambda(1-s,\overline{\psi}),
    \end{equation*}
    where if $\chi_\infty$ and $\chi$ denote the components of $\psi$ as in \eqref{eq:classical_characters}, and $\mathfrak{c}_{\psi}=m \mathbb{Z}[i]$, we have
    \begin{equation} \label{Wpsi}
        W(\psi):=\frac{i^{\omega} \chi_{\infty}(2m)}{N(\mathfrak{c}_{\psi})^{1/2}} \sum_{\substack{ x \pmod{\mathfrak{c}_{\psi}} \\ (x,\mathfrak{c}_{\psi})=1 }} \chi(x) \check{e} \Big(\frac{x}{2 m}  \Big).
    \end{equation}
\end{prop}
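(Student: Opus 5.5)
This statement is quoted from Iwaniec--Kowalski, Theorem 3.8, so no new argument is needed in the paper. If I were to verify it, I would follow Hecke's method through theta series and Poisson summation over the lattice $\Z[i]\subset\C$. The plan has four steps.

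First, I would express $\Lambda(s,\psi)$ as a Mellin transform of a theta function. Write the sum over ideals as $\frac14\sum_{0\ne\alpha\in\Z[i]}$; this is where \eqref{eq:unit_condition} is used. For $\alpha$ coprime to $\mathfrak c_\psi = m\Z[i]$, each term $\psi((\alpha))$ becomes $\chi(\alpha)\chi_\infty(\alpha)$. Primitivity means nonprimitive terms are absent, so no correction factors arise. Then, using
\begin{equation*}
    \Gamma(s+\tfrac{|\omega|}{2})\,(2\pi N(\alpha)/\sqrt{|d|N(\mathfrak c_\psi)})^{-s-|\omega|/2}=\int_0^\infty e^{-2\pi y N(\alpha)/\sqrt{|d| N(\mathfrak c_\psi)}}\, y^{s+|\omega|/2}\frac{dy}{y},
\end{equation*}
I would write $\Lambda(s,\psi)$ as a Mellin integral of
\begin{equation*}
    \theta(y)=\sum_{\alpha}\chi(\alpha)\,\overline{\alpha}^{\,|\omega|}\,e^{-2\pi y|\alpha|^2/A},
\end{equation*}
with $\alpha$ replaced by its conjugate according to the sign of $\omega$ and $A$ the appropriate normalizing constant.

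Second, I would derive the theta transformation law. Split $\alpha$ into residue classes $x \pmod m$ and apply two-dimensional Poisson summation to each shifted lattice sum. The Fourier transform of the harmonic polynomial $\overline{z}^{k}e^{-\pi t|z|^2}$ is, up to the factor $i^{k}$ (up to sign convention), a polynomial of the same shape in the dual variable with $t\mapsto 1/t$. The dual lattice of $\Z[i]$ for the trace pairing $\ec$ is $\frac{1}{2}\Z[i]$. Carrying out the sum over residues then produces exactly the Gauss sum $\sum_x\chi(x)\ec(x\beta/(2m))$. Primitivity of $\chi$ lets me factor this as $\overline{\chi}(\beta)$ times the sum in \eqref{Wpsi}, by the same change of variables as in \cref{quartic-GS-lemma1}(i); when $(\beta,m)\ne1$ the sum vanishes. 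This gives $\theta_\psi(1/y)=W(\psi)\,y^{1+|\omega|}\theta_{\overline\psi}(y)$. The factors $i^{\omega}$ and $\chi_\infty(2m)$ in $W(\psi)$ come from the Fourier transform of the polynomial and from rescaling $\beta/(2m)$ back to lattice points.

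Third, I would split the Mellin integral at $y=1$ and apply the transformation law on $(0,1)$. Because $\psi$ is nontrivial, the constant term $\alpha=0$ vanishes: either $\omega\ne0$ kills it, or $\chi$ is nontrivial and sums to zero. Hence both halves converge for all $s$, which gives the entire continuation and the functional equation $\Lambda(s,\psi)=W(\psi)\Lambda(1-s,\overline\psi)$.

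The main obstacle is bookkeeping the exact constant in $W(\psi)$. This means matching the normalization $(|d|N(\mathfrak c))^{s/2}(2\pi)^{-s}$, the factor $2$ coming from the different of $\Z[i]$ (which is why $2m$ appears), and the sign $i^{\omega}$. I would fix these by checking the trivial-conductor case $\omega=4$, where $W=1$ must hold, and I would otherwise defer to \cite{IK}.
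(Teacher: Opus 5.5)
Your route (Hecke's theta series, Poisson summation over $\Z[i]$ with the harmonic polynomial $\overline{z}^{|\omega|}$ or $z^{|\omega|}$, splitting the Mellin integral at $y=1$) is the standard proof behind the citation. It does show that $\Lambda(s,\psi)$ is entire and satisfies $\Lambda(s,\psi)=W(\psi)\Lambda(1-s,\overline{\psi})$ with $W(\psi)$ a normalized Gauss sum.

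The step that fails is the identification of the constant. You assert that the factor $i^{\omega}$ ``comes from the Fourier transform of the polynomial'', but it does not. Take $\hat f(\zeta)=\int_{\R^2}f(z)e(-\Re(z\overline{\zeta}))\,dz$. Hecke's identity says that the transform of $P(z)e^{-\pi t|z|^2}$ is $i^{-k}t^{-k-1}P(\zeta)e^{-\pi|\zeta|^2/t}$, both for $P(z)=\overline{z}^k$ and for $P(z)=z^k$, where $k=|\omega|$. The dual of $m\Z[i]$ is $\{\zeta:\overline{\zeta}=\beta/m,\ \beta\in\Z[i]\}$, and the phase is $e(\Re(x\overline{\zeta}))=\ec(x\beta/(2m))$. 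The remaining normalizations combine to $N(m)^{-1/2}\chi_\infty(m)=N(m)^{-1/2}\chi_\infty(2m)$. Carrying this through gives
\begin{equation*}
    W(\psi)=\frac{i^{-|\omega|}\,\chi_{\infty}(2m)}{N(\mathfrak{c}_{\psi})^{1/2}} \sum_{\substack{ x \pmod{\mathfrak{c}_{\psi}} \\ (x,\mathfrak{c}_{\psi})=1 }} \chi(x)\, \ec \Big(\frac{x}{2 m}\Big).
\end{equation*}
This agrees with \eqref{Wpsi} when $\omega\le 0$ or $\omega$ is even. When $\omega>0$ is odd, it differs by the sign $(-1)^{\omega}$. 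Your proposed test ($\omega=4$, $m=1$) cannot detect this, since $i^{4}=i^{-4}$ and $\chi_\infty(2)=1$.

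This is not a matter of convention, and you can see it without Fourier analysis. Applying the functional equation twice forces $W(\psi)W(\overline{\psi})=1$; your transformation law applied twice gives the same. Now $\overline{\psi}$ has frequency $-\omega$ and components $(\overline{\chi},\overline{\chi_\infty})$. Write $\tau(\chi)$ for the sum in \eqref{Wpsi}. Then $\tau(\overline{\chi})=\chi(-1)\overline{\tau(\chi)}$ and $|\tau(\chi)|^2=N(\mathfrak{c}_\psi)$. So \eqref{Wpsi} gives $W(\psi)W(\overline{\psi})=\chi(-1)$, which equals $(-1)^{\omega}$ by \eqref{eq:unit_condition}, and this is $-1$ for odd $\omega$.

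Concretely, take $\omega=1$ and $\psi=\xi$ of conductor $(\lambda^3)$. The factor multiplying $i^{\omega}$ in \eqref{Wpsi} equals $i$, as computed in the proof of \cref{lem:Wpsi}, so \eqref{Wpsi} gives $W(\xi)=-1$. But $L(s,\xi)=L(E/\Q,s+\tfrac12)$ for $E\colon y^2=x^3-x$, whose root number is $+1$. Your own transformation law confirms this at its fixed point $y=1$: $\theta_\xi=\theta_{\overline{\xi}}$ is real and $\theta_\xi(1)\neq 0$.

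So the statement as displayed cannot be proved for odd $\omega>0$. Your argument proves it with $i^{\omega}$ replaced by $i^{-|\omega|}$. The sanity check you want is an odd-$\omega$ case with nontrivial conductor, or the identity $W(\psi)W(\overline{\psi})=1$. The sign error only propagates to $\ep(\omega)$ in \cref{lem:Wpsi}, which the rest of the paper uses only through $|\ep(\omega)|=1$.
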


\begin{remark}
    By \eqref{eq:unit_condition}, $W(\psi)$ is independent of the choice of generator $m\in \Z[i]$ for $\mathfrak{c}_\psi$.
\end{remark}

\begin{lemma}\label{lem:Wpsi}
    Let $\omega \in \Z$, $q= q_1q_2^2q_3^3 \in \mathcal{F}_4$ with $q_i \equiv 1 \pmod{\lambda^3}$ and $\mu^2(q_1 q_2 q_3)=1$, and $\nu_{q,\omega}$ be as defined in \eqref{eq:nu_q}, which has frequency $\omega$ and conductor $q_1q_2q_3\m_{\omega}$. Then
    \begin{equation*}
        W(\nu_{q,\omega}) = \ep(\mathfrak{\omega}) \xi(q_1q_2q_3)\chi_{q_1}(q_2q_3)\chi_{q_2^2}(q_1q_3)\overline{\chi_{q_3}(q_1q_2)}\widetilde{g}_4(q_1)\widetilde{g}_2(q_2)\overline{\widetilde{g}_4(q_3)}
    \end{equation*}
    for
    \begin{equation}\label{eq:r_num_f}
        \ep(\mathfrak{\omega}) = 
        \begin{cases}
            (-1)^{\frac{\omega+1}{2}} (\frac{2}{\omega})_2 & \text{if } 2\nmid \omega, \\
            1 & \text{otherwise}.
        \end{cases}
    \end{equation}
\end{lemma}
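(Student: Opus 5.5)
We compute $W(\nu_{q,\omega})$ directly from \eqref{Wpsi} in \cref{funceq}. The first step is to separate the Gauss-sum-like expression into a finite part at $\ra(q)=q_1q_2q_3$ and a part at $\m_\omega$, which are coprime. The conductor is $\mathfrak c = q_1q_2q_3\m_\omega$. Take the generator $m = m_\omega\, q_1q_2q_3$, with $m_\omega \in \{1,\lambda^3,2\}$ a generator of $\m_\omega$. The finite component of $\nu_{q,\omega}$ is $\chi = \chi_q\cdot \chi_\xi$, where $\chi_q$ is a character modulo $q_1q_2q_3$ and $\chi_\xi$ is a character modulo $m_\omega$.

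By the Chinese remainder theorem, write $x = x_1 m_\omega + x_2 q_1q_2q_3$ with $x_1$ modulo $q_1q_2q_3$ and $x_2$ modulo $m_\omega$. Then
\[
\frac{x}{2m} = \frac{x_1}{2q_1q_2q_3} + \frac{x_2}{2m_\omega},
\]
so the sum in \eqref{Wpsi} factors as the product of two sums:
\[
\sum_{x_1} \chi_q(x_1 m_\omega)\,\check e\Big(\frac{x_1}{2q_1q_2q_3}\Big)
\qquad\text{and}\qquad
\sum_{x_2}\chi_\xi(x_2 q_1q_2q_3)\,\check e\Big(\frac{x_2}{2m_\omega}\Big).
\]

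For the first sum, factor out $\chi_q(m_\omega)$. The remaining sum equals $N(q_1q_2q_3)^{1/2}\,\widetilde h_4(\mu,\chi_q)$ with $\mu = 1/2 = \bar\lambda\cdot \lambda^{-2}\cdot(\text{unit})$, which lies in $\lambda^{-2}\Z[i]$. Apply \cref{h3fourier}(ii) to remove the unit and the factor $\bar\lambda$. This produces $\overline{\chi_q(\cdot)}$ factors, and we then use \cref{h3fourier}(i) with $\mu=1$. That yields exactly
\[
\chi_{q_1}(q_2q_3)\,\chi_{q_2}^2(q_1q_3)\,\overline{\chi_{q_3}(q_1q_2)}\;\widetilde g_4(q_1)\,\widetilde g_2(q_2)\,\overline{\widetilde g_4(q_3)}
\]
times a factor $\chi_q(\text{something depending only on } \lambda, \text{units}, m_\omega)$. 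Since $q \equiv 1 \pmod{\lambda^7}$, the supplementary laws \eqref{eq:ram&units_1} give $\chi_q(i)=\chi_q(\lambda)=1$: here $a_4=a_5=a_6=0$, and the laws extend multiplicatively to composite $q$. Hence these leftover factors are all trivial.

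For the second sum, we combine it with the prefactor $i^{\omega}\chi_\infty(2m) = i^\omega \xi_\infty(2m_\omega)\,\xi_\infty(q_1q_2q_3)$. Since $q_1q_2q_3 \equiv 1 \pmod{\lambda^3}$, we have $\chi_\xi(q_1q_2q_3)=1$ and $\xi_\infty(q_1q_2q_3)=\xi(q_1q_2q_3)$, which supplies the factor $\xi(q_1q_2q_3)$. What remains is
\[
\ep(\omega) := \frac{i^\omega\,\xi_\infty(2m_\omega)}{N(\m_\omega)^{1/2}} \sum_{x_2 \bmod m_\omega} \chi_\xi(x_2)\,\check e\Big(\frac{x_2}{2m_\omega}\Big),
\]
which is just the root number $W(\xi)$ of $\xi$ itself. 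It depends only on $\omega \bmod 8$ (because $\xi_\infty(2m_\omega)$ involves $e^{-i\omega\pi/4}$-type phases) and on $\omega \bmod 4$ through $\m_\omega$.

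The main obstacle is evaluating this finite computation explicitly in each residue class. When $4\mid\omega$, the sum is trivial and $i^\omega\xi_\infty(2)=1$. When $\omega\equiv 2 \pmod 4$, we sum over $(\Z[i]/2)^\times$, with $\chi_\xi(u)=u^\omega$ and $\check e(x/4)$; the units are $\pm1,\pm i$ and we check that the result is $1$. When $\omega$ is odd, we sum over the four units modulo $\lambda^3$ with $m_\omega=\lambda^3$, and verify that the result equals $(-1)^{(\omega+1)/2}(\tfrac{2}{\omega})_2$ for $\omega \equiv 1,3,5,7 \pmod 8$. Because $W(\xi)$ has modulus $1$ and is $\pm1$ by the self-duality $\overline{\xi}=\xi_{-\omega}$ combined with consistency, only signs need to be tracked. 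These are small explicit computations, and I would do them case by case.
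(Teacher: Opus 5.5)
Your proposal is correct and follows essentially the same route as the paper: a Chinese remainder theorem splitting of the sum in \eqref{Wpsi} into the $\ra(q)$-part, evaluated via \cref{h3fourier} together with $\chi_q(\lambda)=\chi_q(i)=1$, and the $\m_\omega$-part, which is $W(\xi)$ and is settled by a short explicit computation (the paper writes out only the odd case). Two slips do not affect the argument: $1/2 = i\lambda^{-2}$, with no $\bar\lambda$ factor, so \cref{h3fourier}(ii) with $\nu=2$ gives $\widetilde h_4(1/2,\chi_q)=\chi_q(2)\widetilde h_4(1,\chi_q)$; and $(\Z[i]/2)^\times$ has only the two classes $1,i$, so summing over all four units $\pm1,\pm i$ in the case $\omega\equiv 2 \pmod 4$ would double the answer.
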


\begin{proof}
    We consider the case $2 \nmid \omega$. The other cases are simpler and follow in a similar fashion. First, set $\tilde{q}:= \ra(q) = q_1q_2q_3$. By \eqref{Wpsi}, recalling that $\xi = \chi_\xi \cdot \xi_\infty$ as in \eqref{eq:xi_classical_decomp}, we obtain
    \begin{equation*}
        W(\nu_{q,\omega}) = \frac{i^{\omega}\xi_{\infty}(2\lambda^3\tilde{q})}{N(\lambda^3\tilde{q})^{1/2}} \sum_{\substack{x \pmod{\lambda^3\tilde{q}}\\(x,\lambda^3\tilde{q})=1}}\chi_q(x)\chi_{\xi}(x)\ec \Big({\frac{x}{2\lambda^3 \tilde{q}}}\Big).
    \end{equation*}
    Apply the Chinese remainder theorem and write $x \pmod{\lambda^3\tilde{q}}$ as $x = a\tilde{q} +\lambda^3b$ with $a \pmod{\lambda^3}$ and $b \pmod{\tilde{q}}$, so that
    \begin{equation}\label{eq:W_psi_1}
        W(\nu_{q,\omega}) = \frac{i^{\omega}\xi_{\infty}(2\lambda^3\tilde{q})}{N(\lambda^3\tilde{q})^{1/2}} \Big(\sum_{b \pmod{ \tilde{q}}} \chi_q(\lambda^3 b) \ec\Big(\frac{b}{2 \tilde{q}} \Big)\Big)\Big(\sum_{\substack{a \pmod{\lambda^3}\\(a,\lambda) =1}} \chi_{\xi}(a)\ec\Big(\frac{a}{2\lambda^3}\Big)\Big).
    \end{equation}
    We evaluate each sum on the right hand side of \eqref{eq:W_psi_1} separately. A short computation shows that
    \begin{equation}
        \frac{\xi_{\infty}(2\lambda^3)}{N(\lambda^3)^{1/2}}\sum_{\substack{a \pmod{\lambda^3}\\(a,\lambda) =1}} \chi_{\xi}(a)\ec\Big(\frac{a}{2\lambda^3}\Big) = (-1)^{\frac{\omega^2-1}{8}}i = \quadrat{2}{\omega} i.
    \end{equation}
    Since $q \in \mathcal F_4$ and $\tilde{q}=q_1q_2q_3$, we have $\chi_q(\lambda) = \chi_q(i) = 1$ and by \cref{h3fourier} conclude that
    \begin{equation*}
        \frac{1}{N(\tilde{q})^{1/2}}\sum_{b \pmod {\tilde{q}}} \chi_q(\lambda^3 b) \ec\Big(\frac{b}{2\tilde{q}} \Big) =  \widetilde{h}_4(1,\chi_q) =\chi_{q_1}(q_2q_3)\chi_{q_2^2}(q_1q_3)\overline{\chi_{q_3}(q_1q_2)} \widetilde{g}_4(q_1)\widetilde{g}_2(q_2)\overline{\widetilde{g}_4(q_3)}.
    \end{equation*}
    Thus
    \begin{equation*}
        W(\nu_{q,\omega}) = (-1)^{\frac{\mathfrak{\omega}+1}{2}}\quadrat{2}{\mathfrak{\omega}}\xi(q_1q_2q_3)\chi_{q_1}(q_2q_3)\chi_{q_2^2}(q_1q_3)\overline{\chi_{q_3}(q_1q_2)} \widetilde{g}_4(q_1)\widetilde{g}_2(q_2)\overline{\widetilde{g}_4(q_3)}.
    \end{equation*}
\end{proof}

\begin{corollary}\label{cor:root_num}
    If $q \in \mathcal{F}_4^{\prime}$, then $W(\nu_{q,\omega})=\varepsilon(\omega) \xi(q) \widetilde{g}_4(q)$.
\end{corollary}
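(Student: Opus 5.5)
This is a direct specialization of \cref{lem:Wpsi}. For $q \in \mathcal{F}_4'$, the element $q$ is squarefree, $q \neq 1$, and $q \equiv 1 \pmod{\lambda^7}$.

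First we check that $q \in \mathcal{F}_4$ and find its decomposition. Since $q \equiv 1 \pmod{\lambda^7}$, we have $(q,\lambda)=1$. Also $q \equiv 1 \pmod{\lambda^3}$, so $q$ is its own primary generator, and every prime factor of $q$ can be taken primary. Squarefreeness then shows that $q = q_1 q_2^2 q_3^3$ with $q_1 = q$ and $q_2 = q_3 = 1$. We have $\mu^2(q_1q_2q_3) = \mu^2(q) = 1$, and the congruence condition is part of the hypothesis, so $q \in \mathcal{F}_4$. By uniqueness of the decomposition, this is the decomposition used in \cref{lem:Wpsi}, and the conductor of $\nu_{q,\omega}$ is $q\,\mathfrak{m}_\omega$.

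Next we substitute $q_1=q$, $q_2=q_3=1$ into the formula of \cref{lem:Wpsi}. The factor $\xi(q_1q_2q_3)$ becomes $\xi(q)$. The twisting factors become
\[
\chi_{q}(1)\,\chi_{1}^2(q)\,\overline{\chi_{1}(q)} = 1,
\]
since $\chi_1$ is the trivial character (an empty product of quartic residue symbols) and $\chi_q(1)=1$. The Gauss sum factors become $\widetilde{g}_2(1) = \widetilde{g}_4(1)=1$, using the case $\ell=0$ of \cref{localcomp} (or simply the definition: a sum over a single residue class mod $1$, divided by $\sqrt{N(1)}=1$). This leaves $\widetilde{g}_4(q_1) = \widetilde{g}_4(q)$. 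Putting these together gives $W(\nu_{q,\omega}) = \varepsilon(\omega)\xi(q)\widetilde{g}_4(q)$.

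The only point needing care is the consistency of conventions. The modulus $1$ and the trivial character must give value $1$, not $0$; note that $(\alpha,1)=1$ for every $\alpha$, so no symbol vanishes. Also, $\xi(q)$ in the corollary means $\xi((q))$ evaluated on the primary generator $q$, which matches the usage in \cref{lem:Wpsi}. No real obstacle is expected.
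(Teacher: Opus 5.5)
Your proposal is correct and matches the paper's argument. The paper states the corollary without proof, as the immediate specialization of \cref{lem:Wpsi} to $q_1 = q$ and $q_2 = q_3 = 1$, where the twisting factors and the Gauss sums to modulus $1$ all equal $1$, exactly as you check.
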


\subsection{Approximate functional equations}

The lemma below records the standard approximate function equation for $L(s,\nu_{q,\omega})$.

\begin{lemma}[{\cite[Theorem 5.3]{IK}}] \label{lem:afe}
    Let $\mathcal{F}_4$ be as in \eqref{eq:fam}, $U > 0$, $\omega \in \Z$, and $q=q_1q_2^2q_3^3\in \mathcal{F}_4$ with $q_i \equiv 1 \pmod{\lambda^3}$ and $\mu^2(q_1 q_2 q_3)=1$. Let $\nu_{q,\omega}$ be as in \eqref{eq:nu_q}, and assume it is non-trivial, i.e.\ $(q, \omega) \neq (1, 0)$. Then for $s\in \C$ with $0 \le \Re(s) \le 1$ we have 
    \begin{align}
        L(s,\nu_{q,\omega})= \sum_{0 \ne \mathfrak n \unlhd \Z[i] } \frac{\nu_{q,\omega}(\mathfrak n)}{N(\mathfrak n )^s}V_{s,\omega}\Big(\frac{N(\mathfrak n)}{2U\sqrt{N(q_1 q_2 q_3 \mathfrak m_{\omega})}}\Big) + W(\nu_{q,\omega})(4N(q_1 q_2 q_3 \mathfrak m_{\omega}))^{\frac{1}{2} -s} \nonumber \\
        \times \, (2\pi)^{2s-1} \frac{\Gamma(1-s + \frac{|\omega|}{2})}{\Gamma(s+ \frac{|\omega|}{2})} \sum_{0 \ne \mathfrak n \unlhd \Z[i]} \frac{\overline{\nu_{q,\omega}(\mathfrak n)}}{N(\mathfrak n)^{1-s}}V_{1-s,\omega}\Big(\frac{N(\mathfrak n) U}{2 \sqrt{N(q_1 q_2 q_3 \mathfrak m_{\omega})}}\Big), \qquad \label{AFEeq}
    \end{align}
    where $W(\nu_{q, \omega})$ is given in \cref{lem:Wpsi} and 
    \begin{equation}\label{eq:V_s_f}
        V_{s,\omega}(y):= \frac{1}{2\pi i }\int_{2-i\infty}^{2+i\infty}(2\pi)^{-w}y^{-w}e^{w^2}\frac{\Gamma(s+\frac{|\omega|}{2}+w)}{\Gamma(s+\frac{|\omega|}{2})}\frac{dw}{w}.
    \end{equation}
\end{lemma}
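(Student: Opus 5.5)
This is the standard derivation from Iwaniec--Kowalski \cite[Theorem 5.3]{IK}, specialised to $\nu_{q,\omega}$. The plan is to integrate the completed $L$-function against the kernel $e^{w^2}/w$ and shift contours, using \cref{funceq}.

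\emph{Step 1 (the integral and its contour shift).} Since $q \in \mathcal{F}_4$ and $(q,\omega)\neq(1,0)$, the character $\nu_{q,\omega}$ is primitive and non-trivial. Hence $\Lambda(s,\nu_{q,\omega})$ is entire, and its conductor is $\mathfrak{c}=q_1q_2q_3\mathfrak{m}_\omega$ by \cref{lem:Wpsi}. Fix $s$ with $0\le \Re(s)\le 1$ and consider
\begin{equation*}
I:=\frac{1}{2\pi i}\int_{(2)} \Lambda(s+w,\nu_{q,\omega})\, U^{w}\, e^{w^2}\frac{dw}{w}.
\end{equation*}
I will move the line of integration to $\Re(w)=-2$. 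This is legitimate because $e^{w^2}$ decays rapidly in vertical strips. That decay beats both the Stirling growth of $\Gamma(s+w+\tfrac{|\omega|}{2})$ and the polynomial growth of $L$ in the strip, which follows from Phragmén--Lindelöf together with the functional equation. The only pole crossed is at $w=0$, with residue $\Lambda(s,\nu_{q,\omega})$.

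\emph{Step 2 (the dual integral).} On the line $\Re(w)=-2$, I apply \cref{funceq} to write $\Lambda(s+w)=W(\nu_{q,\omega})\Lambda(1-s-w,\overline{\nu_{q,\omega}})$. Substituting $w\mapsto -w$ turns this into an integral over $\Re(w)=2$. Note that $\overline{\nu_{q,\omega}}$ has frequency $-\omega$ but the same $|\omega|$, so its gamma factor is unchanged. This gives
\begin{equation*}
\Lambda(s)=I - W\cdot\frac{1}{2\pi i}\int_{(-2)}\Lambda(1-s-w,\overline{\nu})U^{w}e^{w^2}\frac{dw}{w}= I + W\cdot\frac{1}{2\pi i}\int_{(2)}\Lambda(1-s+w,\overline{\nu})U^{-w}e^{w^2}\frac{dw}{w}.
\end{equation*}

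\emph{Step 3 (expansion and normalisation).} On $\Re(w)=2$ both $L$-functions converge absolutely, so I expand them as Dirichlet series and interchange sum and integral. I then divide through by $L_\infty(s,\nu_{q,\omega})$. Using \eqref{eq:Lspsiinf}, the ratio $L_\infty(s+w)/L_\infty(s)$ equals
\begin{equation*}
(4N(\mathfrak c))^{w/2}(2\pi)^{-w}\,\frac{\Gamma(s+w+\frac{|\omega|}{2})}{\Gamma(s+\frac{|\omega|}{2})},
\end{equation*}
which matches the kernel of $V_{s,\omega}$ in \eqref{eq:V_s_f} with argument $N(\mathfrak n)/(2U\sqrt{N(\mathfrak c)})$. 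For the dual term, the ratio $L_\infty(1-s+w)/L_\infty(s)$ produces the prefactor
\begin{equation*}
(4N(\mathfrak c))^{1/2-s}(2\pi)^{2s-1}\frac{\Gamma(1-s+\frac{|\omega|}{2})}{\Gamma(s+\frac{|\omega|}{2})}
\end{equation*}
times $V_{1-s,\omega}$ evaluated at $N(\mathfrak n)U/(2\sqrt{N(\mathfrak c)})$. Finally, $W(\nu_{q,\omega})$ is identified by \cref{lem:Wpsi}.

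There is no genuine obstacle here. The only care needed is the bookkeeping of the constants: the factors $|d_{\Q(i)}|=4$, $2\pi$ and $U^{\pm w}$ must combine exactly into the stated arguments $2U\sqrt{N(\mathfrak c)}$ and $2\sqrt{N(\mathfrak c)}/U$.
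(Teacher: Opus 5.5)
Your proposal is correct and is exactly the argument behind \cite[Theorem 5.3]{IK}, which the paper simply cites. Specialise that theorem to $G(w)=e^{w^2}$, analytic conductor $|d_{\Q(i)}|N(\mathfrak{c})=4N(q_1q_2q_3\mathfrak{m}_\omega)$ and $X=U$; your bookkeeping of the factors $4$, $2\pi$ and $U^{\pm w}$ then reproduces the stated arguments of $V_{s,\omega}$ and $V_{1-s,\omega}$ and the prefactor of the dual sum (one harmless slip: $\Gamma(s+w+\tfrac{|\omega|}{2})$ decays exponentially in vertical strips rather than grows, which only makes the contour shift easier).
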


We now record an estimate for $V_s(y)$ in the our degree $2$ setting, which follows from Stirling's formula and the rapid decay of $e^{w^2}$ in vertical strips, as in \cite[Proposition~5.4]{IK}.

\begin{lemma} \label{decaylem}
    Suppose that $\Re(s) \geq \alpha>0$. Then for every $y>0$ and $A, k \in \mathbb{Z}_{\geq 0}$ we have
    \begin{align*}
        y^{k} V^{(k)}_{s,\omega}(y) \ll_{A,\alpha,k} \Big(1+\frac{y}{1+ |\omega|+ |s|} \Big)^{-A} .
    \end{align*}
\end{lemma}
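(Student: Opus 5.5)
The plan is to write $V_{s,\omega}(y)$ as the contour integral \eqref{eq:V_s_f} and shift contours, treating small and large $y$ separately; derivatives are handled the same way. First, differentiating under the integral, $y^k \frac{d^k}{dy^k} y^{-w} = (-w)(-w-1)\cdots(-w-k+1)\, y^{-w}$. Hence
\begin{equation*}
    y^k V^{(k)}_{s,\omega}(y) = \frac{1}{2\pi i}\int_{(c)} (2\pi)^{-w} y^{-w} e^{w^2} \frac{\Gamma(s+\frac{|\omega|}{2}+w)}{\Gamma(s+\frac{|\omega|}{2})} P_k(w)\frac{dw}{w},
\end{equation*}
where $P_k$ is a polynomial of degree $k$. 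Any $c > -\alpha - \frac{|\omega|}{2}$ is allowed, since the Gamma factor has no poles to the right of $\Re(w) = -\Re(s) - \frac{|\omega|}{2}$. The only other possible pole is at $w=0$, and it occurs only when $k=0$, because $P_k(0)=0$ for $k\ge1$.

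\emph{Case $y \le 1+|\omega|+|s|$.} Here the claimed bound is $\asymp 1$, so it suffices to show $y^kV^{(k)}_{s,\omega}(y)\ll 1$. For $k=0$, shift to $\Re(w)=-\alpha/2$. This picks up the residue $1$ at $w=0$, and the remaining integral is $\ll y^{\alpha/2}\times(\text{ratio of Gammas})$. By Stirling the Gamma ratio on $\Re(w)=-\alpha/2$ is $\ll |s+\frac{|\omega|}{2}|^{-\alpha/2}(1+|w|)^{O(1)}$, and the factor $(1+|w|)^{O(1)}$ is absorbed by $e^{\Re(w^2)}=e^{(\alpha/2)^2-t^2}$. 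Since $y\ll 1+|\omega|+|s|$, this term is $O(1)$. For $k\ge1$ there is no pole, and the same shift gives the same bound. (Alternatively one can stay on $\Re(w)=\alpha/2$ and use $|\Gamma(z+w)/\Gamma(z)|\ll |z|^{\Re w}(1+|w|)^{O(\Re w)}e^{\pi|w|/2}$ with $z=s+\frac{|\omega|}{2}$, the exponential growth being killed by $e^{-t^2}$; this gives a bound $(y^{-1}|z|)^{\alpha/2}\cdot$const, which is also $O(1)$ when $y\ll|z|$.)

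\emph{Case $y > 1+|\omega|+|s|$.} Shift the contour to $\Re(w)=A$; no poles are crossed. Then $|y^{-w}|=y^{-A}$ and $|e^{w^2}|=e^{A^2-t^2}$. By Stirling, uniformly for $\Re(z)\ge\alpha$ and $w=A+it$,
\begin{equation*}
    \Big|\frac{\Gamma(z+w)}{\Gamma(z)}\Big| \ll_{A,\alpha} (|z|+|t|+1)^{A}\, e^{\pi|t|/2}
\end{equation*}
(more precisely $|z|^A(1+|t|)^{A}$ up to constants, using $\Gamma(z+A+it)/\Gamma(z+it)\approx (z+it)^A$ and $|\Gamma(z+it)/\Gamma(z)|\le e^{\pi|t|/2}$). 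Integrating against $e^{-t^2}|P_k(w)|/|w|$ gives $\ll (|z|/y)^A\ll (1+|\omega|+|s|)^A y^{-A}$. This is $\ll (y/(1+|\omega|+|s|))^{-A}$, which is the claim in this range.

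The main obstacle is the uniform Stirling estimate for the Gamma ratio when both $z$ and $w$ vary with large imaginary parts. I would handle it by writing $\Gamma(z+w)/\Gamma(z)=\frac{\Gamma(z+w)}{\Gamma(z+it)}\cdot\frac{\Gamma(z+it)}{\Gamma(z)}$. The first factor has a real shift $A$ (or $-\alpha/2$) and is bounded by $|z+it|^{\pm}$ via the ratio asymptotics for Gamma, valid since $\Re(z)\ge\alpha$. The second factor is bounded via $\log|\Gamma|$ and the mean value theorem, giving at most $e^{\pi|t|/2}$ (or $(1+|t|)^{O(1)}$ for large $\Re z$). All these polynomial and exponential factors in $t$ are absorbed by $e^{-t^2}$, so the dependence on $A,\alpha,k$ enters only through constants, as stated.
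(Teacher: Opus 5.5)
Your proof is correct and follows the route the paper intends; the paper gives no details beyond citing Stirling's formula, the decay of $e^{w^2}$, and the contour-shift argument of \cite[Proposition~5.4]{IK}. As you do, one shifts to $\Re(w)=A$ for large $y$ and to a line left of $w=0$ for small $y$, bounding $\Gamma(z+w)/\Gamma(z)$ (with $z=s+\frac{|\omega|}{2}$) by $|z|^{\Re(w)}$ times factors $e^{O(|t|)}(1+|t|)^{O(1)}$ that $e^{-t^2}$ absorbs.

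The one real slip is your parenthetical alternative for small $y$. Staying on $\Re(w)=\alpha/2$ yields $(|z|/y)^{\alpha/2}$, which is bounded only when $y\gg|z|$, not when $y\ll|z|$. So for small $y$ a line with $\Re(w)\le 0$, as in your main argument, is genuinely needed.

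A minor, harmless point: on $\Re(w)=-\alpha/2$ the Gamma ratio carries an $e^{O(|t|)}$ factor, not just $(1+|w|)^{O(1)}$, and the constant in $|\Gamma(z+it)/\Gamma(z)|\le e^{c|t|}$ should be $c=\frac{\pi}{2}+O(1/\alpha)$ rather than $\frac{\pi}{2}$.
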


It will be convenient to work with slightly different expressions at $s = \frac{1}{2}$. For any $q \in \Z[i]$ with $q \equiv 1 \pmod{\lambda^3}$, denote
\begin{equation} \label{BUdef}
    B_{\omega,U}(q):= \sum_{0 \ne \mathfrak n \unlhd \Z[i] } \frac{\nu_{q,\omega}(\mathfrak n)}{N(\mathfrak n )^{1/2}}V_{\omega}\Big(\frac{N(\mathfrak n)}{2U\sqrt{N(q \mathfrak m_{\omega})}}\Big),
\end{equation}
\begin{equation} \label{BUtildedef}
    \widetilde{B}_{\omega,U}(q) := \sum_{ 0 \ne \mathfrak n \unlhd \Z[i]} \frac{\overline{\nu_{q,\omega}(\mathfrak n)}}{N(\mathfrak n)^{1/2}}V_{\omega}\Big(\frac{N(\mathfrak n)}{2 U \sqrt{N(q \mathfrak m_{\omega})}}\Big),
\end{equation}
and
\begin{equation} \label{Aomegadef}
    A_{\omega}(q) := \sumtwo_{0 \neq \mathfrak{n}_1, \mathfrak{n}_2 \unlhd \Z[i]} \frac{\nu_{q,\omega}(\mathfrak{n}_1) \overline{\nu_{q,\omega}( \mathfrak{n}_2)}}{ N( \mathfrak{n}_1 \mathfrak{n}_2)^{1/2}}
    W_{\omega} \Big( \frac{N(\mathfrak{n}_1 \mathfrak{n}_2)}{4 N(q \mathfrak{m}_{\omega})} \Big).
\end{equation}
Here $V_\omega(w) := V_{\frac{1}{2},\omega}(w)$, which is real hence in fact $\widetilde{B}_{\omega,U}(q) = \overline{B_{\omega,U}(q)}$, and
\begin{equation}\label{eq:W_f}
    W_{\omega}(y):= \frac{1}{2\pi i }\int_{2-i\infty}^{2+i\infty}(2\pi)^{-2w}y^{-w}e^{w^2}\frac{\Gamma(\frac{1}{2}+\frac{|\omega|}{2}+w)^2}{\Gamma(\frac{1}{2}+\frac{|\omega|}{2})^2}\frac{dw}{w}.
\end{equation}
As in \cref{decaylem}, for every $y > 0$ and $A, k \in \Z_{\geq 0}$ we have
    \begin{equation}\label{W_f_bound}
        y^k W_{\omega}^{(k)}(y) \ll_{A,k}\Big(1+\frac{y}{(1+|\omega|)^2}\Big)^{-A}.
    \end{equation}

The main advantage of the expressions above is that their lengths involve $N(q)$ instead of $N(\ra(q))$, which is analytically more convenient. Since $q = \ra(q)$ for $q \in \mathcal{F}'_4$, by \cite[Theorem 5.3]{IK} and \cref{cor:root_num} we obtain the following.

\begin{lemma} \label{lem:afe_2}
    Let $\mathcal{F}_4'$ be as in \eqref{eq:subfam}, $\omega \in \Z$, and $U > 0$.
    If $q\in \mathcal{F}_4'$ then
    \begin{equation*}
        L(1/2,\nu_{q,\omega}) = B_{\omega,U}(q) + \varepsilon(\omega) \xi(q) \widetilde{g}_4(q) \widetilde{B}_{\omega,U^{-1}}(q)
    \end{equation*}
    and
    \begin{equation*}
        |L(1/2,\nu_{q,\omega})|^2 = 2 A_{\omega}(q).
    \end{equation*}
\end{lemma}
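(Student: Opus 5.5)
The statement to prove is \cref{lem:afe_2}. The plan is to specialize \cref{lem:afe} to $s=\tfrac12$ for $q\in\mathcal F_4'$, and then to obtain the second identity by an analogous approximate functional equation for $|L(1/2,\nu_{q,\omega})|^2$ built from the product $\Lambda(s,\nu_{q,\omega})\Lambda(s,\overline{\nu_{q,\omega}})$.

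\emph{First identity.} For $q\in\mathcal F_4'$ we have $q_1=q$ and $q_2=q_3=1$, so $N(q_1q_2q_3\mathfrak m_\omega)=N(q\mathfrak m_\omega)$. The character is non-trivial since $q\neq1$. At $s=\tfrac12$:
\begin{itemize}
\item the factor $(4N(q\mathfrak m_\omega))^{1/2-s}(2\pi)^{2s-1}$ equals $1$;
\item the gamma ratio $\Gamma(1-s+\tfrac{|\omega|}{2})/\Gamma(s+\tfrac{|\omega|}{2})$ equals $1$;
\item $V_{1/2,\omega}=V_{1/2,\omega}=V_\omega$ in both sums.
\end{itemize}
By \cref{cor:root_num}, $W(\nu_{q,\omega})=\varepsilon(\omega)\xi(q)\widetilde g_4(q)$. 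Comparing arguments, the first sum in \eqref{AFEeq} is exactly $B_{\omega,U}(q)$. The second sum has argument $N(\mathfrak n)U/(2\sqrt{N(q\mathfrak m_\omega)})=N(\mathfrak n)/(2U^{-1}\sqrt{\cdot})$, so it is $\widetilde B_{\omega,U^{-1}}(q)$. This gives the first formula.

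\emph{Second identity.} Consider $\Lambda(s):=\Lambda(s,\nu_{q,\omega})\Lambda(s,\overline{\nu_{q,\omega}})$. Both characters are primitive with the same conductor $q\mathfrak m_\omega$ and the same $|\omega|$. So
\[
\Lambda(s)=(4N(q\mathfrak m_\omega))^{s}(2\pi)^{-2s}\Gamma\bigl(s+\tfrac{|\omega|}{2}\bigr)^2 \,L(s,\nu_{q,\omega})L(s,\overline{\nu_{q,\omega}}).
\]
By \cref{funceq} it satisfies $\Lambda(s)=W(\nu_{q,\omega})W(\overline{\nu_{q,\omega}})\Lambda(1-s)$. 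The combined root number is $W(\nu)W(\overline\nu)=|W(\nu)|^2=1$, using $W(\overline\nu)=\overline{W(\nu)}$ together with \eqref{sqrootcancel} and $|\xi(q)|=1$; this relation is the one point needing a check, and it follows directly from \eqref{Wpsi} by conjugating the sum. The function $\Lambda$ is entire because $\nu_{q,\omega}$ is non-trivial primitive.

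Now run the standard argument of \cite[Theorem 5.3]{IK}. Set
\[
I=\frac{1}{2\pi i}\int_{(2)}\Lambda(\tfrac12+w)\,e^{w^2}\frac{dw}{w}.
\]
Shift the contour to $\Re w=-2$, picking up the residue $\Lambda(\tfrac12)$ at $w=0$. Apply the functional equation with $w\mapsto -w$; the integrand is symmetric, so $\Lambda(\tfrac12)=2I$. Dividing by the gamma factor at $\tfrac12$ and expanding the Dirichlet series $L(\tfrac12+w,\nu)L(\tfrac12+w,\overline\nu)=\sum\nu(\mathfrak n_1)\overline{\nu(\mathfrak n_2)}N(\mathfrak n_1\mathfrak n_2)^{-1/2-w}$ produces
\[
(4N(q\mathfrak m_\omega))^{w}(2\pi)^{-2w}\frac{\Gamma(\tfrac12+\tfrac{|\omega|}{2}+w)^2}{\Gamma(\tfrac12+\tfrac{|\omega|}{2})^2}.
\]
Integrating term by term yields exactly $W_\omega\bigl(N(\mathfrak n_1\mathfrak n_2)/(4N(q\mathfrak m_\omega))\bigr)$ as in \eqref{eq:W_f}. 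Hence $L(\tfrac12,\nu)L(\tfrac12,\overline\nu)=2A_\omega(q)$. Finally, $L(\tfrac12,\overline\nu)=\overline{L(\tfrac12,\nu)}$ because the Dirichlet coefficients are conjugate and $\tfrac12$ is real (via analytic continuation), so the left side is $|L(1/2,\nu_{q,\omega})|^2$.

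The main obstacle is only bookkeeping: verifying that the normalizations of the conductor factor $4N(q\mathfrak m_\omega)$ and the powers of $2\pi$ in \eqref{eq:Lspsiinf} match those in \eqref{BUdef}–\eqref{eq:W_f}, and confirming $|W|=1$.
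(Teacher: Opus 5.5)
Your proposal is correct and is essentially the paper's argument: the paper simply invokes \cite[Theorem 5.3]{IK}, once for $L(s,\nu_{q,\omega})$ and once for the product $L(s,\nu_{q,\omega})L(s,\overline{\nu_{q,\omega}})$ (whose root number is $1$), together with \cref{cor:root_num}. One small point: get the product root number by applying \cref{funceq} twice, $\Lambda(s,\nu_{q,\omega})=W(\nu_{q,\omega})W(\overline{\nu_{q,\omega}})\Lambda(s,\nu_{q,\omega})$, rather than by conjugating \eqref{Wpsi}, because there the substitution $x\mapsto -x$ introduces $\chi(-1)=(-1)^{\omega}$, so the displayed normalization does not literally give $W(\overline{\nu_{q,\omega}})=\overline{W(\nu_{q,\omega})}$ for odd $\omega$.
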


\subsection{Poisson Summation} This section gathers some straightforward consequences of the Poisson summation formula.

\begin{lemma}\label{lem:pois_1}
    Let $V : \R^2 \to \C$ be a Schwartz function. By an abuse of notation, set $V(x+iy)= V(x,y)$. Then we have
    \begin{equation}\label{eq:pois_1}
        \sum_{m \in \Z[i]}V(m) = \sum_{k\in \Z[i]}\int_{\R^2}V(x,y)\ec\Big(\frac{k(x+iy)}{2}\Big)dx dy.
    \end{equation}
\end{lemma}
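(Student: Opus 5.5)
The plan is to reduce \eqref{eq:pois_1} to the classical Poisson summation formula on $\Z^2$. We identify $\Z[i]$ with the lattice $\Z^2 \subset \R^2$ via $m = a+bi \leftrightarrow (a,b)$. Since $V$ is Schwartz on $\R^2$, classical Poisson summation applies with absolute convergence on both sides:
\begin{equation*}
    \sum_{(a,b)\in\Z^2} V(a,b) = \sum_{(u,v)\in \Z^2} \widehat{V}(u,v), \qquad \widehat{V}(u,v) := \int_{\R^2} V(x,y)\, e(-(ux+vy))\, dx\, dy .
\end{equation*}

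It then remains to express the character $e(-(ux+vy))$ in terms of $\ec$. For $k = k_1 + i k_2 \in \Z[i]$ and $z = x+iy$ we compute
\begin{equation*}
    \ec\Big(\frac{kz}{2}\Big) = e\Big(\frac{kz+\overline{kz}}{2}\Big) = e(\Re(kz)) = e(k_1 x - k_2 y).
\end{equation*}
So the choice $k_1 = -u$, $k_2 = v$ gives $\ec(kz/2) = e(-(ux+vy))$. The map $(u,v)\mapsto -u+iv$ is a bijection $\Z^2 \to \Z[i]$, so reindexing the dual sum by $k$ yields exactly the right-hand side of \eqref{eq:pois_1}.

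There is no real obstacle. The only point needing care is the bookkeeping of signs and conjugation when matching $\ec(kz/2)$ with the standard Fourier kernel. The factor $\tfrac12$ is exactly what makes the dual lattice of $\Z[i]$ under the pairing $(k,z)\mapsto \Re(kz)$ equal to $\Z[i]$ itself.
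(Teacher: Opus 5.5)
Your proposal is correct and follows the paper's approach: apply classical Poisson summation on $\Z^2 \cong \Z[i]$, then reindex the dual sum via the identity $\ec(kz/2) = e(\Re(kz))$. Your sign and conjugation bookkeeping ($k = -u + iv$) matches the paper's use of $\Re[(k_1 - ik_2)(x+iy)] = k_1x + k_2y$ together with closure of $\Z[i]$ under conjugation.
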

\begin{proof}
    View $\Z[i]$ as a self-dual lattice in $\C$. Apply Poisson summation to find
    \begin{gather*}
        \sum_{m \in \Z[i]}V(m) =\sum_{k_1+ik_2 \in \Z[i]} \int_{\R^2}V(x,y)e(k_1x+k_2y)dxdy.
    \end{gather*}
    The result follows after applying the identity $\Re[(k_1-ik_2)(x+iy)] = k_1 x + k_2 y$, using the fact that $\Z[i]$ is closed under conjugation, and the definition of $\ec(z)$.
\end{proof}
\begin{lemma}\label{lem:pois_2}
    For $q \in \Z[i]$, let $\psi: \Z[i]\to \C$ be $q$-periodic and $V:\R^2 \to \C$ be a Schwartz function. Then
    \begin{equation*}
        \sum_{m \in \Z[i]}\psi(m)V(m) = \frac{1}{N(q)} \sum_{k \in \Z[i]}\dot{\psi}(k)\dot{V}\Big(\frac{k}{q}\Big),
    \end{equation*}
    where
    \begin{align*}
        \dot{\psi}(k):= \sum_{t \pmod q} \psi(t)\ec\Big({-\frac{kt}{2 q}}\Big)  \qquad \text{and} \qquad \dot{V}(r):= \int_{\R^2} V(x,y)\ec\Big(\frac{r(x+iy)}{2}\Big)dxdy.
    \end{align*}
\end{lemma}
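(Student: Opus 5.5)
We reduce to the standard Poisson summation over $\Z$ applied to a residue-class decomposition. Since $\psi$ is $q$-periodic, write every $m \in \Z[i]$ uniquely as $m = t + q n$ with $t$ running over a fixed complete residue system modulo $q$ and $n \in \Z[i]$. This gives
\begin{equation*}
    \sum_{m \in \Z[i]}\psi(m)V(m) = \sum_{t \pmod q} \psi(t) \sum_{n \in \Z[i]} V(t + qn).
\end{equation*}
For fixed $t$, the function $z \mapsto V(t+qz)$ (viewed on $\R^2$) is again Schwartz. So \cref{lem:pois_1} applies to the inner sum, giving
\begin{equation*}
    \sum_{n \in \Z[i]} V(t+qn) = \sum_{k \in \Z[i]} \int_{\R^2} V(t+qz)\,\ec\Big(\frac{kz}{2}\Big)\,dx\,dy, \qquad z = x+iy.
\end{equation*}

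Next we change variables $w = t + qz$ in the integral. As a real-linear map of $\R^2$, multiplication by $q$ has Jacobian $|q|^2 = N(q)$, so $dx\,dy = N(q)^{-1}\,dw$. Also $z = (w-t)/q$, hence
\begin{equation*}
    \ec\Big(\frac{k(w-t)}{2q}\Big) = \ec\Big(\frac{(k/q)w}{2}\Big)\ec\Big(-\frac{kt}{2q}\Big),
\end{equation*}
using additivity of $\ec$. The integral therefore becomes $N(q)^{-1}\ec(-kt/(2q))\,\dot V(k/q)$. Summing over $t$ then produces exactly $\dot\psi(k)$, which gives the claimed formula.

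The only points needing care are the following.
\begin{itemize}
    \item The interchange of the finite $t$-sum with the $k$-sum is harmless.
    \item Absolute convergence of $\sum_k \dot V(k/q)$ follows from $V$ being Schwartz.
    \item $\dot\psi(k)$ is independent of the choice of representatives $t$. Indeed, replacing $t$ by $t+qa$ changes the phase by $\ec(-ka/2) = e(-\Re(ka)) = 1$, since $ka \in \Z[i]$ and $\ec(u) = e(2\Re u)$.
\end{itemize}
Nothing here is a real obstacle. The main thing to check is the normalization of $\ec$ with the factor $1/2$ in \cref{lem:pois_1}, so that the dual lattice is $\Z[i]$ itself and the frequencies are $k/q$ with $k \in \Z[i]$.
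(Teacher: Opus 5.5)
Your proposal is correct and matches the paper's proof essentially step for step. Both split into residue classes $t \pmod q$, apply \cref{lem:pois_1} to $n \mapsto V(t+qn)$, make the linear change of variables $w = t+qz$ with Jacobian $N(q)$, and sum over $t$ to produce $\dot\psi(k)$. Your additional check that $\dot\psi(k)$ does not depend on the choice of representatives (via $\ec(-ka/2)=1$) is a small point the paper leaves implicit.
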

\begin{proof}
    Sum over congruence classes modulo $q$ and apply Lemma~\ref{lem:pois_1} to see that
    \begin{gather}\label{eq:per_pois}
        \sum_{m \in \Z[i]} \psi(m)V(m) = \sum_{t \pmod q} \psi(t)  \sum_{k \in \Z[i]}\int_{\R^2} V((u+iv)q+t)\ec\Big(\frac{k(u+iv)}{2}\Big)dudv.
    \end{gather}
    A linear change of variables shows that
    \begin{gather}
        \int_{\R^2} V((u+iv)q+t)\ec\Big(\frac{k(u+iv)}{2}\Big) du dv = \frac{1}{N(q)}\int_{\R^2}V(x,y)\ec\Big(\frac{k(x+iy)}{2q}\Big)\ec\Big({-\frac{kt}{2q}}\Big) dx dy.
    \end{gather}
    The result follows from summing over $t \pmod q$.
\end{proof}
\begin{lemma}\label{lem:pois}
    Let $q,c \in \Z[i]$ with $q\equiv 1 \pmod {\lambda^3}$,  $\psi : \Z[i] \to \C$ be $q$-periodic, and $V:\R \to \C$ be a Schwartz function. Then for any $M > 0$ we have
    \begin{gather*}
        \sum_{\substack{m\in \Z[i] \\ m \equiv c \pmod{\lambda^7}}} \psi(m) V\Big(\frac{N(m)}{M}\Big) = \frac{\pi M}{64 N(q)} \sum_{k \in \Z[i]}\ec\Big({-\frac{kcq^3}{2\lambda^7}}\Big)\ddot{\psi}(k)\ddot{V}\Big(\frac{k\sqrt{M}}{q}\Big),
    \end{gather*}
    where
    \begin{gather}\label{eq:ddotV}
        \ddot{\psi}(k):=  \sum_{b \pmod q} \psi(2\lambda^7b)\ec\Big({-\frac{kb}{q}}\Big) \qquad \text{and} \qquad \ddot{V}(u):= \int_{0}^{\infty}rV(r^2)J_0\Big(\frac{\pi r|u|}{4\sqrt{2}}\Big)dr. \qquad
    \end{gather}
\end{lemma}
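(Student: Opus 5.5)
The plan is to reduce to \cref{lem:pois_2}, applied with the enlarged modulus $\lambda^7 q$, and then to evaluate the resulting complete exponential sum and the Fourier integral separately.

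Set $\Psi(m) := \psi(m)\bbone_{m \equiv c \pmod{\lambda^7}}$. This function is $\lambda^7 q$-periodic. Apply \cref{lem:pois_2} to $\Psi$ with modulus $\lambda^7 q$ and the Schwartz function $(x,y)\mapsto V((x^2+y^2)/M)$. Since $N(\lambda^7 q)=2^7N(q)$, this gives
\[
\sum_{\substack{m \in \Z[i] \\ m \equiv c \pmod{\lambda^7}}}\psi(m)V\Big(\frac{N(m)}{M}\Big)=\frac{1}{128N(q)}\sum_{k\in\Z[i]}\dot\Psi(k)\,\dot V\Big(\frac{k}{\lambda^7 q}\Big).
\]

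\emph{The exponential sum.} Parametrize the residues $t \pmod{\lambda^7 q}$ with $t\equiv c \pmod{\lambda^7}$ as $t = 2\lambda^7 b + q^4 c$, with $b$ running over residues modulo $q$. Two facts are needed.
\begin{itemize}
\item $q^4\equiv 1 \pmod{\lambda^7}$. Since $q\equiv 1\pmod{\lambda^3}$ and $2$ is a unit times $\lambda^2$, squaring $1+\lambda^3x$ gives $q^2\equiv 1 \pmod{\lambda^5}$. Squaring again gives $q^4\equiv 1\pmod{\lambda^7}$. Hence $t\equiv c\pmod{\lambda^7}$.
\item As $b$ varies modulo $q$, the values $t$ are pairwise distinct modulo $\lambda^7 q$. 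This holds because $q$ is coprime to $\lambda$. Moreover, replacing $b$ by $b+q$ changes $t$ by a multiple of $\lambda^7 q$ and changes the phase $\ec(-kb/q)$ by $\ec(-k)=1$, so everything is well defined.
\end{itemize}
Since $q^4c\equiv 0 \pmod q$ and $\psi$ is $q$-periodic, we have $\psi(t)=\psi(2\lambda^7 b)$. The phase splits as
\[
\ec\Big(-\frac{kt}{2\lambda^7q}\Big)=\ec\Big(-\frac{kb}{q}\Big)\,\ec\Big(-\frac{kcq^3}{2\lambda^7}\Big),
\]
so $\dot\Psi(k)=\ec(-kcq^3/(2\lambda^7))\,\ddot\psi(k)$.

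\emph{The integral.} The weight is radial, so we pass to polar coordinates. With $r=k/(\lambda^7q)$ we have $\ec(rz/2)=e(\Re(rz))$, and integrating over the angle gives $2\pi\int_0^\infty \rho\, V(\rho^2/M)J_0(2\pi|r|\rho)\,d\rho$. Substituting $\rho=\sqrt{M}s$ turns this into $2\pi M\int_0^\infty sV(s^2)J_0(2\pi |r|\sqrt M s)\,ds$. Now $|r|=|k|/(2^{7/2}|q|)$, so $2\pi|r|\sqrt M = \frac{\pi}{4\sqrt2}\cdot\frac{|k|\sqrt M}{|q|}$. Therefore $\dot V(k/(\lambda^7q)) = 2\pi M\,\ddot V(k\sqrt M/q)$.

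\emph{Assembling.} The constants combine as $\frac{2\pi M}{128 N(q)}=\frac{\pi M}{64N(q)}$, which yields the stated identity.

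There is no serious obstacle. The only delicate point is choosing the parametrization $t=2\lambda^7b+q^4c$ so that the phase factors cleanly into the two stated pieces, and this hinges on the congruence $q^4\equiv1\pmod{\lambda^7}$.
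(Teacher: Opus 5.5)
Your proof is correct and follows the paper's route: apply \cref{lem:pois_2} with modulus $\lambda^7 q$, evaluate the radial integral in polar coordinates to get $2\pi M\,\ddot V(k\sqrt M/q)$, and factor the complete sum using $q^4\equiv 1 \pmod{\lambda^7}$. Your parametrization $t=2\lambda^7 b+q^4 c$ combines into one step the paper's Chinese remainder decomposition $t=aq+\lambda^7 b$ (with $a\equiv cq^3$) and its subsequent substitution $b\mapsto 2b$.
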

\begin{proof}
    We closely follow the proof of \cite[Lemma 3.10]{DDDS24}. By Lemma ~\ref{lem:pois_2} we obtain
    \begin{equation*}
        \sum_{\substack{m\in \Z[i]\\ m \equiv c \pmod{\lambda^7}}}\psi(m) V\Big(\frac{N(m)}{M}\Big) = \frac{1}{N(\lambda^7 q)} \sum_{k\in \Z[i]}\dot{\psi}(k) \int_{\R^2}V\Big(\frac{x^2+y^2}{M}\Big)\ec\Big(\frac{k(x+iy)}{2\lambda^7q}\Big)dx dy,
    \end{equation*}
    where
    \begin{equation*}
        \dot \psi(k) = \sum_{\substack{t \pmod {\lambda^7q} \\ t\equiv c \pmod {\lambda^7}}} \psi(t) \ec\Big({-\frac{kt}{2\lambda^7q}}\Big).
    \end{equation*}
    The integral term can be computed as in \cite[(4.6)]{DR}. Let $\alpha \in [-\pi,\pi)$ be determined by $\frac{k}{2\lambda^7q} = \frac{|k|}{16\sqrt 2|q|}e^{-i\alpha}$. Changing to polar coordinates we have
    \begin{align}
        \int_{\R^2}V\Big(\frac{x^2+y^2}{M}\Big)\ec\Big(\frac{k(x+iy)}{2\lambda^7q}\Big)dxdy& = M\int_{\R} r V(r^2) \int_{0}^{2\pi}\ec\Big(\frac{r|k|\sqrt{M}}{16\sqrt2|q|}e^{i(\theta-\alpha)}\Big)d\theta dr \nonumber \\
        & = M\int_{\R} r V(r^2) \int_{0}^{2\pi}\exp\Big(i\frac{\pi r|k|\sqrt{M}}{4\sqrt 2|q|}\cos(\theta-\alpha)\Big)d\theta dr \nonumber.
    \end{align}
    By \cite[(10.9.2)]{DLMF} we have $J_0(x) = \frac{1}{2\pi} \int_0^{2\pi} e^{-{ix\cos(\theta)}}d\theta$, so the display above is equal to
    \begin{gather*}
        2\pi M \int_0^{\infty}rV(r^2)J_0\Big(\frac{\pi r|k|\sqrt{M}}{4\sqrt{2}|q|}\Big) dr = 2 \pi M \ddot{V}\Big(\frac{k\sqrt{M}}{q}\Big).
    \end{gather*}
    We now evaluate $\dot\psi(k)$. By the Chinese remainder theorem, decompose $t = aq +\lambda^7b$ for $a \pmod{\lambda^7}$ and $b \pmod q$. From $q \equiv 1 \pmod {\lambda^3}$ we have $q^4 \equiv 1 \pmod{\lambda^7}$, so $cq^3 \equiv a \pmod {\lambda^7}$.  By the $q$-periodicity of $\psi$, we conclude that
    \begin{align*}
        \dot \psi(k) &= \bigg(\sum_{\substack{a \pmod{\lambda^7} \\ a \equiv cq^3 \pmod{\lambda^7}}}\ec\Big({-\frac{ka}{2\lambda^7}}\Big)\bigg)\bigg(\sum_{b \pmod q} \psi(\lambda^7b)\ec\Big({-\frac{kb}{2q}}\Big)\bigg)  \\
        & = \ec\Big({-\frac{kcq^3}{2\lambda^7}}\Big)\bigg( \sum_{b \pmod q} \psi(2\lambda^7b)\ec\Big({-\frac{kb}q}\Big)\bigg)
    \end{align*}
    and the result follows.
\end{proof}

\subsection{Quartic large sieve}

The quartic large sieve of Blomer, Goldmakher, and Louvel \cite{BGL} will be an important tool in the next section.

\begin{theorem}[{\cite[Theorem~1.3]{BGL}}] \label{quartic-large-sieve}
    Let $\varepsilon>0$, $A, B \geq 1$, $\boldsymbol{\lambda}=(\lambda_b)$ be a $\mathbb{C}$-valued sequence supported on $b \in \mathbb{Z}[i]$ such that $b \equiv 1 \pmod{\lambda^3}$, and
    $\chi_a(\cdot)$ denote the quartic residue symbol $\big( \frac{\cdot}{a} \big)_4$ for $a \in \mathbb{Z}[i]$ with $a \equiv 1 \pmod{\lambda^3}$.
    Then
    \begin{align*}
        \sum_{\substack{ a \in \mathbb{Z}[i] \\ a \equiv 1 \pmod{\lambda^3} \\ N(a) \leq A  }} \mu^2(a) \Bigg | \sum_{\substack{ b \in \mathbb{Z}[i] \\ b \equiv 1 \pmod{\lambda^3} \\ N(b) \leq B }}
        \mu^2(b) \lambda_b \chi_a(b)  \Bigg |^2
        \ll_{\varepsilon} (AB)^\varepsilon ( A + B + (AB)^{2/3} ) \cdot \| \boldsymbol{\mu}^2 \boldsymbol{\lambda} \|_2^2.
    \end{align*}
\end{theorem}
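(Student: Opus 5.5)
The statement at the end is the quartic large sieve of Blomer, Goldmakher, and Louvel, quoted as \cite[Theorem~1.3]{BGL}, so in the paper it is imported rather than proved. A proof from scratch would follow Heath-Brown's strategy for the quadratic large sieve \cite{HB}, adapted to quartic symbols as in \cite{BGL}.

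\textbf{Setup.} Let $\mathcal{B}(A,B)$ denote the best constant in the inequality. I would prove a recursive bound for $\mathcal{B}$ and then iterate it.

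\textbf{Step 1: duality and reciprocity.}
\begin{itemize}
\item By the duality principle, $\mathcal{B}(A,B)=\mathcal{B}(B,A)$.
\item By quartic reciprocity \eqref{eq:quar_repr}, and because $a,b$ are squarefree and primary, $\chi_a(b)$ equals $\chi_b(a)$ up to a sign $(-1)^{C(a,b)}$.
\item That sign depends only on $a,b \pmod{\lambda^4}$. Splitting into residue classes absorbs it, so one may swap the roles of the two variables.
\end{itemize}

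\textbf{Step 2: the trivial bound $\mathcal{B}(A,B)\ll (AB)^\varepsilon(A+B^2)$.}
\begin{itemize}
\item Expand the square and use orthogonality in $a$ to get $\sum_{b_1,b_2}\lambda_{b_1}\overline{\lambda_{b_2}}\sum_a \chi_a(b_1\overline{b_2}^{\,3})$.
\item The inner character sum in $a$ (via Poisson summation, Lemma \ref{lem:pois}) has main term only when $b_1\overline{b_2}^{\,3}$ is a fourth power times units. With $b_1,b_2$ squarefree this forces $b_1=b_2$, giving $A\|\lambda\|^2$.
\item The off-diagonal terms are bounded by P\'olya--Vinogradov, contributing $\ll B^{2}\|\lambda\|^2$ after Cauchy--Schwarz.
\end{itemize}

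\textbf{Step 3: Heath-Brown's recursion.}
\begin{itemize}
\item Smooth the sum over $a$ and enlarge it to all $a$ in a disc, by positivity.
\item Apply Poisson summation in $a$ modulo $b_1\overline{b_2}^{\,3}$ (Lemma \ref{lem:pois}). Dual frequencies $k$ produce quartic Gauss sums $g_4(k,\cdot)$ of conductor about $B^2$, and the dual length is $N(k)\ll B^{2+\varepsilon}/A$.
\item The zero frequency gives the diagonal $A\|\lambda\|^2$.
\item For $k\neq0$, Lemma \ref{localcomp} and twisted multiplicativity (Lemma \ref{quartic-GS-lemma1}) factor the Gauss sums as $\widetilde g_4(b_1)\overline{\widetilde g_4(b_2)}$ times quartic characters $\chi_{b_1}(k)\,\overline{\chi_{b_2}(k)}$, plus corrections on the common factors of $k$ and the $b_i$.
\item The off-diagonal is therefore a bilinear form in $k$ and $b$ with quartic characters. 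It is controlled by $\mathcal{B}(B^2/A,B)$, up to dealing with the non-squarefree part of $k$ by factoring $k=k_1k_2^2k_3^3k_4^4$.
\end{itemize}

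This yields a recursion of the shape
\[
\mathcal{B}(A,B)\ll (AB)^\varepsilon\big(A+\mathcal{B}(B^2/A,B)\big).
\]
Combining it with the trivial bound and symmetry, then optimizing, gives the term $(AB)^{2/3}$. At the balance point $A=B^{4/3}$ the dual length $B^{2}/A$ equals $B^{2/3}$, and the trivial bound applied to the dual sum gives $B^{4/3}=(AB)^{2/3}$ there. A bootstrap over a finite number of steps, each losing $(AB)^{\varepsilon}$, finishes the proof.

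\textbf{Main obstacle.} The hardest step is Step 3's handling of the non-squarefree dual variable $k$ and of the common factors $(k,b_1b_2)$. The quartic characters $\chi_{b}(k_2^2)$ and $\chi_b(k_3^3)$ are of lower order (quadratic, conjugate quartic), so the recursion must carry a family of mixed quartic/quadratic large sieves simultaneously. I would set up a joint bound for all characters $\chi_{b_1}\chi_{b_2}^2\overline{\chi_{b_3}}$ and run the induction on that whole family. I expect this bookkeeping to be the genuine difficulty, exactly as in \cite{BGL}.
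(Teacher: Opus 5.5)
The paper gives no proof of this statement. It is quoted directly from \cite[Theorem~1.3]{BGL}, so your first sentence is exactly the paper's treatment and suffices for its purposes. Your from-scratch outline, however, does not establish the bound as written.

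First, the recursion drops a factor. After Poisson in $a$, each frequency $k$ carries $A/N(b_1b_2)$ times a Gauss sum of modulus $\asymp\sqrt{N(b_1b_2)}\asymp B$ (for $N(b_i)\asymp B$). Separating variables therefore gives
\[
\mathcal{B}(A,B)\ll (AB)^{\varepsilon}\Big(A+\frac{A}{B}\,\mathcal{B}'\big(B^2/A,B\big)\Big),
\]
where $\mathcal{B}'$ is the dual sieve over all $k$, including the multiplicity coming from fourth-power parts $k_4^4$. Your version without $A/B$, fed with the transposed trivial bound $\mathcal{B}(B,B^{2/3})\ll B^{4/3}$, would give $\mathcal{B}(B^{4/3},B)\ll B^{4/3}=A$. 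That is the bound $A+B$ with no $(AB)^{2/3}$ term at all, far stronger than the theorem. Also, your identification $B^{4/3}=(AB)^{2/3}$ at $A=B^{4/3}$ is false: there $(AB)^{2/3}=B^{14/9}$. With the factor restored, one step with trivial inputs only yields $A^{3/4}B^{1/2}+B^3/A=B^{5/3}$ at that point. A smaller slip: expanding the square produces $\chi_a(b_1b_2^3)$, not $\chi_a(b_1\overline{b_2}^{\,3})$, since conjugating the argument does not conjugate the character.

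The more serious gap is the balanced range. Poisson acts as the involution $(A,B)\mapsto(B^2/A,B)$, which fixes $A=B$. There the recursion returns $\mathcal{B}(B,B)$ itself and the trivial bound is $B^2$, so no combination of recursion, symmetry and trivial bounds reaches $B^{4/3}$.

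The missing idea is Heath-Brown's induction on $AB$ \cite{HB}:
\begin{itemize}
\item In the range $A\ge B^{1+\delta}$, you feed the inductive hypothesis (not the trivial bound) into the dual sum. The dual problem $(B,B^2/A)$ is again unbalanced with smaller product, although it involves the larger family $\chi_{k_1}\chi_{k_2}^2\overline{\chi_{k_3}}$ you mention.
\item The range $B\le A<B^{1+\delta}$ is then reduced to it by monotonicity, $\mathcal{B}(A,B)\le\mathcal{B}(B^{1+\delta},B)$, at a cost of $B^{O(\delta)}$.
\end{itemize}

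In that argument $(AB)^{2/3}$ is not a balance point of trivial bounds; it is the price of the power-full part of the dual variable. With $K=B^2/A$:
\begin{itemize}
\item the fourth powers cost $\frac{A}{B}\cdot B\cdot K^{1/4}=A^{3/4}B^{1/2}$;
\item the inductive term $(KB)^{2/3}$ costs $\frac{A}{B}(KB)^{2/3}=A^{1/3}B$.
\end{itemize}
Both are at most $A+B+(AB)^{2/3}$ when $B\le A\le B^2$. In Heath-Brown's cubic argument, the analogous cubes $k_3^3$ cost exactly $\frac{A}{B}\cdot B\cdot K^{1/3}=(AB)^{2/3}$.
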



\section{Sieving error terms}\label{sec:S_R_bounds}

In this section we prove Propositions \ref{SRestimate1} and \ref{SRestimate2}. It is convenient to prove them simultaneously, since many of the technical steps are similar.

\begin{proof}[Proof of Propositions \ref{SRestimate1} and \ref{SRestimate2}]
    Recall that $1 \leq U \leq X^{100}$, and also the definitions of $B_{\omega,U}(q)$, $\widetilde{B}_{\omega,U}(q)$, and $A_{\omega}(q)$ given in \eqref{BUdef}, \eqref{BUtildedef}, and \eqref{Aomegadef} respectively. Let $\mathcal{D}_{\omega} \in \{B_{\omega,U}, \widetilde{B}_{\omega,U^{-1}},A_{\omega} \}$. The first part of this proof relates $\mathcal{D}_{\omega}(q)$ back to a Dirichlet polynomial of the correct length, replacing $N(q)$ with $N(\ra(q))$. The second part of the proof is an application of the quartic large sieve.

    \subsection{Dirichlet polynomials}

    It suffices to estimate
    \begin{equation} \label{eq:S_R_1}
        \mathcal{S}_{R}(|\mathcal{D}_\omega(q)|;F)
        =\sum_{\substack{q \in \Z[i]\\ q \equiv 1 \pmod {\lambda^7} }} |R_Y(q) \mathcal{D}_{\omega}(q)| F\Big(\frac{N(q)}{X} \Big).
    \end{equation}
    From \eqref{MYRYdef} and the divisor bound, it follows that $|R_Y(q)| \ll_\varepsilon N(q)^{\varepsilon}$ and that $R_Y(q)=0$ unless $q \mathbb{Z}[i]=\mathfrak{l}^2 \mathfrak{r}$, where $\mathfrak{r}$ is squarefree and $N(\mathfrak{l})>Y$ (note that $\mathfrak{l}$ here is not to be confused with the $\mathfrak{l}$ in \eqref{MYRYdef}). This is equivalent to $q=\ell^2 r$, where $\ell, r \in \Z[i]$ satisfy $\ell, r \equiv 1 \pmod{\lambda^3}$ with $r$ squarefree, $N(\ell)>Y$, and $\ell^2 r \equiv 1 \pmod{\lambda^7}$.
    Therefore \eqref{eq:S_R_1} is
    \begin{align} \label{SRintermed}
        \ll_\varepsilon X^{\varepsilon} \sum_{\substack{ \ell \in \Z[i] \\ \ell \equiv 1 \pmod{\lambda^3} \\ N(\ell)>Y }} \sum_{\substack{r \in \Z[i] \\ \ell^2 r \equiv 1 \pmod{\lambda^7}}} \mu^2(r) | \mathcal{D}_{\omega}(\ell^2 r)| F \Big( \frac{N(\ell^2 r)}{X} \Big).
    \end{align}

    Let $L=2^{\alpha} \gg 1$ and $R=2^{\beta} \gg 1$, where $\alpha, \beta \in \Z$.
    We insert a dyadic partition $N(\ell) \sim L$ and $N(r) \sim R$ in \eqref{SRintermed}. It suffices to consider
    parameters $L$ and $R$ such that $L^2 R \asymp X$ and $L \gg Y$. Thus, recalling that $0 \leq F \leq 1$, it follows that \eqref{SRintermed} is
    \begin{equation} \label{SRdyadic}
        \ll_\varepsilon X^{\varepsilon} \sum_{\substack{\alpha \in \Z \\ L=2^{\alpha}  \\ Y  \ll L \ll \sqrt{X} }} \sum_{\substack{ \beta \in \Z \\ R=2^{\beta} \\ R \asymp X/L^2 }}  Z_{L,R,\mathcal{D}_{\omega}}
    \end{equation}
    for
    \begin{equation} \label{ZLRdef}
        Z_{L,R,\mathcal{D}_{\omega}} := \sum_{\substack{ \ell \in \Z[i] \\ \ell \equiv 1 \pmod{\lambda^3} \\ N(\ell) \sim L }} \sum_{\substack{r \in \Z[i] \\ \ell^2 r \equiv 1 \pmod{\lambda^7} \\ N(r) \sim R }} \mu^2(r) | \mathcal{D}_{\omega}(\ell^2 r)|.
    \end{equation}

    Let $e:= (r, \ell)$. Thus $r = e r^{\prime}$ and $\ell = e \ell'$ for $r^{\prime}, \ell^{\prime} \in \mathbb{Z}[i]$ with $e, r^{\prime}, \ell^{\prime} \equiv 1 \pmod{\lambda^3}$ and $(r^{\prime},\ell^{\prime})=1$. Moreover, since $r$ is squarefree, both $e$ and $r^{\prime}$ are squarefree, and $(r^{\prime},e \ell^{\prime} )=1$. We further uniquely factorize $\ell^{\prime} = a b^2 c^3 d^4$ with $a,b,c,d \in \mathbb{Z}[i]$ such that $a,b,c,d \equiv 1 \pmod{\lambda^3}$ and $\mu^2(abc)=1$. The condition $(r^{\prime},e \ell^{\prime} )=1$ and the factorization above guarantee that $(r^{\prime},abcde)=1$. Thus
    \begin{equation} \label{chareqn_new}
        \nu_{\ell^2 r,\omega }=\nu_{a^2 b^4 c^6 d^8 e^3 r^{\prime},\omega}=\nu_{a^2 c^2 e^3 r^{\prime},\omega} \cdot \mathbf{1}_{bd},
    \end{equation}
    where $\mathbf{1}_{u}$ denotes the principal character modulo $u \Z[i]$. Denoting $f := a^2c^2 e^3$, note that the condition $\ell^2 r \equiv 1 \pmod{\lambda^7}$ is equivalent to $fr' \equiv 1 \pmod{\lambda^7}$, where we use that if $k \in \Z[i]$ satisfies $k \equiv 1 \pmod{\lambda^3}$, then $k^4 \equiv 1 \pmod{\lambda^7}$. 
    Observe also that $\nu_{f r^{\prime},\omega}$ is a primitive character with frequency $\omega$ and conductor $\ra(f)r^{\prime}\mathfrak{m}_\omega$.
    
    We now separate variables in $D_{\omega}(\ell^2 r)$ using
    \eqref{eq:V_s_f} and \eqref{eq:W_f},
    and interchange the integral with the summations by absolute convergence, obtaining 
    \begin{align} \label{BUintermed}
        B_{\omega,U}(\ell^2 r) = \frac{1}{2\pi i }\int_{2-i\infty}^{2+i\infty} (2 \pi)^{-w} U^{w}
        (4N(a^2 b^4 c^6 d^8 e^3 r^{\prime} \m_{\omega}))^{w/2} e^{w^2} \frac{\Gamma(\frac{1+|\omega|}{2}+w)}{\Gamma(\frac{1+|\omega|}{2})} & \nonumber \\
         \times \, \mathcal{E}_{\omega}(w,fr',bd) L(1/2+w, \nu_{f r^{\prime},\omega}) \, \frac{dw}{w}, & \qquad
    \end{align}
    \begin{align} \label{BUtildeintermed}
        \widetilde{B}_{\omega,U^{-1}}(\ell^2 r) =  \frac{1}{2\pi i }\int_{2-i\infty}^{2+i\infty} (2 \pi)^{-w}
        U^{-w} (4N(a^2 b^4 c^6 d^8 e^3 r^{\prime} \m_{\omega}))^{w/2} e^{w^2} \frac{\Gamma(\frac{1+|\omega|}{2}+w)}{\Gamma(\frac{1+|\omega|}{2})} & \nonumber \\
        \times \, \overline{\mathcal{E}_{\omega}(\overline{w},fr',bd)} \cdot \overline{L(1/2+\overline{w}, \nu_{f r^{\prime},\omega})} \, \frac{dw}{w}, & \qquad
    \end{align}
    and 
    \begin{align} \label{Aintermed}
        &A_{\omega}(\ell^2 r) =  \frac{1}{2\pi i }\int_{2-i\infty}^{2+i\infty} (2 \pi)^{-2w}
        (4N(a^2 b^4 c^6 d^8 e^3 r^{\prime} \m_{\omega}))^{w} e^{w^2}\frac{\Gamma(\frac{1+|\omega|}{2}+w)^2}{\Gamma(\frac{1+|\omega|}{2})^2} \nonumber \\
        &\times \mathcal{E}_{\omega}(w,fr',bd) \cdot  \overline{\mathcal{E}_{\omega}(\overline{w},fr',bd)} \cdot L(1/2+w, \nu_{f r^{\prime},\omega}) \cdot \overline{L(1/2+\overline{w}, \nu_{f r^{\prime},\omega})} \, \frac{dw}{w}, \qquad
    \end{align}
    where
    \begin{equation*}
        \mathcal{E}_{\omega}(w,fr',bd):=\prod_{\substack{\pi \text{ prime} \\ \pi \equiv 1 \pmod{\lambda^3} \\ \pi \mid bd,\ \pi \nmid fr'}} \big(1-\nu_{f r^{\prime},\omega}(\pi) N(\pi)^{-1/2-w} \big).
    \end{equation*}
    In particular, for all $\varepsilon > 0$ we have (uniformly in $\omega$)
    \begin{equation} \label{mathcalEbd}
        \mathcal{E}_{\omega}(w,fr',bd) \ll_\varepsilon X^{\varepsilon} \quad \text{for} \quad \Re(w) \geq \varepsilon.
    \end{equation}

    Note that $\nu_{fr', \omega}$ is the trivial character if and only if  $\omega = 0$ and $f = r' = 1$. This last condition is equivalent to $\ell^2 r =a^2 b^4 c^6 d^8 e^3 r^{\prime} = g^4$ for some $g \equiv 1 \pmod{\lambda^3}$. In that case, since $N(g^4) \asymp X$, a trivial bound using \cref{decaylem} and \eqref{W_f_bound} gives
    \begin{equation} \label{residuecont}
        |\mathcal{D}_\omega(\ell^2 r)| = |\mathcal{D}_0(g^4)| \ll_\varepsilon X^{\varepsilon} \cdot
        \begin{cases}
            U^{1/2} X^{1/4}  & \quad \text{if} \quad \mathcal{D}_{\omega}=B_{\omega,U}, \\
            U^{-1/2} X^{1/4} & \quad \text{if} \quad \mathcal{D}_{\omega}=\widetilde{B}_{\omega,U^{-1}}, \\
            X^{1/2} & \quad \text{if} \quad \mathcal{D}_{\omega}=A_{\omega}.
        \end{cases}
    \end{equation}
    For the non-trivial $\nu_{fr', \omega}$, we move the contour in each of the three integrals above to $\Re(w)=\varepsilon$. Then use Stirling's formula, the convexity bound for $L(s, \nu_{fr', \omega})$, and \eqref{mathcalEbd} to truncate those integrals to $\left|\Im(w)\right| \leq ((1+|\omega|)X)^{\varepsilon}$, up to error $\ll_\varepsilon ((1+|\omega|)X)^{-1000}$, and then to estimate the remaining integrands. From \eqref{residuecont}, the fact that $U \geq 1$, and Cauchy--Schwarz we deduce that
    \begin{align} \label{BUcombineintermed2}
        &|B_{\omega,U}(\ell^2 r)|+|\widetilde{B}_{\omega,U^{-1}}(\ell^2 r)| \ll_\varepsilon ((1+|\omega|)X)^{-750} + \bbone_{\omega=0} \cdot \bbone_{\ell^2 r = g^4} \cdot U^{1/2} X^{1/4+\varepsilon}  \nonumber \\
        &+ (1- \bbone_{\omega=0} \cdot \bbone_{\ell^2 r = g^4}) \cdot ((1+|\omega|) X)^{\ep} \cdot \int_{\varepsilon-i ((1+|\omega|)X)^{\varepsilon}}^{\varepsilon+i ((1+|\omega|)X)^{\varepsilon}}  |L(1/2+w, \nu_{f r^{\prime},\omega})| \, |dw|  \qquad
    \end{align}
    and
    \begin{align} \label{Aintermed2}
        & |A_{\omega}(\ell^2 r)| \ll_\varepsilon  ((1+|\omega|)X)^{-750}+\bbone_{\omega=0} \cdot \bbone_{\ell^2 r = g^4} \cdot X^{1/2+\varepsilon}  \nonumber \\
        &+(1- \bbone_{\omega=0} \cdot \bbone_{\ell^2 r = g^4}) \cdot ((1+|\omega|) X)^{\ep} \cdot \int_{\varepsilon-i ((1+|\omega|)X)^{\varepsilon}}^{\varepsilon+i ((1+|\omega|)X)^{\varepsilon}}
        |L(1/2+w, \nu_{f r^{\prime},\omega})|^2 \, |dw|. \qquad
    \end{align}
    Substituting \eqref{BUcombineintermed2} and \eqref{Aintermed2} into \eqref{ZLRdef}, we deduce that
    \begin{align} \label{ZRB}
        &Z_{L,R,B_{\omega,U}}+Z_{L,R,\widetilde{B}_{\omega,U^{-1}}} \ll_\varepsilon  ((1+|\omega|)X)^{-500}+ \bbone_{\omega=0} \cdot U^{1/2} X^{1/2+\varepsilon} + ((1+|\omega|)X)^{\varepsilon} \nonumber \\
        & \times \sup_{\substack{|t| \leq ((1+|\omega|)X)^{\varepsilon} \\ t \in \R }}
        \sum_{\substack{ a,b,c,d,e \in \Z[i] \\ a,b,c,d,e \equiv 1 \pmod{\lambda^3} \\ N(ab^2c^3d^4e) \sim L }} \mu^2(abc) \sum_{\substack{r^{\prime} \in \Z[i] \\ f r^{\prime} \equiv 1 \pmod{\lambda^7} \\ N(r^{\prime}) \sim R/N(e) \\ (r^{\prime},abcde)=1 \\ \omega=0 \implies f r^{\prime} \neq 1  }} \mu^2(e r^{\prime})  |L(1/2+\varepsilon+ it, \nu_{f r^{\prime},\omega})| \qquad
    \end{align}
    and
    \begin{align} \label{ZRA}
        Z_{L,R,A_{\omega}} \ll_\varepsilon  ((1+|\omega|)X)^{-500}+ \bbone_{\omega=0} \cdot X^{3/4+\varepsilon} + ((1+|\omega|)X)^{\varepsilon} \sup_{\substack{|t| \leq ((1+|\omega|)X)^{\varepsilon} \\ t \in \R }} & \nonumber \\
        \sum_{\substack{ a,b,c,d,e \in \Z[i] \\ a,b,c,d,e \equiv 1 \pmod{\lambda^3} \\ N(ab^2c^3d^4e) \sim L }} \mu^2(abc)
        \sum_{\substack{r^{\prime} \in \Z[i] \\ f r^{\prime} \equiv 1 \pmod{\lambda^7} \\ N(r^{\prime}) \sim R/N(e) \\ (r^{\prime},abcde)=1 \\ \omega=0 \implies f r^{\prime} \neq 1  }} \mu^2(er^{\prime})
        |L(1/2+\varepsilon + it, \nu_{f r^{\prime},\omega})|^2. & \qquad
    \end{align}

    Now apply the (balanced) approximate functional equation from \cref{lem:afe} to express $L(1/2+\varepsilon+it, \nu_{f r^{\prime},\omega})$, for $|t| \leq ((1+|\omega|)X)^{\varepsilon}$, using Dirichlet polynomials of the appropriate length. In both terms on the right side of \eqref{AFEeq}, write (uniquely) $\mathfrak{n}=\lambda^g n_1 n_2^2$ for $g \in \Z_{\geq 0}$ and $n_1,n_2 \equiv 1 \pmod{\lambda^3}$ with $\mu^2(n_1) = 1$. Then apply the triangle inequality, Stirling's formula, and the relation $\overline{V_{s, \omega}(y)} = V_{\overline{s}, \omega}(y)$ to deduce that
    \begin{align} \label{Lintermed}
        |L(1/2+\varepsilon+it, \nu_{fr^{\prime},\omega})| \ll_\varepsilon ((1+|\omega|)X)^{\varepsilon} \sum_{\pm} \sum_{g \in \Z_{\geq 0}} \frac{1}{2^{g(1/2 \pm \varepsilon)}} \sum_{\substack{n_2 \in \Z[i] \\ n_2 \equiv 1 \pmod{\lambda^3} }} \frac{1}{N(n_2)^{1 \pm 2 \varepsilon}} & \nonumber \\
        \times \, \Bigg |  \sum_{\substack{n_1 \in \Z[i] \\ n_1 \equiv 1 \pmod{\lambda^3} } } \frac{\mu^2(n_1) \nu_{f r^{\prime},\omega}(n_1)}{N(n_1)^{1/2 \pm \varepsilon + it}} V_{1/2 \pm \varepsilon + it,\omega} \Big(\frac{2^g N(n_1 n_2^2)}{2\sqrt{N(\ra(f r^{\prime}) \mathfrak m_{\omega})}}\Big) \Bigg |. & \qquad
    \end{align}
    Observe that $\text{rad} (fr^{\prime} ) \mid \text{rad}(\ell r)$, so by \cref{decaylem} we may truncate the sums in \eqref{Lintermed} up to $N(n_2^2) \leq \mathcal{N}(L,R)$ and  $N(n_1) \leq \mathcal{N}(L,R)$, incurring an error term $\ll_\varepsilon ((1+|\omega|)X)^{-1000}$, where
    \begin{equation} \label{Ndef}
        \mathcal{N}(L,R)= X^{\varepsilon}(1+|\omega|)^{1+\varepsilon} \sqrt{LR}.
    \end{equation}
    For simplicity let us separate variables by opening $V_{1/2 \pm \varepsilon + it,\omega}$ using \eqref{eq:V_s_f}, moving the contour to $\Re(w)=\varepsilon$, and applying Stirling's formula to truncate the integral to $\left|\Im(w)\right| \leq ((1+|\omega|) X)^{\varepsilon}$ with an error $\ll _\varepsilon ((1+|\omega|)X)^{-1000}$. By the triangle inequality, performing the sums over $g$ and $n_2$, we obtain
    \begin{align} 
        & |L(1/2+\varepsilon+it, \nu_{fr^{\prime},\omega})|  \nonumber \\
        & \ll_\varepsilon ((1+|\omega|)X)^{\varepsilon}  \int_{-((1+|\omega|)X)^{\varepsilon}}^{{((1+|\omega|)X)^{\varepsilon}}} \sum_{\pm} \Bigg |  \sum_{\substack{n_1 \in \Z[i] \\ n_1 \equiv 1 \pmod{\lambda^3} \\ N(n_1) \leq  \mathcal{N}(L,R) } } \frac{\mu^2(n_1) \nu_{fr^{\prime},\omega}(n_1)}{N(n_1)^{1/2 \pm \varepsilon +\varepsilon+i(t+y)}} \Bigg | |dy| +((1+|\omega|)X)^{-750}. \nonumber
    \end{align}
    By Cauchy--Schwarz, this implies
    \begin{align} \label{Lintermed3}
        & |L(1/2+\varepsilon+it, \nu_{fr^{\prime},\omega})|^2  \nonumber \\
        & \ll_\varepsilon ((1+|\omega|)X)^{\varepsilon}  \int_{-((1+|\omega|)X)^{\varepsilon}}^{{((1+|\omega|)X)^{\varepsilon}}} \sum_{\pm} \Bigg |  \sum_{\substack{n_1 \in \Z[i] \\ n_1 \equiv 1 \pmod{\lambda^3} \\ N(n_1) \leq \mathcal{N}(L,R) } } \frac{\mu^2(n_1) \nu_{f r^{\prime},\omega}(n_1)}{N(n_1)^{1/2 \pm \varepsilon +\varepsilon+i(t+y)}} \Bigg |^2 |dy| +((1+|\omega|)X)^{-750}.
    \end{align}

    \subsection{Application of the large sieve}

    We now apply Cauchy--Schwarz to deduce that the sum over $r^{\prime}$ in \eqref{ZRB} is
    \begin{equation} \label{rbd1}
        \ll \Big(\frac{R}{N(e)} \Big)^{1/2} \cdot \Bigg( \sum_{\substack{r^{\prime} \in \Z[i] \\ f r^{\prime} \equiv 1 \pmod{\lambda^7} \\ N(r^{\prime}) \sim R/N(e) \\ (r^{\prime},abcde)=1 \\ \omega=0 \implies f r^{\prime} \neq 1  }} \mu^2(er^{\prime})  |L(1/2+\varepsilon+ it, \nu_{fr^{\prime},\omega})|^2 \Bigg)^{1/2}.
    \end{equation}
    By \eqref{Lintermed3} we have
    \begin{align} \label{secmomentintermed}
        \sum_{\substack{r^{\prime} \in \Z[i] \\ f r^{\prime} \equiv 1 \pmod{\lambda^7} \\ N(r^{\prime}) \sim R/N(e) \\ (r^{\prime},abcde)=1 \\ \omega=0 \implies f r^{\prime} \neq 1  }} \mu^2(er^{\prime})  |L(1/2+\varepsilon+it, \nu_{f r^{\prime},\omega})|^2 \ll_\varepsilon ((1+|\omega|)X)^{-500} + ((1+|\omega|)X)^{\varepsilon} & \nonumber \\
        \times \sup_{\substack{y \in \R \\ |y| \leq ((1+|\omega|)X))^{\ep}}} \sum_{\pm}
        \sum_{\substack{r^{\prime} \in \Z[i] \\ r^{\prime} \equiv 1 \pmod{\lambda^3} \\  N(r^{\prime}) \sim R/N(e) }} \mu^2(r^{\prime})  \Bigg |  \sum_{\substack{n_1 \in \Z[i] \\ n_1 \equiv 1 \pmod{\lambda^3} \\ N(n_1) \leq \mathcal{N}(L,R) } } \frac{\mu^2(n_1) \nu_{f r^{\prime},\omega}(n_1)}{N(n_1)^{1/2 \pm \varepsilon +\varepsilon+i(t+y)}} \Bigg |^2, & \qquad
    \end{align}
    where we discarded the conditions $fr' \equiv 1 \pmod{\lambda^7}$, $\mu^2(e)= 1$, $(r^{\prime},abcde)=1$, and $\omega=0 \implies f r^{\prime} \neq 1$ on the right side of \eqref{secmomentintermed}
    by non-negativity.
    
    Recall that $\nu_{fr^{\prime},\omega}(n_1)=\chi_f (n_1) \xi(n_1) \chi_{r^{\prime}}(n_1)$, and that $f = a^2 c^2 e^3$. Apply the quartic large sieve (\cref{quartic-large-sieve}) to the sum over $r^{\prime}$ to deduce that \eqref{secmomentintermed} is
    \begin{align} \label{rsumest}
        & \ll_\varepsilon ((1+|\omega|)X)^{\ep} \Big(\frac{R}{N(e)}+ \mathcal{N}(L,R) + \Big( \frac{R}{N(e)} \mathcal{N}(L,R) \Big)^{2/3} \Big)  \nonumber \\
        & \ll_\varepsilon ((1+|\omega|)X)^{\ep} \Big(R+ (1+|\omega|) (LR)^{1/2} + (1+|\omega|)^{2/3} L^{1/3} R \Big),
    \end{align}
    where we used \eqref{Ndef}. Thus \eqref{rbd1} is
    \begin{equation} \label{rsumest2}
        \ll_\varepsilon ((1+|\omega|)X)^{\ep} \Big(R+ (1+|\omega|)^{1/2} L^{1/4} R^{3/4} + (1+|\omega|)^{1/3} L^{1/6} R \Big).
    \end{equation}
    Substitute \eqref{rsumest2} and \eqref{rsumest} respectively into \eqref{ZRB} and \eqref{ZRA} and estimate the sum over $a,b,c,d,e$ trivially by $X^{\ep} L$ using the divisor bound. This gives
    \begin{align} 
        Z_{L,R,B_{\omega,U}} +Z_{L,R,\widetilde{B}_{\omega,U^{-1}}}  &\ll_\varepsilon \bbone_{\omega=0} \cdot U^{1/2} X^{1/2+\varepsilon} \nonumber \\ 
        & \quad + ((1+|\omega|)X)^{\ep} \Big(LR+ (1+|\omega|)^{1/2} L^{5/4} R^{3/4} + (1+|\omega|)^{1/3} L^{7/6} R \Big) \nonumber
    \end{align}
    and
    \begin{align*} 
        Z_{L,R,A_{\omega}}
        \ll_\varepsilon \bbone_{\omega=0} \cdot X^{3/4+\varepsilon} + ((1+|\omega|)X)^{\varepsilon} \Big(LR+ (1+|\omega|) L^{3/2} R^{1/2} + (1+|\omega|)^{2/3} L^{4/3} R \Big).
    \end{align*}
    Using these two inequalities in \eqref{SRdyadic} finishes the proof of Propositions \ref{SRestimate1} and \ref{SRestimate2}.
\end{proof}


\section{Twisted sums of quartic Gauss sums} \label{Gauss_sums_section}

In this section we will frequently use \cref{quartic-GS-lemma1} and \cref{localcomp} without further reference. Let $\xi$ be the Hecke character defined in \eqref{eq:inf_hecke}. For any squarefree $\alpha \in \Z[i]$ with $\alpha\equiv 1 \pmod{\lambda^3}$, $r \in \lambda^{-2} \mathbb{Z}[i]$, and $v \equiv 1 \pmod{\lambda^3}$, denote
\begin{equation*}
    \psi_\alpha(r,s,\xi; v) := \sum_{\substack{c \in \Z[i] \\ c \equiv v \pmod{4} \\ (c, \alpha)=1}}  \frac{g_4(r, c) \xi(c)}{N(c)^s},
\end{equation*}
which converges absolutely for $\Re(s)>\frac{3}{2}$. Let
\begin{equation}\label{eq:zeta_def}
    \zeta_{\lambda}(s,\xi) := \sum_{\substack{c\in\Z[i] \\ c \equiv 1 \pmod{\lambda^3}}}  \frac{\xi(c)^{4}}{N(c)^s} = L(s, \xi^4) \cdot \Big(1 - \frac{\xi(\lambda)^4}{2^s}\Big),
\end{equation}
which converges absolutely for $\Re(s) > 1$. Also denote $\psi(r, s,\xi;v) := \psi_1(r, s,\xi;v)$,
\begin{equation*}
    \Delta_{\alpha}(s,\xi) := \prod_{\substack{\pi \text{ prime} \\ \pi\equiv 1 \pmod{\lambda^3} \\ \pi \mid \alpha}} \Big(1 - N(\pi)^{3 - 4s}\xi(\pi)^{4} \Big),
\end{equation*}
and
\begin{equation} \label{deltastardef}
    \Delta_{\alpha}^{*}(r, s,\xi):= \prod_{\substack{\pi \text{ prime} \\ \pi \equiv 1 \pmod{\lambda^3} \\ \pi \mid \alpha}}  \Big(1+ g_2\Big(\frac{r}{\pi},\pi\Big) N(\pi)^{1-2s} \xi(\pi)^2 \Big).
\end{equation}
By convention we set $g_2(\frac{r}{\pi}, \pi) = 0$ if $\pi \nmid r$, so $\Delta_{\alpha}^{*}(r, s,\xi)$ is a multiplicative function of $\alpha$. If $\xi$ is trivial then we omit it from the notations defined above. It is convenient to express $\psi_\alpha$ in terms of the simpler $\psi$.

\begin{lemma}[Reducing the coprimality conditions] \label{psi_coprimality_lemma}
    Let $s\in \C$ satisfy $\Re(s) > \frac{3}{2}$, $0 \neq r \in \lambda^{-2} \mathbb{Z}[i]$, and $v \equiv 1 \pmod{\lambda^3}$. For $\alpha, \beta \in \Z[i]$ such that $\alpha, \beta \equiv 1 \pmod{\lambda^3}$, $\mu^2(\alpha)=1$, and $(\alpha, \beta r) = 1$, we have the following. 
    \begin{enumerate}[leftmargin=*]
        \item[(i)] $\psi_{\alpha \beta}( \alpha^3 r, s,\xi;v) \Delta_\alpha(s,\xi) = \psi_{\beta} (\alpha^3 r, s,\xi;v)$.\\
        
        \item[(ii)]
              \begin{align*}
                  &\psi_{\alpha \beta}( \alpha^2 r, s,\xi;v) \Delta_\alpha(s,\xi) \\
                  &= \sum_{\substack{d \in \Z[i] \\ d \equiv 1 \pmod{\lambda^3} \\ d \mid \alpha}} \mu(d) (-1)^{C(d,v)}  \chi_d(-1)\xi(d)^3 N(d)^{2 -3s} \overline{g_4 \Big ( \frac{\alpha^2 r}{d^2}, d \Big)} \psi_{\beta} \Big ( \frac{\alpha^2 r}{d^2}, s,\xi; dv \Big).
              \end{align*}

        \item[(iii)] $\psi_{\alpha \beta}(\alpha r,s,\xi;v) \Delta^{*}_{\alpha}(\alpha r, s,\xi)=\psi_{\beta}(\alpha r,s,\xi;v)$. \\

        \item[(iv)] $\displaystyle\psi_{\alpha \beta}(r, s,\xi;v) \Delta_\alpha(s,\xi) = \sum_{\substack{d \in \Z[i] \\ d \equiv 1 \pmod{\lambda^3} \\ d \mid \alpha}}  \mu(d) (-1)^{C(d,v)} \xi(d) N(d)^{-s}  g_4(r, d) \psi_\beta(rd^2, s,\xi;dv).$
    \end{enumerate}
\end{lemma}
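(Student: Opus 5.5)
The plan is to prove all four identities prime by prime. By multiplicativity in $\alpha$ it suffices to treat $\alpha=\pi$ a single prime with $\pi\equiv 1 \pmod{\lambda^3}$. The general squarefree case then follows by induction on the number of prime factors: apply the one-prime identity with $\beta$ replaced by $\beta\alpha'$, where $\alpha=\pi\alpha'$, and iterate. In (ii) and (iv), the outputs $\psi_\beta(\cdot\,;dv)$ have shifted arguments and residue classes, so the induction must check that the hypotheses reproduce. These hypotheses are $(\alpha',\beta r')=1$ with the new $r'=\alpha^2 r/d^2$ or $rd^2$, and the new class $dv\equiv 1\pmod{\lambda^3}$. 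The signs $(-1)^{C(d,v)}$ and the symbols $\chi_d(-1)$ combine multiplicatively, using $C(d_1d_2,v)\equiv C(d_1,v)+C(d_2,v) \pmod 2$ for $d_i\equiv 1\pmod{\lambda^3}$. This last point follows since $(N(d)-1)/4 \bmod 2$ is additive, because $N(d)\equiv 1\pmod 4$.

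For a single prime $\pi\nmid \beta r$, I split the sum defining $\psi_\beta(\mu,s,\xi;v)$ according to $c=\pi^k c'$ with $(c',\pi\beta)=1$. I use twisted multiplicativity in the second form of Lemma~\ref{quartic-GS-lemma1}(ii):
\[
g_4(\mu,\pi^kc')=(-1)^{C(\pi^k,c')}g_4(c'^2\mu,\pi^k)\,g_4(\mu,c').
\]
Since $c'$ is a unit mod $\pi$, Lemma~\ref{quartic-GS-lemma1}(i) gives $g_4(c'^2\mu,\pi^k)=\overline{\chi_{\pi^k}(c'^2)}\,g_4(\mu,\pi^k)$. The character $\overline{\chi_{\pi^k}(c')^2}$ is then converted by quartic reciprocity \eqref{eq:quar_repr} into a character of $c'$ modulo $\pi$, which can be absorbed by a change of variable $\mu\mapsto \pi^{2k}\mu$ using Lemma~\ref{quartic-GS-lemma1}(i) again, now applied to $g_4(\cdot,c')$. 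Reciprocity requires $c'\equiv 1\pmod{\lambda^3}$, whereas $c'\equiv \pi^{-k}v \pmod 4$. The sign $(-1)^{C(\pi^k,c')}$ depends only on $c'\bmod 4$, and it is exactly what produces the factor $(-1)^{C(d,v)}$ once the residue class is rewritten as $c'\equiv dv$ or $d^{-1}v$. Any $\chi_\pi(-1)$ arising from inverting squares accounts for the $\chi_d(-1)$ in (ii). Note also $\xi(\pi^kc')=\xi(\pi)^k\xi(c')$.

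Next I evaluate $g_4(\mu,\pi^k)$ via Lemma~\ref{localcomp} with $\mu=\pi^j r$, where $j\in\{3,2,1,0\}$ for cases (i)--(iv) respectively, and multiplied by the appropriate $\pi^{2k}$ shift. In case (i), $v_\pi(\mu)=3$, so only $k=0$ and $k\equiv 0 \pmod 4$ with $k\le 3$ survive, apart from the $\varphi(\pi^4)$-type terms. I expect the nonzero terms to resum into a geometric series in $N(\pi)^{3-4s}\xi(\pi)^4$, giving $\Delta_\pi^{-1}$. In (iii), $v_\pi=1$ and the terms $k=0,2$ produce $1+g_2(r,\pi)N(\pi)^{1-2s}\xi(\pi)^2$ times the tail. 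In (ii) and (iv), a single $k\ge 1$ term survives besides the tail. Here a Gauss sum $\overline{g_4(\cdot,\pi)}$ or $g_4(r,\pi)$ appears, and the shift of the argument explains the $\psi_\beta(\alpha^2r/d^2)$ and $\psi_\beta(rd^2)$ terms, which are the $d=\pi$ summands. The $d=1$ summand is the original sum.

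The cleanest route is probably the reverse direction. I start from $\psi_{\beta}(\mu;v)=\sum_{k\ge0}(\text{$\pi^k$-part})\cdot\psi_{\pi\beta}(\text{shifted }\mu;\pi^{-k}v)$ and note that the shifted arguments cycle with period $4$ in $k$. This yields a small linear system relating $\psi_{\pi\beta}(\pi^jr)$ for $j=0,1,2,3$. Solving it, by multiplying by $\Delta_\pi$, gives the four formulas simultaneously.

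The main obstacle I expect is this bookkeeping. It requires tracking the exact unit, sign, and residue-class factors ($\chi_\pi(-1)$, $(-1)^{C}$, $\xi(\pi)^k$, and the class $\pi^kv \bmod 4$) so that the system closes with precisely the stated coefficients. I would verify it first for $\xi$ trivial and $k\le 4$ before writing the general case.
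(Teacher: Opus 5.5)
Your overall route is the paper's. You reduce to one prime $\pi$, split $c=\pi^k c'$, and apply the second form of Lemma~\ref{quartic-GS-lemma1}(ii) together with Lemma~\ref{localcomp}. The relations for arguments of $\pi$-adic valuation $3$ and $1$ close on themselves and give (i) and (iii). Those for valuations $0$ and $2$ couple, and the coupling is eliminated using $|g_4(\delta,\pi)|^2=N(\pi)$ before inducting on the prime factors of $\alpha$. One expectation in your sketch is wrong, though. In (i) there is no geometric series: only $k=0$ and $k=4$ survive, via the entry $g_4(\pi^3,\pi^4)=-N(\pi)^3$. This gives directly $\psi_\beta(\pi^3r,s,\xi;v)=\Delta_\pi(s,\xi)\,\psi_{\pi\beta}(\pi^3r,s,\xi;v)$, whereas summing a geometric series would produce the reciprocal relation.

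The genuine problem is the sign bookkeeping. As you describe it, it cannot close with the displayed coefficients of (ii) and (iv).
\begin{itemize}
\item For odd $k$ the cofactor lies in the class $c'\equiv \pi^{-k}v\equiv \pi v \pmod 4$. Hence $(-1)^{C(\pi^k,c')}=(-1)^{C(\pi,\pi v)}=(-1)^{C(\pi,v)}\chi_\pi(-1)$, not $(-1)^{C(\pi,v)}$.
\item In the $k=3$ relation (argument $\pi^2 r$), this $\chi_\pi(-1)$ cancels the one in $g_4(\pi^2,\pi^3)=N(\pi)^2\chi_\pi(-1)\overline{g_4(\pi)}$.
\item In the $k=1$ relation (argument $r$), it survives.
\end{itemize}
After elimination, the factor $\chi_d(-1)$ disappears from (ii) and appears in (iv), where the sign becomes $(-1)^{C(d,v)}\chi_d(-1)=(-1)^{C(d,dv)}$. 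So your claims that $(-1)^{C(\pi^k,c')}$ yields exactly $(-1)^{C(d,v)}$, and that the $\chi_\pi(-1)$ accounts for the $\chi_d(-1)$ in (ii), are both false.

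The paper's proof makes the same slip: it writes $(-1)^{C(\pi,v)}$ in its $j=1$ and $j=3$ computations. As displayed, (ii) and (iv) are in fact false. Take $\pi=-1+2i$, which satisfies $\pi\equiv 1\pmod{\lambda^3}$, $N(\pi)=5$ and $\chi_\pi(-1)=-1$, and set $\alpha=v=\pi$, $\beta=r=1$. Terms with $(c,\pi)=1$ agree on both sides, and $\Delta_\pi$ only affects norms divisible by $5^4$.
\begin{itemize}
\item In (iv), compare coefficients of $5^{-s}$. The left side has no $c=\pi$ term. The right side as displayed contributes $\xi(\pi)g_4(\pi)$ from $c=\pi$ in $\psi(1,s,\xi;\pi)$. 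It contributes another $\mu(\pi)(-1)^{C(\pi,\pi)}\xi(\pi)g_4(\pi)=\xi(\pi)g_4(\pi)$ from the $c=1$ term of the $d=\pi$ summand, since $\pi v=\pi^2\equiv 1\pmod 4$. The total is $2\xi(\pi)g_4(\pi)\neq 0$; with the extra $\chi_\pi(-1)$ the two terms cancel.
\item In (ii), compare coefficients of $125^{-s}$. The term $c=\pi^3$ gives $-25\,\xi(\pi)^3\overline{g_4(\pi)}$, and other non-coprime $c$ of norm $125$ vanish by Lemma~\ref{localcomp}. The displayed $d=\pi$ summand adds another $-25\,\xi(\pi)^3\overline{g_4(\pi)}$ instead of cancelling it.
\end{itemize}

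Carried out carefully, your plan therefore proves (i) and (iii) as stated, and (ii) and (iv) with $\chi_d(-1)$ moved from (ii) to (iv). Do not aim to reproduce the displayed coefficients verbatim. The corrected signs should also be propagated into Corollary~\ref{general_coprimality_cor} and the residue formula of Lemma~\ref{truncated_Gauss_sums_lemma}. This is harmless for the rest of the paper, since those expressions are only ever bounded in absolute value.
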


\begin{proof}
    Fix $r$ and $\beta$ as above. The proof is by induction on the number of prime factors of $\alpha$, with the base case $\alpha=1$ being trivial for each item.

    If $\gamma, \pi \equiv 1 \pmod{\lambda^3}$ and $\pi$ is a prime with $\pi \nmid \gamma$, then for any $\rho \in \Z[i]$ we have
    \begin{align} \label{sum}
        \psi_{\gamma} (\rho, s,\xi;v) & = \sum_{j=0}^\infty \sum_{\substack{c \in \Z[i] \\ c \pi^j \equiv v \pmod{4} \\ (c, \gamma \pi) = 1}} \frac{g_4(\rho, c \pi^j) \xi(c\pi^j)}{N(c\pi^j)^s}.
    \end{align}
    Write $\rho = \delta \pi^k$ for $\delta \in \lambda^{-2}\Z[i]$ with $(\delta, \pi)=1$. Using Lemma \ref{localcomp}, observe that for each $k \in \{0,1,2,3\}$, only the summands corresponding to
    $j \in\{ 0, k+1\}$ contribute to the sum in \eqref{sum}. Furthermore, from $c \pi^j \equiv v \pmod{4}$ and $(c,\pi)=1$ we have
    \begin{align*}
        g_4(\delta, c \pi) = (-1)^{C(\pi,v)} g_4(\delta, \pi) g_4(\delta \pi^2, c),
    \end{align*}
    \begin{align*}
        g_4(\delta \pi, c \pi^2) & = (-1)^{C(\pi^2, c)} g_4(\delta \pi, \pi^2) g_4(\delta \pi^5, c) \newline  \\
        & = \overline{\chi_{\pi^2}(\delta)} g_4(\pi,\pi^2) \overline{\chi_c(\pi^5)} g_4(\delta, c) \\
        &= N(\pi) g_2(\delta, \pi) g_4(\delta \pi, c),
    \end{align*}
    \begin{align*}
        g_4(\delta \pi^2, c \pi^3) & = (-1)^{C(\pi^3,c)} g_4(\delta \pi^2, \pi^3) g_4(\delta \pi^8, c) \\
        & = (-1)^{C(\pi,v)} \overline{\chi_{\pi^3}(\delta)} g_4(\pi^2, \pi^3) \overline{\chi_{c}(\pi^8)} g_4(\delta, c) \\
        &=(-1)^{C(\pi,v)} \chi_{\pi}(\delta) \chi_{\pi}(-1) N(\pi)^2 \overline{g_4(\pi)} g_4(\delta,c)  \\
        & =(-1)^{C(\pi,v)} \chi_{\pi}(-1) N(\pi)^2  \overline{g_4(\delta, \pi)} g_4(\delta,c),
    \end{align*}
    and
    \begin{align*}
        g_4(\delta \pi^3,c \pi^4)&=(-1)^{C(\pi^4, c)} g_4(\delta \pi^3,\pi^4) g_4(\delta \pi^{11},c) \\
        &=\overline{\chi_{\pi^4}(\delta)} g_4(\pi^3,\pi^4) \overline{\chi_{c}(\pi^{11})}g_4(\delta,c) \\
        &=-N(\pi)^3 g_4(\delta \pi^3,c).
    \end{align*}

    Using these observations and computations in \eqref{sum} leads respectively to
    \begin{align} \label{pi_0_transform}
        \psi_{\gamma}(\delta, s,\xi;v) = \psi_{\gamma \pi} (\delta, s,\xi;v) + (-1)^{C(\pi,v)} \xi(\pi) \frac{g_4(\delta, \pi)}{N(\pi)^s} \psi_{\gamma \pi} (\delta \pi^2, s,\xi; \pi v),
    \end{align}
    \begin{align}\label{pi_1_transform}
        \psi_{\gamma}(\delta \pi, s,\xi;v) &= \psi_{\gamma \pi} (\delta \pi, s,\xi;v) + \xi(\pi)^2\frac{N(\pi) g_2(\delta, \pi)}{N(\pi)^{2s}} \psi_{\gamma \pi} (\delta \pi, s,\xi;v) \nonumber \\
        &=\Delta_{\pi}^{*}(\delta \pi, s,\xi) \psi_{\gamma \pi} (\delta \pi, s,\xi;v),
    \end{align}
    \begin{align}\label{pi_2_transform}
        & \psi_{\gamma}(\delta \pi^2, s,\xi;v)  = \psi_{\gamma \pi} (\delta \pi^2, s,\xi;v)
        + (-1)^{C(\pi,v)} \xi(\pi)^3 \frac{\chi_{\pi}(-1) N(\pi)^2  \overline{g_4(\delta, \pi)}}{N(\pi)^{3s}} \psi_{\gamma \pi} (\delta, s,\xi; \pi v), \quad
    \end{align}
    and
    \begin{align} \label{pi_3_transform}
        \psi_{\gamma}(\delta \pi^3,s,\xi;v)&=\psi_{\gamma \pi}(\delta \pi^3,s,\xi;v)-\xi(\pi)^{4} \frac{N(\pi)^3}{N(\pi)^{4s}} \psi_{\gamma \pi}(\delta \pi^3,s,\xi;v) \nonumber \\
        & =\Delta_{\pi}(s,\xi) \psi_{\gamma \pi}(\delta \pi^3,s,\xi;v).
    \end{align}
    
    The inductive proofs of items $(i)$ and $(iii)$ follow respectively from \eqref{pi_3_transform} for $(\gamma, \delta) = (\alpha \beta, \alpha^3 r)$
    and \eqref{pi_1_transform} for $(\gamma,\delta)=(\alpha \beta, \alpha r)$.

    Now consider item $(ii)$. Note that $|g_4(\delta, \pi)|^2 = N(\pi)$ and $(-1)^{C(\pi, \pi v)} = (-1)^{C(\pi, v)} \chi_\pi(-1)$. Therefore combining  \eqref{pi_0_transform} (with $v \mapsto \pi v$) and \eqref{pi_2_transform} gives
    \begin{align} \label{remove_pi_1}
        \psi_{\gamma \pi}(\delta \pi^2,s,\xi;v) \Delta_{\pi}(s,\xi) &= \psi_{\gamma}(\delta \pi^2,s,\xi;v) \nonumber \\
        & \quad -(-1)^{C(\pi,v)}  \xi(\pi)^3
        \frac{\chi_{\pi}(-1) N(\pi)^2\overline{g_4(\delta,\pi)}}{N(\pi)^{3s}} \psi_{\gamma}(\delta,s,\xi; \pi v). \qquad
    \end{align}
    Then the proof of item $(ii)$ follows from \eqref{remove_pi_1} with $(\gamma,\delta)=(\alpha \beta,\alpha^2 r)$ and the induction hypothesis, 
    assembling the divisors of $\pi \alpha$ from those of the form $d$ and those of the form $d \pi$, for $d \mid \alpha$. Here it is convenient to use that $C(xy, z) \equiv C(x, z) + C(y, z) \pmod{2}$ for $x, y, z\equiv 1 \pmod{\lambda^3}$.

    Finally, consider item $(iv)$. Another combination of \eqref{pi_0_transform} and \eqref{pi_2_transform} (with $v \mapsto \pi v$) gives
    \begin{equation} \label{remove_pi_2}
        \psi_{\gamma \pi}(\delta,s,\xi;v) \Delta_{\pi}(s,\xi) =\psi_{\gamma}(\delta,s,\xi;v)-(-1)^{C(\pi,v)} \xi(\pi)\frac{g_4(\delta,\pi)}{N(\pi)^s} \psi_{\gamma}(\delta \pi^2,s,\xi;\pi v). 
    \end{equation}
    The proof of item $(iv)$ then follows inductively from \eqref{remove_pi_2} with $(\gamma,\delta)=(\alpha \beta, r)$, as in the previous items.
\end{proof}

\begin{corollary}[Removal of coprimality for twists]\label{general_coprimality_cor}
    Let $s\in \C$ satisfy $\Re(s) > \frac{3}{2}$, $0 \neq r \in \lambda^{-2} \mathbb{Z}[i]$, and $v \equiv 1 \pmod{\lambda^3}$. For $a, b, c, d \in \Z[i]$ such that $a, b, c, d \equiv 1 \pmod{\lambda^3}$, $\mu^2(abcd)=1$, and $(abcd, r) = 1$, we have
    \begin{align*}
        \psi_{abcd}(a b^2 c^3 r, s,\xi;v)
        = \Delta^{*}_a (acr, s, \xi)^{-1} \Delta_{bcd}(s, \xi)^{-1} \mathop{\sum \sum}_{\substack{e, f \in \Z[i] \\ e, f \equiv 1\pmod{\lambda^3} \\ e \mid b,\ f \mid d}} \mu(ef) (-1)^{C(ef,v)} \chi_{e}(-1) & \nonumber \\
        \times \, \xi(e^3f) \frac{N(e)^2}{N(e^3 f)^{s}} \overline{g_4 \Big(\frac{a b^2 c^3 r}{e^2}, e \Big)}  g_4 \Big( \frac{a b^2 c^3 r}{e^2}, f \Big) \psi \Big( \frac{ab^2 c^3 f^2 r}{e^2}, s, \xi;efv \Big). &
    \end{align*}
\end{corollary}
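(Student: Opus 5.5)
The plan is to peel off the four squarefree factors $a,c,b,d$ one at a time, applying the items of \cref{psi_coprimality_lemma} with the remaining factors absorbed into $\beta$. Write the twist as $\rho:=ab^2c^3r$. Relative to the prime divisors of each factor, $\rho$ has the following shape: it is $\alpha^3\cdot(\text{unit})$ for $\alpha=c$, $\alpha^2\cdot(\text{unit})$ for $\alpha=b$, $\alpha\cdot(\text{unit})$ for $\alpha=a$, and $\alpha^0\cdot(\text{unit})$ for $\alpha=d$. Here ``unit'' means coprime to $\alpha$, and this uses $\mu^2(abcd)=1$ and $(abcd,r)=1$. The hypotheses $\alpha\equiv1\pmod{\lambda^3}$, $\mu^2(\alpha)=1$ and $(\alpha,\beta r')=1$ of the lemma then hold at each stage.

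First I remove $a$ and $c$, which produce no divisor sums.
\begin{enumerate}
\item Apply item (iii) with $\alpha=a$, $\beta=bcd$ and twist $a\cdot(b^2c^3r)$. This gives
\[\psi_{abcd}(\rho)=\Delta^*_a(\rho,s,\xi)^{-1}\psi_{bcd}(\rho).\]
Inspecting \eqref{deltastardef}, each factor depends only on $g_2(\rho/\pi,\pi)$ for $\pi\mid a$. Since $b^2c^2$ is a square coprime to $\pi$, the quadratic twist property gives $g_2(\rho/\pi,\pi)=g_2(acr/\pi,\pi)$, and hence $\Delta^*_a(\rho)=\Delta^*_a(acr)$.
\item Apply item (i) with $\alpha=c$, $\beta=bd$ and twist $c^3\cdot(ab^2r)$. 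This gives
\[\psi_{bcd}(\rho)=\Delta_c^{-1}\psi_{bd}(\rho).\]
\end{enumerate}
The inversions of $\Delta^*_a$ and $\Delta_c$ are legitimate for $\Re(s)>3/2$, where these factors do not vanish.

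Next I remove $b$ using item (ii) with $\alpha=b$, $\beta=d$ and twist $b^2\cdot(ac^3r)$. This produces
\[\Delta_b^{-1}\sum_{e\mid b}\mu(e)(-1)^{C(e,v)}\chi_e(-1)\xi(e)^3N(e)^{2-3s}\,\overline{g_4(\rho/e^2,e)}\,\psi_d(\rho/e^2;ev).\]
The factor $N(e)^{2-3s}$ is $N(e)^2/N(e^3)^s$, matching the stated form.

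Finally I remove $d$ from each $\psi_d(\rho/e^2;ev)$ using item (iv) with $\alpha=d$, $\beta=1$ and twist $\rho/e^2$, which is coprime to $d$. This yields
\[\Delta_d^{-1}\sum_{f\mid d}\mu(f)(-1)^{C(f,ev)}\xi(f)N(f)^{-s}\,g_4(\rho/e^2,f)\,\psi(\rho f^2/e^2;efv).\]
Combining the four steps gives the claimed formula once the signs are reconciled. The total sign is $(-1)^{C(e,v)+C(f,ev)}$, whereas the statement has $(-1)^{C(ef,v)}$. By bilinearity of $C$ mod $2$ (noted in the proof of the lemma),
\[C(f,ev)\equiv C(f,e)+C(f,v),\]
so the discrepancy is $(-1)^{C(e,f)}$. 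This is where I expect the only real obstacle. I would handle it by absorbing $(-1)^{C(e,f)}$ into the $f$-Gauss sum via \cref{quartic-GS-lemma1}, or by checking whether $C(e,f)$ vanishes or is accounted for by the $\chi_e(-1)$ factor. The bookkeeping of the other factors is straightforward:
\[\mu(e)\mu(f)=\mu(ef),\qquad \xi(e)^3\xi(f)=\xi(e^3f),\qquad \Delta_b\Delta_c\Delta_d=\Delta_{bcd}.\]
Here the last identity holds since $\Delta_\alpha$ is multiplicative over the coprime squarefree factors.
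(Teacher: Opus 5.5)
Your route is the paper's: you remove $c$, $b$, $a$, $d$ with items (i)--(iv) of \cref{psi_coprimality_lemma}, and you make the same observation that $\Delta^*_a$ only sees $acr$. Taking $a$ before $c$ changes nothing. The sign obstacle you flag at the end is genuine, however, and none of your proposed remedies works. Item (iv) applied to $\psi_d(ab^2c^3r/e^2,s,\xi;ev)$ necessarily produces $(-1)^{C(f,ev)}$, so the chain gives $(-1)^{C(ef,v)+C(e,f)}\chi_e(-1)$. Moreover $C(e,f)$ need not be even: $e=-1+2i$ and $f=-1-2i$ are coprime, both $\equiv 1 \pmod{\lambda^3}$, of norm $5$, and $C(e,f)=1$. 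Each of your suggested fixes fails:
\begin{itemize}
\item The factor $\chi_e(-1)$ already sits on both sides, so it cannot absorb anything.
\item Rewriting the Gauss sums via \cref{quartic-GS-lemma1} only changes the shape of the formula.
\item Distinct pairs $(e,f)$ carry distinct series $\psi(ab^2c^3f^2r/e^2,s,\xi;efv)$, so there is no cancellation between terms to exploit.
\end{itemize}
Concretely, for $a=c=1$ and primes $b,d$ with $C(b,d)$ odd, your formula and the displayed one differ by twice the single term $(e,f)=(b,d)$. The paper's proof gets past this point only by writing $(-1)^{C(f,v)}$ instead of $(-1)^{C(f,ev)}$ in \eqref{rel_4}.

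So what your argument (and the paper's) actually establishes is the identity with $(-1)^{C(ef,v)}$ replaced by $(-1)^{C(e,v)+C(f,ev)}$. The displayed formula does not follow from \cref{psi_coprimality_lemma}. The fix is to correct the statement rather than search for a compensating identity.

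While doing so, recheck the local step \eqref{pi_0_transform} as well. There the summation variable satisfies $c\pi\equiv v$, hence $c\equiv \pi v \pmod{4}$. So \cref{quartic-GS-lemma1} gives the sign $(-1)^{C(c,\pi)}=(-1)^{C(\pi,\pi v)}=(-1)^{C(\pi,v)}\chi_\pi(-1)$ rather than $(-1)^{C(\pi,v)}$. The same slip occurs in the computation behind \eqref{pi_2_transform}. Tracking this moves the factor $\chi_d(-1)$ from item (ii) to item (iv) of the lemma. The chain then yields $(-1)^{C(e,v)+C(f,efv)}$, with no $\chi_e(-1)$.

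None of this affects the rest of the paper. In \cref{truncated_Gauss_sums_lemma} the corollary is used only to locate the pole, to bound $\mathcal{R}_M$ in absolute value, and to write down $\mathcal{T}_M$, which is afterwards only ever bounded in absolute value.
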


\begin{proof}
    By \cref{psi_coprimality_lemma} $(i)$ we have $\psi_{abcd}(ab^2 c^3 r, s,\xi;v) = \Delta_c(s,\xi)^{-1} \psi_{abd}(ab^2 c^3 r, s,\xi;v)$. Then \cref{psi_coprimality_lemma} $(ii)$ shows that $\psi_{abd}(ab^2 c^3 r, s,\xi;v)$ is equal to
    \begin{equation} \label{rel_2}
        \Delta_b(s,\xi)^{-1}
        \sum_{\substack{e \in\Z[i] \\ e \equiv 1 \pmod{\lambda^3} \\ e \mid b}} \mu(e) (-1)^{C(e,v)} \chi_e(-1) \xi(e)^3
        N(e)^{2-3s} \overline{g_4 \Big( \frac{a b^2 c^3 r}{e^2}, e \Big)} \psi_{ad} \Big( \frac{a b^2 c^3 r}{e^2}, s,\xi;e v \Big).
    \end{equation}
    Next, \cref{psi_coprimality_lemma} $(iii)$ gives $\psi_{ad} \big( \frac{a b^2 c^3 r}{e^2}, s,\xi;ev \big)=\Delta^{*}_a \big( \frac{a b^2 c^3 r}{e^2}, s,\xi \big)^{-1} \psi_d \big( \frac{a b^2 c^3 r}{e^2},s,\xi;ev \big)$. Since $(a,bcr)=1$ and $e \mid b$, we deduce from \eqref{deltastardef} that $\Delta^{*}_a \big( \frac{a b^2 c^3 r}{e^2}, s,\xi \big)=\Delta^{*}_a(a c r, s,\xi)$. Finally, \cref{psi_coprimality_lemma} $(iv)$ yields that $\psi_d \big ( \frac{a b^2 c^3 r}{e^2},s,\xi;ev \big)$ is equal to
    \begin{equation} \label{rel_4}
        \Delta_d(s,\xi)^{-1}
        \sum_{\substack{f \in \Z[i] \\ f \equiv 1 \pmod{\lambda^3} \\ f \mid d}}  \mu(f) (-1)^{C(f,v)} \xi(f) N(f)^{-s}
        g_4 \Big( \frac{ a b^2 c^3 r}{e^2}, f \Big) \psi \Big( \frac{a b^2 c^3 f^2 r}{e^2}, s,\xi;efv \Big).
    \end{equation}
    Note that $\Delta_b(s,\xi)^{-1}  \Delta_c(s,\xi)^{-1}  \Delta_d(s,\xi)^{-1}= \Delta_{bcd}(s,\xi)^{-1}$ since $\mu^2(bcd)=1$. Tracing back through the equations above yields the result.
\end{proof}

\begin{lemma}[Poles and convexity bound] \label{psi_convexity_lemma}
    Let $0 \neq r \in \lambda^{-2} \mathbb{Z}[i] $, $v \equiv 1 \pmod{\lambda^3}$, and $\xi$ be the Hecke character of $\Q(i)$ with frequency $\omega \in \Z$ given in \eqref{eq:inf_hecke}.
    The function $\psi(r, s,\xi;v)$ has meromorphic continuation in $s$ to all of $\C$.
    If $\omega \neq 0$, then $\psi(r, s,\xi;v)$ is holomorphic in the half-plane $\Re(s)>1$.
    If $\omega = 0$, then $\psi(r, s,\xi;v)$ has at most one (simple) pole in the half-plane $\Re(s)>1$,
    at $s=\frac{5}{4}$. Furthermore, for any $0<\varepsilon<\frac{1}{100}$ we have the convexity bound
    \begin{align} \label{convexbd}
        \psi(r,s,\xi;v) & \ll_{\varepsilon, \ord_{\lambda}(r)} \big(N(r)^{1/2} (|s|^3+|\omega|^3+1)\big)^{3/2-\Re(s)+\varepsilon}
    \end{align}
    uniformly for $1+\varepsilon \leq \Re(s) \leq \frac{3}{2}+\varepsilon$ and $\big|s- \frac{5}{4} \big| \geq \varepsilon$.
\end{lemma}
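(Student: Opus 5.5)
The plan is to realize $\psi(r,s,\xi;v)$ as a Dirichlet series built from Fourier coefficients of metaplectic Eisenstein series on the fourfold cover of $\GL_2$ over $\Q(i)$, following Kubota. Analytic continuation and functional equations then come from the Eisenstein theory. Convexity comes from Phragmén--Lindelöf once the two edges of the strip are controlled.

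\textbf{Meromorphic continuation and poles.} I will first reduce to a standard shape. Using \cref{general_coprimality_cor} in reverse, and the fact that the twist by $\xi$ only involves a character modulo $\m_\omega$ together with an infinity-type factor, the series $\psi(r,s,\xi;v)$ becomes a finite combination, with coefficients depending only on $\ord_\lambda(r)$ and on $\omega \bmod 4$, of the $r$-th Fourier coefficients of Kubota's metaplectic Eisenstein series $E(g,s)$. These Eisenstein series live on a congruence subgroup of level a power of $\lambda$, and carry $K$-type $\omega$ at the archimedean place.

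\begin{itemize}
\item Kubota's theory gives meromorphic continuation in $s$ together with a functional equation relating $s \leftrightarrow 2-s$ in the normalization where absolute convergence holds for $\Re(s)>\frac32$. The functional equation carries gamma factors $\Gamma(s-\tfrac12+\tfrac{|\omega|}{2})$-type ratios and the scattering matrix; the latter is built from $\zeta_\lambda(4s-3,\xi)$ ratios, cf.\ \eqref{eq:zeta_def}.
\item In $\Re(s)>1$ the only possible poles come from the constant term, that is, from the pole of $\zeta_\lambda(4s-4,\xi)$, which occurs at $4s-4=1$, i.e.\ $s=\frac54$.
\item This pole is present only when $\xi^4$ is trivial, i.e.\ $\omega=0$. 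When $\omega\neq0$ the residue is the residue of a non-principal Hecke $L$-function, which vanishes, so the residual theta function is absent and the series is holomorphic in $\Re(s)>1$.
\end{itemize}

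\textbf{Convexity bound.} The argument has three steps.

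\begin{enumerate}
\item On the line $\Re(s)=\frac32+\varepsilon$ the series converges absolutely by \eqref{nicebd}, since $|g_4(r,c)|\le N(rc)^{1/2}$. The bound there is $\ll N(r)^{\varepsilon}$, after writing $c$ coprime to $r$ times a part dividing $r^\infty$ and using \cref{localcomp}; the latter shows the $r$-part contributes at most $N(r)^{\varepsilon}$.
\item On the line $\Re(s)=\frac12-\varepsilon$ (reflected), the functional equation expresses $\psi(r,s)$ through series of the same type at $2-s$, which are again absolutely convergent. They are multiplied by a factor of size $N(r)^{1/2(2\Re(s')-\ldots)}$, which in total gives $(N(r)^{1/2}(|s|+|\omega|+1)^{3})^{1+2\varepsilon}$. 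The exponent $3$ on $|s|$ comes from the ratio of gamma factors of degree four over $\R$ (two complex gammas for a $\GL_2$ coefficient), via Stirling.
\item The poles are removed by multiplying by $(s-\frac54)$ when $\omega=0$, and one checks finite order of growth in vertical strips. Phragmén--Lindelöf then interpolates linearly in $\Re(s)$ between the two lines, giving exponent $\frac32-\Re(s)+\varepsilon$ throughout $1+\varepsilon\le\Re(s)\le\frac32+\varepsilon$, away from $s=\frac54$.
\end{enumerate}

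\textbf{Main obstacle.} I expect the main difficulty to be the precise functional equation. Its exact form with level $\lambda$-power, residue class condition $c\equiv v \pmod 4$, and archimedean weight $\omega$ requires a vector-valued Eisenstein series over cusps and characters modulo $4$. The uniformity in $r$ requires tracking how the dual coefficients at $2-s$ depend on $r$, with only $\ord_\lambda(r)$ allowed to affect constants. I would handle the residue class condition by expanding in characters modulo $4$, reducing to the twisted series treated by Kubota and by Brubaker--Bump type multiple Dirichlet series results. For the dual side, I would bound the dual series crudely by absolute convergence, relying only on \eqref{nicebd} rather than exact identification.
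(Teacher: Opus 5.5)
Your route is the one underlying the paper's proof: Kubota/Diaconu Eisenstein series for the continuation and the poles in $\Re(s)>1$, then Phragm\'en--Lindel\"of. The paper identifies $\widetilde\psi:=G(s,\xi)\zeta_\lambda(4s-3)\psi(r,s,\xi;v)$, with $G(s,\xi)=\prod_{j=1}^{3}\Gamma_\C(s+\frac{|\omega|}{2}-\frac j4)$, with Diaconu's $\widehat{\Psi}_{i_1 1}$ or $\widehat{\Psi}_{i_2 1}$, and quotes \cite[Theorem~2.1]{Dia} and \cite[Proposition~4.3]{DDHL}. Your treatment of the continuation and of the pole at $s=\frac54$ is fine in outline.

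The convexity step fails as you set it up. You apply Phragm\'en--Lindel\"of to $(s-\frac54)\psi(r,s,\xi;v)$ on $\frac12-\varepsilon\le\Re(s)\le\frac32+\varepsilon$, but $\psi$ is not holomorphic there apart from $s=\frac54$. As you note yourself, the scattering coefficient contains $\zeta_\lambda(4s-4,\xi)/\zeta_\lambda(4s-3,\xi)$. So the Eisenstein series and its Fourier coefficients inherit poles at the zeros of $\zeta_\lambda(4s-3,\xi)$, of which there are infinitely many in $\frac34\le\Re(s)<1$. For $\omega=0$ there is also the reflected pole at $s=\frac34$. Nothing forces $\widetilde\psi$ to vanish at these zeros. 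In the quadratic analogue, $\sum_c g_2(r,c)N(c)^{-s}$ is essentially $L(s-\frac12,\chi_r)/\zeta_{\Q(i)}(2s-1)$ and genuinely has such poles. This is exactly why the lemma claims holomorphy only in $\Re(s)>1$ and the bound only for $\Re(s)\ge 1+\varepsilon$: in the paper's proof, $G(s,\xi)\zeta_\lambda(4s-3,\xi)$ is holomorphic and zero-free only there. No polynomial factor removes these poles, so your interpolation is not justified.

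The repair is to interpolate the normalized function. Apply Phragm\'en--Lindel\"of to $P(s)\,\zeta_\lambda(4s-3,\xi)\psi(r,s,\xi;v)$, with $P$ a fixed polynomial killing the finitely many poles of $\widetilde\psi$ in the strip. Only at the end divide by $\zeta_\lambda(4s-3,\xi)$, using $|\zeta_\lambda(4s-3,\xi)|^{-1}\ll_\varepsilon 1$ for $\Re(s)\ge1+\varepsilon$.

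This also corrects your account of the exponent $3$. Schematically the functional equation is $\widetilde\psi(r,s)\approx N(r)^{1-s}\widetilde\psi(r',2-s)$, and $G(s,\xi)$ consists of \emph{three} complex gamma factors centred at $s=1$. Hence on $\Re(s)=\frac12-\varepsilon$ the growth of $\zeta_\lambda(4s-3,\xi)\psi$ is governed by $|G(2-s,\xi)/G(s,\xi)|\asymp(|s|+|\omega|+1)^{3(1+2\varepsilon)}$. Two complex gamma factors, as you propose, would give exponent $2$. Conversely, if you push $\psi$ itself through the functional equation, the decay $|\zeta_\lambda(4s-3,\xi)|^{-1}\approx(|s|+|\omega|+1)^{-3}$ at $\Re(4s-3)\approx-1$ essentially cancels this growth. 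So the exponent you wrote in Step~2 is not what your stated mechanism produces. The $|s|^3$ in \eqref{convexbd} is precisely the cost of the $\zeta_\lambda(4s-3,\xi)$ normalization that makes Phragm\'en--Lindel\"of legitimate.
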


\begin{proof}
    Let
    \begin{equation*}
        G(s,\xi) := \Gamma_\C\Big(s + \frac{|\omega|}{2}-\frac{3}{4}\Big) \Gamma_\C\Big(s+\frac{|\omega|}{2}-\frac{1}{2}\Big) \Gamma_\C\Big(s+\frac{|\omega|}{2}-\frac{1}{4}\Big)
    \end{equation*}
    for $\Gamma_\C(s) := 2 (2\pi)^{-s} \Gamma(s)$, and
    \begin{equation*}
        \widetilde{\psi}(r, s,\xi;v) := G(s,\xi) \zeta_{\lambda}(4s-3) \psi(r, s,\xi;v).
    \end{equation*}
    By \cite[(2.14) and (2.37)]{Dia}, $\widetilde{\psi}(r, s,\xi;v)$ is a constant multiple of
    $\widehat{\Psi}_{i_1 1}(s,r,\xi)$ (if $v \equiv 1 \pmod{4}$) or $\widehat{\Psi}_{i_2 1}(s,r,\xi)$
    (if $v \equiv 1+\lambda^3 \pmod{4}$) defined in \cite[(2.46)]{Dia}. Their meromorphic continuation to $s\in\C$ and pole structure is given in \cite[Theorem 2.1]{Dia}. Then the meromorphic continuation and pole structure of $\psi(r, s,\xi;v)$ follow from the fact that $G(s,\xi) \zeta_{\lambda}(4s-3,\xi)$ is both holomorphic and zero-free in the half-plane $\Re(s)>1$.
    The convexity bound \eqref{convexbd} was computed using the Phragmen--Lindel\"{o}f principle
    in \cite[Proposition~4.3]{DDHL}.
\end{proof}

Let
\begin{equation}
    \tau_4(r;v):=\Res_{s=\frac{5}{4}} \psi(r, s;v).
\end{equation}
No general closed form formula is known for the $\tau_4(\delta;v)$. Patterson \cite{Pat2}, with refinements by Eckhardt--Patterson \cite[Conjecture~2.11]{EckPat}, conjectured essentially that $N(\pi)^{1/4} \tau_4(\pi;v)^2$ is proportional to $\overline{\widetilde{g}_4(\pi)}$ for $\pi \equiv 1 \pmod{4}$ prime. 

Suzuki \cite{Suz1} obtained partial information about these residues. In what follows, let $0 \neq \delta \in \lambda^{-2}\Z[i]$ and $m, v \in \Z[i]$ satisfy $m, v \equiv 1 \pmod{\lambda^3}$. Suzuki \cite[Theorem~3]{Suz1} established the periodicity relation
\begin{equation} \label{quartrel}
    \tau_4(m^4 \delta;v)=\tau_4(\delta;v).
\end{equation}
For $(m, \delta) = 1$ and $\mu^2(m)=1$, he also showed \cite[Theorems 4, 5, 6]{Suz1} that
\begin{equation} \label{cuberel}
    \tau_4(m^3 \delta;v)=0 \qquad \qquad \text{for } m \neq 1,
\end{equation} 
\begin{equation} \label{squarerel}
    \tau_4(m^2 \delta;v)= (-1)^{C(m, v)}  \cdot \frac{\overline{g_4(\delta,m)}}{N(m)^{3/4}} \cdot \tau_4(\delta;mv),
\end{equation}
and $\tau_4(m \delta;v) =0$ unless
\begin{equation} \label{linrel2} 
    g_2 \Big(\frac{m \delta}{m_1},m_1 \Big)=N(m_1)^{1/2} \qquad \text{ for all } m_1 \equiv 1 \pmod{\lambda^3} \text{ such that } m_1 \mid m.
\end{equation} 

We have the following convexity bound for the residues.

\begin{lemma} \label{thetaconvexity}
    Let $\varepsilon>0$, $v \equiv 1 \pmod{\lambda^3}$, and $0 \neq r \in \lambda^{-2} \Z[i]$. Then
    \begin{equation*}
        \tau_4(r;v) \ll_{\varepsilon} N(r)^{1/8+\varepsilon}.
    \end{equation*}
\end{lemma}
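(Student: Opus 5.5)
We bound the residue $\tau_4(r;v)$ by writing it as a contour integral of $\psi(r,s;v)$ around $s=\frac{5}{4}$ and then applying the convexity bound \eqref{convexbd} of \cref{psi_convexity_lemma} with $\omega=0$.

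Fix $0<\varepsilon<\frac{1}{100}$ and take $\mathcal{C}$ to be the circle $|s-\frac54|=\varepsilon$. By \cref{psi_convexity_lemma}, when $\omega=0$ the function $\psi(r,s;v)$ is meromorphic in $\Re(s)>1$ with at most a simple pole at $s=\frac54$. Since $\varepsilon<\frac14$, the circle lies inside $1+\varepsilon\le \Re(s)\le \frac32+\varepsilon$. Hence
\begin{equation*}
\tau_4(r;v)=\frac{1}{2\pi i}\oint_{\mathcal{C}}\psi(r,s;v)\,ds ,
\end{equation*}
and it suffices to bound $\psi$ on $\mathcal{C}$.

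On $\mathcal{C}$ we have $|s-\frac54|=\varepsilon$, so \eqref{convexbd} applies with $|s|\ll 1$. It gives
\begin{equation*}
\psi(r,s;v)\ll_{\varepsilon,\ord_\lambda(r)} N(r)^{\frac12(\frac32-\Re(s)+\varepsilon)}.
\end{equation*}
On the circle $\Re(s)\ge \frac54-\varepsilon$, so the exponent is at most $\frac12(\frac14+2\varepsilon)=\frac18+\varepsilon$. The circle has length $2\pi\varepsilon$, so integrating gives $\tau_4(r;v)\ll N(r)^{1/8+\varepsilon}$ after renaming $\varepsilon$.

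It remains to make the implied constant independent of $\ord_\lambda(r)$, since \eqref{convexbd} allows it to depend on $\ord_\lambda(r)$. This is where I expect the only real work.

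First, the exponent $(N(r)^{1/2})^{1/8}$ includes the factor $N(\lambda)^{k/16}$ with $k=\ord_\lambda(r)$. So a bound that is uniform in $k$ costs only this factor, which is harmless.

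Second, the dependence itself should be removable. Write $r=\lambda^k r'$ with $(r',\lambda)=1$. The unit $i$ and the ramified prime $\lambda$ act on $g_4(\cdot,c)$ through the characters $\chi_c(i)$ and $\chi_c(\lambda)$. By \cref{quartic-GS-lemma1}(i) and the supplements \eqref{eq:ram&units_1}--\eqref{eq:ram&units_2}, these depend only on $c$ modulo $\lambda^{7}$, or a similar bounded modulus. So I would split the sum defining $\psi(\lambda^k r',s;v)$ into finitely many residue classes of $c$ modulo a fixed power of $\lambda$. In each class, $\chi_c(\lambda)^k$ is a fixed root of unity. This expresses $\psi(\lambda^k r',s;v)$ as a bounded combination of series with $k$ replaced by $k \bmod 4$.

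Those series are of the same type, with $v$ replaced by finitely many classes. Their convexity bounds hold uniformly, since only finitely many cases occur (e.g.\ $0\le k\le 3$ with $\lambda^{-2}$ allowed). The residue at $\frac54$ splits in the same way, so $\tau_4(r;v)$ is a bounded combination of residues to which the bound above applies uniformly. If this reduction proves awkward, the alternative is to simply accept the factor $N(\lambda)^{k/16}$, which is already dominated by $N(r)^{1/8}$. Then only uniformity of the constant in \eqref{convexbd} in $k$ needs checking, via the explicit form of the Phragmén--Lindelöf argument in \cite{DDHL}.
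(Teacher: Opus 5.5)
Your argument is the paper's own one-line proof: bound $\psi(r,s;v)$ on a small circle around $s=\frac54$ using \eqref{convexbd} and integrate, which gives the exponent $\frac12\big(\frac14+2\varepsilon\big)$. The uniformity in $\ord_\lambda(r)$, which the paper leaves implicit, is simpler than your residue-class splitting: since $\chi_c(\lambda)^4=1$, \cref{quartic-GS-lemma1}(i) gives $g_4(\lambda^4 r,c)=g_4(r,c)$ term by term, so $\psi(\lambda^4 r,s;v)=\psi(r,s;v)$ and you may assume $\ord_\lambda(r)\in\{-2,-1,0,1\}$, which only decreases $N(r)$.
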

\begin{proof}
    This follows from an application of \eqref{convexbd} to a contour integral around $s = \frac{5}{4}$.
\end{proof}

\begin{lemma}[Evaluation of truncated twisted sums of Gauss sums] \label{truncated_Gauss_sums_lemma}
    Let $H :\R_{>0} \to \R_{\geq 0}$ be a smooth and compactly supported function. Let $M >0$,  $\omega \in \Z$, $\xi$ be as in \eqref{eq:inf_hecke},  $v \equiv 1 \pmod{\lambda^3}$, and $s = \sigma +it$ for $0\leq \sigma \leq 2$ and $t \in \R$. Then for any $0 \neq \eta, a, b, c, d \in \Z[i]$ satisfying $a, b, c, d \equiv 1 \pmod{\lambda^3}$, $\mu^2(abcd)=1$, and $\eta \mid \lambda^{\infty}$, we have
    \begin{align}  \label{twisted_Gauss_sum_display}
        & \sum_{\substack{m \in \Z[i] \\ m \equiv v \pmod 4}} \frac{\xi(m) \overline{\chi_m(\eta ab^2c^3 d^4)} g_4(m)}{N(m)^s} H\Big(\frac{N(m)}{M}\Big)  \\
        & \qquad \qquad \qquad \qquad \qquad =\mathcal{T}_M(\eta a b^2 c^3 d^4,s,\xi;v) +
        \mathcal{R}_M(\eta ab^2c^3 d^4,s,\xi;v), \nonumber
    \end{align}
    where
    \begin{align} \label{resstate}
        &  \mathcal{T}_M(\eta a b^2 c^3 d^4,s,\xi;v) = \bbone_{\omega=0} \cdot \bbone_{c=1} \cdot \widetilde{H}(\tfrac{5}{4}-s) M^{\frac{5}{4}-s} \cdot \Delta^{*}_a (a \eta, \tfrac{5}{4})^{-1} \Delta_{bd}(\tfrac{5}{4})^{-1} N(b)^{-\frac{1}{4}} \nonumber \\
        & \times \mathop{\sum \sum}_{\substack{e, f \in \Z[i] \\ e, f \equiv 1\pmod{\lambda^3} \\ e \mid b,\ f \mid d}} (-1)^{C(efv,b)} \chi_{f}(-1) \frac{\mu(ef)}{N(ef)}
        \overline{\widetilde{g}_4 \Big(\frac{a b^2 \eta}{e^2}, e \Big)}  \widetilde{g}_4 \Big( \frac{a b^2 \eta}{e^2}, f \Big) \overline{\widetilde{g}_4 \Big(a \eta,\frac{bf}{e} \Big)} \tau_4(a \eta; bv), \qquad
    \end{align}
    and for any $A \in \Z_{\geq 0}$ and $\varepsilon > 0$ we have
    \begin{align*}
        & \mathcal{R}_M(\eta ab^2c^3 d^4,s,\xi;v) \nonumber \\
        & \ll_{H, A, \varepsilon} M^{1-\sigma+\varepsilon} \mathop{\sum \sum}_{\substack{e, f \in \Z[i] \\ e, f \equiv 1 \pmod{\lambda^3} \\ e \mid b, \ f \mid d}} \frac{N(a)^\varepsilon}{N(ef)^{1/2+\varepsilon}}
        \int_{-\infty}^\infty \frac{\big|\psi\big(\frac{\eta ab^2 c^3 f^2}{e^2}, 1+\varepsilon + i(t+y), \xi;efv\big)\big|}{(1+|y|)^A} \, dy.
    \end{align*}
    Here $\widetilde{H}(w)$ denotes the Mellin transform of $H$, namely $\widetilde{H}(w) = \int_{0}^\infty H(x) x^{w} \frac{dx}{x}$.
\end{lemma}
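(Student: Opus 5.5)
The plan is to turn the left side of \eqref{twisted_Gauss_sum_display} into a Mellin--Perron integral of the series $\psi$, shift the contour past $\Re(w)=\frac54$, and then read the residue term and the remainder term off \cref{general_coprimality_cor}.

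\textbf{Step 1: rewrite the summand.} By \cref{quartic-GS-lemma1}(i) we have $\overline{\chi_m(\rho)}\,g_4(m)=g_4(\rho,m)$ whenever $(\rho,m)=1$, where $\rho:=\eta ab^2c^3d^4$. If $(\rho,m)\neq1$, both sides vanish: the left because the character is zero, and the right by \cref{localcomp}, since we may take $m$ squarefree (otherwise $g_4(m)=0$). So the sum equals
\[
\sum_{\substack{m\equiv v \ (\mathrm{mod}\ 4)\\ (m,abcd)=1}} \frac{\xi(m)\,g_4(\rho,m)}{N(m)^s}\,H\Big(\frac{N(m)}{M}\Big).
\]
For $(m,d)=1$ we have $g_4(\rho,m)=g_4(\eta ab^2c^3,m)$, because $\overline{\chi_m(d^4)}=1$. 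Since $\eta\mid\lambda^\infty$ is coprime to $m$ and to $abcd$, I take $r=\eta$ in \cref{general_coprimality_cor}.

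\textbf{Step 2: Mellin inversion and continuation.} Write $H(N(m)/M)=\frac{1}{2\pi i}\int_{(2)}\widetilde H(w)M^{w}N(m)^{-w}\,dw$. The sum becomes
\[
\frac{1}{2\pi i}\int_{(\kappa)}\widetilde H(w)\,M^{w}\,\psi_{abcd}(ab^2c^3\eta,\,s+w,\,\xi;v)\,dw,
\]
with $\Re(s+w)>\frac32$. Now apply \cref{general_coprimality_cor} to express $\psi_{abcd}$ as $\Delta^*_a(a c\eta,\cdot)^{-1}\Delta_{bcd}(\cdot)^{-1}$ times a finite sum over $e\mid b$, $f\mid d$ of explicit coefficients times $\psi\big(\tfrac{ab^2c^3f^2\eta}{e^2},\,s+w,\,\xi;efv\big)$.

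By \cref{psi_convexity_lemma}, each $\psi$ continues meromorphically. On $\Re(s+w)\ge 1+\varepsilon$ the factors $\Delta$ and $\Delta^*$ have no zeros and are bounded, since $|g_2|\le N(\pi)^{1/2}$. Move the contour to $\Re(s+w)=1+\varepsilon$.

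\textbf{Step 3: the residue term $\mathcal{T}_M$.} The only pole crossed is at $s+w=\frac54$, and it occurs only when $\omega=0$. Its residue involves $\tau_4\big(\tfrac{ab^2c^3f^2\eta}{e^2};efv\big)$, which I simplify with Suzuki's relations.
\begin{itemize}
\item By \eqref{cuberel}, applied with $m=c$, the residue vanishes unless $c=1$.
\item Write $b^2/e^2=(b/e)^2$ and apply \eqref{squarerel} to $(b/e)^2$ and to $f^2$. The two square factors combine (via \cref{quartic-GS-lemma1}(ii)) into the Gauss sum $\overline{g_4(a\eta,bf/e)}$, with the norm factor $N(bf/e)^{-3/4}$ and the sign $(-1)^{C(\cdot,\cdot)}$. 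The remaining residue is $\tau_4(a\eta;\,bv)$, after the modulus shifts $e\cdot(b/e)\cdot f\cdot f$; here \eqref{quartrel} or reciprocity absorbs $f^2$ in the congruence class.
\end{itemize}
Collecting the powers of $N(e)$, $N(f)$ and normalizing the Gauss sums should produce the factor $N(b)^{-1/4}\mu(ef)/N(ef)$ and the stated signs. The evaluation at $s+w=\frac54$ gives the factor $\widetilde H(\frac54-s)M^{5/4-s}$.

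\textbf{Step 4: the remainder $\mathcal{R}_M$.} On the line $\Re(s+w)=1+\varepsilon$ we have:
\begin{itemize}
\item $|M^w|=M^{1-\sigma+\varepsilon}$;
\item $\widetilde H(w)$ decays faster than any power of $|\Im w|$;
\item $\Delta^{-1}\ll N(bcd)^\varepsilon$ and $(\Delta^*_a)^{-1}\ll N(a)^\varepsilon$;
\item the coefficients are bounded by $N(e)^{2}N(e^3f)^{-1-\varepsilon}\cdot|g_4|\,|g_4|\le N(ef)^{-1/2-\varepsilon}$ up to $\varepsilon$-losses, using \eqref{nicebd} in its squarefree form $|g_4(\cdot,e)|\le N(e)^{1/2}$.
\end{itemize}
This gives the stated bound with $y=\Im w$.

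The main obstacle is Step 3: the sign and unit bookkeeping when applying \eqref{squarerel} twice, and reconciling the congruence classes $efv$, $bv$ and the factors $\chi_f(-1)$ into exactly the form \eqref{resstate}. I would first verify the case $b$, $d$ prime and then induct.
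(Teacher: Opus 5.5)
Your proposal is correct and follows essentially the paper's proof: Mellin inversion, then \cref{general_coprimality_cor} with \cref{psi_convexity_lemma}, a contour shift to real part $1+\varepsilon$, and \eqref{cuberel}, \eqref{squarerel} for the residue. The paper applies \eqref{squarerel} once with $m=bf/e$, which avoids your recombination via \cref{quartic-GS-lemma1}(ii). The sign bookkeeping you flag reduces to the one-line identity $(-1)^{C(ef,v)+C(bf/e,efv)}=(-1)^{C(efv,b)}\chi_{ef}(-1)$, which follows from $C(xy,z)\equiv C(x,z)+C(y,z) \pmod{2}$ and $\chi_\gamma(-1)=(-1)^{(N(\gamma)-1)/4}$, so no induction is needed.

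Two small corrections. First, on the line of real part $1+\varepsilon$ one actually has $\Delta_{bcd}(\cdot,\xi)^{-1}\ll_\varepsilon 1$; your weaker $N(bcd)^{\varepsilon}$ would not give the bound exactly as stated. Second, the step $\tau_4(a\eta;bf^2v)=\tau_4(a\eta;bv)$ holds because $f^2\equiv 1 \pmod{4}$ for $f\equiv 1 \pmod{\lambda^3}$ and $\tau_4(\cdot\,;v)$ depends only on $v \bmod 4$, not because of \eqref{quartrel} or reciprocity.
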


\begin{proof}
    For $\Re(w) >\frac{3}{2}$ we have
    \begin{align*}
        \sum_{\substack{m \in \Z[i] \\ m \equiv v \pmod{\lambda^3}}} \frac{ \xi(m) \overline{\chi_m(\eta ab^2c^3 d^4)} g_4(m)}{N(m)^{w}} & = \sum_{\substack{m \in \Z[i] \\ m \equiv v \pmod{4} \\ (m, abcd)=1}} \frac{\xi(m) g_4(\eta ab^2c^3, m)}{N(m)^{w}} \nonumber \\
        & = \psi_{abcd}(\eta ab^2c^3, w,\xi;v).
    \end{align*}
    Applying \cref{general_coprimality_cor} and \cref{psi_convexity_lemma} to $\psi_{abcd}(\eta ab^2c^3, w,\xi;v)$, we deduce that it is holomorphic for $\Re(w) > 1$, except for at most a simple pole at $w=\frac{5}{4}$ which can only occur if $\omega = 0$.

    By Mellin inversion, the Dirichlet series in \eqref{twisted_Gauss_sum_display} is equal to
    \begin{equation}
        \frac{1}{2\pi i} \int_{2 -i\infty}^{2+i\infty} \psi_{abcd}(\eta a b^2 c^3 ,w,\xi;v)  \widetilde{H}(w-s) M^{w-s} dw,
    \end{equation}
    where the Mellin transform $\widetilde{H}$ is rapidly decaying uniformly on vertical strips. We shift the line of integration to $\Re(w) = 1+\varepsilon$. Only when $\xi$ is trivial do we pick up a possible simple pole at $w =\frac{5}{4}$, which by \cref{general_coprimality_cor} has residue
    \begin{align} \label{resintermed}
        &\bbone_{\omega=0} \cdot \widetilde{H}(\tfrac{5}{4}-s) M^{\frac{5}{4}-s} \cdot \Delta^{*}_a (a c \eta, \tfrac{5}{4})^{-1} \Delta_{bcd}(\tfrac{5}{4})^{-1}  \nonumber \\
        &\times \mathop{\sum \sum}_{\substack{e, f \in \Z[i] \\ e, f \equiv 1\pmod{\lambda^3} \\ e \mid b,\ f \mid d}} \mu(ef) (-1)^{C(ef,v)}  \chi_{e}(-1) \frac{ \overline{g_4 \big(\frac{a b^2 c^3 \eta}{e^2}, e \big)}  g_4 \big( \frac{a b^2 c^3 \eta}{e^2}, f \big)}{N(e)^{7/4} N(f)^{5/4}} \tau_4\Big( \frac{ab^2 c^3 f^2 \eta}{e^2};efv \Big).  \qquad 
    \end{align}
    Since $\mu^2(abcd)=1$ and $(abcd,\eta)=1$, we deduce from \eqref{cuberel} that $\tau_4 \big( \tfrac{ab^2 c^3 f^2 \eta}{e^2};efv \big)= 0$ unless $c=1$. Furthermore, \eqref{squarerel} implies 
    \begin{equation} \label{tau4intermed}
        \tau_4 \Big( \frac{ab^2 f^2 \eta}{e^2};efv \Big) = (-1)^{C\left(\frac{bf}{e}, efv\right)} \frac{\overline{g_4\big(a \eta,\frac{bf}{e}\big)}}{N\big(\frac{bf}{e}\big)^{3/4}} \tau_4(a\eta; bv).
    \end{equation}
    A quick computation gives $(-1)^{C(ef, v) + C\left(\frac{bf}{e}, efv\right)} = (-1)^{C(efv, b)}\chi_{ef}(-1)$. Substituting \eqref{tau4intermed} into \eqref{resintermed} and writing the expression in terms of normalized Gauss sums, we obtain $\mathcal{T}_M(\eta a b^2 c^3 d^4,\xi;v)$ given in \eqref{resstate}.

    For the remaining integral over $\Re(w) = 1+\varepsilon$, we simply use \cref{general_coprimality_cor} and bound the Gauss sums using \cref{quartic-GS-lemma1} $(i)$ and \eqref{sqrootcancel}, as $\mu^2(abcd)=1$. Since
    \begin{equation*}
        \Delta_{bcd}(1+\varepsilon+iy,\xi) \gg_\varepsilon 1, \qquad  \Delta^*_{a}(ac \eta, 1+\varepsilon+iy,\xi) \gg_\varepsilon N(a)^{-\varepsilon},
    \end{equation*}
    and for any $A \in \Z_{\geq 0}$ we have
    \begin{equation*}
        \widetilde{H}(1-\sigma + \varepsilon + iy) \ll_{H, A, \varepsilon} (1 + |y|)^{-A}
    \end{equation*}
    uniformly for $y \in \R$ and $0 \leq \sigma \leq 2$, we obtain the desired result.
\end{proof}


\section{Second moment asymptotics}\label{sec:second_moment}

\begin{proof}[Proof of \cref{prop:second_moment}]
    Recall the definitions of $A_{\omega}(q)$ in \eqref{Aomegadef} and of $\mathcal{S}_M$ in \eqref{SMdef}.
    Thus
    \begin{align} \label{eq:S_M_1}
        \mathcal{S}_M\big(\nu_{q,\omega}(\mathfrak{b}_1)\overline{\nu_{q,\omega}(\mathfrak{b}_2)}A_\omega(q);F\big) =\sum_{\substack{q \in \Z[i]\\ q \equiv 1 \pmod {\lambda^7} }}M_Y(q) \nu_{q,\omega}(\mathfrak{b}_1)\overline{\nu_{q,\omega}(\mathfrak{b}_2)}A_{\omega}(q)F\Big(\frac{N(q)}X\Big) \nonumber \\
        = \sum_{0 \ne \mathfrak{n}_1,\mathfrak{n}_2 \unlhd \Z[i]}N(\mathfrak{n}_1\mathfrak{n}_2)^{-1/2}\mathcal{S}_M( \nu_{q,\omega}(\mathfrak{b}_1\mathfrak{n}_1)\overline{\nu_{q, \omega}(\mathfrak{b}_2\mathfrak{n}_2)};F_{\mathfrak{n}_1,\mathfrak{n}_2,\omega}), \quad
    \end{align}
    where
    \begin{equation}\label{eq:Fn1n2}
        F_{\mathfrak{n}_1, \mathfrak{n}_2,\omega}(t):= F(t) W_\omega\Big(\frac{N(\mathfrak{n}_1\mathfrak{n}_2)}{4N(\mathfrak{m}_{\omega})Xt}\Big).
    \end{equation}
    By \eqref{eq:nu_q} and \eqref{MYRYdef} we have
    \begin{align*}
        & \mathcal{S}_M\big( \nu_{q,\omega}(\mathfrak{b}_1\mathfrak{n}_1)\overline{\nu_{q,\omega}(\mathfrak{b}_2\mathfrak{n}_2});F_{\mathfrak{n}_1,\mathfrak{n}_2,\omega}\big) \\
        &= \xi(\mathfrak b_1\mathfrak{n}_1)\overline{\xi(\mathfrak b_2 \mathfrak{n}_2)}\sum_{ \substack{\ell \in \Z[i] \\ \ell \equiv 1 \pmod {\lambda^3} \\ N(\ell) \le Y}}\mu(\ell) \sum_{\substack{m\in \Z[i] \\ \ell^2m\equiv 1 \pmod{\lambda^7}}}\chi_{\ell^2m}(\mathfrak b_1\mathfrak{n}_1) \overline{\chi_{\ell^2m}(\mathfrak b_2\mathfrak{n}_2)}F_{\mathfrak{n}_1,\mathfrak{n}_2,\omega}\Big(\frac{N(\ell^2m)}{X}\Big).
    \end{align*}
    For $i \in \{1,2\}$, we may write $\mathfrak{b}_i = b_i\Z[i]$ and $\mathfrak{n}_i =\lambda^{g_i} n_i \Z[i]$ for some $g_i \in \Z_{\ge 0}$ and $b_i,n_i\equiv 1 \pmod{\lambda^3}$. Since the $\mathfrak{b}_i$ are squarefree, their primary generators are as well.  Since $\ell^2m \equiv 1 \pmod {\lambda^7}$, observe that $\chi_{\ell^2m}(\lambda) =1$ and by quartic reciprocity
    \begin{align*}
        \mathcal{S}_M\big( \nu_{q,\omega}(\mathfrak{b}_1\mathfrak{n}_1) \overline{\nu_{q, \omega}(\mathfrak{b}_2\mathfrak{n}_2)}; F_{\mathfrak{n}_1,\mathfrak{n}_2,\omega}\big) = \xi(b_1\lambda^{g_1}n_1)\overline{\xi(b_2\lambda^{g_2}n_2)} \sum_{\substack{\ell \in \Z[i] \\ \ell \equiv 1 \pmod{\lambda^3}\\N(\ell) \le Y}}\mu(\ell) \quadrat{\ell}{b_1n_1b_2n_2}\\
        \times \sum_{\substack{m\in \Z[i] \\ \ell^2m \equiv 1 \pmod {\lambda^7}}} \chi_{b_1n_1}(m) \overline{\chi_{b_2n_2}(m)}F_{\lambda^{g_1}n_1, \lambda^{g_2}n_2,\omega}\Big(\frac{N(\ell^2m)}{X}\Big).
    \end{align*}
    Set $(b_1n_1,b_2n_2) =: d \equiv 1 \pmod{\lambda^3}$ and $r_i := \frac{b_in_i}{d}$ for $i\in \{1,2\}$. Then
    \begin{equation}
        \begin{split}
            \mathcal{S}_M\big(\nu_{q,\omega}(\mathfrak{b}_1\mathfrak{n}_1) \overline{\nu_{q,\omega}(\mathfrak{b}_2\mathfrak{n}_2)};F_{\mathfrak{n}_1,\mathfrak{n}_2,\omega}\big) = \xi(\lambda^{g_1}r_1)\overline{\xi(\lambda^{g_2}r_2)} \sum_{\substack{\ell \in \Z[i]\\\ell \equiv 1 \pmod{\lambda^3}\\N(\ell) \le Y}}\mu(\ell) \quadrat{\ell}{d^2r_1r_2}\\
            \times \sum_{\substack{ m \in \Z[i] \\ \ell^2m \equiv 1 \pmod {\lambda^7}}} \chi_{dr_1}(m) \overline{\chi_{dr_2}(m)}F_{\lambda^{g_1}n_1, \lambda^{g_2}n_2,\omega}\Big(\frac{N(\ell^2m)}{X}\Big).
        \end{split}
    \end{equation}
    Insert this into \eqref{eq:S_M_1} and use the relation $n_i = \frac{dr_i}{b_i}$ for $i \in \{1, 2\}$ to arrive at
    \begin{align}
        &\mathcal{S}_M\big(\nu_{q,\omega}(\mathfrak b_1)\overline { \nu_{q,\omega}(\mathfrak b_2)}A_{\omega}(q);F\big) \nonumber \\
        & = N(b_1b_2)^{1/2}\sumtwo_{g_1,g_2\in \Z_{\ge 0}}\frac{\xi(\lambda^{g_1})\overline {\xi(\lambda^{g_2})}}{N(\lambda^{g_1+g_2})^{1/2}} \sum_{\substack{d \in \Z[i]\\ d \equiv 1 \pmod{\lambda^3} }}\frac{1}{N(d)}  \sumtwo_{\substack{ r_1, r_2 \in \mathbb{Z}[i] \\ r_1, r_2 \equiv 1 \pmod{\lambda^3} \\ (r_1, r_2)=1 \\ b_i \mid d r_i }}
        \frac{\xi(r_1)\overline{ \xi(r_2)}}{N(r_1r_2)^{1/2}} \nonumber \\
        & \times \sum_{\substack{\ell \in \Z[i] \\ \ell \equiv 1 \pmod {\lambda^3} \\ (\ell, d)=1 \\ N(\ell) \leq Y}} \mu(\ell) \quadrat{\ell}{r_1 r_2} \sum_{\substack{ m \in \Z[i] \\  \ell^2 m \equiv 1 \pmod{\lambda^7}}} \psi_{dr_1,dr_2}(m)
        F_{\frac{\lambda^{g_1} d r_1}{b_1},\frac{\lambda^{g_2} d r_2}{b_2},\omega } \Big( \frac{N(\ell^2 m)}{X}  \Big), \label{eq:S_M_2}
    \end{align}
    where for $a,b \equiv 1 \pmod {\lambda^3}$ we define $\psi_{a,b}(\cdot) := \chi_{a}(\cdot)\overline{\chi_{b}}(\cdot)$. Note that the sum over $g_1$ and $g_2$ reduces to a single element $g_1=g_2=0$ unless $4 \mid \omega$.
    We may uniquely decompose $r_i =d_i m_i$ for $d_i,m_i \equiv 1 \pmod{\lambda^3}$ with $d_i\mid d^\infty$ and $(m_i,d) = 1$ for $i \in \{1,2\}$. Then $(m_1,m_2) = (d_1,d_2)=1$  and $b_i \mid dd_im_i$. In preparation to apply Poisson summation, note that
    \begin{equation}\label{eq:perpsi}
        \psi_{d r_1, d r_2} = \chi_{d r_1} \overline{\chi_{d r_2}} = \chi_{r_1} \overline{\chi_{r_2}}\mathbf{1}_d = \chi_{m_1} \overline{\chi_{m_2}} \chi_{d_1}\overline{\chi_{d_2}}\mathbf{1}_{e},
    \end{equation}
    where $e := \frac{\ra(d)}{\ra(d_1d_2)}$. Applying \cref{lem:pois} with period $m_1m_2d_1d_2e$ and observing that $\overline{\ell^2} \equiv \ell^2 \pmod{\lambda^7}$ we find that the $m$-sum in \eqref{eq:S_M_2} is equal to
    \begin{align} \label{poissonapp}
        \frac{\pi X}{64 N(\ell^2 m_1 m_2 d_1 d_2 e)} \sum_{k \in \mathbb{Z}[i]} \ddot{\psi}_{d d_1 m_1, d d_2 m_2}(k) \check{e}\Big({-\frac{k\ell^2 (m_1 m_2 d_1 d_2 e)^3}{2 \lambda^7}}\Big) & \nonumber \\
        \times \, \ddot{F}_{\frac{\lambda^{g_1} d d_1 m_1}{b_1},\frac{\lambda^{g_2} d d_2 m_2}{b_2},\omega }
        \Big( \frac{k \sqrt{X}}{\ell^2 m_1 m_2 d_1 d_2 e} \Big ).
    \end{align}
    Using the Chinese remainder theorem for the pairwise coprime moduli $m_1,m_2,$ and $d_1d_2e$, we compute from the definition \eqref{eq:ddotV} that
    \begin{equation}\label{eq:ddpsi_1}
        \begin{split}
            \ddot{\psi}_{d d_1 m_1, d d_2 m_2}(k) :=& \ \sum_{b \pmod{m_1m_2d_1d_2e}} \chi_{m_1}\overline{\chi_{m_2}} \chi_{d_1}\overline{\chi_{d_2}}\mathbf{1}_{e}(2\lambda^7b) \check e \Big({-\frac{kb}{m_1m_2d_1d_2e}}\Big) \\
            =& \ \chi_{m_1}(2\lambda^7m_2d_1d_2e) \overline{\chi_{m_2}(2\lambda^7m_1d_1d_2e)} \chi_{d_1}\overline{\chi_{d_2}}(2\lambda^7m_1m_2) g_4(-k,m_1)\overline{g_4(k,m_2)} \\
            & \ \times \sum_{a \pmod{d_1d_2e}}\chi_{d_1}\overline{\chi_{d_2}}\mathbf{1}_e(a)\check e \Big({-\frac{ka}{d_1d_2e}}\Big).
        \end{split}
    \end{equation}
    By quartic reciprocity, we have
    \begin{equation}
        \chi_{m_1}(m_2d_1d_2e) \overline{\chi_{m_2}(m_1d_1d_2e)} \chi_{d_1}\overline{\chi_{d_2}}(m_1m_2) = (-1)^{C(m_1,m_2,d_1,d_2)} \quadrat{d_1}{m_1}\quadrat{d_2}{m_2}\psi_{m_1, m_2}(e),
    \end{equation}
    where
    \begin{equation}\label{eq:rec}
        (-1)^{C(m_1,m_2,d_1,d_2)}  := (-1)^{C(m_1,m_2)+C(m_1,d_1)+C(m_1,d_2)+C(m_2,d_1)+C(m_2,d_2)}.
    \end{equation}
    Furthermore, $\chi_{m_1}\chi_{d_1} \overline{\chi_{m_2}} \overline{\chi_{d_2}} (2\lambda^7) = \psi_{m_1 d_1,m_2d_2}(-i\lambda)$. By the Chinese remainder theorem applied to the pairwise coprime moduli $d_1,d_2$, and $e$, quartic reciprocity, and the evaluation of Ramanujan sums (where $e$ is squarefree) analogous to \cite[Lemma 5.5]{DR}, we have
    \begin{align*}
        &\sum_{a \pmod{d_1d_2e}}\chi_{d_1}\overline{\chi_{d_2}}\mathbf{1}_e(a)\check e \Big({-\frac{ka}{d_1d_2e}}\Big) \\
        &= (-1)^{C(d_1,d_2)} \psi_{d_1, d_2}(e) g_4(-k,d_1)\overline{g_4(k,d_2)} \cdot \frac{\mu\big(\frac{e}{(e,k)}\big) \varphi(e)}{\varphi\big(\frac{e}{(e, k)}\big)}.
    \end{align*}
    Overall, again since $e$ is squarefree we conclude that
    \begin{equation}
        \begin{split}
            \ddot{\psi}_{dd_1m_1,dd_2m_2}(k) &= (-1)^{C(m_1,m_2,d_1,d_2) +C(d_1,d_2)} \psi_{m_1d_1,m_2d_2}(-i\lambda e)\quadrat{d_1}{m_1}\quadrat{d_2}{m_2} \\
            & \quad \times g_4(-k,m_1) g_4(-k,d_1)\overline{g_4(k,m_2)g_4(k,d_2)}\mu\Big(\frac{e}{(e,k)}\Big)\varphi((e,k)).
        \end{split}
    \end{equation}
    From \eqref{eq:quar_repr}, \eqref{eq:ram&units_1}, and \eqref{eq:ram&units_2}, the quantity in \eqref{eq:rec} and $\chi_{m_i}(-i\lambda)$ are both $\lambda^7$-periodic. Hence, to interchange the sum over $k$ and the sums over the $m_i$, we split the $m_i$ into congruence classes modulo $\lambda^7$. This gives
    \begin{equation}\label{eq:pre_split}
        \begin{split}
            & \mathcal{S}_M  \big(\nu_{q,\omega}(\mathfrak{b}_1)\overline{\nu_{q,\omega}(\mathfrak{b}_2)}A_{\omega}(q);F\big) = \frac{\pi X}{64}N(b_1b_2)^{1/2}\sumtwo_{g_1,g_2\in \Z_{\ge 0}} \frac{\xi(\lambda^{g_1})\overline{\xi(\lambda^{g_2})}}{2^{(g_1+g_2)/2}} \\
            & \times \sum_{\substack{d\in \Z[i]\\ d\equiv 1 \pmod{\lambda^3}}}\frac{1}{N(d)}
            \sumtwo_{\substack{d_1,d_2\in \Z[i] \\ d_1,d_2 \equiv 1 \pmod{\lambda^3} \\ (d_1,d_2)=1 \\ d_1d_2\mid d^{\infty}}} (-1)^{C(d_1,d_2)}  \frac{\xi(d_1) \overline{\xi(d_2)} \psi_{d_1,d_2}(e)}{{N(d_1d_2)^{3/2}}N(e)} \\
            & \times \sumtwo_{\substack{c_1,c_2 \pmod{\lambda^7}\\ c_1,c_2 \equiv 1 \pmod {\lambda^3}}}(-1)^{C(c_1,c_2,d_1,d_2)} \psi_{c_1d_1,c_2d_2}(-i\lambda)  \sum_{k\in \Z[i]} \mu\Big(\frac{e}{(e,k)}\Big) \varphi((e,k)) g_4(-k,d_1)\overline{g_4(k,d_2)}\\
            & \times \sum_{\substack{\ell \in \Z[i] \\ \ell \equiv 1 \pmod{\lambda^3} \\ (\ell,d)=1 \\ N(\ell) \le Y}} \ec \Big({-\frac{k\ell^2(c_1c_2d_1d_2e)^3}{2\lambda^7}}\Big)\quadrat{\ell}{d_1d_2} \frac{\mu(\ell)}{N(\ell)^2} \\
            & \times \sumtwo_{\substack{m_1,m_2 \in \Z[i]\\ m_i \equiv c_i \pmod{\lambda^7} \\ (m_1m_2,d)=1 \\ b_i\mid dd_i m_i \\(m_1,m_2)=1}} \psi_{m_1,m_2}(e) \xi(m_1) \overline{\xi(m_2)} \quadrat{d_1}{m_1}\quadrat{d_2}{m_2}\quadrat{\ell}{m_1m_2} \frac{g_4(-k,m_1)\overline{g_4(k,m_2)}}{N(m_1m_2)^{3/2}} \\
            &  \times \ddot{F}_{\frac{\lambda^{g_1} d d_1 m_1}{b_1},\frac{\lambda^{g_2} d d_2 m_2}{b_2},\omega }
            \Big( \frac{k \sqrt{X}}{\ell^2 m_1 m_2 d_1 d_2 e} \Big).
        \end{split}
    \end{equation}
    The main term will arise from $k = 0$, so we write
    \begin{equation}\label{eq:M(b1b2)R(b1b2)}
        \mathcal{S}_M \big(\nu_{q,\omega}(\mathfrak{b}_1)\overline{\nu_{q,\omega}(\mathfrak{b}_2})A_\omega(q);F\big) = \mathcal M_{\omega}(b_1,b_2) + \mathcal {R}_{\omega}(b_1,b_2),
    \end{equation}
    where $\mathcal M_{\omega}(b_1,b_2)$ corresponds to all terms with $k=0$ and $\mathcal R_{\omega}(b_1,b_2)$ corresponds to all terms with $0\ne k \in \Z[i]$ in \eqref{eq:pre_split}.
    
    
    \subsection{The main term $\mathcal M_{\omega}(b_1,b_2)$} 
    
    The Gauss sum $g_4(0,c)$ vanishes unless $c$ is a fourth power, in which case $g(0,c) = \varphi(c)$. Hence in \eqref{eq:pre_split} we make the change of variables $d_i\mapsto d_i^4$ and $m_i \mapsto m_i^4$ for $i \in \{1,2\}$. Furthermore, if $c \equiv 1 \pmod{\lambda^3}$ then $c^4 \equiv 1 \pmod{\lambda^7}$ and $(-1)^{C(c^4, v)} = 1$ for any $v\equiv1\pmod{\lambda^3}$. Thus we have
    \begin{equation}\label{eq:main_term_expression}
        \begin{split}
            &\mathcal M_{\omega}(b_1,b_2) \\
            &= \frac{\pi X}{64}N(b_1b_2)^{1/2} \sumtwo_{g_1, g_2\in \Z_{\ge 0}} \frac{\xi(\lambda^{g_1})\overline{\xi(\lambda^{g_2})}}{2^{(g_1+g_2)/2}} \sum_{\substack{d\in \Z[i]\\ d\equiv 1 \pmod{\lambda^3}}}\frac{1}{N(d)} \sumtwo_{\substack{d_1,d_2\in \Z[i] \\ d_1,d_2 \equiv 1 \pmod{\lambda^3} \\ (d_1,d_2)=1 \\ d_1d_2\mid d^{\infty}}} \xi(d_1^4)\overline{\xi(d_2^4)}\ \frac{\varphi(e)}{N(e)} \\
            &\times \frac{\varphi(d_1^4d_2^4)}{N(d_1d_2)^6} \sumtwo_{\substack{m_1,m_2 \in \Z[i] \\ m_1,m_2 \equiv 1 \pmod{\lambda^3} \\ (m_1m_2,d)=1 \\ b_i\mid dd_i^4 m_i^4 \\(m_1,m_2)=1}}\xi(m_1^4)\overline{\xi(m_2^4)}\frac{\varphi(m_1^4m_2^4)}{N(m_1m_2)^6}  \ddot{F}_{\frac{\lambda^{g_1} d d_1^4 m_1^4}{b_1},\frac{\lambda^{g_2} d d_2^4 m_2^4}{b_2},\omega }
            (0) \sum_{\substack{\ell \in \Z[i]\\ \ell \equiv 1 \pmod{\lambda^3} \\ (\ell,dm_1m_2)=1 \\ N(\ell)\le Y}}\frac{\mu(\ell)}{N(\ell)^2}.
        \end{split}
    \end{equation}
    The sum over $\ell$ is
    \begin{equation}\label{eq: ell_sum_eval}
        \sum_{\substack{ \ell \in \Z[i] \\ \ell \equiv 1 \pmod{\lambda^3} \\ N(\ell) \leq Y}} \frac{\mu(\ell) \mathbf{1}_{d m_1 m_2}(\ell)}{N(\ell)^2} = \zeta_\lambda(2)^{-1} \prod_{\substack{\pi \text{ prime} \\ \pi \equiv 1 \pmod{\lambda^3} \\ \pi \mid d m_1 m_2}} \Big(1 - \frac{1}{N(\pi)^2}\Big)^{-1} + O\Big(\frac{1}{Y}\Big).
    \end{equation}

    Let us now evaluate $F_{\mathfrak{n}_1, \mathfrak{n}_2,\omega}(0)$. Recall that $F_{\mathfrak{n}_1, \mathfrak{n}_2,\omega}(t) = F(t) W_{\omega} \big(\frac{N(\mathfrak{n}_1 \mathfrak{n}_2)}{4N(\m_{\omega})Xt} \big)$ and  $F$ has support in $(1, 2)$. Since $J_0(0)=1$, by \eqref{eq:ddotV} and \eqref{eq:W_f} we have
    \begin{align}
        \ddot{F}_{\mathfrak{n}_1, \mathfrak{n}_2,\omega}(0) & = \int_1^{\sqrt{2}}  r F(r^2) W_{\omega} \Big(\frac{N(\mathfrak{n}_1 \mathfrak{n}_2)}{4N(\m_{\omega})Xr^2} \Big) dr \nonumber \\
        & =  \frac{1}{2\pi i} \int_{2 -i\infty}^{2 + i\infty} \Big(\frac{\pi^2 N(\mathfrak{n}_1 \mathfrak{n}_2)}{N(\m_{\omega})X} \Big)^{-w} e^{w^2} \frac{\Gamma(\frac{1}{2}+\frac{|\omega|}{2}+w)^2}{\Gamma(\frac{1}{2}+\frac{|\omega|}{2})^2} \check{F}(w) \frac{dw}{2w}, \label{F_dot_dot_zero}
    \end{align}
    where $\check F(w) : = \int_0^{\infty}t^{w}F(t)dt = 2 \int_1^{\sqrt{2}}r^{2w+1}F(r^2)dr$.  By the decay properties of $W_{\omega}$ described in \eqref{W_f_bound} and the fact that $0 \le F(t) \le 1$, for every $A \in \Z_{\geq 0}$ we have
    \begin{equation}\label{F_dot_dot_zero_bound}
        \ddot{F}_{\mathfrak n_1,\mathfrak n_2,\omega}(0) \ll_{A}\Big(1+\frac{N(\mathfrak n_1 \mathfrak n_2)}{(1+|\omega|)^2X}\Big)^{-A}.
    \end{equation}
    To bound the error term in \eqref{eq: ell_sum_eval}, we use a version of \cite[Lemma 7.1]{DDDS24} for $\Z[i]$.
    \begin{lemma}\label{gcd_sum_lemma}
        For any $C \geq 1/2$, $1\geq \Delta \geq 0$, $\delta \in \R$, and $g \in \Z[i]$ with $g \equiv 1 \pmod{\lambda^3}$,
        \begin{equation*}
            \sum_{\substack{c\in \Z[i] \\ c\equiv 1 \pmod{\lambda^3} \\ N(c) \sim C}} \frac{N((g, c))^\Delta}{N(c)^\delta} \ll_{\delta, \varepsilon} N(g)^\varepsilon C^{1 - \delta}.
        \end{equation*}
    \end{lemma}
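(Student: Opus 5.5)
We group the terms according to the value of the gcd. Set $h := (g,c)$; it is a divisor of $g$ with $h \equiv 1 \pmod{\lambda^3}$ (after normalizing units), and $h \mid c$. Writing $c = h c'$ with $c' \equiv 1 \pmod{\lambda^3}$ and dropping the condition $(g/h, c')=1$ (the summand is nonnegative), we bound the sum by
\begin{equation*}
\sum_{\substack{h \mid g \\ h \equiv 1 \pmod{\lambda^3} \\ N(h) \le 2C}} N(h)^{\Delta} \sum_{\substack{c' \equiv 1 \pmod{\lambda^3} \\ N(c') \sim C/N(h)}} \frac{1}{N(hc')^{\delta}}.
\end{equation*}
On the dyadic range $N(hc') \sim C$ we have $N(hc')^{-\delta} \ll_\delta C^{-\delta}$, for any real $\delta$. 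The inner sum counts Gaussian integers in an annulus, restricted to a fixed residue class modulo $\lambda^3$. It is therefore $\ll C/N(h) + 1$. Since $N(h) \le 2C$, this is in turn $\ll C/N(h)$.

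The total is thus
\begin{equation*}
\ll_\delta C^{1-\delta} \sum_{h \mid g} N(h)^{\Delta - 1}.
\end{equation*}
Because $\Delta \le 1$, each term is at most $1$. So the sum over $h$ is bounded by the number of divisors of $g$, which is $\ll_\varepsilon N(g)^{\varepsilon}$ by the divisor bound in $\Z[i]$. This gives the claimed estimate.

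The only step needing any care is the lattice-point count. Here we need the uniform bound $\#\{c' : N(c') \sim T,\ c' \equiv 1 \pmod{\lambda^3}\} \ll T + 1$, uniform for $T \ge 1/4$ (small $T$ occurs when $N(h)$ is close to $2C$). This follows from the standard fact that a disc of radius $R$ contains $\ll R^2 + 1$ points of any translate of a lattice of fixed covolume. The hypothesis $C \ge 1/2$ keeps $C^{1-\delta}$ from being an artificially small bound when the sum is nonempty. Nothing else is delicate.
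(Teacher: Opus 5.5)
Your proof is correct. Grouping by $h=(g,c)$ and dropping the coprimality condition by positivity is the standard argument. Counting $c'$ in the annulus by $\ll C/N(h)$ uses $N(h)\le N(c)<2C$, and you finish with $\Delta\le 1$ and the divisor bound in $\Z[i]$. The paper does not write this out; it simply invokes \cite[Lemma 7.1]{DDDS24} transposed to $\Z[i]$, which is proved this way. Note also that the hypothesis $C\ge 1/2$ is never actually used, since the sum is empty when $2C\le 1$.
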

    Observe that the condition $b_i \mid d d_i^4 m_i^4$ is equivalent to $\frac{b_i}{(b_i, d)} \mid m_i$, since $\mu^2(b_i) = 1$ and $d_i \mid d^\infty$. Inserting \eqref{eq: ell_sum_eval} into \eqref{eq:main_term_expression}, we can use \eqref{F_dot_dot_zero_bound} to conclude that the error term arising from $O(\frac{1}{Y})$ contributes
    \begin{align}
        &\ll \frac{X N(b_1b_2)^{1/2}}{Y} \sumthree_{\substack{d, d_1, d_2 \in \Z[i] \\ d, d_1, d_2 \equiv 1 \pmod{\lambda^3} \\ (d_1, d_2)=1 \\ d_1 d_2 \mid d^\infty}} \sumtwo_{\substack{m_1, m_2 \in \Z[i] \\ m_1, m_2 \equiv 1 \pmod{\lambda^3} \\ (m_1, m_2) = (m_i, d) = 1 \\ \frac{b_i}{(b_i, d)} \mid m_i}} \frac{\big(1 + \frac{N(d)^2 N(d_1 m_1 \cdot d_2 m_2)^4}{N(b_1b_2)(1+|\omega|)^2 X} \big)^{-100}}{N(d) N(d_1 m_1 \cdot d_2 m_2)^{2}} \nonumber \\
        &\ll \frac{X N(b_1b_2)^{1/2}}{Y} \sum_{\substack{d \in \Z[i] \\ d \equiv 1 \pmod{\lambda^3}}} \frac{\big(1 + \frac{N(d^2)}{N(b_1b_2)(1+|\omega|)^2 X} \big)^{-100}}{N(d)} \frac{N((b_1, d)(b_2, d))^{2}}{N(b_1 b_2)^{2}} \nonumber \\
        &\ll \frac{X}{Y} \sum_{\substack{d \in \Z[i] \\ d \equiv 1 \pmod{\lambda^3}}} \frac{N((b_1, d))+ N((b_2, d))}{N(d)} \Big(1 + \frac{N(d^2)}{N(b_1b_2)(1+|\omega|)^2 X} \Big)^{-100} \ll_\ep \frac{(1+|\omega|)^{\ep}X^{1+\varepsilon}}{Y} \nonumber,
    \end{align}
    where the last step follows from 
    \cref{gcd_sum_lemma}.

    We use $\eqref{F_dot_dot_zero}$ with $n_i = \frac{d\lambda^{g_i}d_i^4m_i^4}{b_i}$ for $i\in \{1,2\}$ in \eqref{eq:main_term_expression} to find
    \begin{equation}\label{eq:M_contour}
        \begin{split}
            \mathcal M_{\omega}(b_1,b_2) &= \frac{\pi X}{64 \cdot \zeta_{\lambda}(2)}\frac{1}{2\pi i} \int_{(2)} \Big(\frac{\pi^2}{N(\m_{\omega})X}\Big)^{-w}\Big(\frac{\Gamma(\frac{1+|\omega|}{2}+w)}{\Gamma(\frac{1+|\omega|}{2})}\Big)^2 e^{w^2} \check F(w) \mathcal G_{b_1,b_2,\omega}(w)\frac{dw}{2w}\\
            & \quad  + O_{\ep}\Big(\frac{(1+|\omega|)^{\ep}X^{1+\ep}}Y\Big),
        \end{split}
    \end{equation}

    where

    \begin{equation}\label{eq:Gb1b2}
        \begin{split}
            &\mathcal{G}_{b_1,b_2,\omega}(w) := N(b_1b_2)^{\tfrac 1 2+w} \Big(1-\frac{\xi(\lambda)}{2^{1/2+w}}\Big)^{-1}\Big(1-\frac{\overline{\xi(\lambda)}}{2^{1/2+w}}\Big)^{-1}\sum_{\substack{d\in \Z[i]\\d \equiv 1 \pmod{\lambda^3}}}\frac{1}{N(d)^{1+2w}}\\
            &\times  \Bigg(\prod_{\substack{\pi \ \text{prime } \\ \pi \equiv 1 \pmod{\lambda^3}\\ \pi \mid d}}\Big(1-\frac{1}{N(\pi)^2}\Big)^{-1} \Bigg)\Bigg(\sumtwo_{\substack{d_1,d_2 \in \Z[i]\\ d_1,d_2 \equiv 1 \pmod{\lambda^3} \\ (d_1,d_2)=1 \\ d_1d_2 \mid d^{\infty}}} \frac{\varphi(e)}{N(e)}\frac{\varphi(d_1^4d_2^4) \xi(d_1^4)\overline{\xi(d_2^4)}}{N(d_1d_2)^{6+4w}}\Bigg) \\
            & \times \sumtwo_{\substack{m_1,m_2 \in \Z[i] \\ m_1,m_2 \equiv 1 \pmod{\lambda^3}\\(m_1m_2,d)=1 \\ \frac{b_i}{(b_i,d)}\mid m_i \\(m_1,m_2)=1}} \frac{\varphi(m_1^4m_2^4) \xi(m_1^4)\overline{\xi(m_2^4)}}{N(m_1m_2)^{6+4w}}  \prod_{\substack{\pi \ \text{prime} \\ \pi \equiv 1 \pmod{\lambda^3} \\\pi |m_1m_2}}\Big(1-\frac{1}{N(\pi)^2}\Big)^{-1}.
        \end{split}
    \end{equation}
    
    Define $b_i':= \frac{b_i}{(b_i,d)}$ for $i \in \{1, 2\}$, so that $(b'_i, d)=1$ since $b_1$ and $b_2$ are squarefree. Note that the sums over $m_1$ and $m_2$ in  \eqref{eq:Gb1b2} are empty unless $(b'_1,b'_2)=1$. Assuming this is the case, the sums over $m_1$ and $m_2$ are then equal to
    \begin{align*}
        &  \sum_{\substack{m_1 \in \Z[i] \\ m_1 \equiv 1 \pmod{\lambda^3} \\ (m_1, d) = 1 \\  b'_1 \mid  m_1}} \frac{\varphi(m_1^4)\xi(m_1^4)}{N(m_1)^{6+4w}} \sum_{\substack{m_2 \in \Z[i] \\ m_2 \equiv 1 \pmod{\lambda^3} \\ (m_2, d) = 1 \\  b'_2 \mid m_2 \\ (m_1, m_2)=1}} \frac{\varphi(m_2^4)\overline{\xi(m_2^4)}}{N(m_2)^{6+4w}} \prod_{\substack{\pi \text{ prime} \\ \pi \equiv 1 \pmod{\lambda^3} \\ \pi \mid m_1 m_2}} \Big(1 - \frac{1}{N(\pi)^2}\Big)^{-1} \\
        & = \prod_{\substack{\pi \text{ prime} \\ \pi \equiv 1 \pmod{\lambda^3} \\ \pi \nmid d b'_1 b'_2}}  \Bigg( 1 + \sum_{k=1}^\infty \frac{\varphi(\pi^{4k}) \big(\xi(\pi^{4k}) + \overline{\xi(\pi^{4k})}\big) \big(1 - \frac{1}{N(\pi)^2}\big)^{-1}}{N(\pi)^{(6+4w)k}} \Bigg) \\
        & \quad \times \prod_{\substack{\pi \ \text{prime}\\ \pi \equiv 1 \pmod{\lambda^3}\\ \pi \mid b'_1}} \Bigg( \sum_{k=1}^{\infty} \frac{\varphi(\pi^{4k}) \xi(\pi^{4k}) \big(1 - \frac{1}{N(\pi)^2}\big)^{-1}}{N(\pi)^{(6+4w)k}} \Bigg) \\
        & \quad \times \prod_{\substack{\pi \ \text{prime}\\ \pi \equiv 1 \pmod{\lambda^3}\\ \pi \mid b'_2}} \Bigg( \sum_{k=1}^{\infty} \frac{\varphi(\pi^{4k}) \overline{\xi(\pi^{4k})} \big(1 - \frac{1}{N(\pi)^2}\big)^{-1}}{N(\pi)^{(6+4w)k}} \Bigg).
    \end{align*}
    We introduce the convenient notations
    \begin{equation}\label{eq:F(pi,w)}
        \begin{split}
            F_{1,\omega}(\pi,w) & := \sum_{k=1}^{\infty} \frac{\varphi(\pi^{4k}) \xi(\pi^{4k}) \big(1 - \frac{1}{N(\pi)^2}\big)^{-1}}{N(\pi)^{(6+4w)k}} = \frac{\big(1 + \frac{1}{N(\pi)}\big)^{-1}}{N(\pi)^{2+4w}\overline{\xi(\pi^{4})}-1}, \\
            F_{2,\omega}(\pi,w) & := \sum_{k=1}^{\infty} \frac{\varphi(\pi^{4k}) \overline{\xi(\pi^{4k})} \big(1 - \frac{1}{N(\pi)^2}\big)^{-1}}{N(\pi)^{(6+4w)k}} = \frac{\big(1 + \frac{1}{N(\pi)}\big)^{-1}}{N(\pi)^{2+4w}\xi(\pi^{4})-1}, \\
            E_{\omega}(\pi,w) & := 1 + F_{1, \omega}(\pi,w) + F_{2, \omega}(\pi,w), \\
            \mathcal{E}_{\omega}(w) & := \prod_{\substack{\pi \ \text{prime}\\ \pi \equiv 1 \pmod{\lambda^3}}}E_\omega(\pi,w).
        \end{split}
    \end{equation}
    Thus the sums over $m_1$ and $m_2$ in \eqref{eq:Gb1b2} are equal to
    \begin{align*}
        \mathcal{E}_{\omega}(w) \prod_{\substack{\pi \: \text{prime}\\ \pi \equiv 1 \pmod{\lambda^3} \\ \pi \mid d b'_1 b'_2}} E^{-1}_{\omega}(\pi,w) & \prod_{\substack{\pi \ \text{prime}\\ \pi \equiv 1 \pmod{\lambda^3} \\ \pi \mid b_1'}} F_{1,\omega}(\pi,w)  \prod_{\substack{\pi \ \text{prime}\\ \pi \equiv 1 \pmod{\lambda^3}\\ \pi \mid b_2'}} F_{2,\omega}(\pi,w).
    \end{align*}
    
    The next step is to write the sums over $d_1$ and $d_2$ in \eqref{eq:Gb1b2} as an Euler product. Recall that $e = \frac{\ra(d)}{\ra(d_1d_2)}$, hence from the identity $\frac{\varphi(\ra(r))}{N(\ra(r))} = \frac{\varphi(r)}{N(r)}$ for $r\equiv 1 \pmod{\lambda^3}$ we obtain
    \begin{align*}
        &\sumtwo_{\substack{d_1,d_2 \in \Z[i]\\ d_1,d_2 \equiv 1 \pmod{\lambda^3} \\ (d_1,d_2)=1 \\ d_1d_2 \mid d^{\infty}}} \frac{\varphi(e)}{N(e)}\frac{\varphi(d_1^4d_2^4) \xi(d_1^4)\overline{\xi(d_2^4)}}{N(d_1d_2)^{6+4w}} = \frac{\varphi(d)}{N(d)} \sumtwo_{\substack{ d_1,d_2 \in \Z[i]\\ d_1,d_2 \equiv 1 \pmod{\lambda^3}\\ (d_1,d_2)=1\\ d_1 d_2 \mid d^{\infty}}} \frac{\xi(d_1^4)\overline{\xi(d_2^4)}}{N(d_1d_2)^{2+4w}} \\
        &= \frac{\varphi(d)}{N(d)} \prod_{\substack{\pi \ \text{prime}\\ \pi \equiv 1 \pmod{\lambda^3} \\ \pi  \mid d}} \Big(1+\sum_{k=1}^{\infty} \frac{\xi(\pi^{4k})+\overline{\xi(\pi^{4k})}}{N(\pi)^{(2+4w)k}}\Big) =: \frac{\varphi(d)}{N(d)} \prod_{\substack{\pi \ \text{prime} \\ \pi \equiv 1 \pmod{\lambda^3}\\\pi  \mid d}}H_{\omega}(\pi,w).
    \end{align*}
    Therefore
    \begin{align}
        \mathcal G_{b_1,b_2,\omega}(w) & = \mathcal{E}_{\omega}(w)N(b_1b_2)^{\frac{1}{2}+w}   \Big(1-\frac{\xi(\lambda)}{2^{1/2+w}}\Big)^{-1}\Big(1-\frac{\overline{\xi(\lambda)}}{2^{1/2+w}}\Big)^{-1} \nonumber \\
        & \quad \times \sum_{\substack{d \in \Z[i]\\ d\equiv 1 \pmod{\lambda^3}\\ (b'_1,b'_2)=1}}\frac{1}{N(d)^{1+2w}} \prod_{\substack{\pi \ \text{prime}\\ \pi \equiv 1 \pmod{\lambda^3} \\ \pi \mid d}} \Big(1+\frac{1}{N(\pi)}\Big)^{-1}(E_{\omega}^{-1}H_{\omega})(\pi,w) \qquad \nonumber \\
        & \quad \times  \prod_{\substack{\pi \ \text{prime}\\ \pi \equiv 1 \pmod{\lambda^3} \\ \pi \mid b'_1}}(E_{\omega}^{-1}F_{1,\omega})(\pi,w)\prod_{\substack{\pi \ \text{prime}\\ \pi \equiv 1 \pmod{\lambda^3} \\ \pi \mid b'_2}}(E_{\omega}^{-1}F_{2,\omega})(\pi,w). \label{eq:Gb1b2_int}
    \end{align}
    
    Set $b:=(b_1,b_2)$, so that $(b'_1, b'_2)=1$ if and only if $b \mid d$. Then we can uniquely decompose $d = bb'd'$ for $b', d' \equiv 1 \pmod{\lambda^3}$ such that $b' \mid b^{\infty}$ and $(d',b)=1$. Thus the sum over $d$ in \eqref{eq:Gb1b2_int} equals 
    \begin{equation}\label{eq:d_sum_interm}
        \sumtwo_{\substack{b', d' \in \Z[i] \\ b', d' \equiv 1 \pmod{\lambda^3} \\ b'\mid b^\infty,\ (d', b)=1}} \frac{1}{N(bb'd')^{1+2w}} \prod_{\substack{\pi \ \text{prime}\\ \pi \equiv 1 \pmod{\lambda^3}\\ \pi  \mid bd'}} \frac{N(\pi) \cdot (E_{\omega}^{-1}H_{\omega})(\pi,w)}{N(\pi)+1}\prod_{i=1}^2\prod_{\substack{ \pi \ \text{prime} \\\pi \equiv 1 \pmod{\lambda^3}\\ \pi  \mid b_i, \ \pi \nmid bd'}} (E_{\omega}^{-1}F_{i,\omega})(\pi,w).
    \end{equation}
    The sum over $b'$ can now be evaluated: since $b$ is squarefree we have
    \begin{align*}
        \sum_{\substack{b' \in \Z[i]\\b' \equiv 1 \pmod{\lambda^3}\\ b' \mid b^{\infty}}} \frac{1}{N(bb')^{1+2w}}= \prod_{\substack{\pi \ \text{prime}\\ \pi \equiv 1 \pmod{\lambda^3}\\ \pi  \mid b}} \frac{1}{N(\pi)^{1+2w}-1},
    \end{align*}
    hence \eqref{eq:d_sum_interm} is equal to
    \begin{align*}
        &\Bigg(\prod_{\substack{\pi \ \text{prime}\\ \pi \equiv 1 \pmod{\lambda^3}\\ \pi  \mid b}} \Big(1+\frac{1}{N(\pi)}\Big)^{-1} \frac{(E_{\omega}^{-1}H_{\omega})(\pi,w)}{N(\pi)^{1+2w}-1}\Bigg) \Bigg( \prod_{i=1}^2\prod_{\substack{ \pi \ \text{prime} \\\pi \equiv 1 \pmod{\lambda^3}\\ \pi  \mid b_i, \ \pi \nmid b}} (E_{\omega}^{-1}F_{i,\omega})(\pi,w) \Bigg) \\
        & \times \sumtwo_{\substack{d' \in \Z[i] \\ d' \equiv 1 \pmod{\lambda^3} \\ (d', b)=1}} \frac{1}{N(d')^{1+2w}} \prod_{\substack{\pi \ \text{prime}\\ \pi \equiv 1 \pmod{\lambda^3}\\ \pi  \mid d'}} \frac{N(\pi) \cdot (E_{\omega}^{-1}H_{\omega})(\pi,w)}{N(\pi)+1}\prod_{i=1}^2\prod_{\substack{ \pi \ \text{prime} \\\pi \equiv 1 \pmod{\lambda^3}\\ \pi  \mid (b_i, d')}} (E_{\omega}F^{-1}_{i,\omega})(\pi,w).
    \end{align*}
    We rewrite the sum over $d'$ above as 
    \begin{align*}
        &\prod_{\substack{\pi \ \text{prime}\\ \pi \equiv 1 \pmod{\lambda^3}\\ \pi \nmid b_1b_2}}\Big(1+\Big(1+\frac{1}{N(\pi)}\Big)^{-1}\frac{(E_{\omega}^{-1}H_{\omega})(\pi,w)}{N(\pi)^{1+2w}-1}\Big)\\
        & \times \prod_{i=1}^{2} \prod_{\substack{\pi \ \text{prime} \\ \pi \equiv 1 \pmod{\lambda^3}\\ \pi  \mid b_i, \ \pi\nmid b}}\Big(1+\Big(1+\frac{1}{N(\pi)}\Big)^{-1}\frac{(F_{i,\omega}^{-1}H_{\omega})(\pi,w)}{N(\pi)^{1+2w}-1}\Big),
    \end{align*}
    which evaluates to
    \begin{align*}
        & \prod_{\substack{\pi \ \text{prime}\\ \pi \equiv 1 \pmod{\lambda^3}}}\Big(1+\Big(1+\frac{1}{N(\pi)}\Big)^{-1}\frac{(E_{\omega}^{-1}H_{\omega})(\pi,w)}{N(\pi)^{1+2w}-1}\Big)\\
        & \times \prod_{\substack{\pi \ \text{prime}\\ \pi \equiv 1 \pmod{\lambda^3}\\ \pi \mid b_1b_2}}\Big(1+\Big(1+\frac{1}{N(\pi)}\Big)^{-1}\frac{(E_{\omega}^{-1}H_{\omega})(\pi,w)}{N(\pi)^{1+2w}-1}\Big)^{-1}\\
        & \times \prod_{i=1}^{2} \prod_{\substack{\pi \ \text{prime} \\ \pi \equiv 1 \pmod{\lambda^3}\\ \pi  \mid  \frac{b_i}{b}}}\Big(1+\Big(1+\frac{1}{N(\pi)}\Big)^{-1}\frac{(F_{i,\omega}^{-1}H_{\omega})(\pi,w)}{N(\pi)^{1+2w}-1}\Big).
    \end{align*}
    
    In conclusion, we have
    \begin{equation}\label{eq:Gb1b2_final}
        \mathcal{G}_{b_1,b_2,\omega}(w)=\frac{2^{1+2w}\mathcal{F}_\omega(w)\mathcal{H}_{b_1,b_2,\omega}(w)}{(2^{1/2+w}-\xi(\lambda))(2^{1/2+w}-\overline{\xi(\lambda)})},
    \end{equation}
    for
    \begin{equation}\label{eq:defF(w)}
        \mathcal{F}_{\omega}(w):= \prod_{\substack{\pi \ \text{prime} \\ \pi \equiv 1 \pmod{\lambda^3} }}E_{0,\omega}(\pi,w)
    \end{equation}
    and
    \begin{equation}\label{eq:Hb1b2}
        \begin{split}
            \mathcal{H}_{b_1,b_2,\omega}(w):=N(b_1b_2)^{1/2+w}& \prod_{\substack{\pi \ \text{prime}\\ \pi \equiv 1 \pmod{\lambda^3}\\ \pi \mid \frac{b_1}b}}E_{1,\omega}(\pi,w)\prod_{\substack{\pi \ \text{prime}\\ \pi \equiv 1 \pmod{\lambda^3} \\ \pi \mid \frac{b_2}b}}E_{2,\omega}(\pi,w) \\
            & \prod_{\substack{\pi \ \text{prime}\\ \pi \equiv 1 \pmod{\lambda^3} \\ \pi \mid b}} E_{3,\omega}(\pi,w) \prod_{\substack{\pi \ \text{prime}\\ \pi \equiv 1 \pmod{\lambda^3} \\ \pi \mid b_1b_2}} E_{4,\omega}(\pi,w) ,
        \end{split}
    \end{equation}
    where assuming from now on that $\Re(w) \ge -\frac{1}{2}+\ep$,
    \begin{equation}\label{eq:E_i}
        \begin{split}
            & E_{0,\omega}(w) := \Big(1+\Big(1+\frac{1}{N(\pi)}\Big)^{-1}\frac{(E_{\omega}^{-1}H_{\omega})(\pi,w)}{N(\pi)^{1+2w}-1}\Big)E_{\omega}(\pi,w),\\
            & E_{1,\omega}(w) := \Big(1+\Big(1+\frac{1}{N(\pi)}\Big)^{-1}\frac{(F_{1,\omega}^{-1}H_{\omega})(\pi,w)}{N(\pi)^{1+2w}-1}\Big)(E_{\omega}^{-1}F_{1,\omega})(\pi,w) \ll_\varepsilon \frac{1}{N(\pi)^{1+2\Re(w)}},\\
            & E_{2,\omega}(w) := \Big(1+\Big(1+\frac{1}{N(\pi)}\Big)^{-1}\frac{(F_{2,\omega}^{-1}H_{\omega})(\pi,w)}{N(\pi)^{1+2w}-1}\Big)(E_{\omega}^{-1}F_{2,\omega})(\pi,w) \ll_\varepsilon \frac{1}{N(\pi)^{1+2\Re(w)}}, \\
            & E_{3,\omega}(w):=\Big(1+\frac{1}{N(\pi)}\Big)^{-1}\frac{(E_{\omega}^{-1}H_{\omega})(\pi,w)}{N(\pi)^{1+2w}-1} \ll_\varepsilon \frac{1}{N(\pi)^{1+2\Re(w)}}, \\
            & E_{4,\omega}(w):= \Big(1+\Big(1+\frac{1}{N(\pi)}\Big)^{-1}\frac{(E_{\omega}^{-1}H_{\omega})(\pi,w)}{N(\pi)^{1+2w}-1}\Big)^{-1} \ll_\varepsilon 1.
        \end{split}
    \end{equation}
    Using the definition of $E_{0,\omega}(w)$, we compute 
    \begin{align}
        & E_{0,\omega}(\pi,w)\Big(1-\frac{1}{N(\pi)^{1+2w}}\Big)  = 1 - \frac{1}{N(\pi)^{1+2w}} \nonumber \\
        & \quad + \Big(1+\frac{1}{N(\pi)}\Big)^{-1}\Big(\frac{1}{N(\pi)^{1+2w}} + \frac{1}{N(\pi)^{2+4w}\overline{\xi(\pi^4)}-1}  + \frac{1}{N(\pi)^{2+4w}\xi(\pi^4)-1}\Big) \qquad \label{eq:Flocfact}\\
        & = 1 + \frac{1}{N(\pi)^{2+4w}\overline{\xi(\pi^4)}-1} + \frac{1}{N(\pi)^{2+4w}\xi(\pi^4)-1} + O_{\ep}\Big(\frac{1}{N(\pi)^{1+\ep}}\Big) \nonumber \\
        & = \Big(1-\frac{1}{N(\pi)^{4+8w}}\Big)\Big(1-\frac{\xi(\pi^4)}{N(\pi)^{2+4w}}\Big)^{-1}\Big(1-\frac{\overline{\xi(\pi^4)}}{N(\pi)^{2+4w}}\Big)^{-1}\Big(1+O_{\ep}\Big(\frac{1}{N(\pi)^{1+\ep}}\Big)\Big). \nonumber
    \end{align}
    Thus using the zeta functions given in \eqref{eq:zeta_def} gives 
    \begin{equation} \label{eq:F(w)_eval}
        \mathcal{F}_{\omega}(w)= \frac{\zeta_{\lambda}(1+2w)\zeta_{\lambda}(2+4w,\xi)\zeta_{\lambda}(2+4w,\overline{\xi})}{\zeta_{\lambda}(4+8w)}\mathcal{J}_{\omega}(w),
    \end{equation}
    for some function $\mathcal J_{\omega} (w)$ that is is holomorphic, non-vanishing, and uniformly bounded (in terms of $w$ and $\f$) for $\Re(w)\ge -\frac{1}{2} +\ep$. Thus the function $\mathcal{F}_{\omega}(w)$ has meromorphic continuation to that half-plane, with a simple pole at $w =0$. When $\omega$ is zero, it also has a double pole at $w = -\frac{1}{4}$. It has no other poles for $\Re(w) \geq -\frac{3}{8}$. 
    
    We move the contour in \eqref{eq:M_contour} to $\Re(w) = -\frac{1}{4}+\ep$, picking up the simple pole at $w =0$. In the remaining integral over $\Re w = -\frac{1}{4}+\ep$, we use the convexity bound for $\zeta_{\lambda}(1+2w)$, Stirling's formula \cite[(5.11.3)]{DLMF}, the exponential decay of $e^{w^2}$, and bound $\mathcal{H}_{b_1, b_2, \omega}(w)$ trivially. Thus
    \begin{align}
        \mathcal M_{\omega}(b_1,b_2)  & =  \frac{\pi X}{128 \cdot \zeta_{\lambda}(2)} \Res_{w =0} \Big(\frac{\pi^2}{N(\m_\omega)X}\Big)^{-w}\Big(\frac{\Gamma(\frac{1+|\omega|}{2} +w)}{\Gamma(\frac{1+|\omega|}{2})}\Big)^2 e^{w^2} \check{F}(w) \frac{\mathcal G_{b_1,b_2,\omega}(w)}{w} \nonumber\\
        & \quad + O_{\ep}\bigg(\frac{(1+|\omega|)^{\ep}X^{1+\ep}}Y + (1+|\omega|)^{-\frac{1}{2}+2\ep}X^{3/4 +\ep}N\Big(\frac{b_1 b_2}{b^2}\Big)^{-1/4}\bigg). \label{eq:Mb1b2res}
    \end{align}
    
    At $w=0$ we have the series expansions
    \begin{align*}
        &\Big(\frac{\pi^2}{N(\m_{\omega})X}\Big)^{-w}  = 1 + \log \Big(\frac{N(\m_{\omega})X}{\pi ^2}\Big) w+ O_X(w^2), \\
        &\Big(\frac{\Gamma(\frac{1+|\omega|} 2+w)}{\Gamma(\frac{1+|\omega|} 2)}\Big)^2 = 1 + 2\frac{\Gamma'}{\Gamma}\Big(\frac{1+|\omega|}{2}\Big)w + O_{\omega}(w^2), \\
        & \frac{2^{1+2w}}{(2^{1/2+w}-\xi(\lambda))(2^{1/2+w}-\overline{\xi(\lambda)})}= \frac{2}{|\sqrt 2 -\xi(\lambda)|^2} - \frac{4\log(2)(\sqrt{2}\Re{\xi(\lambda)} - |{\xi(\lambda)}|^2)}{|\sqrt{2}-\xi(\lambda)|^4} w + O_{\omega}(w^2). \\
    \end{align*}
    By \eqref{eq:zeta_def} and the class number formula we also compute
    \begin{equation}\label{eq:zetalambdares}
        2\Res_{w=0} \zeta_{\lambda}(1+2w) = \Res_{s=1}\zeta_{\lambda}(s) = \frac{1}{2} \Res_{s=1}\zeta_{\Q(i)}(s) = \frac{\pi}{8},
    \end{equation}
    so there is an absolute constant $c$ such that close to $w=0$ we have
    \begin{equation*}
        \frac{\zeta_{\lambda}(1+2w)}{w} =\frac{\pi}{16 w^2} + \frac{c}{w}+O(1).
    \end{equation*}
    Similarly, one computes the series expansions at $w=0$ given by
    \begin{align*}
        & \check F(w) = \check F(0) \Big(1 + \frac{\check F'}{\check F}(0) w + O_F(w^2)\Big), \\
        & \mathcal H_{b_1,b_2,\omega}(w) = \mathcal H_{b_1,b_2}(0)\Big(1 + \frac{\mathcal H_{b_1,b_2,\omega}'}{\mathcal H_{b_1,b_2,\omega}}(0) w +O_{b_1,b_2,\omega}(w^2)\Big),\\
        & \mathcal P_{\omega}(w):= \frac{\zeta_{\lambda}(2+4w,\xi)\zeta_{\lambda}(2+4w,\overline{\xi})}{\zeta_{\lambda}(4 +8w)}\mathcal{J}_{\omega}(w) = \mathcal{P_{\omega}}(0)\Big(1 + \frac{\mathcal P'_{\omega}}{\mathcal P_{\omega}}(0) w + O_{\omega}(w^2)\Big).
    \end{align*}
    Since $e^{w^2}$ is even, by \eqref{eq:Gb1b2_final} the residue at $w=0$ in \eqref{eq:Mb1b2res} is equal to 
    \begin{equation}\label{eq:res}
        \begin{split}
            \frac{\pi \check F(0)\mathcal P_{\omega}(0) \mathcal{H}_{b_1,b_2,\omega}(0)}{8|\sqrt{2} - \xi(\lambda)|^2} \Big(\log(X) + 2\frac{\Gamma'}{\Gamma}\Big(\frac{1+|\omega|}{2}\Big) + \frac{\mathcal P'_{\omega}}{\mathcal P_{\omega}}(0)+\frac{\mathcal{H}_{b_1,b_2,\omega}'}{\mathcal{H}_{b_1,b_2,\omega}}(0) + C_{\omega,F} \Big),
        \end{split}
    \end{equation}
    where $C_{\omega,F} \ll_{F} 1$ uniformly in $\omega$.
    
    Using \eqref{eq:Flocfact} and \eqref{eq:F(w)_eval} we can directly evaluate
    \begin{equation}\label{eq:P(0)}
        \mathcal P_{\omega}(0) = \prod_{\substack{\pi \ \text{prime}\\ \pi \equiv 1 \pmod{\lambda^3}\\ q:= N(\pi)}} \Big( 1 - \frac{1}{q(q+1)} + \frac{q}{(q+1)(q^2\xi(\pi^4)-1)} + \frac{q}{(q+1)(q^2\overline{\xi(\pi^4)}-1)} \Big).
    \end{equation}
    The next step is to evaluate $\mathcal H_{b_1,b_2,\omega}(0)$ and $\frac{\mathcal{H}_{b_1,b_2,\omega}'}{\mathcal{H}_{b_1,b_2,\omega}}(0)$. For $q:= N(\pi)$, we compute 
    \begin{equation}\label{eq:E_1E_3}
        (E_{1,\omega} E_{4,\omega})(\pi,w) = \frac{q^{6 w+4} + q^{4 w+3}(\xi(\pi^4)+1) + q^{2 w+2}}{q^{8 w+4}(q+1) + q^{6 w+4} + q^{4 w+2} (q-2 \Re{\xi(\pi^4)})+q^{2 w+2}+1},
    \end{equation}
    \begin{equation}\label{eq:E_2E_3}
        (E_{2,\omega}E_{4,\omega})(\pi,w) = \frac{q^{6 w+4} + q^{4 w+3}(\overline{\xi(\pi^4)}+1) + q^{2 w+2}}{q^{8 w+4}(q+1) + q^{6 w+4} + q^{4 w+2} (q-2 \Re{\xi(\pi^4)})+q^{2 w+2}+1},
    \end{equation}
    \begin{equation}\label{eq:E_3E_4}
        (E_{3,\omega}E_{4,\omega})(\pi,w)= \frac{q \left(q^{4 w+2}+1\right) \left(q^{2 w+1}+1\right)}{q^{8 w+4}(q+1) + q^{6 w+4} + q^{4 w+2} (q-2 \Re{\xi(\pi^4)}) + q^{2 w+2} + 1}.
    \end{equation}
    For $j\in \{1,2, 3\}$ this implies
    \begin{equation}\label{eq:lderEiE3}
        \frac{(E_{j,\omega}E_{4,\omega})'}{(E_{j,\omega} E_{4,\omega})}(\pi,0) = -2\log q + D_{j, \omega}(\pi) \frac{\log q}{q} \qquad \text{ with } \qquad D_{j, \omega}(\pi) \ll 1
    \end{equation}
    uniformly in $\f$. We deduce from \eqref{eq:E_1E_3}, \eqref{eq:E_2E_3}, \eqref{eq:E_3E_4}, and \eqref{eq:Hb1b2} that 
    \begin{equation}\label{eq:Hb1b2(0)}
        \mathcal H_{b_1,b_2,\omega}(0) = N\Big(\frac{b_1b_2}{b^2}\Big)^{-1/2} g_\f(b)h_{\omega}\Big(\frac{b_1}{b}\Big)\overline{h_{\omega}}\Big(\frac{b_2}{b}\Big),
    \end{equation}
    where $g$ and $h$ are multiplicative functions defined, for each $k \ge 1$, by
    \begin{equation}
        g_{\omega}(\pi^k) :=q(E_{3,\omega}E_{4,\omega})(\pi,0) = \frac{q^2(q^2+1)(q+1)}{q^5+2q^4+q^3+(1-2\Re{\xi(\pi^4)})q^2+1} = 1 + O\Big(\frac{1}{q}\Big)
    \end{equation}
    and 
    \begin{equation}
        h_{\omega}(\pi^k):= q(E_{1,\omega}E_{4,\omega})(\pi,0) = \frac{q^3(q^2+q(\xi(\pi^4)+1) + 1)}{q^5+2 q^4+q^3+(1-2\Re{\xi(\pi^4)})q^2+1}  = 1 + O\Big(\frac{1}{q}\Big).
    \end{equation}
    The implied constants are again independent of $\omega$. Furthermore, \eqref{eq:lderEiE3} and \eqref{eq:Hb1b2} imply
    \begin{equation}\label{eq:lderHb1b2}
        \begin{split}
            \frac{\mathcal H'_{b_1,b_2,\omega}}{H_{b_1,b_2,\omega}}(0) = &-\log N\Big(\frac{b_1b_2}{b^2}\Big) + \sum_{i=1}^2 \sum_{\substack{\mathfrak{p} \text{ prime} \\ \mathfrak{p} \mid \frac{b_i}{b}}} D_{i, \omega}(\mathfrak{p}) \frac{\log{N(\mathfrak{p})}}{N(\mathfrak{p})} + \sum_{\substack{\mathfrak{p} \text{ prime} \\ \mathfrak{p} \mid b}} D_{3, \omega}(\mathfrak{p}) \frac{\log{N(\mathfrak{p})}}{N(\mathfrak{p})}.
        \end{split}
    \end{equation}
    In a similar fashion, using \eqref{eq:Flocfact} one shows the uniform bound 
    \begin{equation}\label{eq:P'/P}
        \frac{\mathcal P'_{\omega}}{\mathcal P_{\omega}}(0) \ll \sum_{\mathfrak{p} \text{ prime}} \frac{\log{N(\mathfrak{p})}}{N(\mathfrak{p})^2} \ll 1.
    \end{equation}
    An application of Stirling's formula \cite[(5.11.2)]{DLMF} gives
    \begin{equation}\label{eq:Psi}
        2\frac{\Gamma'}{\Gamma}\Big(\frac{1+|\omega|}{2}\Big) = 2\log(1+|\omega|) + O(1).
    \end{equation}
    In light of \eqref{eq:P'/P} and \eqref{eq:Psi}, we redefine $C_{\omega,F}$ to include $\frac{\mathcal P'_{\omega}}{\mathcal P_{\omega}}(0)$ and the term $O(1)$ above.
    
    Insert \eqref{eq:P(0)}, \eqref{eq:Hb1b2(0)},  \eqref{eq:lderHb1b2}, and \eqref{eq:Psi} into \eqref{eq:res} and finally \eqref{eq:Mb1b2res} to conclude that 
    \begin{equation*}
        \begin{split}
            \mathcal M_{\omega} (b_1,b_2) & = D_{\omega}\check F (0) X N\Big(\frac{b_1b_2}{b^2}\Big)^{-1/2}g_\f(b) h_{\omega}\Big(\frac{b_1}{b}\Big)\overline{h_{\omega}}\Big(\frac{b_2}{b}\Big) \nonumber \\
            & \quad \times \Big [ \log \Big(\frac{(1+|\omega|)^2 XN(b^2)}{N(b_1b_2)}\Big)+ \mathcal O_{\omega}(b_1,b_2)\Big]\\
            & \quad + O_{\ep}\Big(\frac{(1+|\omega|)^{\ep}X^{1+\ep}}Y+(1+|\omega|)^{-\frac{1}{2}+2\ep} X^{3/4 +\ep}N\Big(\frac{b_1 b_2}{b^2}\Big)^{-1/4}\Big),
        \end{split}
    \end{equation*}
    where
    \begin{align}
        D_{\omega}& := \frac{\pi^2 \mathcal P_{\omega}(0)}{|\sqrt 2 -\xi(\lambda)|^2\cdot 8  \cdot  128   \cdot \zeta_{\lambda}(2)} \\
        & =  \frac{\pi^2}{1024 \cdot \zeta_{\lambda}(2)\cdot |\sqrt 2 -\xi(\lambda)|^2} \prod_{\substack{\pi \equiv 1 \pmod{\lambda^3}\\ q:= N(\pi)}} \Big( 1 -\frac{1}{q(q+1)} +  2 \Re \Big(\frac{q}{(q+1)(q^2\xi(\pi)^4-1)}\Big)\Big)
    \end{align}
    and for $i \in \{1, 2, 3\}$ there exist $C_{\omega,F} \ll 1$, $D_{i, \omega}(\pi) \ll 1$ (uniformly on $\omega$) such that
    \begin{equation*}
        \mathcal{O}_{\omega}(b_1,b_2) :=  C_{\omega,F} + \sum_{i=1}^2 \sum_{\substack{\mathfrak{p} \text{ prime} \\ \mathfrak{p} \mid \frac{b_i}{b}}} D_{i, \omega}(\mathfrak{p}) \frac{\log{N(\mathfrak{p})}}{N(\mathfrak{p})} + \sum_{\substack{\mathfrak{p} \text{ prime} \\ \mathfrak{p} \mid b}} D_{3, \omega}(\mathfrak{p}) \frac{\log{N(\mathfrak{p})}}{N(\mathfrak{p})}.
    \end{equation*}
    
    \subsection{The error term \texorpdfstring{$\mathcal{R}_{\omega}(b_1, b_2)$}{}: initial manipulations}

    Recall that $N(b_1) \sim B_1$,  $N(b_2) \sim B_2$, and $B := \max(B_1, B_2)$, where $1 \leq B_1, B_2 \leq X^{100}$. We will use the notation $\ls$ given in \eqref{specnotation}. In order to separate the variables $m_1$ and $m_2$ in \eqref{eq:pre_split}, we must deal with the condition $(m_1, m_2) = 1$ and the Archimedean transform $\ddot{F}$. This can be done by standard maneuvers via M{\"o}bius inversion and Mellin inversion, respectively. We include the analogous lemma to \cite[Lemma 7.3]{DDDS24}.
    \begin{lemma}\label{F_dot_dot_bound_lemma}
        For any $\mathfrak{n}_1, \mathfrak{n}_2 \unlhd \Z[i]$, $u \in \C$, and $A \in \Z_{\geq 0}$, we have the uniform bound
        \begin{align*}
            \ddot{F}_{\mathfrak{n}_1, \mathfrak{n}_2,\omega}(u) \ll_{F, A} \Big(1 + |u| + \frac{N(\mathfrak{n}_1 \mathfrak{n}_2)}{(1+|\omega|)^2 X}\Big)^{-A}.
        \end{align*}
    \end{lemma}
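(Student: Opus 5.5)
The plan is to bound the two-dimensional Hankel-type transform directly from its definition \eqref{eq:ddotV},
\begin{equation*}
    \ddot{F}_{\mathfrak{n}_1,\mathfrak{n}_2,\omega}(u) = \int_0^\infty r\, F(r^2)\, W_\omega\Big(\frac{N(\mathfrak{n}_1\mathfrak{n}_2)}{4N(\m_\omega)Xr^2}\Big) J_0\Big(\frac{\pi r|u|}{4\sqrt 2}\Big)\,dr,
\end{equation*}
where $r$ is confined to $[1,\sqrt 2]$ because $F$ has support in $(1,2)$. Write $G(r) := rF(r^2)W_\omega(y_0/r^2)$ with $y_0 := N(\mathfrak{n}_1\mathfrak{n}_2)/(4N(\m_\omega)X)$. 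The estimate splits into two separate decays, one in $N(\mathfrak{n}_1\mathfrak{n}_2)/((1+|\omega|)^2X)$ and one in $|u|$. Since $(1+a+b)^{-A} \le (1+a)^{-A}$ and $(1+a+b)^{-A}\le (1+b)^{-A}$ up to constants, it will suffice to show
\begin{equation*}
    \ddot F \ll_{F,A} \min\Big((1+y_0/(1+|\omega|)^2)^{-A},\ (1+|u|)^{-A}\,(1+y_0/(1+|\omega|)^2)^{-A}\Big),
\end{equation*}
and then use $(1+a)(1+b)\ge 1+a+b$.

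The first step is the bound without oscillation. I will use $|J_0|\le 1$ together with \eqref{W_f_bound} for $k=0$, noting that $y_0/r^2 \asymp y_0$ and $N(\m_\omega)\le 4$. This gives $\ddot F \ll_A (1+ N(\mathfrak n_1\mathfrak n_2)/((1+|\omega|)^2X))^{-A}$, which already handles the case $|u|\le 1$.

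The second step, for $|u|\ge 1$, extracts decay in $|u|$ by repeated integration by parts using the Bessel recurrence. For this I work with $\frac{d}{dx}(xJ_1(x)) = xJ_0(x)$ and, more generally, $\frac{d}{dx}(x^{\nu+1}J_{\nu+1}(x)) = x^{\nu+1}J_\nu(x)$. Set $c := \pi|u|/(4\sqrt2)$. Writing $rJ_0(cr) = c^{-1}\frac{d}{dr}(rJ_1(cr))$ and integrating by parts moves a derivative onto $F(r^2)W_\omega(y_0/r^2)$ and gains a factor $c^{-1}$. The boundary terms vanish because $F$ is compactly supported in $(1,2)$. Iterating $A$ times, with the resulting factors $r^{-1}$ absorbed since $r\asymp 1$, produces integrals of $(r^{-1}\partial_r)^j[F(r^2)W_\omega(y_0/r^2)]$ against $r^{j}J_j(cr)$. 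These are bounded by $c^{-A}$ times a sum of derivatives. Derivatives falling on $F$ contribute $O_{F,A}(1)$. Each derivative falling on $W_\omega(y_0/r^2)$ produces terms of the form $(y_0/r^2)^k W_\omega^{(k)}(y_0/r^2)$, and these are $\ll_{A}(1+y_0/(1+|\omega|)^2)^{-A}$ by \eqref{W_f_bound}. This is exactly where the normalized form $y^kW^{(k)}$ of \eqref{W_f_bound} is needed. Combining with $|J_j|\le1$ gives $\ddot F\ll_{F,A}|u|^{-A}(1+y_0/(1+|\omega|)^2)^{-A}$.

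The main point needing care is the bookkeeping in the iterated integration by parts, namely verifying that the chain rule on $W_\omega(y_0/r^2)$ only ever yields the scale-invariant combinations $y^kW_\omega^{(k)}(y)$, so that there is no loss in $y_0$ or $\omega$. This holds because $\partial_r$ of a function of $y_0/r^2$ equals $-2r^{-1}\,y\,\partial_y$ evaluated at $y=y_0/r^2$. Uniformity in $\omega$ then follows from the uniformity in \eqref{W_f_bound}.
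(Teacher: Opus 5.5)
Your proposal is correct and follows essentially the same route as the paper: integrate by parts repeatedly using the Bessel recurrence $\frac{d}{dx}\big(x^{\nu+1}J_{\nu+1}(x)\big)=x^{\nu+1}J_\nu(x)$, taking zero steps when $|u|\le 1$ and $A$ steps when $|u|\ge 1$, and control the derivatives of $F(t)\,W_\omega(\cdot/t)$ by the scale-invariant bound \eqref{W_f_bound}. One harmless slip: $N(\m_\omega)$ can equal $N(\lambda^3)=8$, not at most $4$, but only its boundedness matters.
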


    \begin{proof}
        From \eqref{eq:ddotV} and integration by parts, antidiferentiating the Bessel function as in \cite[(4.11)]{DR}, for any $j \in \Z_{\geq 0}$ (and assuming $u\neq 0$ if $j \neq 0$) we have
        \begin{equation*}
            \ddot{F}_{\mathfrak{n}_1, \mathfrak{n}_2,\omega}(u) = (-1)^j \Big(\frac{8\sqrt{2}}{\pi}\Big)^j \frac{1}{|u|^j} \int_0^\infty F_{\mathfrak{n}_1, \mathfrak{n}_2,\omega}^{(j)} (r^2) \cdot r^{j+1} J_j\Big(\frac{\pi r |u|}{4\sqrt 2}\Big) dr.
        \end{equation*}
        Recall from \eqref{eq:Fn1n2} that $F_{\mathfrak{n}_1, \mathfrak{n}_2,\omega}(t) = F(t) W_\omega \big(\frac{N(\mathfrak{n}_1 \mathfrak{n}_2)}{4N(\mathfrak{m}_{\omega}) Xt} \big)$. Then \eqref{W_f_bound} and the fact that $F$ has support in $(1, 2)$ give
        \begin{equation*}
            F_{\mathfrak{n}_1, \mathfrak{n}_2,\omega}^{(j)}(t) \ll_{F, j, A} \Big(1 + \frac{N(\mathfrak{n}_1 \mathfrak{n}_2)}{(1+|\omega|)^2 X}\Big)^{-A}.
        \end{equation*}
        Putting those together for $j=0$ if $|u| \leq 1$ or $j=A$ if $|u|\geq 1$ gives the desired result.
    \end{proof}

    \begin{remark}\label{F_bound_remark}
        From \cref{F_dot_dot_bound_lemma} and $\log N(b_1b_2\ell) \ll \log{X}$, if $N(m_1m_2) \gg \frac{X^{1+\varepsilon} (1+|\omega|)^{2+\varepsilon} B_1 B_2}{N(d_1 d_2 d^2)}$ or $N(k) \gg \frac{N(\ell^2 m_1 m_2 d_1 d_2 e)}{(1+|\f|)^{-\varepsilon} X^{1-\varepsilon}}$ then 
        \begin{equation*}
            \ddot{F}_{\frac{\lambda^{g_1} d d_1 m_1}{b_1},\frac{\lambda^{g_2} d d_2 m_2}{b_2},\omega }  \Big( \frac{k \sqrt{X}}{\ell^2 m_1 m_2 d_1 d_2 e} \Big) \ll_{F, A, \varepsilon} X^{-A} (1+|\omega|)^{-A} N(m_1 m_2 d_1 d_2 d k)^{-A}.
        \end{equation*}
    \end{remark}

    We now assume $u \neq 0$. Shifting the line of integration in the definition \eqref{eq:W_f} of $W_\omega(y)$, by Stirling's formula and the exponential decay of $e^{w^2}$ we also obtain
    \begin{align*}
        \ddot{F}_{\mathfrak{n}_1, \mathfrak{n}_2,\omega}(u) = \frac{1}{2\pi i} \int_{\varepsilon -i\infty}^{\varepsilon + i\infty} \Big(\frac{\pi^2 N(\mathfrak{n}_1 \mathfrak{n}_2)}{N(\m_{\omega})X}\Big)^{-w} \frac{\Gamma \big(\frac{1+|\omega|}{2}+w \big)^2}{\Gamma \big(\frac{1+|\omega|}{2}\big)^2} e^{w^2} \int_0^\infty r^{2w} F(r^2)  r J_0\Big(\frac{\pi r |u|}{4\sqrt 2}\Big) dr \frac{dw}{w} \\
        = \frac{1}{2\pi i} \int_{\varepsilon -iX^\varepsilon (1+|\omega|)^\varepsilon}^{\varepsilon + iX^\varepsilon (1+|\omega|)^\varepsilon} \Big(\frac{\pi^2 N(\mathfrak{n}_1 \mathfrak{n}_2)}{N(\m_{\omega})X}\Big)^{-w} \frac{\Gamma \big(\frac{1+|\omega|}{2}+w\big)^2}{\Gamma \big(\frac{1+|\omega|}{2}\big)^2} e^{w^2} \mathcal{I}(w, u) \frac{dw}{w} + O_{A, \varepsilon}(X^{-A}(1+|\omega|)^{-A}),
    \end{align*}
    where $\mathcal{I}(w, u)$ is given in \eqref{Iwudef} and the truncation is justified since $\mathcal{I}(w, u) \ll 1$ uniformly for $\Re(w) = \varepsilon$. Using the Mellin--Barnes integral representation \cite[(10.9.22)]{DLMF} gives
    \begin{align}
        \mathcal{I}(w, u) := &\ \int_0^\infty r^{2w} F(r^2)  r J_0\Big(\frac{\pi r |u|}{4\sqrt 2}\Big) dr \label{Iwudef} \\
        =&\ \frac{(-1)^j}{2\pi i} \int_0^\infty \int_{-\varepsilon -i\infty}^{-\varepsilon + i\infty} G_w^{(j)} (r^2) r^{2j+1}\frac{\Gamma(-s)}{\Gamma(j+s+1)} \Big(\frac{\pi r|u|}{8\sqrt 2}\Big)^{2s} ds\: dr \nonumber
    \end{align}
    for $G_w(y) := y^w F(y)$, $u \neq 0$, and $j\in \Z_{\geq 1}$. Observe that $G_w^{(j)}$ is supported in $(1, 2)$ and satisfies the uniform bound
    \begin{equation*}
        G_w^{(j)}(y) \ll_{F, j} (1+|w|)^j.
    \end{equation*}
    Choosing $j$ sufficiently large in terms of $\varepsilon$ (but fixed), Stirling's formula implies that
    \begin{align}\label{F_transform_bulk}
        \ddot{F}_{\mathfrak{n}_1, \mathfrak{n}_2,\omega}(u) = \int_{\varepsilon -iX^\varepsilon (1+|\omega|)^\varepsilon}^{\varepsilon + iX^\varepsilon (1+|\omega|)^\varepsilon}\int_1^{\sqrt{2}} \int_{-\varepsilon -iX^{\varepsilon} (1+|\omega|)^\varepsilon}^{-\varepsilon + iX^{\varepsilon} (1+|\omega|)^\varepsilon} \mathcal{G}_{2, \omega}(w, r, s)  \frac{|u|^{2s}}{N(\mathfrak{n}_1 \mathfrak{n}_2)^{w}} \, ds\, dr \, dw & \nonumber \\
        +\ O_{F, A, \varepsilon}\big( (1+|u|^{-2\varepsilon}) X^{-A} (1+|\omega|)^{-A}\big), & \qquad
    \end{align}
    where in the display above we used
    \begin{align*}
        \mathcal{G}_{2,\omega}(w, r, s) :=&\ \frac{(-1)^j}{(2\pi i)^2} \Big(\frac{\pi^2}{N(\m_{\omega})X}\Big)^{-w} \frac{\Gamma \big(\frac{1+|\omega|}{2}+w\big)^2}{\Gamma \big(\frac{1+|\omega|}{2}\big)^2} \frac{e^{w^2}}{w} G_w^{(j)}(r^2) r^{2j+1} \frac{\Gamma(-s)}{\Gamma(j+s+1)} \Big(\frac{\pi r}{8\sqrt 2}\Big)^{2s} \nonumber \\
        \ll_{F, j} &\ \frac{X^\varepsilon \Gamma\big(\frac{1+|\omega|}{2}+w\big)^2 e^{w^2} (1+|w|)^j \Gamma(-s)}{\Gamma\big(\frac{1+|\omega|}{2}\big)^2 (j+s)(j-1+s) \cdots (1+s) \Gamma(1+s)} \ll_j \frac{X^\varepsilon e^{\frac{w^2}{2}}\Gamma\big(\frac{1+|\omega|}{2}+w\big)^2}{\Gamma\big(\frac{1+|\omega|}{2}\big)^2 (1+ \left|\Im(s)\right|)^j},
    \end{align*}
    for $\Re(w) = \varepsilon$ and $\Re(s) = -\varepsilon$.
    In particular, Stirling's formula gives the uniform bound
    \begin{align}\label{G_func_uniform_bound}
        \mathcal{G}_{2,\omega}(w, r, s) \ll_{F, \varepsilon} X^\varepsilon(1+|\omega|)^{2\ep}.
    \end{align}

    Let $H$ be a smooth non-negative function with compact support in $(1, 3)$ which gives rise to a partition of unity
    \begin{equation}\label{partition_of_unity}
        1 = \sum_{h \in \Z} H\Big(\frac{x}{2^h}\Big) \qquad \text{for every } x>0.
    \end{equation}
    
    We are ready to manipulate the remaining terms in \eqref{eq:pre_split}. Applying M{\"o}bius inversion to the condition $(m_1, m_2) = 1$ and adding partitions of unity to each $m_i$, we conclude that for $k\neq 0$ and $N(\ell) \sim L \gg 1$, the sums over $m_1$ and $m_2$ in \eqref{eq:pre_split} are equal to
    \begin{align}
        &\sumtwo_{\substack{\alpha_1, \alpha_2 \in \Z \\ M_i = 2^{\alpha_i} \gg 1 \\ M_1 M_2 \ll \frac{X^{1+\varepsilon} (1+|\omega|)^{2+\varepsilon} B_1 B_2}{N(d_1 d_2 d^2)}}} \bbone_{N(k) \ll \frac{ M_1 M_2 L^2 N(d_1 d_2 e)}{(1+|\omega|)^{-\varepsilon} X^{1-\varepsilon}}} \sum_{\substack{f \in \Z[i] \\ f \equiv 1 \pmod{\lambda^3} \\ N(f) \ll M_1+M_2 \\ (f, d)=1}} \mu(f) \nonumber \\
        & \times \int_{\varepsilon -iX^\varepsilon (1+|\omega|)^\varepsilon}^{\varepsilon + iX^\varepsilon (1+|\omega|)^\varepsilon}\int_1^{\sqrt{2}} \int_{-\varepsilon -iX^{\varepsilon} (1+|\omega|)^\varepsilon}^{-\varepsilon + iX^{\varepsilon} (1+|\omega|)^\varepsilon} \frac{\mathcal{G}_{2, \omega}(w, r, s) N(b_1b_2)^w}{N(\lambda^{g_1+g_2} d_1 d_2 d^2)^w} \Big(\frac{N(k)X}{N(\ell^2 d_1 d_2 e)}\Big)^s \nonumber \\
        & \times \Bigg(\sum_{\substack{m_1 \in \Z[i] \\ m_1 \equiv c_1 \pmod{\lambda^7} \\ (m_1, d) = 1 \\ b_1 \mid dd_1m_1 \\ f \mid m_1}} \frac{\xi(m_1)\chi_{m_1}(\ell^2 d_1^2 e) \widetilde{g}_4(-k, m_1)}{N(m_1)^{1+w+s}} H\Big(\frac{N(m_1)}{M_1}\Big) \Bigg) \nonumber \\
        &\times \Bigg( \sum_{\substack{m_2 \in \Z[i] \\ m_2 \equiv c_2 \pmod{\lambda^7} \\ (m_2, d) = 1 \\ b_2 \mid dd_2m_2 \\ f \mid m_2}} \frac{\overline{\xi(m_2)\chi_{m_2}(\ell^2 d_2^2 e)\widetilde{g}_4(k, m_2)}}{N(m_2)^{1+w+s}} H\Big(\frac{N(m_2)}{M_2}\Big) \Bigg) ds\: dr\: dw  \label{m_separation_Mellin}
    \end{align}
    plus the contributions from the error term in \eqref{F_transform_bulk} and from the remaining ranges of $M_1, M_2$, and $N(k)$, which by \cref{F_bound_remark} both contribute to \eqref{eq:pre_split} a negligible error term $\ll_{F, \varepsilon} X^{-1000}(1+|\omega|)^{-1000}$.
    The bound on $f$ and the condition $(f, d) = 1$ were added since otherwise the summands are zero.

    Thus
    \begin{equation}\label{localized_sum_def}
        \mathcal{R}_{\omega}(b_1, b_2) = \sumtwo_{\substack{\alpha_1, \alpha_2 \in \Z \\ M_i = 2^{\alpha_i} \gg 1 \\ M_1 M_2 \ll X^{1+\varepsilon} (1+|\omega|)^{2+\varepsilon} B_1 B_2}} \sum_{\substack{\alpha \in \Z \\ L = 2^{\alpha} \gg 1 \\ L \leq Y}} \mathcal{R}^L_{\f,M_1, M_2}(b_1, b_2) + O_{F, \varepsilon}(X^{-1000}),
    \end{equation}
    where $\mathcal{R}^L_{\f,M_1, M_2}(b_1, b_2)$ corresponds to the terms of $\mathcal{R}_{\omega}(b_1, b_2)$ with $N(m_i)$ localized at $M_i$ (using the partition of unity $H$) and $N(\ell)$ localized at $L$ (using a dyadic decomposition $N(\ell) \sim L$). By the supplements to quartic reciprocity in \eqref{eq:ram&units_1} and \eqref{eq:ram&units_2}, if $m, c\equiv 1\pmod{\lambda^3}$ then
    \begin{equation}\label{congruence_with_chars}
        \bbone_{m\equiv c \pmod {\lambda^7}} = \frac{1}{16} \sum_{\eta \mid \lambda^3} \chi_m(\eta) \overline{\chi_c(\eta)},
    \end{equation}
    so we detect the conditions $m_i \equiv c_i\pmod{\lambda^7}$ with linear combinations of cubic characters using \eqref{congruence_with_chars}.
    With $m_1$ and $m_2$ separated in \eqref{m_separation_Mellin} and the congruence condition removed, we interchange the integrals and the sum over $f$ with the sums over $\ell$ and $k$ in \eqref{eq:pre_split}, and then put absolute values around the sums over $m_1$ and $m_2$.  Recalling \eqref{G_func_uniform_bound}, and writing
    \begin{equation}\label{fidef}
        f_i := \Big[\frac{b_i}{(b_i, dd_i)}, f \Big] = \Big[\frac{b_i}{(b_i, d)}, f \Big],
    \end{equation}
    which is squarefree (since so are $b_i$ and $f$), the resulting bound is
    \begin{align}
        &  \mathcal{R}_{\f,M_1, M_2}^L(b_1, b_2) \ll_{F, \varepsilon} X^{1+\varepsilon} (1+|\omega|)^\varepsilon (B_1 B_2)^{1/2}  \sup_{\substack{t_1, t_2 \in \R \\ |t_1|, |t_2| \leq X^{\varepsilon}(1+|\omega|)^\varepsilon }}  \sup_{\eta_1, \eta_2 \mid \lambda^3} \nonumber \\
        & \times  \mathop{\sum\sum\sum}_{\substack{d, d_1, d_2 \in \mathbb{Z}[i] \\ d, d_1, d_2 \equiv 1 \pmod{\lambda^3} \\ (d_1, d_2)=1,\ d_1d_2 \mid d^\infty \\ N(d_1 d_2 d^2) \ll \frac{X^{1+\varepsilon} (1+|\omega|)^{2+\varepsilon} B_1 B_2}{M_1 M_2} }} \frac{1}{N(d)} \sum_{\substack{f \in \mathbb{Z}[i] \\ f \equiv 1 \pmod{\lambda^3} \\ N(f) \ll M_1+M_2 \\ (f, d) = 1}} \mu^2(f) \sum_{\substack{0 \neq k \in \mathbb{Z}[i] \\ N(k) \ll \frac{M_1M_2 L^2 N(d_1 d_2 e)}{(1+|\omega|)^{-\varepsilon} X^{1-\varepsilon}}}} \frac{|\widetilde{g}_4(-k,d_1)\widetilde{g}_4(k,d_2)|}{N(d_1d_2)}  \nonumber \\
        & \times \frac{N((e, k))}{N(e)} \sum_{\substack{\ell \in \mathbb{Z}[i] \\ \ell \equiv 1 \pmod{\lambda^3} \\ N(\ell) \sim L \\ (\ell, d) = 1 }} \frac{\mu^2(\ell)}{N(\ell)^{2}}  \Bigg |  \sum_{\substack{m_1 \in \mathbb{Z}[i] \\ m_1 \equiv 1 \pmod{\lambda^3} \\ (m_1, d)  = 1 \\ f_1 \mid m_1}} \frac{\xi(m_1) \chi_{m_1}(\eta_1 \ell^2 d_1^2 e) \widetilde{g}_4(-k, m_1)}{N(m_1)^{1+it_1}} H\Big(\frac{N(m_1)}{M_1}\Big) \Bigg| \nonumber \\
        & \times \Bigg| \sum_{\substack{m_2 \in \mathbb{Z}[i] \\ m_2 \equiv 1 \pmod{\lambda^3} \\ (m_2, d)  = 1 \\ f_2 \mid m_2}} \frac{\xi(m_2) \chi_{m_2}(\eta_2 \ell^2 d_2^2 e) \widetilde{g}_4(k, m_2)}{N(m_2)^{1+it_2}} H\Big(\frac{N(m_2)}{M_2}\Big) \Bigg| + X^{-500}(1+|\omega|)^{-500}. \qquad \label{R_after_abs_values}
    \end{align}
    The upper bounds on $N(d_1d_2d^2)$ and $N(k)$ follow from the ranges in \eqref{m_separation_Mellin}.

    \subsection{Preliminary pruning}

    Recall the notation \eqref{specnotation}. Observe in \eqref{R_after_abs_values} that
    \begin{equation*}
        N(\ell) \ls 1.
    \end{equation*}
    The bounds
    \begin{equation} \label{kd}
        N(k) \ll \frac{M_1M_2 L^2 N(d_1d_2e)}{(1+|\omega|)^{-\varepsilon} X^{1-\varepsilon}}, \quad \quad N(d_1d_2d^2) \ll \frac{X^{1+\varepsilon} (1+|\omega|)^{2+\varepsilon} B_1 B_2}{M_1M_2},
    \end{equation}
    and the fact that $e = \frac{\mathrm{rad}(d)}{\mathrm{rad}(d_1d_2)}$ together imply that
    \begin{equation}\label{eq:k_bound}
        N(k) \ls 1.
    \end{equation} 
    Furthermore, the same conditions also imply that
    \begin{equation} \label{M1M2bd}
        \frac{X}{N(d_1d_2 d^2)} \ls M_1M_2 \ls X.
    \end{equation}
    Using the coarse bound \eqref{nicebd} for the Gauss sums (which are normalized here) gives
    \begin{equation} \label{Abd}
        \frac{|\widetilde{g}_4(-k,d_1)\widetilde{g}_4(k,d_2)|}{N(d_1 d_2)}  \frac{N((e, k))}{N(e)}\leq \frac{N(k)^2}{N(d_1d_2 e)} \ls \frac{1}{N(d_1d_2 e)}.
    \end{equation}

    Applying these bounds and taking the supremum over $k$ and $\ell$, we see that \eqref{R_after_abs_values} is 
    \begin{align} \label{R_after_abs_values_2}
        &\ls X \sup_{\substack{t_1, t_2 \in \R \\ |t_1|, |t_2| \ls 1}} \sup_{ \eta_1, \eta_2 \mid \lambda^3} \mathop{\sum\sum\sum}_{\substack{d, d_1, d_2 \in \mathbb{Z}[i] \\ d, d_1, d_2 \equiv 1 \pmod{\lambda^3} \\ (d_1, d_2)=1,\ d_1d_2 \mid d^\infty \\ \frac{X}{M_1M_2} \ls N(d_1 d_2 d^2) \ls X}} \frac{1}{N(de d_1 d_2)} \sum_{\substack{f \in \mathbb{Z}[i] \\ f \equiv 1 \pmod{\lambda^3} \\ N(f) \ls X \\ (f, d) = 1}} \mu^2(f)   \nonumber \\
        & \times \sup_{\substack{0 \neq k, \ell \in \mathbb{Z}[i] \\ N(k) \ls 1 \\ N(\ell) \ls 1}} \Bigg| \sum_{\substack{m_1 \in \mathbb{Z}[i] \\ m_1 \equiv 1 \pmod{\lambda^3} \\ (m_1, d)  = 1 \\ f_1 \mid m_1}} \frac{\xi(m_1) \chi_{m_1}(\eta_1 \ell^2 d_1^2 e) \widetilde{g}_4(-k, m_1)}{N(m_1)^{1+it_1}} H\Big(\frac{N(m_1)}{M_1}\Big) \Bigg| \nonumber \\
        & \times \Bigg| \sum_{\substack{m_2 \in \mathbb{Z}[i] \\ m_2 \equiv 1 \pmod{\lambda^3} \\ (m_2, d)  = 1 \\ f_2 \mid m_2}} \frac{\xi(m_2) \chi_{m_2}(\eta_2 \ell^2 d_2^2 e) \widetilde{g}_4(k, m_2)}{N(m_2)^{1+it_2}} H\Big(\frac{N(m_2)}{M_2}\Big) \Bigg| + X^{-500}. \qquad
    \end{align}

    \subsection{Truncation of outer sums}
    We now truncate the sums over $d, d_1, d_2$, and $f$. Before doing this, recall the definition of $f_i$ in \eqref{fidef}. Making a change of variable $m_i \mapsto m_i f_i$ and applying the triangle inequality, for $i \in \{1, 2\}$ we have
    \begin{align}
        \Bigg| \sum_{\substack{m_i \in \mathbb{Z}[i] \\ m_i \equiv 1 \pmod{\lambda^3} \\ (m_i, d)  = 1 \\ f_i \mid m_i}} \frac{\xi(m_i) \chi_{m_i}(\eta_i \ell^2 d_i^2 e) \widetilde{g}_4( \pm k, m_i)}{N(m_i)^{1+it_i}} H\Big(\frac{N(m_i)}{M_i}\Big) \Bigg| \nonumber \\
        \leq \frac{\sqrt{N(k)}}{N(f_i)} \sum_{\substack{m_i \in \mathbb{Z}[i] \\ m_i \equiv 1 \pmod{\lambda^3} }} \frac{1}{N(m_i)} H\Big(\frac{N(m_i f_i)}{M_i}\Big) \ls \frac{1}{N(f)}, \label{eq:m_sum_ trivial_bound}
    \end{align}
    where the last inequality follows from \eqref{fidef} and $N(k) \ls 1$.

    Let $1 \leq D \leq X$ be a parameter which will be chosen later (we will eventually choose $D =X^{\delta}$ for fixed $0<\delta < 1$).
    Using \eqref{eq:m_sum_ trivial_bound}, we deduce that the contribution of the terms with $N(f) > D$ to \eqref{R_after_abs_values_2} is
    \begin{equation} \label{fcont}
        \ls X \mathop{\sum\sum\sum}_{\substack{d, d_1, d_2 \in \mathbb{Z}[i] \\ d, d_1, d_2 \equiv 1 \pmod{\lambda^3} \\ (d_1, d_2)=1,\ d_1d_2 \mid d^\infty \\ N(d_1 d_2 d^2) \ls X}} \frac{1}{N(de d_1 d_2)} \sum_{\substack{f \in \mathbb{Z}[i] \\ N(f) > D}} \frac{1}{N(f)^2} \ls \frac{X}{D} \mathop{\sum\sum\sum}_{\substack{d, d_1, d_2 \in \mathbb{Z}[i] \\ 1\leq  N(d_1 d_2 d) \ls X}} \frac{1}{N(d d_1 d_2)} \ls \frac{X}{D}. 
    \end{equation}
    Similarly, the contribution of the terms with $N(d_1) > D$ to \eqref{R_after_abs_values_2} is
    \begin{equation} \label{d1cont}
        \ls X \mathop{\sum\sum\sum}_{\substack{d, d_1, d_2 \in \mathbb{Z}[i] \\ d, d_1, d_2 \equiv 1 \pmod{\lambda^3} \\ (d_1, d_2)=1,\ d_1d_2 \mid d^\infty \\ N(d_1 d_2 d^2) \ls X \\ N(d_1)>D }} \frac{1}{N(de d_1 d_2)} \sum_{\substack{0 \neq f \in \mathbb{Z}[i]}} \frac{1}{N(f)^2} \ls \frac{X}{D} \mathop{\sum\sum\sum}_{\substack{d, d_1, d_2 \in \mathbb{Z}[i] \\ 1\leq  N(d_1 d_2 d) \ls X \\ d_1 \mid d^\infty}} \frac{1}{N(d d_2)} \ls \frac{X}{D},
    \end{equation}
    since the number of possible $d_1$ that occur in the sum above is $\ls 1$ by Rankin's trick
    \begin{equation}\label{eq:rankins_trick}
        \sum_{\substack{d_1 \in \mathbb{Z}[i] \\ N(d_1) \leq Z \\ d_1 \mid d^\infty}} 1 \leq Z^\varepsilon \sum_{\substack{d_1 \in \mathbb{Z}[i] \\  d_1 \mid d^\infty}} \frac{1}{N(d_1)^\varepsilon} \ll Z^\varepsilon \prod_{\substack{\mathfrak{p} \text{ prime} \\ \mathfrak{p} \mid (d)}} \Big(1 - \frac{1}{N(\mathfrak{p})^{\varepsilon}} \Big)^{-1} \leq Z^\varepsilon \prod_{\substack{\mathfrak{p} \text{ prime} \\ \mathfrak{p} \mid (d)}} \Big(1 - \frac{1}{2^{\varepsilon}}\Big)^{-1} \ll_\varepsilon (Z N(d))^\varepsilon.
    \end{equation}
    The same argument shows that the contribution of the terms $N(d_2)>D$ to \eqref{R_after_abs_values_2} is $\ls \frac{X}{D}$. Finally, the contribution to \eqref{R_after_abs_values_2} of the terms with $N(d) > D$ is
    \begin{equation} \label{dcont1}
        \ls X \mathop{\sum\sum\sum}_{\substack{d, d_1, d_2 \in \mathbb{Z}[i] \\ d, d_1, d_2 \equiv 1 \pmod{\lambda^3} \\ (d_1, d_2)=1,\ d_1d_2 \mid d^\infty \\ N(d_1 d_2 d^2) \ls X \\  N(d) > D}} \frac{1}{N(de d_1 d_2)} \sum_{\substack{0 \neq f \in \mathbb{Z}[i]}} \frac{1}{N(f)^2} \ls  X \sum_{\substack{d \in \mathbb{Z}[i] \\ D < N(d) \ls X}} \frac{1}{N(d \cdot \mathrm{rad}(d))},
    \end{equation}
    where we used the fact that $\mathrm{rad}(d) \mid ed_1 d_2$, and that there are $\ls 1$ choices for $d_1$ and $d_2$, again by Rankin's trick. Furthermore
    \begin{equation} \label{dcont2}
        X \sum_{\substack{d \in \mathbb{Z}[i] \\ D < N(d) \ls X}} \frac{1}{N(d \cdot \mathrm{rad}(d))}  \leq \frac{X}{D} \sum_{\substack{d \in \mathbb{Z}[i] \\ 1 \leq N(d) \ls X}} \frac{1}{N(\mathrm{rad}(d))} \leq \frac{X}{D} \sum_{\substack{d' \in \mathbb{Z}[i] \\ 1 \leq N(d') \ls X}} \frac{1}{N(d')} \sum_{\substack{a \in \mathbb{Z}[i] \\ N(a) \ls X \\ a \mid d'^\infty}} 1 \ls  \frac{X}{D},
    \end{equation}
    where the penultimate inequality follows from a change variables $d^{\prime} = \mathrm{rad}(d)$ and $a = \frac{d}{d^{\prime}}$.

    We now use \eqref{fcont}, \eqref{d1cont}, and \eqref{dcont2} to deduce that \eqref{R_after_abs_values_2} is
    \begin{align} \label{R_after_abs_values_3}
        & \ls \frac{X}{D} +  X \sup_{\substack{t_1, t_2 \in \R \\ |t_1|, |t_2| \ls 1 \\ \eta_1, \eta_2 \mid \lambda^3}} \mathop{\sum\sum\sum}_{\substack{d, d_1, d_2 \in \mathbb{Z}[i] \\ d, d_1, d_2 \equiv 1 \pmod{\lambda^3} \\ (d_1, d_2)=1,\ d_1d_2 \mid d^\infty \\ N(d_1),N(d_2), N(d) \leq D \\ \frac{X}{M_1M_2} \ls N(d_1d_2d^2)}} \frac{1}{N(de d_1 d_2)} \sum_{\substack{f \in \mathbb{Z}[i] \\ f \equiv 1 \pmod{\lambda^3} \\ N(f) \leq D \\ (f, d) = 1}} \mu^2(f)  \nonumber \\
        & \times \sup_{\substack{0 \neq k, \ell \in \mathbb{Z}[i] \\ N(k) \ls 1 \\ N(\ell) \ls 1}}  \Bigg| \sum_{\substack{m_1 \in \mathbb{Z}[i] \\ m_1 \equiv 1 \pmod{\lambda^3} \\ (m_1, d)  = 1 \\ f_1 \mid m_1}} \frac{\xi(m_1) \chi_{m_1}(\eta_1 \ell^2 d_1^2 e) \widetilde{g}_4(-k, m_1)}{N(m_1)^{1+it_1}} H\Big(\frac{N(m_1)}{M_1}\Big) \Bigg| \nonumber \\
        & \times \Bigg| \sum_{\substack{m_2 \in \mathbb{Z}[i] \\ m_2 \equiv 1 \pmod{\lambda^3} \\ (m_2, d)  = 1 \\ f_2 \mid m_2}} \frac{\xi(m_2) \chi_{m_2}(\eta_2 \ell^2 d_2^2 e) \widetilde{g}_4(k, m_2)}{N(m_2)^{1+it_2}} H\Big(\frac{N(m_2)}{M_2}\Big) \Bigg| .
    \end{align}
    In particular, the inequalities in the display above imply that the sum over $d, d_1, d_2$ is only present if
    \begin{equation}\label{eq:M_i_bound}
        \frac{X}{D^4} \ls M_1M_2.
    \end{equation}
    Thus we assume that \eqref{eq:M_i_bound} holds from now on.

    \subsection{Removal of outer sums}

    For $i \in \{1, 2\}$, by \eqref{fidef} have $N(f_i) \leq N(b_i f) \ls D$, and the condition $(m_i, d) = 1$ can be removed by adding $\chi_{m_i}(d^4)$. Writing $r_i := \eta_i^3 \ell^2 d^4 d_i^2 e^3$, so that $N(r_i) \ls D^9$, we observe that the sum over $m_i$ in \eqref{R_after_abs_values_3} is equal to 
    \begin{align*}
        &\sum_{\substack{m_i \in \Z[i] \\ m_i \equiv 1 \pmod{\lambda^3} \\ f_i \mid m_i}} \frac{\xi(m_i) \chi_{m_i}(\eta_i \ell^2 d^4 d_i^2 e) \widetilde{g}_4((-1)^i k, m_i)}{N(m_i)^{1+it_i}} H\Big(\frac{N(m_i)}{M_i}\Big) \\
        \ls\ & \sup_{\substack{f_i \in \Z[i] \\ f_i \equiv 1 \pmod{\lambda^3}\\ \mu^2(f_i) = 1 \\ N(f_i) \ls D}} \sup_{\substack{0 \neq r_i \in \Z[i] \\ N(r_i) \ls D^9}} \sum_{\substack{m_i \in \Z[i] \\ m_i \equiv 1 \pmod{3} \\ f_i \mid m_i}} \frac{\xi(m_i) \overline{\chi_{m_i}(r_i)} \widetilde{g}_4((-1)^i k, m_i)}{N(m_i)^{1+it_i}} H\Big(\frac{N(m_i)}{M_i}\Big).
    \end{align*}
    Applying this to the sums over $m_1$ and $m_2$, the sums over $d, d_1, d_2, f$ in \eqref{R_after_abs_values_3} can then be directly evaluated and give $\ls D$. In conclusion, the final bound for \eqref{R_after_abs_values_3} becomes
    \begin{align}
        & \ls \frac{X}{D} + X D \prod_{i=1}^2 \Bigg(\sup_{\substack{t \in \R \\ |t| \ls 1}} \sup_{\substack{h \in \Z[i] \\ h \equiv 1 \pmod{\lambda^3}\\ \mu^2(h) = 1 \\ N(h) \ls D}} \sup_{\substack{0 \neq k, r \in \Z[i] \\ N(k) \ls 1 \\ N(r) \ls D^9}} \Bigg| \sum_{\substack{m \in \Z[i] \\ m \equiv 1 \pmod{\lambda^3} \\ h \mid m}} \frac{\xi(m)\overline{\chi_{m}(r)} \widetilde{g}_4(k, m)}{N(m)^{1+it}} H\Big(\frac{N(m)}{M_i}\Big) \Bigg| \Bigg). \label{eq:second_mom_prod_prelim}
    \end{align}
    Note that we changed variables $k \mapsto (-1)^i k$ above.

    \subsection{Removal of gcd and divisibility conditions}

    Write (uniquely) $m = w v$ for $w, v \equiv 1\pmod{\lambda^3}$ with $w \mid k^\infty$ and $(v, k)=1$. By \cref{quartic-GS-lemma1}, 
    \begin{equation*}
        \widetilde{g}_4(k, w v) = (-1)^{C(w,v)} \chi_{v}(w^2) \widetilde{g}_4(k, w) \widetilde{g}_4(k, v) =
        \widetilde{g}_4(k, w)  (-1)^{C(w,v)} \overline{\chi_{v}(k w^2)} \widetilde{g}_4(v).
    \end{equation*}
    Thus the sum over $m$ in \eqref{eq:second_mom_prod_prelim} is equal to
    \begin{equation} \label{wivi}
        \sum_{\substack{w \in \mathbb{Z}[i] \\ w \equiv 1 \pmod{\lambda^3} \\ w \mid k^\infty}} \frac{\xi(w) \overline{\chi_{w}(r)}\widetilde{g}_4(k, w)}{N(w)^{1+it}} \sum_{\substack{v \in \mathbb{Z}[i] \\ v \equiv 1  \pmod{\lambda^3} \\ h \mid w v}} (-1)^{C(w,v)} \frac{\xi(v) \overline{\chi_{v}(r k w^2)} \widetilde{g}_4(v)}{N(v)^{1+it}} H\Big(\frac{N(w v)}{M_i}\Big). 
    \end{equation}
    Note that we removed the condition $(v, k)=1$ since it is enforced by $\overline{\chi_{v}(k)}$. Let
    \begin{equation}\label{q_i_def}
        q := \frac{h}{(h, w)},
    \end{equation}
    which is squarefree (since so is $h$), so the divisibility condition for the sum over $v$ becomes $q \mid v$. Setting $v = q n$ for $n\equiv 1\pmod{\lambda^3}$, observe that $v$ is squarefree, otherwise $g_4(v) = 0$ by \eqref{sqrootcancel}, so $(q, n)=1$ and we once again have
    \begin{equation} \label{qini}
        \widetilde{g}_4(q n) = (-1)^{C(q,n)} \chi_{n}(q^2) \widetilde{g}_4(q) \widetilde{g}_4(n).
    \end{equation}
    Apply \eqref{qini} in \eqref{wivi}, where we can drop the condition $(q, n) = 1$ due to the presence of $\chi_{n}(q^2)$, and substitute the result into \eqref{eq:second_mom_prod_prelim}. Then move the sum over $w$ to the outside of the absolute values using the triangle inequality and $|\widetilde{g}_4(q)| \leq 1$, by \eqref{sqrootcancel}.
    We also stratify the $n$ into congruence classes $\rho \pmod{4}$ to deduce that \eqref{eq:second_mom_prod_prelim} is
    \begin{align}
        & \ls \frac{X}{D} + X D \prod_{i=1}^2 \Bigg(\sup_{\substack{t \in \R \\ |t| \ls 1}} \sup_{\substack{\rho \in \Z[i] \\ \rho \equiv 1 \pmod{\lambda^3} }} \sup_{\substack{h \in \Z[i] \\ h \equiv 1 \pmod{\lambda^3}\\ \mu^2(h) = 1 \\ N(h) \ls D}} \sup_{\substack{0 \neq k, r \in \Z[i] \\ N(k) \ls 1 \\ N(r) \ls D^9}}  \\
        & \times \sum_{\substack{w \in \mathbb{Z}[i] \\ w \equiv 1 \pmod{\lambda^3} \\ w \mid k^\infty}} \frac{|\widetilde{g}_4(k, w)|}{N(w q)} \cdot \Bigg| \sum_{\substack{n \in \mathbb{Z}[i] \\ n \equiv \rho  \pmod{4} }} \frac{\xi(n)  \overline{\chi_{n}(r k w^2 q^2)} \widetilde{g}_4(n)}{N(n)^{1+it}} H\Big(\frac{N(w q n )}{M_i}\Big) \Bigg| \Bigg).  \label{almost_ready_for_poles_eq}
    \end{align}
    Note that a factor of $(-1)^{C(w,qn) + C(q, n)}$ was removed since it is fixed for $n \equiv \rho \pmod{4}$.

    \subsection{Truncation of sum over $w$}
    For $i \in \{1, 2\}$, a trivial estimation yields
    \begin{equation} \label{trivnisum}
        \Bigg | \sum_{\substack{n \in \mathbb{Z}[i] \\ n \equiv \rho  \pmod{4}}} \frac{\xi(n) \overline{\chi_{n}(r k w^2 q^2)} \widetilde{g}_4(n)}{N(n)^{1+it}} H\Big(\frac{N(w q n)}{M_i}\Big) \Bigg| \ll \log{M_i} \ls 1.
    \end{equation}
    Therefore, for any $Z \geq 1$, the contribution of the terms with $N(w) > Z$ to each factor in the product over $i \in \{1, 2\}$ in \eqref{almost_ready_for_poles_eq} is
    \begin{align}
        & \ls \sup_{\substack{0 \neq k\in \Z[i] \\ N(k) \ls 1}}  \sum_{\substack{w \in \mathbb{Z}[i] \\ w \equiv 1 \pmod{\lambda^3} \\ N(w) > Z,\ w \mid k^\infty}} \frac{|\widetilde{g}_4(k, w)|}{N(w)} \leq \sup_{\substack{0 \neq k\in \Z[i] \\ N(k) \ls 1}}  N(k)^{1/2} \sum_{\substack{w \in \mathbb{Z}[i] \\ w \equiv 1 \pmod{\lambda^3} \\ N(w) > Z,\ w \mid k^\infty}} \frac{1}{N(w)} \nonumber \\
        & \ls \sup_{\substack{0 \neq k\in \Z[i] \\ N(k) \ls 1}}  \frac{N(k)^{1/2}}{Z^{1/2}} \prod_{\substack{\mathfrak{p} \text{ prime} \\ \mathfrak{p} \mid (k)}} \Big(1 - \frac{1}{N(\mathfrak{p})^{1/2}} \Big)^{-1} \ll_\varepsilon \sup_{\substack{0 \neq k\in \Z[i] \\ N(k) \ls 1}}  \frac{N(k)^{1/2+\varepsilon}}{Z^{1/2}} \ls \frac{1}{Z^{1/2}},
    \end{align}
    where we used \eqref{nicebd} to bound the Gauss sums (mind the normalization) and a coarse version of Rankin's trick. Applying the bound above for $Z = 1$ and $Z = D^4$, we conclude that the contribution to \eqref{almost_ready_for_poles_eq} of the terms (in either factor of the product over $i$) with $N(w)>D^4$ is
    \begin{align}
        & \ls XD \cdot \frac{1}{(D^4)^{1/2}} = \frac{X}{D}.\label{wtrunc1}
    \end{align}
    Thus we may add the condition $N(w) \leq D$ to \eqref{almost_ready_for_poles_eq} without any other changes.

    \subsection{Removal of sum over $w$}
    Since $N(h) \ls D$, we deduce from \eqref{q_i_def} that $N(q) \leq N(h) \ls D$. Together with the condition $N(w) \leq D^4$, this implies that
    $N(r kw^2 q^2) \ls D^{19}$ and $N(w q) \ls D^5$. Therefore \eqref{almost_ready_for_poles_eq} is 
    \begin{align} \label{almost_ready_for_poles_eq_2}
        &\ls \frac{X}{D} + X D \prod_{i=1}^2 \Bigg(\sup_{\substack{t \in \R \\ |t| \ls 1}} \sup_{\substack{\rho \in \Z[i] \\ \rho \equiv 1 \pmod{\lambda^3} }} \sup_{\substack{0 \neq u \in \Z[i] \\ N(u) \ls D^5}} \sup_{\substack{0 \neq k, s \in \Z[i] \\ N(k) \ls 1 \\ N(s) \ls D^{19}}} \nonumber \\
        &\times \sum_{\substack{w \in \mathbb{Z}[i] \\ w \equiv 1 \pmod{\lambda^3} \\ N(w) \leq D^4,\ w \mid k^\infty}} \frac{1}{N(w)} \cdot \Bigg| \sum_{\substack{n \in \mathbb{Z}[i] \\ n \equiv \rho  \pmod{4} }} \frac{\xi(n)  \overline{\chi_{n}(s)} \widetilde{g}_4(n)}{N(n)^{1+it}} H\Big(\frac{N(u n )}{M_i}\Big) \Bigg| \Bigg), \qquad
    \end{align}
    where we once again used $|\widetilde{g}_4(k, w)| \leq N(k)^{1/2} \ls 1$. The sum over $w$ in the display above can now be freely evaluated and is $\ls 1$. 

    \subsection{Sums of quartic Gauss sums}

    For each $0 \neq s \in \mathbb{Z}[i]$ with $N(s) \ls D^{19}$, we can uniquely factorize it as $s = \eta \lambda^{4j} \alpha \beta^2 \gamma^3 \delta^4 \epsilon^4$ with $\eta \mid \lambda^3$, $j \in \Z_{\geq 0}$, $\alpha, \beta, \gamma, \delta, \epsilon \equiv 1 \pmod{\lambda^3}$, $\mu^2(\alpha \beta \gamma \delta)=1$, and $\epsilon \mid (\alpha \beta \gamma \delta)^{\infty}$. Note that $\chi_{n}(s) = \chi_{n}(\eta \alpha \beta^2 \gamma^3 \delta^4)$ in \eqref{almost_ready_for_poles_eq_2}, and \cref{truncated_Gauss_sums_lemma} gives
    \begin{align}\label{decomposition_Gauss_sums_eq}
        \sum_{\substack{n \in \mathbb{Z}[i] \\ n \equiv \rho  \pmod{4}}} \frac{\xi(n) \overline{\chi_{n}(\eta \alpha  \beta^2 \gamma^3 \delta^4)} \widetilde{g}_4(n)}{N(n)^{1+it}} H\Big(\frac{N(u n)}{M_i}\Big) = \mathcal{P}_i + \mathcal{I}_i,
    \end{align}
    where 
    \begin{align*}
        & \mathcal{P}_i := \bbone_{\omega=0} \cdot \bbone_{\gamma=1} \cdot \widetilde{H}(-\tfrac{1}{4}-it) \Big( \frac{M_i}{N(u)} \Big)^{-\frac{1}{4}-it} \cdot \Delta^{*}_{\alpha} (\alpha \eta, \tfrac{5}{4})^{-1} \Delta_{\beta \delta}(\tfrac{5}{4})^{-1} N(\beta)^{-1/4} \nonumber \\
        & \times \mathop{\sum \sum}_{\substack{e, f \in \Z[i] \\ e, f \equiv 1\pmod{\lambda^3} \\ e \mid \beta,\ f \mid \delta}} (-1)^{C(ef\rho,\beta)} \chi_{f}(-1) \frac{\mu(ef)}{N(ef)} \overline{\widetilde{g}_4 \Big(\frac{\alpha \beta^2 \eta}{e^2}, e \Big)}  \widetilde{g}_4 \Big( \frac{\alpha \beta^2 \eta}{e^2}, f \Big) \overline{\widetilde{g}_4 \Big(\alpha \eta,\frac{\beta f}{e} \Big)} \tau_4(\alpha \eta; \beta \rho) \nonumber
    \end{align*}
    and
    \begin{align*}
        \mathcal{I}_i \ll_{A, \varepsilon} (1+|\omega|)^{\varepsilon}X^{\varepsilon} \Big(\frac{M_i}{N(u)}\Big)^{-\frac{1}{2}} \sup_{\substack{ 0 \neq z \in \mathbb{Z}[i] \\ N(z) \ls D^{19} }} \sup_{\substack{y \in \mathbb{R}}} \sup_{\substack{\kappa \in \Z[i] \\ \kappa \equiv 1 \pmod{\lambda^3}}} \frac{\big|\psi\big(z, 1+\varepsilon + i(t + y),\xi; \kappa\big)\big|}{(1+|y|)^A}.
    \end{align*}
    Thus by \cref{psi_convexity_lemma} and \cref{thetaconvexity} we have
    \begin{equation} \label{supest}
        \sup_{\substack{t \in \R \\ |t| \ls 1}} \sup_{\substack{\rho \in \Z[i] \\ \rho \equiv 1 \pmod{\lambda^3} }} \sup_{\substack{0 \neq u \in \Z[i] \\ N(u) \ls D^5}} \sup_{\substack{0 \neq s \in \Z[i] \\ N(s) \ls D^{19}}}
        (\mathcal{P}_i+\mathcal{I}_i) \ls \Big( \Big( \frac{D^5}{M_i}\Big)^{1/4} D^{19/8} +\Big(\frac{D^5}{M_i} \Big)^{1/2} D^{19/4} \Big).
    \end{equation}

    \subsection{Endgame for second moment error term}

    Using \eqref{decomposition_Gauss_sums_eq} and \eqref{supest} in \eqref{almost_ready_for_poles_eq_2} for $i\in \{1,2\}$, we conclude that it is
    \begin{align} \label{secondest}
        \ls \frac{X}{D} + XD \prod_{i=1}^2 \Big( \frac{D^{29/8}}{M_i^{1/4}} + \frac{D^{29/4}}{M_i^{1/2}} \Big) \ls \frac{X}{D}+ \frac{X D^{31/2}}{(M_1M_2)^{1/4}} \ls \frac{X}{D} + X^{3/4} D^{33/2},
    \end{align}
    where we used restriction \eqref{eq:M_i_bound} to obtain the last inequality. Since the display above is a bound for \eqref{R_after_abs_values}, there is a constant $C\geq 1$ such that
    \begin{equation} \label{RM1M2est}
        \mathcal{R}^L_{\f,M_1,M_2}(b_1,b_2) \ll_{F, \varepsilon} X^{\varepsilon} \cdot \big(B Y(1+|\omega|)\big)^C \cdot \Big( \frac{X}{D} + X^{3/4} D^{33/2} \Big)
    \end{equation}
    for all $1 \leq L \leq Y$ and $M_1, M_2 \gg 1$ such that $M_1 M_2 \ll X^{1+\varepsilon} (1+|\omega|)^{2+\varepsilon} B_1 B_2$. Summing over all $L, M_1, M_2$ as in \eqref{localized_sum_def} gives the same bound for $\mathcal{R}_\omega(b_1, b_2)$. Choose for instance $D=X^{1/70}$ to obtain
    \begin{equation} \label{RM1M2est2}
        \mathcal{R}_{\omega}(b_1,b_2) \ll_{F, \varepsilon} \big(B Y (1+|\omega|)\big)^C \cdot X^{1-\frac{1}{70}+\varepsilon},
    \end{equation}
    which proves \eqref{Rbound}, as desired.
\end{proof}


\section{First moment asymptotics}\label{sec:first_moment}

\begin{proof}[Proof of \cref{prop:first_moment}]
    Recall the definitions of $B_{\f,U}(q)$ and $\widetilde{B}_{\f,U}(q)$ in \eqref{BUdef} and \eqref{BUtildedef}. Since we denote $V_{\omega}(y) := V_{\frac{1}{2},\omega}(y)$, \cref{decaylem} gives the bound
    \begin{equation}\label{eq:V_f_bound}
        V_{\omega}(y)\ll_{A} \Big(1+\frac{y}{1+|\omega|}\Big)^{-A}
    \end{equation}
    uniformly in $\omega$. Note from \eqref{SMdef} that
    \begin{align*}
        &\mathcal{S}_M \Big( \nu_{q,\omega}(\mathfrak{b}) \Big[B_{\f,U}(q) + \ep(\f)\xi(q)\widetilde{g}_4(q) \cdot \widetilde{B}_{\f,U^{-1}}(q) \Big]; F \Big) \\
        & = \sum_{\substack{ q \in \mathbb{Z}[i] \\ q \equiv 1 \pmod{\lambda^7}}}  M_Y(q)  \nu_{q,\omega}(\mathfrak{b}) \Big[B_{\f,U}(q) + \ep(\f)\xi(q)\widetilde{g}_4(q) \cdot \widetilde{B}_{\f,U^{-1}}(q) \Big] F \Big( \frac{N(q)}{X} \Big).
    \end{align*}
    The expression above is equal to 
    \begin{equation} \label{SMopen_first_mom}
        \sum_{\substack{0 \neq \mathfrak{n} \unlhd \Z[i]}}
        N(\mathfrak{n})^{-1/2}  \Big[ \mathcal{S}_M \big( \nu_{q,\omega}(\mathfrak{b} \mathfrak{n}); F_{\mathfrak{n},\f,U} \big) + \mathcal{S}_M \big( \ep(\f)\xi(q)\widetilde{g}_4(q) \overline{\nu_{q,\omega}(\mathfrak{b}^{-1} \mathfrak{n})}; F_{\mathfrak{n},\f,U^{-1}} \big) \Big],
    \end{equation}
    where we used the fact that $V_{\omega}$ is real-valued, and denoted
    \begin{equation}\label{eq:F_first_mom}
        F_{\mathfrak{n},\f,U}(t):= F(t) V_{\omega} \Big( \frac{N(\mathfrak{n})}{2U\sqrt{N(\mathfrak m_{\omega})X t}} \Big).
    \end{equation}
    Opening up the two terms of \eqref{SMopen_first_mom} and using \eqref{MYRYdef}, we have
    \begin{align*}
        \mathcal{S}_M \big( \nu_{q,\omega}(\mathfrak{b} \mathfrak{n}); F_{\mathfrak{n},\omega, U} \big) = \xi(\mathfrak b \mathfrak n ) \sum_{\substack{\ell \in \Z[i] \\ \ell \equiv 1 \pmod{\lambda^3} \\ N(\ell) \leq Y}} \mu(\ell) \sum_{\substack{ m \in \Z[i]  \\ \ell^2 m \equiv 1 \pmod{\lambda^7}}}  \chi_{\ell^2m}(\mathfrak{b} \mathfrak{n}) F_{\mathfrak{n},\f,U} \Big( \frac{N(\ell^2 m)}{X} \Big)
    \end{align*}
    and
    \begin{align*}
        & \mathcal{S}_M \big( \ep(\f)\xi(q)\widetilde{g}_4(q) \overline{\nu_{q,\omega}(\mathfrak{b}^{-1} \mathfrak{n})}; F_{\mathfrak{n},\f,U^{-1}} \big) \\
        &= \ep(\f)\xi(\mathfrak b)\overline{\xi(\mathfrak n)}\sum_{\substack{ m \in \Z[i]  \\ m \equiv 1 \pmod{\lambda^7}}}  \xi(m)\widetilde{g}_4(m) \overline{\chi_m(\mathfrak{b}^3 \mathfrak{n})} F_{\mathfrak{n},\omega,U^{-1}} \Big( \frac{N(m)}{X} \Big),
    \end{align*}
    where we used the fact \eqref{sqrootcancel} that $\widetilde{g}_4(\ell^2 m) = 0$ for $\ell \equiv 1 \pmod{\lambda^3}$, unless $\ell = 1$.

    Write $\mathfrak{b} = b \mathbb{Z}[i]$ and $\mathfrak{n} =\lambda^{g} n \mathbb{Z}[i]$ for $g \in \mathbb{Z}_{\geq 0}$ and $b, n \equiv 1 \pmod{\lambda^3}$, where $\mu^2(b)=1$ since we are assuming $\mathfrak{b}$ is squarefree. By \eqref{eq:ram&units_1} and quartic reciprocity, if $\ell^2 m \equiv 1 \pmod{\lambda^7}$ then
    \begin{equation*}
        \chi_{\ell^2 m}(\mathfrak{b} \mathfrak{n}) = \chi_{\ell^2 m}(b \lambda^{g} n)=\chi_{\ell^2 m}(b n)=\chi_{b n}(\ell^2 m) = \quadrat{\ell}{b n}\chi_{b n}(m),
    \end{equation*}
    and similarly for $m \equiv 1 \pmod{\lambda^7}$ we get
    \begin{equation*}
        \overline{\chi_{m}(\mathfrak{b}^3 \mathfrak{n})} = \overline{\chi_{m}(b^3 \lambda^{g} n)} = \overline{\chi_{m}(b^3 n)}.
    \end{equation*}
    Thus
    \begin{equation}\label{first_mom_sum}
        \begin{split}
            &  \mathcal{S}_M \big( \nu_{q,\omega}(\mathfrak{b} \mathfrak{n}); F_{\mathfrak{n},\omega, U} \big)\\
            & = \xi(b\lambda^g n)\sum_{\substack{\ell \in \Z[i] \\ \ell \equiv 1 \pmod {\lambda^3} \\ N(\ell) \leq Y}} \mu(\ell)\quadrat{\ell}{bn}\sum_{\substack{ m \in \Z[i] \\  \ell^2 m \equiv 1 \pmod{\lambda^7}}} \chi_{b n}(m) F_{\lambda^{g} n,\f,U} \Big( \frac{N(\ell^2 m)}{X}  \Big)
        \end{split}
    \end{equation}
    and
    \begin{equation}\label{first_mom_dual}
        \begin{split}
            & \mathcal{S}_M \big( \ep(\f)\xi(q)\widetilde{g}_4(q) \overline{\nu_{q,\omega}(\mathfrak{b}^{-1} \mathfrak{n})}; F_{\mathfrak{n},\f,U^{-1}} \big)\\
            &  = \ep(\f)\xi(b) \overline{\xi(\lambda^g n)} \sum_{\substack{ m \in \Z[i] \\ m \equiv 1 \pmod{\lambda^7}}} \xi(m)\widetilde{g}_4(m) \overline{\chi_m(b^3 n)} F_{\lambda^g n,\omega,U^{-1}} \Big( \frac{N(m)}{X} \Big).
        \end{split}
    \end{equation}
    
    We apply Poisson summation (\cref{lem:pois}) to \eqref{first_mom_sum} and find that the sum over $m$ is equal to
    \begin{equation}\label{eq:first_after_Poisson}
        \frac{\pi X}{64 N(bn\ell^2)}\sum_{k \in \Z[i]} \ddot{\chi}_{bn}(k)\ec\Big({-\frac{k\ell^2(bn)^3}{2\lambda^7}} \Big)\ddot{F}_{\lambda^gn,\f,U}\Big(\frac{k\sqrt X}{bn\ell^2}\Big),
    \end{equation}
    where we use the fact that $\overline{\ell^2}\equiv \ell^2 \pmod{\lambda^7}$ and have
    \begin{equation}\label{eq:first_chi_dot_dot}
        \ddot{\chi}_{bn}(k):=  \sum_{a \pmod {bn}} \chi_{bn}(2\lambda^7 a)\ec\Big({-\frac{ka}{bn}}\Big) = \chi_{bn}(-i\lambda)g_{4}(-k,bn).
    \end{equation}
    Denote the terms corresponding to the first $\mathcal{S}_M$ in \eqref{SMopen_first_mom} by $\mathcal{S}_{\f,U}(b)$, and those corresponding to the second $\mathcal{S}_M$ by $\widetilde{\mathcal{S}}_{\f,U^{-1}}(b)$. Applying \eqref{eq:first_after_Poisson} and \eqref{eq:first_chi_dot_dot} in \eqref{first_mom_sum}, we obtain
    \begin{align}
        \mathcal{S}_{\f,U}(b) &= \frac{\pi X \xi(b)}{64 N(b)} \sum_{g=0}^\infty \frac{\xi(\lambda^g)}{2^{g/2}} \sum_{\substack{c \pmod{\lambda^7} \\ c \equiv 1 \pmod{\lambda^3}}}  \chi_{bc}(-i\lambda) \sum_{k\in \Z[i]} \sum_{\substack{\ell \in \Z[i] \\ \ell \equiv 1 \pmod{\lambda^3} \\ N(\ell) \leq Y}} \frac{\mu(\ell)}{N(\ell)^2}\quadrat{\ell}{b} \check{e}\Big({ -\frac{k \ell^2 b^3 c^3}{2\lambda^7}} \Big) \nonumber \\
        & \quad \times \sum_{\substack{n \in \Z[i] \\ n\equiv c \pmod{\lambda^7}}} \frac{\xi(n)g_4(-k, bn)}{N(n)^{3/2}} \quadrat{\ell}{n} \ddot{F}_{\lambda^g n,\f,U} \Big( \frac{k \sqrt{X}}{bn \ell^2} \Big). \label{S_first_mom_exp}
    \end{align}
    Let $\mathcal{M}_{\f,U}(b)$ denote the terms corresponding to $k=0$ in \eqref{S_first_mom_exp}, which turn out to be the main terms, and let $\mathcal{S}'_{\omega, U}(b)$ denote the terms corresponding to $k \neq 0$. Finally, denote $\mathcal{R}_{\f,U}(b) := \mathcal{S}'_{\f,U}(b) + \widetilde{\mathcal{S}}_{\f,U^{-1}}(b)$.
    
    \subsection{The main term \texorpdfstring{$\mathcal{M}_{\f,U}(b)$}{}}
    
    From the definition, $\mathcal{M}_{\f,U}(b)$ is equal to
    \begin{equation*}
        \frac{\pi X}{64 N(b)} \sum_{g =0}^{\infty}\frac{\xi(\lambda^g)}{2^{g/2}} \sum_{\substack{\ell \in \Z[i] \\ \ell \equiv 1 \pmod{\lambda^3} \\ N(\ell) \leq Y}} \frac{\mu(\ell)}{N(\ell^2)}  \sum_{\substack{n\in \Z[i]\\n \equiv 1 \pmod{\lambda^3}}} \frac{\chi_{bn}(-i\lambda\ell^2)\xi(bn)g_4(0, bn)}{N(n)^{3/2}}\ddot{F}_{\lambda^g n,\f,U} (0).
    \end{equation*}
    Observe that $g_4(0,bn) =0$ unless $bn$ is a fourth power. Since $b\equiv 1 \pmod{\lambda^3}$ is squarefree, this forces $n = b^3m^4$ for $m\equiv 1\pmod{\lambda^3}$. Thus $g_4(0,bn)= g(0,b^4m^4) = \varphi(b^4m^4)= \varphi(bm)N(bm)^3$. We uniquely decompose $m$ as $m = b'c$ for $b', c \equiv 1 \pmod{\lambda^3}$ with $b'\mid b^{\infty}$ and $(b,c)=1$. Then $\varphi(bm)=\varphi(bb')\varphi(c) =N(b')\varphi(b) \varphi(c)$, so 
    \begin{equation}\label{eq:first_moment_main}
        \begin{split}
            & \mathcal{M}_{\omega,U}(b) = \frac{\pi X}{64} \sum_{g =0}^{\infty}\frac{\xi(\lambda^g)}{2^{g/2}} \sum_{\substack{m\in \mathbb{Z}[i]\\ m \equiv 1 \pmod{\lambda^3}}}\frac{\xi(b^4m^4)\varphi(bm)}{N(b)^{5/2}N(m)^3}\ddot{F}_{\lambda^gb^3m^4,\f,U}(0) \sum_{\substack{\ell \in \Z[i]\\ \ell \equiv 1 \pmod{\lambda^3} \\ N(\ell) \leq Y}}
            \frac{\mu(\ell)\mathbf{1}_{bm}(\ell)}{N(\ell^2)}\\
            & = \frac{\pi X \varphi(b) \xi(b^4)}{64 N(b)^{5/2}} \sum_{g =0}^{\infty}\frac{\xi(\lambda^g)}{2^{g/2}} \sumtwo_{\substack{b',c\in \Z[i]\\b',c\equiv 1 \pmod{\lambda^3}\\ b' \mid b^{\infty},\ (b,c)=1}}\frac{\varphi(c)\xi(c^4b'^4)\ddot{F}_{\lambda^gb^3b'^4c^4,\f,U}(0)}{N(b')^{2}N(c)^{3}} \sum_{\substack{\ell \in \Z[i]\\ \ell \equiv 1 \pmod{\lambda^3} \\ N(\ell) \leq Y}}  \frac{\mu(\ell)\mathbf{1}_{bc}(\ell)}{N(\ell^2)}.
        \end{split}
    \end{equation}

    Recall the definition of $F_{\mathfrak n,\f,U}(t)$ in \eqref{eq:F_first_mom}, and that $F$ is supported on $(1,2)$ and satisfies $0 \leq F(t) \leq 1$ for all $t\in\R$. Since $J_0(0) = 1$, by \eqref{eq:ddotV} and \eqref{eq:V_s_f} we have 
    \begin{align} \label{eq:F_n_f(0)}
        \ddot{F}_{\mathfrak{n},\f,U}(0) &=  \int_{1}^{\sqrt 2}rF(r^2)V_{\omega} \Big(\frac{N(\mathfrak n)}{2U r\sqrt{N(\m_{\omega})X}}\Big) dr \nonumber \\
        &= \frac{1}{2\pi i}\int_{2-i\infty}^{2+i\infty} \Big(\frac{\pi N(\mathfrak n)}{U\sqrt{N(\m_{\omega})X}}\Big)^{-w}
        \frac{\Gamma(\tfrac{1+|\omega|}{2}+w)}{\Gamma(\frac{1+|\omega|}{2})} e^{w^2} \check F \Big(\frac{w}{2}\Big)\frac{dw}{2w},
    \end{align}
    where $\check F(w) := \int_0^{\infty} t^w F(t) dt = 2 \int_1^{\sqrt 2}r^{2w+1}F(r^2)dr$. From \eqref{eq:V_f_bound}, we have the coarse bound
    \begin{equation}\label{F_dot_dot_zero_bound_first_mom}
        \ddot F_{\mathfrak n,\f,U}(0) \ll_{A} \Big(1+\frac{N(\mathfrak n)}{(1+|\omega|)U\sqrt X }\Big)^{-A}.
    \end{equation}
    The sum over $\ell$ in \eqref{eq:first_moment_main} is equal to 
    \begin{equation}
        \zeta_{\lambda}^{-1}(2)\prod_{\substack{\pi \ \text{prime}\\ \pi \equiv 1 \pmod{\lambda^3}\\ \pi \mid bc}}\Big(1-\frac{1}{N(\pi)^2}\Big)^{-1} +O\Big(\frac{1}{Y}\Big).
    \end{equation}
    The contribution of the error term $O(1/Y)$ to \eqref{eq:first_moment_main} is
    \begin{equation}
        \ll_{A} \frac{X}{YN(b)^{3/2}}\sum_{\substack{m \in \Z[i]\\ m\equiv 1 \pmod{\lambda^3}}}\frac{1}{N(m)^2}\ll \frac{X}{YN(b)^{3/2}} \ll \frac{X}{Y B^{3/2}}.
    \end{equation}

    Using \eqref{eq:F_n_f(0)}, we see that 
    \begin{align}
        M_{\f,U}(b) = \frac{\pi X}{128 \cdot \zeta_{\lambda}(2)}\frac{1}{2\pi i}\int_{2-i\infty}^{2+i\infty}\Big(\frac{\pi}{U\sqrt{N(\m_{\omega}) X}}\Big)^{-w} \frac{\Gamma(\tfrac{1+|\f|}{2}+w)}{\Gamma(\frac{1+|\f|}{2})} e^{w^2} \check F \Big(\frac{w}{2}\Big) & \mathcal{G}_{b,\omega}(w)\frac{dw}{w} \nonumber \\
        +\ O\Big(\frac{X}{Y B^{3/2}}\Big), & \label{eq:M_f_int_mom_1}
    \end{align}
    where
    \begin{align}
        &\mathcal G_{b,\omega}(w) := \frac{\varphi(b)\xi(b^4)}{N(b)^{5/2+3w}} \bigg(\sum_{g \in \Z_{\ge 0}}\frac{\xi(\lambda^g)}{2^{g(1/2+w)}}\bigg) \bigg(\prod_{\substack{\pi \ \text{prime}\\ \pi \equiv 1 \pmod{\lambda^3}\\ \pi  \mid b}}\Big(1-\frac{1}{N(\pi)^2}\Big)^{-1}\bigg) \nonumber  \\
        & \times \bigg(\sum_{\substack{b'\in \Z[i] \\ b'\equiv 1 \pmod{\lambda^3}\\b'\mid b^{\infty}}}\frac{\xi(b'^4)}{N(b')^{2+4w}}\bigg) \sum_{\substack{c\in \Z[i]\\ c\equiv 1 \pmod{\lambda^3} \\ (b,c)=1}} \frac{\xi(c^4)\varphi(c)}{N(c)^{3+4w}} \prod_{\substack{\pi \ \text{prime}\\ \pi \equiv 1\pmod{\lambda^3}\\ \pi  \mid c}}\Big(1-\frac{1}{N(\pi)^2}\Big)^{-1}. \qquad \label{eq:G_b_initial}
    \end{align} 
    The sum over $c$ in \eqref{eq:G_b_initial} is
    \begin{equation}
        \sum_{\substack{c\in \Z[i]\\ c\equiv 1 \pmod{\lambda^3} \\ (b,c)=1}}\frac{\xi(c^4)\varphi(c)}{N(c)^{3+4w}}
        \prod_{\substack{\pi \ \text{prime}\\ \pi \equiv 1\pmod{\lambda^3}\\ \pi \mid c}}\Big(1-\frac{1}{N(\pi)^2}\Big)^{-1} =\mathcal{K}_{\omega}(w) \prod_{\substack{\pi \ \text{prime}\\ \pi \equiv 1 \pmod{\lambda^3}\\ \pi \mid b}}K_{\omega}(\pi,w)^{-1}  ,
    \end{equation}
    where
    \begin{align}
        K_{\omega}(\pi,w):= & \ 1 + \Big(1-\frac{1}{N(\pi)^2}\Big)^{-1} \sum_{k=1}^{\infty} \frac{\xi(\pi^{4k}) \varphi(\pi^k)}{N(\pi)^{k(2+4w)} N(\pi)^k}= 1 + \frac{\big(1+\frac{1}{N(\pi)}\big)^{-1}}{N(\pi)^{2+4w} \overline{\xi(\pi^4)}-1} \qquad \nonumber \\
        = & \ \Big(1-\frac{\xi(\pi^4)}{N(\pi)^{2+4w}}\Big)^{-1} \Big(1-\frac{\xi(\pi^4)}{N(\pi)^{2+4w}(N(\pi)+1)}\Big) \label{eq:K(pi,w)}
    \end{align}
    and
    \begin{equation}\label{eq:K}
        \mathcal K_{\omega}(w) := \prod_{\substack{\pi \ \text{prime}\\ \pi \equiv 1 \pmod{\lambda^3}}}K_{\omega}(\pi,w) =:\zeta_{\lambda}(2+4w,\xi) \cdot \mathcal Q _{\omega}(w).
    \end{equation}
    Computing the sums over $g$ and $b'$ in \eqref{eq:G_b_initial} gives
    \begin{equation}\label{eq:G_b_f}
        \mathcal{G}_{b,\omega}(w) = \Big(\frac{2^{1/2+w}}{2^{1/2+w}-\xi(\lambda)}\Big)\cdot \zeta_{\lambda}(2+4w,\xi) \cdot \mathcal Q _{\omega}(w) \cdot \mathcal{H}_{b,\omega}(w),
    \end{equation}
    where from now on we assume $\Re(w)\ge -\frac{1}{2}+\ep$ and have 
\begin{align}
        \mathcal{H}_{b,\omega}(w) & := \frac{\xi(b^4)}{N(b)^{3/2+3w}}\frac{\varphi(b)}{N(b)} \prod_{\substack{\pi \ \text{prime}\\ \pi \equiv 1 \pmod{\lambda^3}\\\pi \mid b}} \Big(1-\frac{1}{N(\pi)^2}\Big)^{-1}
        \Big(1 - \frac{\xi(\pi^4)}{N(\pi)^{2+4w}}\Big)^{-1} K_{\omega}(\pi,w)^{-1} \nonumber \\
        & = \frac{\xi(b^4)}{N(b)^{3/2+3w}} \prod_{\substack{\pi \ \text{prime}\\ \pi \equiv 1 \pmod{\lambda^3}\\ \pi  \mid b}}  \Big(1+\frac{1}{N(\pi)}\Big)^{-1} \Big(1-\frac{\xi(\pi^4)}{N(\pi)^{2+4w}(N(\pi)+1)}\Big)^{-1} \label{eq:H_b_f(w)}\\
        & \ll_{\ep} N(b)^{-3/2-3\Re(w) + \varepsilon}. \nonumber
    \end{align}
    It follows from \eqref{eq:K(pi,w)} and \eqref{eq:K} that $\mathcal{Q}_{\omega}(w)$ is holomorphic and uniformly bounded for $\Re(w)\ge -\frac{1}{2} +\ep$. Note that $\mathcal G_{b,\omega}(w)$ is holomorphic for $\Re(w) \ge - \frac{1}{2}+\ep$ if $\omega \neq 0$. If $\omega = 0$, note that $G_{b,0}(0)$ has a simple pole at $w  = -\frac{1}{4}$, since neither  $Q_{0}(- \frac{1}{4})$ nor $\mathcal{H}_{b,0}(-\frac{1}{4})$ are zero, and no other poles for $\Re(w)\ge -\frac{1}{2} +\ep$.

    Therefore we may shift the contour in \eqref{eq:M_f_int_mom_1} to $\Re w = - \frac{1}{4} +\ep$, picking up only a simple pole at $w =0$. Since $\zeta_{\lambda}(2+4w,\xi)$ and $\mathcal Q_{\omega}(w)$ are both uniformly bounded in that vertical line, the remaining integral is
    \begin{align*}
        & \ll \frac{X^{7/8+\ep/2}}{U^{1/4-\ep}N(b)^{3/4}}\int_{-1/4+\ep -i\infty}^{-1/4+\ep +i\infty} \Bigg| \frac{\Gamma(\tfrac{1+|\f|}{2}+w)}{\Gamma(\frac{1+|\f|}{2})}\Bigg| \cdot |e^{w^2}| \cdot |w|^{100}|dw| \ll \frac{X^{7/8+\ep}}{(1+|\omega|)^{1/4} U^{1/4}B^{3/4}}
    \end{align*}
    by Stirling's formula and \eqref{firstmomentassump}.

    Denote $q :=N(\pi)$. From \eqref{eq:M_f_int_mom_1} we have
    \begin{equation} \label{maintermintermed}
        M_{\omega,U}(b) =  \frac{\pi X}{128\cdot\zeta_{\lambda}(2)} \check F(0) \mathcal{G}_{b,\omega}(0)+ O\Big(\frac{X}{Y B^{3/2}}\Big)+O_{\ep}\Big(\frac{X^{7/8+\ep}}{(1+|\omega|)^{1/4} U^{1/4} B^{3/4}}\Big).
    \end{equation}
    Observe by \eqref{eq:K(pi,w)}, \eqref{eq:K}, \eqref{eq:G_b_f}, and \eqref{eq:H_b_f(w)} that
    \begin{equation*}
        \mathcal{G}_{b,\omega}(0) = \frac{\sqrt{2}}{\sqrt 2 - \xi(\lambda)} \zeta_{\lambda}(2,\xi)\mathcal Q_{\omega}(0)\frac{\xi(b^4) r_{\omega}(b)}{N(b)^{3/2}},
    \end{equation*}
    where
    \begin{equation*}
        \zeta_{\lambda}(2,\xi)\mathcal Q_{\omega}(0) =  \mathcal{K}_\omega(0) = \prod_{\substack{\pi \ \text{prime}\\ \pi \equiv 1 \pmod{\lambda^3} \\ q:=N(\pi)}} \Big(1+\frac{q}{(q+1)(q^{2} \overline{\xi(\pi^4)}-1)}\Big)
    \end{equation*}
    and $r$ is the multiplicative function given, for $k\geq 1$, by
    \begin{equation*}
        r(\pi^k):= \frac{q^{3}}{q^3+q^2-\xi(\pi^4)} = 1+ O\Big(\frac{1}{q}\Big),
    \end{equation*}
    for $\pi$ prime and $k\in \Z_{>0}$.
    
     \subsection{The standard sum $\mathcal{S}^{\prime}_{\omega,U}(b)$}
    
    Recall the notation $\ls$ in \eqref{specnotation} and the definition of $F_{\mathfrak{n},\f,U}(t)$ in \eqref{eq:F_first_mom}. We observe for future reference that the proof of \cref{F_dot_dot_bound_lemma} applies directly to give, for any $\mathfrak{n} \unlhd \Z[i]$, $u\in \C$, and $A \in \Z_{\geq 0}$, that
    \begin{equation}\label{f_dot_dot_first_mom_bound}
        \ddot{F}_{\mathfrak{n},\f,U}(u) \ll_{F, A} \Big(1 + |u| + \frac{N(\mathfrak{n})}{(1+|\omega|)U\sqrt{X}}\Big)^{-A}.
    \end{equation}
    
    We now use the hypothesis \eqref{firstmomentassump} to quickly verify that $|\mathcal{S}^{\prime}_{\omega,U}(b)|$ is negligible. Applying the triangle inequality to \eqref{S_first_mom_exp} gives
    \begin{equation}\label{S_prime_bound}
        \mathcal{S}^{\prime}_{\f,U}(b) \ll \frac{X}{B} \sum_{g=0}^\infty \frac{1}{2^{g/2}} \sum_{\substack{ k\in \Z[i] \\ k \neq 0}} \sum_{\substack{\ell \in \Z[i] \\ \ell \equiv 1 \pmod{\lambda^3} \\ N(\ell) \leq Y}} \frac{\mu^2(\ell)}{N(\ell)^2} \sum_{\substack{n \in \Z[i] \\ n \equiv 1 \pmod{\lambda^3}}} \frac{|g_4(-k, bn)|}{N(n)^{3/2}} \Big| \ddot{F}_{\lambda^g n,\f,U} \Big( \frac{k \sqrt{X}}{bn \ell^2} \Big) \Big|.
    \end{equation}
    By \eqref{f_dot_dot_first_mom_bound} we can restrict to $N(n) \leq (1+|\omega|)^{1+\varepsilon} U X^{1/2+\varepsilon}$ up to a negligible error term $O_{F, A, \varepsilon}(X^{-A}(1+|\omega|)^{-A})$. Up to the same error term, we may also restrict to
    \begin{equation*}
        N(k) \leq \frac{N(bn\ell^2)}{X^{1-\varepsilon} (1+|\omega|)^{-\varepsilon}} \ll \frac{BY^2 N(n)}{X^{1-\varepsilon} (1+|\omega|)^{-\varepsilon}} \leq \frac{(1+|\omega|)^{1+2\varepsilon} BY^2 U}{X^{1/2-2\varepsilon}}
    \end{equation*}
    From the hypothesis \eqref{firstmomentassump}, we deduce that
    \begin{equation*}
        N(k) \ll \frac{(1+|\omega|)^{1+2\varepsilon} BY^2 U}{X^{1/2-2\varepsilon}} \ll \frac{X^{(1/2-\nu)(1+2\varepsilon)}}{X^{1/2-2\varepsilon}} = X^{-\nu + \varepsilon(3-2\nu)}.
    \end{equation*}
    For $\varepsilon > 0$ sufficiently small in terms of $\nu$, the sum over $k \neq 0$ becomes empty. Therefore
    \begin{equation} \label{SprimeU}
        \mathcal{S}^{\prime}_{\omega,U} (b) \ll_{F, \nu, A} X^{-A}.
    \end{equation}

    \subsection{The dual sum $\widetilde{\mathcal{S}}_{\omega,U^{-1}}(b)$}

    Recalling \eqref{first_mom_dual}, and detecting the congruence on $m$ using multiplicative characters as in \eqref{congruence_with_chars}, we have
    \begin{align}
        \widetilde{\mathcal{S}}_{\omega,U^{-1}}(b) = \frac{\ep(\f) \xi(b)}{16} \sum_{\eta \mid \lambda^3} \sum_{g=0}^{\infty} \frac{\overline{\xi(\lambda^g)}}{2^{g/2}} \sum_{\substack{n \in \Z[i] \\ n \equiv 1 \pmod{\lambda^3}}} \frac{\overline{\xi(n)}}{N(n)^{1/2}} \nonumber \\
        \times \sum_{\substack{ m \in \Z[i]  \\ m \equiv 1 \pmod{\lambda^3}}}  \xi(m) \overline{\chi_m(\eta b^3 n )}
        \widetilde{g}_4(m)  F_{\lambda^g n,\omega,U^{-1}} \Big( \frac{N(m)}{X} \Big).
    \end{align}
    Separate $m$ into congruence classes $v \pmod{4}$, and then partition the range of $n$ into dyadic intervals $N(n) \sim R$. The triangle inequality gives
    \begin{align}
        & \widetilde{\mathcal{S}}_{\omega,U^{-1}}(b)  \ll \sup_{\substack{v \in \Z[i] \\ v \equiv 1 \pmod{\lambda^3}}} \sup_{\eta \mid \lambda^3}  \sum_{\substack{i \in \Z \\ R = 2^i \gg 1}} \sum_{g=0}^\infty \frac{1}{2^{g/2}}  \nonumber \\
        & \times \Bigg|\sum_{\substack{n \in \Z[i] \\ n \equiv 1 \pmod{\lambda^3} \\ N(n) \sim R}} \frac{\overline{\xi(n)}}{N(n)^{1/2}} \sum_{\substack{ m \in \Z[i]  \\ m \equiv v \pmod{4}}}
        \xi(m) \overline{\chi_m(\eta b^3 n)} \widetilde{g}_4(m)  F_{\lambda^g n,\omega,U^{-1}} \Big( \frac{N(m)}{X} \Big) \Bigg|. \qquad \label{dual_sum_dyadic_decomp}
    \end{align}

    Recalling the definition of $F_{\lambda^g n,\omega,U}$ in \eqref{eq:F_first_mom} and the bound \eqref{eq:V_f_bound}, we can restrict to
    \begin{equation*}
        R \ll \frac{(1+|\omega|)^{1+\varepsilon} X^{1/2+\varepsilon}}{U}
    \end{equation*}
    in \eqref{dual_sum_dyadic_decomp}, up to error $O_{A, \varepsilon}((1+|\omega|)^{-A} X^{-A})$. For the remaining dyadic ranges of $R$, open $V_{\omega}$ using \eqref{eq:V_s_f} and then shift the line of integration to $\Re(w) = \varepsilon$. We deduce that the sum over $n$ in \eqref{dual_sum_dyadic_decomp} is equal to 
    \begin{align}
        &  \frac{1}{2\pi i} \int_{\varepsilon - i \infty}^{\varepsilon + i \infty}  \sum_{\substack{n \in \Z[i] \\ n \equiv 1 \pmod{\lambda^3} \\ N(n) \sim R}} \frac{\overline{\xi(n)}}{2^{gw} N(n)^{1/2+w}} \sum_{\substack{ m \in \Z[i]  \\ m \equiv v \pmod{4}}}  \frac{ \xi(m) \overline{\chi_m(\eta b^3 n)} \widetilde{g}_4(m)
        }{N(m)^{-w/2}} F \Big( \frac{N(m)}{X} \Big) \nonumber \\
        &  \qquad \qquad \qquad \qquad \qquad \qquad \qquad \qquad \qquad  \times \Big(\frac{\pi U}{\sqrt{N(\mathfrak{m}_{\omega})}} \Big)^{-w}
        e^{w^2} \frac{\Gamma\big(\tfrac{1+|\omega|}{2}+w\big)}{\Gamma\big(\tfrac{1+|\omega|}{2} \big)}  \frac{dw}{w} \nonumber \\
        & \ll_{A, \varepsilon} \int_{-(1+|\omega|)^\varepsilon X^\varepsilon}^{(1+|\omega|)^\varepsilon X^\varepsilon}  \sum_{\substack{n \in \Z[i] \\ n \equiv 1 \pmod{\lambda^3} \\ N(n) \sim R}} \frac{1}{N(n)^{1/2+\varepsilon }} \cdot \Bigg| \sum_{\substack{ m \in \Z[i]  \\ m \equiv v \pmod{4}}}  \frac{\xi(m) \overline{\chi_m(\eta b^3 n)}  g_4(m)}{N(m)^{1/2-\varepsilon/2 - it/2}} F \Big( \frac{N(m)}{X} \Big) \Bigg| \, |dt| \nonumber \\
        & \qquad \qquad \qquad \qquad \qquad \qquad \qquad \qquad \qquad \times (1+|\omega|)^\varepsilon  + (1+|\omega|)^{-A} X^{-A}      \label{dual_ready_for_eval},
    \end{align}
    where the second step is justified by Stirling's formula and the decay of $e^{w^2}$.

    Now uniquely factorize $b^3 n = \alpha \beta^2 \gamma^3 \delta^4 \epsilon^4$ with $\alpha, \beta, \gamma, \delta, \epsilon \equiv 1 \pmod{\lambda^3}$, $\mu^2(\alpha \beta \gamma \delta)=1$, and $\epsilon \mid (\alpha \beta \gamma \delta)^\infty$. Then $\chi_{m}(\eta b^3 n) = \chi_m(\eta \alpha \beta^2 \gamma^3 \delta^4)$, and \cref{truncated_Gauss_sums_lemma} gives
    \begin{equation}
        \sum_{\substack{ m \in \Z[i]  \\ m \equiv v \pmod{4}}}  \frac{\xi(m) \overline{\chi_m(\eta \alpha \beta^2 \gamma^3 \delta^4)} g_4(m) }{N(m)^{1/2-\varepsilon/2 - it/2}} F \Big( \frac{N(m)}{X} \Big) \nonumber \\
        = \widetilde{\mathcal{P}}_\omega + \widetilde{\mathcal{I}}_\omega,  \nonumber
    \end{equation}
    where
    \begin{align} \label{Pequal}
        & \widetilde{\mathcal{P}}_\omega := \bbone_{\omega=0} \cdot \bbone_{\gamma=1} \cdot \widetilde{F}(\tfrac{3}{4}+\tfrac{\varepsilon+it}{2}) X^{\frac{3}{4}+\frac{\varepsilon+it}{2}} \cdot \Delta^{*}_{\alpha} (\alpha \eta, \tfrac{5}{4})^{-1} \Delta_{\beta \delta}(\tfrac{5}{4})^{-1} N(\beta)^{-1/4} \nonumber \\
        & \times \mathop{\sum \sum}_{\substack{e, f \in \Z[i] \\ e, f \equiv 1\pmod{\lambda^3} \\ e \mid \beta,\ f \mid \delta}} (-1)^{C(efv, \beta)} \chi_{f}(-1) \frac{\mu(ef)}{N(ef)}
        \overline{\widetilde{g}_4 \Big(\frac{\alpha \beta^2 \eta}{e^2}, e \Big)}  \widetilde{g}_4 \Big( \frac{\alpha \beta^2 \eta}{e^2}, f \Big) \overline{\widetilde{g}_4 \Big(\alpha \eta,\frac{\beta f}{e} \Big)} \tau_4(\alpha \eta; \beta v)
    \end{align}
    and
    \begin{align} \label{Ibound}
        \widetilde{\mathcal{I}}_\omega & \ll_{F, A, \varepsilon} (BR)^{3\varepsilon} X^{1/2 + \varepsilon} \sup_{\substack{\rho \in \Z[i] \\ \rho \equiv 1 \pmod{\lambda^3}}} \sup_{\substack{0 \neq r \in \Z[i] \\ N(r) \ll B^3R}} \int_{-\infty}^\infty \frac{\big|\psi\big(r, 1+\frac{\varepsilon}{2} + i(y - \frac{t}{2}),\xi;\rho\big)\big|}{(1+|y|)^A} \, dy. \qquad
    \end{align}
    Applying the triangle inequality to $\widetilde{\mathcal{P}}_\omega$ in \eqref{Pequal} gives
    \begin{equation} \label{Pineq}
        \widetilde{\mathcal{P}}_\omega \ll_{F, \varepsilon} \bbone_{\omega=0} \cdot (BR)^\varepsilon X^{3/4+\varepsilon} \sup_{\substack{\rho \in \Z[i] \\ \rho \equiv 1 \pmod{\lambda^3} }} \sup_{\substack{0 \neq r \in \Z[i] \\ N(r) \ll B^3R}} |\tau_4(r;\rho)| \ls \bbone_{\omega=0} \cdot X^{3/4} R^{1/8},
    \end{equation}
    where we used \cref{thetaconvexity} in the last step. Similarly, by \cref{psi_convexity_lemma} we have
    \begin{equation*}
        \widetilde{\mathcal{I}}_\omega \ll_{F, \varepsilon} (BR)^{4\varepsilon} X^{1/2+2\varepsilon} \sup_{\substack{0 \neq r \in \Z[i] \\ N(r) \ll B^3R}} N(r)^{1/4+\varepsilon} (1+|\omega|)^{3/2+\varepsilon} \ls X^{1/2} R^{1/4}.
    \end{equation*}
    
    Inserting the bounds above for $\widetilde{\mathcal{P}}_\omega$ and $\widetilde{\mathcal{I}}_\omega$ into \eqref{dual_ready_for_eval} and then \eqref{dual_sum_dyadic_decomp}, we conclude that
    \begin{equation} \label{SfUestimate}
        \widetilde{\mathcal{S}}_{\omega,U^{-1}}(b) \ls \sup_{1 \ls R \ls \frac{X^{1/2}}{U}} \Big(\bbone_{\omega=0} \cdot X^{3/4} R^{5/8} +  X^{1/2} R^{3/4} \Big) \ls \bbone_{\omega=0} \cdot \frac{X^{17/16}}{U^{5/8}}
        + \frac{X^{7/8}}{U^{3/4}}.
    \end{equation}

    \subsection{Conclusion}
    
    Combining \eqref{maintermintermed}, \eqref{SprimeU}, and \eqref{SfUestimate}, we obtain \cref{prop:first_moment}.
\end{proof}


\section{Choosing the mollifier} \label{mollifier_section}

With asymptotics for twisted moments in place, in this section we use them to make an explicit choice of mollifier and prove a positive proportion of non-vanishing in \cref{mainthm}.

Recall the mollifier given in \eqref{mollifier}.
We suppose throughout that its coefficients $\lambda_{\omega}(\mathfrak{b})$ are supported on squarefree $0 \neq \mathfrak{b} \unlhd \Z[i]$ coprime with $2$ such that $N(\mathfrak{b}) \leq M$.
We also assume that $\lambda_{\omega}(\mathfrak{b}) \ll_\varepsilon N(\mathfrak{b})^{-1+\varepsilon}$ for every $\varepsilon > 0$. This gives the trivial bound 
\begin{equation}\label{eq:M_w(q)_bound}
    |\mathcal M_{\omega}(q)| \ll_\ep \sum_{\substack{0 \ne \mathfrak b \unlhd \Z[i]\\ N(\mathfrak b)\le M}} N(\mathfrak b)^{-1/2+\ep} \ll_{\ep} M^{1/2+\ep}.
\end{equation}
Fix a Schwartz function $F$ supported in $(1, 2)$ such that $0 \leq F(t) \leq 1$ for every $t \in \R$. For simplicity, also assume that $\omega \in \Z$ is fixed. Thus we allow implied constants in this section to depend on $F$ and $\omega$ without further comment.

Let $H \geq 10^{10}$ be an absolute constant such that both bounds \eqref{Rfirstbd} and \eqref{Rbound} for the error terms of the first and second moments hold. We set 
\begin{equation} \label{eq:MYU_def}
    M:=X^{\vartheta^3}, \qquad Y:= X^{\vartheta^2}, \qquad \text{and} \qquad U:= X^{1/2-\vartheta},
\end{equation}
where $0 < \vartheta < 10^{-10}$ will later be chosen sufficiently small (depending only on $H$). For $X>1$ sufficiently large (depending only on $\omega$ and $\vartheta$) and (say) $\nu = \frac{\vartheta}{2}$, these parameters clearly satisfy the constraint
\begin{equation} \label{eq:MYUassume2}
    (1+|\omega|) MY^2U \le X^{1/2-\nu}.
\end{equation}
From now on we allow implied constants to depend on $\vartheta$.

It is convenient to make the change of variables
\begin{equation}\label{eq:kappa_def}
    \kappa_{\omega}(\mathfrak{l}) := \sum_{\substack{0 \neq \mathfrak{a} \unlhd \Z[i]}} \lambda_{\omega}(\mathfrak{l a}) h_{\omega}(\mathfrak{a}),
\end{equation}
where $h_{\omega}$ is the multiplicative function defined in \eqref{eq:h_xi}. Note that
\begin{equation}\label{lambda_chi_mobius_inv}
    \lambda_{\omega}(\mathfrak{l}) = \sum_{\substack{0 \neq \mathfrak{a} \unlhd \Z[i]}} \mu(\mathfrak{a}) h_{\omega}(\mathfrak{a}) \kappa_{\omega}(\mathfrak{l a}).
\end{equation}
From now on we assume that $\kappa_{\omega}(\mathfrak{l})$ is supported on squarefree $\mathfrak l$ coprime to $2$ with $N(\mathfrak{l}) \leq M$, which by \eqref{lambda_chi_mobius_inv} implies the same property for $\lambda_\omega(\mathfrak{l})$.
The sequence $\kappa_{\omega}$ will be chosen to satisfy 
\begin{align}\label{kap_bound}
    |\kappa_{\omega}(\mathfrak{d})| \ll \frac{1}{N(\mathfrak{d}) \log{M}} \prod_{\substack{\mathfrak{p} \text{ prime} \\ \mathfrak{p} \mid \mathfrak{d}}} \Big(1 + O\Big(\frac{1}{N(\mathfrak{p})}\Big) \Big).
\end{align}
By \eqref{lambda_chi_mobius_inv} and $h_{\omega}(\mathfrak{p}) = 1 + O(N(\mathfrak{p})^{-1})$, this ensures the desired bound
\begin{equation} \label{lambdabd}
    \lambda_{\omega}(\mathfrak{d}) \ll_{\varepsilon} N(\mathfrak{d})^{-1+\varepsilon}.
\end{equation}


\subsection{The first mollified moment}

Express $L(1/2, \nu_{q,\omega})$ for $q \in \mathcal{F}'_4$ using \cref{lem:afe_2}, and then apply \eqref{nonnegproperty} and \eqref{SMexpand} to the resulting sums in the first mollified moment. For the parameters $M$, $Y$, $U$ in \eqref{eq:MYU_def}, which satisfy \eqref{eq:MYUassume2}, we observe that \cref{prop:first_moment} and \cref{SRestimate1} hold. Combining them with the trivial bound \eqref{eq:M_w(q)_bound} for the mollifier yields 
\begin{align}
    \mathcal{S}(L(1/2, \nu_{q,\omega}) \mathcal{M}_{\omega}(q) ; F) &= C_{\omega} X \check{F}(0) Q_{1,\omega}(M) \nonumber \\
    & + O\Big(\sum_{0 \neq \mathfrak{b} \unlhd \Z[i]} |\lambda_{\omega}(\mathfrak b)| \sqrt{N(\mathfrak{b})} \cdot|\mathcal{R}_{\omega,U}(\mathfrak{b})|\Big) \label{SLintermed} \\
    & +O_\varepsilon \Big( X^{\ep} M^{1/2} \Big [  \frac{X^{7/8}}{U^{1/4}}+ U^{1/2} X^{1/2}+ \frac{X^{3/4}}{Y^{1/4}}+ \frac{X}{Y^{5/6}} \Big] \Big)\label{first_moment_Qv1},
\end{align}
where the constant $C_{\omega}$ is given in \eqref{eq:C_f},
\begin{align}
    Q_{1,\omega}(M) &:= \sum_{\substack{0 \neq \mathfrak{b} \unlhd \Z[i] \\ N(\mathfrak{b}) \leq M}} \frac{\lambda_{\omega}(\mathfrak{b}) \xi(\mathfrak{b}^4) r_{\omega}(\mathfrak{b})}{N(\mathfrak{b})} = \sum_{\substack{0 \neq \mathfrak{b} \unlhd \Z[i] \\ N(\mathfrak{b}) \leq M}} \frac{\xi(\mathfrak{b}^4) r_{\omega}(\mathfrak{b})}{N(\mathfrak{b})} \sum_{\substack{0 \neq \mathfrak{a} \unlhd \Z[i]}} \mu(\mathfrak{a}) h_{\omega}(\mathfrak{a}) \kappa_{\omega}(\mathfrak{a b}) \qquad \nonumber \\
    &\ =  \sum_{\substack{0 \neq \mathfrak{d} \unlhd \Z[i] \\ N(\mathfrak{d}) \leq M}} \kappa_{\omega}(\mathfrak{d}) G_{\omega}(\mathfrak{d}), \label{eq:Q1}
\end{align}
$r_{\omega}$ is given in \eqref{eq:def r},
and $G_{\omega}$ is the Dirichlet convolution of $\frac{\xi^4 r_{\omega}}{N}$ with $\mu  h_{\omega}$. Hence $G_\omega$ is multiplicative and on primes $\mathfrak p$ we have
\begin{equation}
    G_{\omega}(\mathfrak{p}) := \frac{\xi(\mathfrak{p}^4) r_{\omega}(\mathfrak{p})}{N(\mathfrak{p})} - h_{\omega}(\mathfrak{p}) = -1 + O\Big(\frac{1}{N(\mathfrak{p})}\Big), \label{G_primes}
\end{equation}
which follows from the asymptotics for $r_{\omega}$ and $h_{\omega}$ in \eqref{eq:def r} and \eqref{eq:h_xi}, respectively. 

For the sum over $\mathfrak{b}$ in \eqref{SLintermed}, we apply \eqref{Rfirstbd}, \eqref{eq:M_w(q)_bound}, and \eqref{eq:MYU_def} to deduce that it is
\begin{equation*}
    \ll_\varepsilon X^{\ep} M^{1/2} (MY)^H \Big(\frac{X^{17/16}}{U^{5/8}}
    + \frac{X^{7/8}}{U^{3/4}}\Big) \ll X^{3/4 + \vartheta H} = o(X),
\end{equation*}
provided that $0< \vartheta < 10^{-10}$ is sufficiently small in terms of $H \geq 10^{10}$, which we assume from now on.
By \eqref{eq:MYU_def}, the other error terms in \eqref{first_moment_Qv1} are also 
\begin{equation*}
    \ll_\varepsilon X^{\ep} M^{1/2} \Big(  \frac{X^{7/8}}{U^{1/4}}+ U^{1/2} X^{1/2}+ \frac{X^{3/4}}{Y^{1/4}}+ \frac{X}{Y^{5/6}} \Big) \ll X^{3/4 + \vartheta + \varepsilon} + X^{1 - \frac{\vartheta^2}{2} +\varepsilon} = o(X).
\end{equation*}
Hence
\begin{equation}\label{first_moment_Q}
    \mathcal{S}(L(1/2, \nu_{q,\omega}) \mathcal{M}_{\omega}(q) ; F)   =   C_{\omega} X \check{F}(0) Q_{1,\omega}(M) +o(X).
\end{equation}


\subsection{The second mollified moment}

Write $\mathfrak{b} = (\mathfrak{b}_1,\mathfrak{b}_2)$ and $\mathfrak{b}_i = \mathfrak{a}_i \mathfrak{b}$ for $i \in \{1, 2\}$. Since $|L(1/2, \nu_{q, \omega})|^2 = 2 A_\omega(q)$ for $q \in \mathcal{F}'_4$ by \cref{lem:afe_2}, we may apply \eqref{nonnegproperty} and \eqref{SMexpand2}, then treat the resulting terms using \cref{SRestimate2} and \cref{prop:second_moment}. We obtain
\begin{align}
    & \mathcal{S}(|L(1/2,\nu_{q,\omega}) \mathcal{M}_{\omega}(q)|^2; F) =  2 \sumtwo_{\substack{0 \neq \mathfrak{b}_1, \mathfrak{b}_2 \unlhd \Z[i]}} \lambda_{\omega}(\mathfrak{b}_1) \overline{\lambda_{\omega}(\mathfrak{b}_2)} \sqrt{N(\mathfrak{b}_1\mathfrak{b}_2)} \label{SL21} \\
    &\times \Big( D_{\omega} \check{F}(0) X \frac{g_{\omega}(\mathfrak{b}) h_{\omega}(\mathfrak{a}_1) \overline{h_{\omega}(\mathfrak{a}_2)}}{\sqrt{N(\mathfrak{a}_1 \mathfrak{a}_2)}} \Big[\log \Big( \frac{(1+|\omega|)^2 X}{N(\mathfrak{a}_1 \mathfrak{a}_2)} \Big) +  \mathcal{O}_{\omega}(\mathfrak{b}_1, \mathfrak{b}_2)\Big] + \mathcal{R}_{\omega}(\mathfrak{b}_1, \mathfrak{b}_2) \Big) \qquad \label{SL22} \\
    & + O_\varepsilon \Big(X^{\ep} M \Big( X^{3/4}+ \frac{X}{Y^{2/3}} \Big) \Big),  \label{SL2intermed}
\end{align}
where $D_{\omega}$, $g_{\omega}$, $h_{\omega}$, and $\mathcal{O}_{\omega}(\mathfrak{b}_1,\mathfrak{b}_2)$ are given respectively in \eqref{eq:D_omega}, \eqref{eq:g_xi_1}, \eqref{eq:h_xi},
and \eqref{eq:O(b1,b2)}. Note that to obtain the error terms in \eqref{SL2intermed} we used \eqref{eq:M_w(q)_bound}.

We again bound the mollifier trivially as in \eqref{eq:M_w(q)_bound} and use \eqref{Rbound} to deduce that the contribution from the terms $\mathcal R_{\omega}(\mathfrak{b}_1,\mathfrak{b}_2)$ in \eqref{SL22} is
\begin{align} \label{Rerrorterm}
    & \ll_\varepsilon M^{1+\varepsilon} \sup_{\substack{0 \neq \mathfrak{b}_1, \mathfrak{b}_2 \unlhd \Z[i] \\ N(\mathfrak{b}_1),N(\mathfrak{b}_2)\leq M\\ (\mathfrak{b}_1\mathfrak{b}_2,2)=1}} \mu^2(\mathfrak{b}_1)\mu^2(\mathfrak{b}_2)\left|\mathcal{R}_{\omega}(\mathfrak{b}_1,\mathfrak{b}_2)\right| \nonumber \\
    & \ll_\varepsilon X^{\ep} M \cdot (MY)^H \cdot X^{1-\frac{1}{70}} \ll X^{1-\frac{1}{70} + H \vartheta + \varepsilon} = o(x)
\end{align}
due to \eqref{eq:MYU_def}, provided that $0 < \vartheta < 10^{-10}$ is sufficiently small in terms of $H \geq 10^{10}$, which we do assume. Similarly, the error term in \eqref{SL2intermed} is $\ll_\varepsilon X^{3/4 + \vartheta + \varepsilon} + X^{1-\vartheta^2/2 + \varepsilon} = o(x)$. Thus
\begin{equation}
    \mathcal{S}(|L(1/2,\chi_q) \mathcal{M}_{\omega}(q)|^2; F) = 2 D_{\omega} \check{F}(0) X Q_{2_,\omega}(M) + o(X), \label{second_moment_Q}
\end{equation}
where
\begin{equation*}
    \begin{split}
        Q_{2,\omega}(M) := \sumtwo_{\substack{0 \neq \mathfrak{b}_1, \mathfrak{b}_2 \unlhd \Z[i] \\ N(\mathfrak{b}_1), N(\mathfrak{b}_1) \leq M}} \lambda_{\omega}(\mathfrak{b}_1) \overline{\lambda_{\omega}(\mathfrak{b}_2)} N(\mathfrak{b}) g_{\omega}(\mathfrak{b}) h_{\omega}(\mathfrak{a}_1) \overline{h_{\omega}(\mathfrak{a}_2)} & \\
        \times \Big(\log \Big( \frac{(1+|\omega|)^2 X}{N(\mathfrak{a}_1 \mathfrak{a}_2)} \Big) +  \mathcal{O}_{\omega}(\mathfrak{b}_1, \mathfrak{b}_2)\Big). &
    \end{split}
\end{equation*}

\subsection{Evaluation of $Q_{2,\omega}(M)$}

In this section we closely follow \cite[p.~85--86]{DDDS24}.
In order to evaluate $Q_{2,\omega}(M)$, recall that $\mathfrak{b} = (\mathfrak{b}_1, \mathfrak{b}_2)$ and $\mathfrak{b}_i = \mathfrak{a}_i \mathfrak{b}$ to rewrite it as
\begin{align*}
    Q_{2,\omega}(M) = \sum_{\substack{0 \ne \mathfrak{b} \unlhd \Z[i]\\ N(\mathfrak{b}) \le M}}g_{\omega}(\mathfrak b) N(\mathfrak b) \sumtwo_{\substack{0 \neq \mathfrak{a}_1,\mathfrak{a}_2 \unlhd \Z[i] \\ N(\mathfrak{a}_1), N(\mathfrak{a}_2)\le \frac{M}{N(\mathfrak{b})} \\ (\mathfrak{a}_1,\mathfrak{a}_2)=1}} \lambda_{\omega}(\mathfrak{a}_1 \mathfrak{b}) \overline{\lambda_{\omega}(\mathfrak{a}_2 \mathfrak{b})} h_{\omega}(\mathfrak{a}_1) \overline{h_{\omega}(\mathfrak{a}_2)} & \\
    \times \Big(\log\pfrac{(1+|\omega|)^2 X}{N(\mathfrak{a}_1 \mathfrak{a}_2)} + \mathcal{O}_{\omega}(\mathfrak{a}_1\mathfrak{b},\mathfrak{a}_2\mathfrak{b})\Big). &
\end{align*}
Apply M\"{o}bius inversion to remove the condition  $(\mathfrak{a}_1,\mathfrak{a}_2)=1$, writing $\mathfrak{a}_i = \mathfrak{c}_i \mathfrak{c}$, to get
\begin{align}
    Q_{2,\omega}(M) & = \sum_{\substack{ 0\neq \mathfrak{b} \unlhd \Z[i] \\ N(\mathfrak{b}) \leq M}} g_{\omega}(\mathfrak{b})N(\mathfrak{b}) \sum_{\substack{ 0\neq \mathfrak{c} \unlhd \Z[i] \\ N(\mathfrak{c}) \leq \frac{M}{N(\mathfrak{b})} }} \mu(\mathfrak{c}) |h_{\omega}(\mathfrak{c})|^2 \sumtwo_{\substack{0 \neq \mathfrak{c}_1, \mathfrak{c}_2 \unlhd \Z[i] \\ N(\mathfrak{c}_1), N(\mathfrak{c}_2)\leq \frac{M}{N(\mathfrak{bc})}}} \lambda_{\omega}(\mathfrak{c}_1\mathfrak{c}\mathfrak{b}) \overline{\lambda_{\omega}(\mathfrak{c}_2\mathfrak{c}\mathfrak{b})} \nonumber \\
    & \qquad \qquad \qquad \qquad \times  h_{\omega}(\mathfrak{c}_1) \overline{h_{\omega}(\mathfrak{c}_2)} \Big(\log\Big(\frac{(1+|\omega|)^2 X}{N(\mathfrak{c}_1\mathfrak{c}_2\mathfrak{c}^2)}\Big) + \mathcal{O}_{\omega}(\mathfrak{c}_1\mathfrak{c}\mathfrak{b}, \mathfrak{c}_2\mathfrak{c}\mathfrak{b}) \Big) \nonumber \\
    & = \sum_{\substack{ 0\neq \mathfrak{d} \unlhd \Z[i] \\ N(\mathfrak{d}) \leq M}} N(\mathfrak{d}) H_{\omega}(\mathfrak{d}) \sumtwo_{\substack{0 \neq \mathfrak{c}_1, \mathfrak{c}_2 \unlhd \Z[i] \\ N(\mathfrak{c}_1),N(\mathfrak{c}_2)\leq \frac{M}{N(\mathfrak{d})}}} \lambda_{\omega}(\mathfrak{c}_1\mathfrak{d}) \overline{\lambda_{\omega}(\mathfrak{c}_2\mathfrak{d})} h_{\omega}(\mathfrak{c}_1) \overline{h_{\omega}(\mathfrak{c}_2)} \nonumber \\
    & \qquad \qquad \qquad \qquad \times \left(\log\pfrac{(1+|\omega|)^2X}{N(\mathfrak{c}_1\mathfrak{c}_2)} + 2 \eta_{\omega}(\mathfrak{d}) + \mathcal{O}_{\omega}(\mathfrak{c}_1\mathfrak{d},\mathfrak{c}_2\mathfrak{d}) \right), \label{Q_2_with_error_terms}
\end{align}
where for squarefree $\mathfrak{d} \unlhd \Z[i]$ we define
\begin{equation} \label{Hd}
    H_{\omega}(\mathfrak{d}) := \sumtwo_{\substack{0 \neq \mathfrak{b}, \mathfrak{c} \unlhd \Z[i] \\ \mathfrak{b}\mathfrak{c} = \mathfrak{d}}} g_{\omega}(\mathfrak{b}) \frac{\mu(\mathfrak{c})|h_{\omega}(\mathfrak{c})|^2}{N(\mathfrak{c})} = \prod_{\substack{\mathfrak{p} \text{ prime} \\ \mathfrak{p} \mid \mathfrak{d}}} \Big(g_{\omega}(\mathfrak{p}) - \frac{|h_{\omega}(\mathfrak{p})|^2}{N(\mathfrak{p})}\Big),
\end{equation}
and
\begin{equation*}
    \eta_{\omega}(\mathfrak{d}) := -\frac{1}{H_{\omega}(\mathfrak{d})} \sumtwo_{\substack{0 \neq \mathfrak{b}, \mathfrak{c} \unlhd \Z[i] \\ \mathfrak{b}\mathfrak{c} = \mathfrak{d}}} g_{\omega}(\mathfrak{b}) \frac{\mu(\mathfrak{c})|h_{\omega}(\mathfrak{c})|^2}{N(\mathfrak{c})} \log N(\mathfrak{c}) = \sum_{\substack{\mathfrak{p} \text{ prime} \\ \mathfrak{p} \mid \mathfrak{d}}} \frac{|h_{\omega}(\mathfrak{p})|^2 \log N(\mathfrak{p})}{N(\mathfrak{p}) H_{\omega}(\mathfrak{p})}
\end{equation*}
by a standard calculation. From the definition of $g_{\omega}$ (which is real and positive) in \eqref{eq:g_xi_1} and the definition of $h_{\omega}$ in
\eqref{eq:h_xi}, it follows by explicit computation that
\begin{equation}\label{eq:H_w_bounds}
    0< H_{\omega}(\mathfrak p)= 1 + O\Big(\frac{1}{N(\mathfrak{p})}\Big).
\end{equation}

Splitting the sums below based on when  $N(\mathfrak{p}) \ll \log{N(\mathfrak{d})}$, we see for $N(\mathfrak{d})$ large that
\begin{equation*} 
    \sum_{\substack{\mathfrak{p} \text{ prime} \\ \mathfrak{p} \mid \mathfrak{d} }} \frac{1}{N(\mathfrak{p})} \leq \log\log\log{N(\mathfrak{d})} + O(1) \quad \text{and} \quad \sum_{\substack{\mathfrak{p} \text{ prime} \\ \mathfrak{p} \mid \mathfrak{d} }} \frac{\log{N(\mathfrak{p})}}{N(\mathfrak{p})} \leq \log\log{N(\mathfrak{d})} + O(1). \qquad
\end{equation*}
Combining this with \eqref{kap_bound} and \eqref{eq:H_w_bounds} gives
\begin{equation*}
    H_{\omega}(\mathfrak{d}) \ll (\log\log{N(\mathfrak{d})})^{O(1)}, \quad  \eta_{\omega}(\mathfrak{d}) \ll \log\log N(\mathfrak{d}), \quad \text{and} \quad \kappa_{\omega}(\mathfrak d)\ll \frac{(\log \log M)^{O(1)}}{N(\mathfrak d)\log M}.
\end{equation*}
Thus recalling \eqref{eq:kappa_def}, the contribution of the term $2\eta_{\omega}(\mathfrak{d})$ to \eqref{Q_2_with_error_terms} is
\begin{align*}
    = 2 \sum_{\substack{ 0\neq \mathfrak{d} \unlhd \Z[i] \\ N(\mathfrak{d}) \leq M}} N(\mathfrak{d}) H_{\omega}(\mathfrak{d}) \eta_{\omega}(\mathfrak{d}) |\kappa_{\omega}(\mathfrak{d})|^2 \ll \sum_{\substack{ 0\neq \mathfrak{d} \unlhd \Z[i] \\ N(\mathfrak{d}) \leq M}} \frac{(\log\log{N(\mathfrak{d})})^{O(1)}}{(\log{M})^2 N(\mathfrak{d})} \ll \frac{(\log\log{X})^{O(1)}}{\log{X}} = o(1).
\end{align*}
The same argument shows that the term $2\log(1+|\omega|)$ contributes $o(1)$ to \eqref{Q_2_with_error_terms}. 

Similarly, by \eqref{eq:O(b1,b2)} and \eqref{eq:kappa_def}, the contribution of the term $\mathcal{O}_{\omega}(\mathfrak{c}_1\mathfrak{d}, \mathfrak{c}_2\mathfrak{d})$ to \eqref{Q_2_with_error_terms} is 
\begin{align*}
    &\sum_{\substack{ 0\neq \mathfrak{d} \unlhd \Z[i] \\ N(\mathfrak{d}) \leq M}} N(\mathfrak{d}) H_{\omega}(\mathfrak{d}) \bigg( C_{\omega, F} |\kappa_{\omega}(\mathfrak{d})|^2 + \sum_{\substack{\mathfrak{p} \text{ prime}}} D_{1, \omega}(\mathfrak{p}) \frac{\log{N(\mathfrak{p})}}{N(\mathfrak{p})} \big( \kappa_{\omega}(\mathfrak{pd}) \overline{\kappa_{\omega}(\mathfrak{d})}  - |\kappa_{\omega}(\mathfrak{pd})|^2 \big)   \\
    & + \sum_{\substack{\mathfrak{p} \text{ prime}}} D_{2, \omega}(\mathfrak{p}) \frac{\log{N(\mathfrak{p})}}{N(\mathfrak{p})} \big( \kappa_{\omega}(\mathfrak{d}) \overline{\kappa_{\omega}(\mathfrak{pd})}  - |\kappa_{\omega}(\mathfrak{pd})|^2 \big) + \sum_{\substack{\mathfrak{p} \text{ prime} \\ \mathfrak{p} \mid \mathfrak{d}}} D_{3, \omega}(\mathfrak{p}) \frac{\log{N(\mathfrak{p})}}{N(\mathfrak{p})} |\kappa_{\omega}(\mathfrak{d})|^2    \\
    & + \sum_{\substack{\mathfrak{p} \text{ prime}\\  \mathfrak{p} \nmid \mathfrak{d}}} D_{3, \omega}(\mathfrak{p}) \frac{\log{N(\mathfrak{p})}}{N(\mathfrak{p})} |\kappa_{\omega}(\mathfrak{pd})|^2 \bigg),
\end{align*}
which we directly bound by
\begin{align}
    & \ll \sum_{\substack{ 0\neq \mathfrak{d} \unlhd \Z[i] \\ N(\mathfrak{d}) \leq M}} \frac{(\log \log{M})^{O(1)}}{N(\mathfrak{d}) (\log{M})^2} \Big( 1 + \sum_{\substack{\mathfrak{p} \text{ prime} \\  N(\mathfrak{p}) \leq M}} \frac{\log{N(\mathfrak{p})}}{N(\mathfrak{p})^2} + \sum_{\substack{\mathfrak{p} \text{ prime} \\  \mathfrak{p} \mid \mathfrak{d}}} \frac{\log{N(\mathfrak{p})}}{N(\mathfrak{p})} \Big) \ll \frac{(\log \log{X})^{O(1)}}{\log{X}} = o(1). \nonumber
\end{align}
It follows that
\begin{equation}\label{eq:Q_2_w_formula}
    \begin{split}
        & Q_{2,\omega}(M) = \sum_{\substack{0 \neq \mathfrak d \unlhd \Z[i]}} N(\mathfrak{d})H_{\omega}(\mathfrak d) \sumtwo_{\substack{0 \neq \mathfrak{b}_1,\mathfrak{b}_2 \unlhd \Z[i]}} \lambda_\omega(\mathfrak{b}_1 \mathfrak{d})\overline{\lambda_\omega(\mathfrak{b}_2 \mathfrak{d})}h_{\omega}(\mathfrak b_1)\overline{h_{\omega}(\mathfrak b_2)}\log\Big(\frac{X}{N(\mathfrak b_1 \mathfrak b_2)}\Big)+o(1).
    \end{split}
\end{equation}


\subsection{The optimal mollifier}

Observe that $Q_{2,\omega}(M)$ is essentially equal to the diagonal quadratic form
\begin{align*}
    \log{X} \sum_{\substack{ 0\neq \mathfrak{d} \unlhd \Z[i] \\ N(\mathfrak{d}) \leq M}} N(\mathfrak{d}) H_{\omega}(\mathfrak{d}) |\kappa_{\omega}(\mathfrak{d})|^2.
\end{align*}
By Cauchy--Schwarz, this quadratic form is minimized, subject to fixing $Q_{1, \omega}(M)$ given by the linear constraint \eqref{eq:Q1}, when $\kappa_{\omega}$ is proportional to $\frac{\overline{G_{\omega}(\mathfrak d)}}{N(\mathfrak d)H_{\omega}(\mathfrak d)}$. Thus, for squarefree $\mathfrak{d} \unlhd \Z[i]$ coprime to $2$ such that $N(\mathfrak{d}) \leq M$ we choose
\begin{equation}\label{eq:kappachoice}
    \kappa_{\omega}(\mathfrak d) = \frac{\overline{C_{\omega}}}{D_{\omega}\log M} \cdot \frac{\overline{G_{\omega}(\mathfrak d)}}{N(\mathfrak d)H_{\omega}(\mathfrak d)}.
\end{equation}
Note that this choice of $\kappa_{\omega}$ satisfies \eqref{kap_bound}, by \eqref{G_primes} and \eqref{eq:H_w_bounds}.


\subsubsection{Endgame for first moment}

By \eqref{first_moment_Q} and \eqref{eq:Q1}, our choice of $\kappa_{\omega}(\mathfrak d)$ gives
\begin{align} \label{firstset}
    \mathcal{S}(L(1/2, \nu_{q,\omega}) \mathcal{M}_{\omega}(q); F) = \check{F}(0) X \frac{|C_{\omega}|^2}{D_{\omega} \log{M}} \sum_{\substack{0 \neq \mathfrak{d} \unlhd \Z[i] \\  N(\mathfrak{d}) \leq M \\ (\mathfrak{d}, 2) = 1}} \mu^2(\mathfrak{d}) \frac{|G_{\omega}(\mathfrak{d})|^2}{N(\mathfrak{d}) H_{\omega}(\mathfrak{d})} + o(X).
\end{align}
A standard argument using Perron's formula,
$\frac{|G_{\omega}(\mathfrak{p})|^2}{H_{\omega}(\mathfrak{p})} = 1 + O\big(\frac{1}{N(\mathfrak{p})}\big)$,
and \eqref{eq:zetalambdares} yields
\begin{align}
    &\sum_{\substack{0 \neq \mathfrak{d} \unlhd \Z[i] \\  N(\mathfrak{d}) \leq M \\ (\mathfrak{d}, 2) = 1}} \frac{\mu^2(\mathfrak{d}) |G_{\omega}(\mathfrak{d})|^2}{N(\mathfrak{d}) H_{\omega}(\mathfrak{d})} = \Res_{s=0} \frac{\zeta_\lambda(1+s) M^s}{s} \prod_{\substack{\mathfrak{p} \text{ prime} \\ (\mathfrak{p}, 2) = 1}} \Big(1 - \frac{1}{N(\mathfrak{p})} \Big) \Big(1 + \frac{|G_{\omega}(\mathfrak{p})|^2}{N(\mathfrak{p}) H_{\omega}(\mathfrak{p})} \Big) + O(1) \nonumber \\
    & = \Big(\frac{\pi}{8}\log{M} + O(1)\Big) \prod_{\substack{\mathfrak{p} \text{ prime} \\ (\mathfrak{p}, 2) = 1}} \Big(1 - \frac{1}{N(\mathfrak{p})} \Big) \Big(1 + \frac{|G_{\omega}(\mathfrak{p})|^2}{N(\mathfrak{p}) H_{\omega}(\mathfrak{p})} \Big) = \frac{\pi}{8} \mathcal{Q}_{\omega} \log{M} +O(1),  \qquad \label{first_mom_asymptotic_comp}
\end{align}
where
\begin{equation*}
    \mathcal{Q}_{\omega} := \prod_{\substack{\mathfrak{p} \text{ prime} \\ (\mathfrak{p}, 2) = 1 \\ q := N(\mathfrak{p})}}\frac{(q^2-1) ( q^5+2 q^4+q^3+(1-2\Re{\xi(\mathfrak{p}^4)})q^2+1)}{q (q^6 + 2 q^5 + q^4 - 2 q^2(q+1) \Re{\xi(\mathfrak{p}^4)} + 1)}
\end{equation*}
is computed from the definitions of $G_{\omega}$ and $H_{\omega}$ in \eqref{G_primes} and \eqref{Hd}, using \eqref{eq:def r}, \eqref{eq:g_xi_1}, and \eqref{eq:h_xi}. From the definitions of $C_{\omega}$ and $D_{\omega}$ in \eqref{eq:C_f} and \eqref{eq:D_omega},
we conclude that the first mollified moment is equal to $\mathcal{C}_{1,\omega} \mathcal{P}_{1,\omega} \check{F}(0) X + o(X)$, where
\begin{align*}
    \mathcal{C}_{1,\omega} := \frac{\pi}{8} \Big|\frac{\pi}{48\sqrt{2} \cdot \zeta_{\Q(i)}(2)\cdot (\sqrt 2 -\xi(\lambda))} \Big|^2 \Big( \frac{\pi^2}{768 \cdot \zeta_{\Q(i)}(2) \cdot |\sqrt{2}-\xi(\lambda)|^2 } \Big)^{-1} = \frac{\pi }{48 \cdot \zeta_{\Q(i)}(2)},
\end{align*}
and the Euler factors completely cancel due to the remarkable identity
\begin{align*}
    \mathcal{P}_{1,\omega} := \mathcal{Q}_{\omega} \cdot \prod_{\substack{\mathfrak{p} \text{ prime} \\ (\mathfrak{p}, 2) = 1 \\ q := N(\mathfrak{p})}}  \frac{\big|1+ \frac{q}{(q+1)(q^2 \overline{\xi(\mathfrak p^4)}-1)}\big|^2}{\big( 1 -\frac{1}{q(q+1)} +  2 \Re \big(\frac{q}{(q+1)(q^2\xi(\mathfrak{p}^4)-1)}\big)\big)} = 1.
\end{align*}
In conclusion,
\begin{equation}
    \mathcal{S}(L(1/2, \nu_{q,\omega}) \mathcal{M}_{\omega}(q); F) = \frac{\pi}{48\cdot \zeta_{\Q(i)}(2)} \check{F}(0) X + o(X). \label{first_mom_asymp}
\end{equation}


\subsubsection{Endgame for second moment}

Note from \eqref{eq:Q_2_w_formula} that
\begin{align*}
    Q_{2,\omega}(M)  = \sum_{\substack{ 0\neq \mathfrak{d} \unlhd \Z[i]}} N(\mathfrak{d}) H_{\omega}(\mathfrak{d}) &\Big( |\kappa_{\omega}(\mathfrak{d})|^2 \log(X) \\
    & - 2 \Re \Big[ \overline{\kappa_{\omega}}(\mathfrak{d}) \sum_{\substack{0 \neq \mathfrak{b} \unlhd \Z[i]}} \lambda_{\omega}(\mathfrak{b} \mathfrak{d}) h_{\omega}(\mathfrak{b}) \log{N(\mathfrak{b})} \Big] \Big) +o(1).
\end{align*}
Using \eqref{eq:kappa_def} followed by \eqref{eq:kappachoice}, the asymptotic estimates for $G_{\omega}$ and $H_{\omega}$ in \eqref{G_primes} and \eqref{eq:H_w_bounds}, and the fact that $h_{\omega}(\mathfrak{p}) = 1 + O(N(\mathfrak{p})^{-1})$, we observe that
\begin{align*}
    \sum_{\substack{0 \neq \mathfrak{b} \unlhd \Z[i]}} \lambda_{\omega}(\mathfrak{b} \mathfrak{d}) h_{\omega}(\mathfrak{b}) \log{N(\mathfrak{b})} &= \sum_{\substack{\mathfrak{p} \text{ prime}}} \log{N(\mathfrak{p})} h_{\omega}(\mathfrak{p}) \kappa_{\omega}(\mathfrak{pd}) \\
    = \kappa_{\omega}(\mathfrak{d})\sum_{\substack{\mathfrak{p} \text{ prime} \\ N(\mathfrak{p}) \leq \frac{M}{N(\mathfrak{d})} \\ \mathfrak{p} \nmid 2 \mathfrak{d}}} \log{N(\mathfrak{p})} h_{\omega}(\mathfrak{p}) \frac{\overline{G_{\omega}(\mathfrak{p})}}{N(\mathfrak{p}) H_{\omega}(\mathfrak{p})} &=  - \kappa_{\omega}(\mathfrak{d}) \sum_{\substack{\mathfrak{p} \text{ prime} \\ N(\mathfrak{p}) \leq \frac{M}{N(\mathfrak{d})} \\ \mathfrak{p} \nmid 2\mathfrak{d}}} \frac{\log{N(\mathfrak{p})}}{N(\mathfrak{p})} \Big(1 + O\Big(\frac{1}{N(\mathfrak{p})}\Big)\Big). 
\end{align*}
A short computation gives
\begin{align*}
    \sum_{\substack{\mathfrak{p} \text{ prime} \\ N(\mathfrak{p}) \leq \frac{M}{N(\mathfrak{d})} \\ \mathfrak{p} \nmid 2 \mathfrak{d}}} \frac{\log{N(\mathfrak{p})}}{N(\mathfrak{p})} \Big(1 + O\Big(\frac{1}{N(\mathfrak{p})}\Big)\Big) & = \sum_{\substack{\mathfrak{p} \text{ prime} \\ N(\mathfrak{p}) \leq \frac{M}{N(\mathfrak{d})}}} \frac{\log{N(\mathfrak{p})}}{N(\mathfrak{p})}  + O\big(\log\log{N(\mathfrak{d})} + 1\big) \\
    & = \log{\Big( \frac{M}{N(\mathfrak{d})} \Big)} + O(\log\log{X}).
\end{align*}
Therefore
\begin{equation} \label{Q2intermed}
    Q_{2,\omega}(M) = \sum_{\substack{ 0\neq \mathfrak{d} \unlhd \Z[i]}} N(\mathfrak{d}) H_{\omega}(\mathfrak{d}) |\kappa_{\omega}(\mathfrak{d})|^2 \Big(\log\Big({\frac{XM^2}{N(\mathfrak{d})^2}}\Big)  + O(\log\log{X}) \Big)+o(1).
\end{equation}
By the bounds \eqref{kap_bound} and \eqref{eq:H_w_bounds}, the term $O(\log\log{X})$ above contributes to $Q_{2, \omega}(M)$ at most $\frac{(\log\log{X})^{O(1)}}{\log{X}} = o(1)$. Inserting this into \eqref{second_moment_Q}, we deduce that
\begin{equation}
    \mathcal{S}(|L(1/2,\nu_{q,\omega}) \mathcal{M}_{\omega}(q)|^2; F) = \frac{2 \check{F}(0) X |C_{\omega}|^2}{D_{\omega} (\log{M})^2} \sum_{\substack{ 0\neq \mathfrak{d} \unlhd \Z[i] \\ N(\mathfrak{d}) \leq M \\ (\mathfrak{d}, 2) = 1}} \frac{\mu^2(\mathfrak{d}) |G_{\omega}(\mathfrak{d})|^2}{N(\mathfrak{d}) H_{\omega}(\mathfrak{d})} \log\Big({\frac{XM^2}{N(\mathfrak d)^2}}\Big) + o(X). \label{second_mom_prelim_asymp}
\end{equation}

Partial summation with the asymptotic \eqref{first_mom_asymptotic_comp} gives
\begin{align*}
    \sum_{\substack{ 0\neq \mathfrak{d} \unlhd \Z[i] \\ N(\mathfrak{d}) \leq M \\ (\mathfrak{d}, 2) = 1}} \frac{\mu^2(\mathfrak{d}) |G_{\omega}(\mathfrak{d})|^2}{N(\mathfrak{d}) H_{\omega}(\mathfrak{d})} \log{N(\mathfrak{d})} = \frac{ \pi}{16} \mathcal{Q}_{\omega} (\log{M})^2 + O(\log{M}).
\end{align*}
Combining this with \eqref{first_mom_asymptotic_comp} in \eqref{second_mom_prelim_asymp}, and recalling from \eqref{eq:MYU_def} that $M = X^{\vartheta^3}$, we obtain a term almost identical to the first moment, and conclude that
\begin{equation}\label{second_mom_asymp}
    \mathcal{S}(|L(1/2,\nu_{q,\omega}) \mathcal{M}_{\omega}(q)|^2; F) = \frac{\pi}{24 \cdot \zeta_{\Q(i)}(2)} \Big(1 + \frac{1}{\vartheta^3}\Big) \check{F}(0) X  + o(X). 
\end{equation}


\subsection{Final density computation}

We are ready to prove the main non-vanishing result.

\begin{proof}[Proof of \cref{mainthm}]

    Recall that $F$ is supported in $(1, 2)$ and satisfies $0 \leq F(t) \leq 1$ for all $t \in \R$. By Cauchy--Schwarz and the asymptotic expressions \eqref{first_mom_asymp} and \eqref{second_mom_asymp},
    \begin{align*}
        \sum_{\substack{q \in \mathbb{Z}[i] \\ q \equiv 1 \pmod{\lambda^7} \\ X < N(q) \leq 2X  \\ L(1/2,\nu_{q,\omega}) \neq 0}} \mu^2(q) & \geq \sum_{\substack{q \in \mathbb{Z}[i] \\ q \equiv 1 \pmod{\lambda^7} \\ L(1/2,\nu_{q,\omega}) \neq 0}} \mu^2(q) F \Big( \frac{N(q)}{X} \Big) \geq \frac{|\mathcal{S}(L(1/2,\nu_{q,\omega}) \mathcal{M}_{\omega}(q); F)|^2}{\mathcal{S}(|L(1/2,\nu_{q,\omega}) \mathcal{M}_{\omega}(q)|^2; F)} \\
        &= \frac{\pi}{ 96 \cdot \zeta_{\Q (i)}(2)} \frac{\vartheta^3}{\vartheta^3 + 1} \check{F}(0) X  + o(X).
    \end{align*}

    We take $F$ arbitrarily close to the indicator function of the interval $(1, 2)$, so that $\check{F}(0)$ approaches $1$. Note that
    \begin{align*}
        \sum_{\substack{q \in \mathbb{Z}[i] \\ q \equiv 1 \pmod{\lambda^7} \\ X < N(q) \leq 2X}} \mu^2(q) = \frac{1}{16} \sum_{\substack{q \in \mathbb{Z}[i] \\ q \equiv 1 \pmod{\lambda^3} \\ X < N(q) \leq 2X}} \mu^2(q) + o(X) = \frac{X}{16} \mathop{\mathrm{Res}}_{s=1} \frac{\zeta_\lambda(s)}{\zeta_\lambda(2s)} + o(X).
    \end{align*}
    Moreover, \eqref{eq:zetalambdares} and \eqref{eq:zeta_def} imply
    \begin{align*}
        \frac{X}{16} \mathop{\mathrm{Res}}_{s=1} \frac{\zeta_\lambda(s)}{\zeta_\lambda(2s)} = \frac{X}{12 \cdot \zeta_{\Q(i)}(2)  } \Res_{s=1}\zeta_{\lambda}(s) = \frac{\pi X}{96 \cdot \zeta_{\Q(i)}(2)}.
    \end{align*}
    Therefore for any $\varepsilon > 0$ we have
    \begin{equation*}
        \sum_{\substack{q \in \mathbb{Z}[i] \\ q \equiv 1 \pmod{\lambda^7} \\ X < N(q) \leq 2X  \\ L(1/2,\nu_{q,\omega}) \neq 0}} \mu^2(q) \geq \Big(\frac{\vartheta^3}{\vartheta^3 + 1} - \varepsilon \Big) \sum_{\substack{q \in \mathbb{Z}[i] \\ q \equiv 1 \pmod{\lambda^7} \\ X < N(q) \leq 2X}} \mu^2(q)
    \end{equation*}
    for all $X$ sufficiently large in terms of $\varepsilon$. Then sum dyadically over $X$ to finish the proof.

\end{proof}


\bibliography{references}

@book {IR90,
    AUTHOR = {Ireland, K.~ and Rosen, M.~},
     TITLE = {A classical introduction to modern number theory},
    SERIES = {Graduate Texts in Mathematics},
    VOLUME = {84},
   EDITION = {Second edition},
 PUBLISHER = {Springer-Verlag, New York},
      YEAR = {1990},
     PAGES = {xiv+389},
      ISBN = {0-387-97329-X},
   MRCLASS = {11-01 (11-02)},
  MRNUMBER = {1070716},
MRREVIEWER = {Glenn Stevens},
       DOI = {10.1007/978-1-4757-2103-4},
       URL = {https://doi.org/10.1007/978-1-4757-2103-4},
}

@article {BGL,
    AUTHOR = {Blomer, V. and Goldmakher, L. and Louvel, B.},
     TITLE = {{$L$}-functions with {$n$}-th-order twists},
   JOURNAL = {Int. Math. Res. Not. IMRN},
  FJOURNAL = {International Mathematics Research Notices. IMRN},
      YEAR = {2014},
    NUMBER = {7},
     PAGES = {1925--1955},
      ISSN = {1073-7928,1687-0247},
   MRCLASS = {11M41 (11F66 11R42)},
  MRNUMBER = {3190355},
MRREVIEWER = {Timothy\ Lee\ Gillespie},
       DOI = {10.1093/imrn/rns257},
       URL = {https://doi.org/10.1093/imrn/rns257},
}

@incollection {Pat2,
    AUTHOR = {Patterson, S. J.},
     TITLE = {Whittaker models of generalized theta series},
 BOOKTITLE = {Seminar on number theory, {P}aris 1982--83 ({P}aris,
              1982/1983)},
    SERIES = {Progr. Math.},
    VOLUME = {51},
     PAGES = {199--232},
 PUBLISHER = {Birkh\"{a}user Boston, Boston, MA},
      YEAR = {1984},
   MRCLASS = {11F70 (11F27 22E55)},
  MRNUMBER = {791596},
}

@article {EckPat,
    AUTHOR = {Eckhardt, C. and Patterson, S. J.},
     TITLE = {On the {F}ourier coefficients of biquadratic theta series},
   JOURNAL = {Proc. London Math. Soc. (3)},
  FJOURNAL = {Proceedings of the London Mathematical Society. Third Series},
    VOLUME = {64},
      YEAR = {1992},
    NUMBER = {2},
     PAGES = {225--264},
      ISSN = {0024-6115},
   MRCLASS = {11F30 (11F27)},
  MRNUMBER = {1143226},
MRREVIEWER = {Solomon Friedberg},
       DOI = {10.1112/plms/s3-64.2.225},
       URL = {https://doi.org/10.1112/plms/s3-64.2.225},
}

@article {DR,
    AUTHOR = {Dunn, A.~ and Radziwi\l\l, M.~},
     TITLE = {Bias in cubic {G}auss sums: {P}atterson's conjecture},
   JOURNAL = {Ann. of Math. (2)},
  FJOURNAL = {Annals of Mathematics. Second Series},
    VOLUME = {200},
      YEAR = {2024},
    NUMBER = {3},
     PAGES = {967--1057},
      ISSN = {0003-486X,1939-8980},
   MRCLASS = {11L20 (11F27 11F30 11L05 11L15 11N36)},
  MRNUMBER = {4816436},
MRREVIEWER = {Olivier\ Bordell\`es},
       DOI = {10.4007/annals.2024.200.3.3},
       URL = {https://doi.org/10.4007/annals.2024.200.3.3},
}

@article{DDHL,
      title={Quartic {G}auss sums over primes and metaplectic theta functions}, 
      author={David, C.~ and Dunn, A.~ and  Hamieh, H.~ and Lin, H.~},
      year={2025},
      journal={to appear in Algebra \& Number Theory}
}

@article {Suz1,
    AUTHOR = {Suzuki, T.~},
     TITLE = {Some results on the coefficients of the biquadratic theta
              series},
   JOURNAL = {J. Reine Angew. Math.},
  FJOURNAL = {Journal f\"{u}r die Reine und Angewandte Mathematik. [Crelle's Journal]},
    VOLUME = {340},
      YEAR = {1983},
     PAGES = {70--117},
      ISSN = {0075-4102},
   MRCLASS = {10D24 (10A15 10D12)},
  MRNUMBER = {691962},
MRREVIEWER = {A. I. Vinogradov},
       DOI = {10.1515/crll.1983.340.70},
       URL = {https://doi.org/10.1515/crll.1983.340.70},
}

@article {FHL,
    AUTHOR = {Friedberg, S.~ and Hoffstein, J.~ and Lieman, D.~},
     TITLE = {Double {D}irichlet series and the {$n$}-th order twists of {H}ecke {$L$}-series},
   JOURNAL = {Math. Ann.},
  FJOURNAL = {Mathematische Annalen},
    VOLUME = {327},
      YEAR = {2003},
    NUMBER = {2},
     PAGES = {315--338},
      ISSN = {0025-5831,1432-1807},
   MRCLASS = {11R42 (11F66 11M41 11R47)},
  MRNUMBER = {2015073},
MRREVIEWER = {M.\ Ram\ Murty},
       DOI = {10.1007/s00208-003-0455-4},
       URL = {https://doi.org/10.1007/s00208-003-0455-4},
}

@book {IK,
    AUTHOR = {Iwaniec, H.~ and Kowalski, E.~},
     TITLE = {Analytic number theory},
    SERIES = {American Mathematical Society Colloquium Publications},
    VOLUME = {53},
 PUBLISHER = {American Mathematical Society, Providence, RI},
      YEAR = {2004},
     PAGES = {xii+615},
      ISBN = {0-8218-3633-1},
   MRCLASS = {11-02 (11Fxx 11Lxx 11Mxx 11Nxx)},
  MRNUMBER = {2061214},
MRREVIEWER = {K.\ Soundararajan},
       DOI = {10.1090/coll/053},
       URL = {https://doi.org/10.1090/coll/053},
}

@article {Dia,
    AUTHOR = {Diaconu, A.~},
     TITLE = {Mean square values of {H}ecke {$L$}-series formed with {$r$}-th order characters},
   JOURNAL = {Invent. Math.},
  FJOURNAL = {Inventiones Mathematicae},
    VOLUME = {157},
      YEAR = {2004},
    NUMBER = {3},
     PAGES = {635--684},
      ISSN = {0020-9910,1432-1297},
   MRCLASS = {11F66 (11F67 11R42)},
  MRNUMBER = {2092772},
MRREVIEWER = {Emmanuel\ P.\ Royer},
       DOI = {10.1007/s00222-004-0363-6},
       URL = {https://doi.org/10.1007/s00222-004-0363-6},
}

@book{Kub,
  author    = {Kubota, T.~},
  title     = {On Automorphic Functions and the Reciprocity Law in a Number Field},
  series    = {Lectures in Mathematics, Department of Mathematics, Kyoto University},
  volume    = {2},
  publisher = {Kinokuniya Book-Store},
  year      = {1969},
  address   = {Tokyo},
  pages     = {65 pp.},
}

@misc{DLMF,
         key = "{\relax DLMF}",
       title = "{\it NIST Digital Library of Mathematical Functions}",
howpublished = "\url{https://dlmf.nist.gov/}, Release 1.2.5 of 2025-12-15",
         url = "https://dlmf.nist.gov/",
        note = "F.~W.~J. Olver, A.~B. {Olde Daalhuis}, D.~W. Lozier, B.~I. Schneider,
                R.~F. Boisvert, C.~W. Clark, B.~R. Miller, B.~V. Saunders,
                H.~S. Cohl, and M.~A. McClain, eds."
}

@book {H,
    AUTHOR = {Hasse, H.~},
     TITLE = {Vorlesungen \"{u}ber {Z}ahlentheorie},
    SERIES = {Die Grundlehren der mathematischen Wissenschaften in Einzeldarstellungen mit besonderer Ber\"{u}cksightigung der Anwendungsgebiete. Band LIX},
 PUBLISHER = {Springer-Verlag, Berlin-G\"{o}ttingen-Heidelberg},
      YEAR = {1950},
     PAGES = {xii+474},
   MRCLASS = {10.0X},
  MRNUMBER = {0051844},
MRREVIEWER = {P. T. Bateman},
}

@article {HB,
    AUTHOR = {Heath-Brown, D. R.},
     TITLE = {Kummer's conjecture for cubic {G}auss sums},
   JOURNAL = {Israel J. Math.},
  FJOURNAL = {Israel Journal of Mathematics},
    VOLUME = {120},
      YEAR = {2000},
    NUMBER = {part A},
     PAGES = {97--124},
      ISSN = {0021-2172},
   MRCLASS = {11L05 (11L40)},
  MRNUMBER = {1815372},
MRREVIEWER = {Matti Jutila},
       DOI = {10.1007/s11856-000-1273-y},
       URL = {https://doi-org.proxy2.library.illinois.edu/10.1007/s11856-000-1273-y},
}

@article {BY10,
    AUTHOR = {Baier, S.~ and Young, M.~P.~},
     TITLE = {Mean values with cubic characters},
   JOURNAL = {J. Number Theory},
  FJOURNAL = {Journal of Number Theory},
    VOLUME = {130},
      YEAR = {2010},
    NUMBER = {4},
     PAGES = {879--903},
      ISSN = {0022-314X,1096-1658},
   MRCLASS = {11M06 (11A15 11L05 11N37)},
  MRNUMBER = {2600408},
MRREVIEWER = {Olivier\ Bordell\`es},
       DOI = {10.1016/j.jnt.2009.11.007},
       URL = {https://doi-org.proxy2.library.illinois.edu/10.1016/j.jnt.2009.11.007},
}

@article {KatzSarnak,
    AUTHOR = {Katz, N.~M. and Sarnak, P.},
     TITLE = {Zeroes of zeta functions and symmetry},
   JOURNAL = {Bull. Amer. Math. Soc. (N.S.)},
  FJOURNAL = {American Mathematical Society. Bulletin. New Series},
    VOLUME = {36},
      YEAR = {1999},
    NUMBER = {1},
     PAGES = {1--26},
      ISSN = {0273-0979,1088-9485},
   MRCLASS = {11M41 (11F66 11G40)},
  MRNUMBER = {1640151},
MRREVIEWER = {Daniel\ A.\ Goldston},
       DOI = {10.1090/S0273-0979-99-00766-1},
       URL = {https://doi-org.proxy2.library.illinois.edu/10.1090/S0273-0979-99-00766-1},
}

@article {G25,
    AUTHOR = {G\"{u}lo{\u g}lu, A.~M.~},
     TITLE = {Non-vanishing of cubic {D}irichlet {$L$}-functions over the {E}isenstein field},
   JOURNAL = {Proc. Amer. Math. Soc.},
  FJOURNAL = {Proceedings of the American Mathematical Society},
    VOLUME = {153},
      YEAR = {2025},
    NUMBER = {5},
     PAGES = {1947--1961},
      ISSN = {0002-9939,1088-6826},
   MRCLASS = {11R42 (11R45)},
  MRNUMBER = {4881386},
MRREVIEWER = {Sami\ Omar},
       DOI = {10.1090/proc/17155},
       URL = {https://doi-org.proxy2.library.illinois.edu/10.1090/proc/17155},
}

@article {GY24,
    AUTHOR = {G\"{u}lo{\u g}lu, A.~M. and Yesilyurt, H.},
     TITLE = {Mollified moments of cubic {D}irichlet {$L$}-functions over the {E}isenstein field},
   JOURNAL = {J. Math. Anal. Appl.},
  FJOURNAL = {Journal of Mathematical Analysis and Applications},
    VOLUME = {533},
      YEAR = {2024},
    NUMBER = {2},
     PAGES = {Paper No. 128014, 49 pp.},
      ISSN = {0022-247X,1096-0813},
   MRCLASS = {11M20 (11R42)},
  MRNUMBER = {4677712},
MRREVIEWER = {Timothy\ Lee\ Gillespie},
       DOI = {10.1016/j.jmaa.2023.128014},
       URL = {https://doi-org.proxy2.library.illinois.edu/10.1016/j.jmaa.2023.128014},
}

@article {GZ22,
    AUTHOR = {Gao, P. and Zhao, L.},
     TITLE = {Bounds for moments of cubic and quartic {D}irichlet
              {$L$}-functions},
   JOURNAL = {Indag. Math. (N.S.)},
  FJOURNAL = {Koninklijke Nederlandse Akademie van Wetenschappen. Indagationes Mathematicae. New Series},
    VOLUME = {33},
      YEAR = {2022},
    NUMBER = {6},
     PAGES = {1263--1296},
      ISSN = {0019-3577,1872-6100},
   MRCLASS = {11M06},
  MRNUMBER = {4498233},
MRREVIEWER = {Olivier\ Bordell\`es},
       DOI = {10.1016/j.indag.2022.08.003},
       URL = {https://doi-org.proxy2.library.illinois.edu/10.1016/j.indag.2022.08.003},
}

@article {DG22,
    AUTHOR = {David, C. and G\"{u}lo{\u g}lu, A. M.},
     TITLE = {One-level density and non-vanishing for cubic {$L$}-functions over the {E}isenstein field},
   JOURNAL = {Int. Math. Res. Not. IMRN},
  FJOURNAL = {International Mathematics Research Notices. IMRN},
      YEAR = {2022},
    NUMBER = {23},
     PAGES = {18833--18873},
      ISSN = {1073-7928,1687-0247},
   MRCLASS = {11M50 (11R42)},
  MRNUMBER = {4519156},
MRREVIEWER = {Timothy\ Lee\ Gillespie},
       DOI = {10.1093/imrn/rnab240},
       URL = {https://doi-org.proxy2.library.illinois.edu/10.1093/imrn/rnab240},
}

@article {Pat1,
    AUTHOR = {Patterson, S. J.},
     TITLE = {A cubic analogue of the theta series},
   JOURNAL = {J. Reine Angew. Math.},
  FJOURNAL = {Journal f\"{u}r die Reine und Angewandte Mathematik. [Crelle's Journal]},
    VOLUME = {296},
      YEAR = {1977},
     PAGES = {125--161},
      ISSN = {0075-4102},
   MRCLASS = {10D20},
  MRNUMBER = {563068},
       DOI = {10.1515/crll.1977.296.125},
       URL = {https://doi-org.proxy2.library.illinois.edu/10.1515/crll.1977.296.125},
}

@article {sound00,
    AUTHOR = {Soundararajan, K.},
     TITLE = {Nonvanishing of quadratic {D}irichlet {$L$}-functions at {$s=\frac12$}},
   JOURNAL = {Ann. of Math. (2)},
  FJOURNAL = {Annals of Mathematics. Second Series},
    VOLUME = {152},
      YEAR = {2000},
    NUMBER = {2},
     PAGES = {447--488},
      ISSN = {0003-486X,1939-8980},
   MRCLASS = {11M20 (11R42)},
  MRNUMBER = {1804529},
MRREVIEWER = {J.\ B.\ Conrey},
       DOI = {10.2307/2661390},
       URL = {https://doi-org.proxy2.library.illinois.edu/10.2307/2661390},
}

@article {OS99,
    AUTHOR = {\"Ozl\"uk, A. E. and Snyder, C.},
     TITLE = {On the distribution of the nontrivial zeros of quadratic {$L$}-functions close to the real axis},
   JOURNAL = {Acta Arith.},
  FJOURNAL = {Acta Arithmetica},
    VOLUME = {91},
      YEAR = {1999},
    NUMBER = {3},
     PAGES = {209--228},
      ISSN = {0065-1036,1730-6264},
   MRCLASS = {11M26 (11M20)},
  MRNUMBER = {1735673},
MRREVIEWER = {Daniel\ A.\ Goldston},
       DOI = {10.4064/aa-91-3-209-228},
       URL = {https://doi-org.proxy2.library.illinois.edu/10.4064/aa-91-3-209-228},
}

@article {GZ20,
    AUTHOR = {Gao, P. and Zhao, L.},
     TITLE = {One-level density of low-lying zeros of quadratic and quartic {H}ecke {$L$}-functions},
   JOURNAL = {Canad. J. Math.},
  FJOURNAL = {Canadian Journal of Mathematics. Journal Canadien de
              Math\'ematiques},
    VOLUME = {72},
      YEAR = {2020},
    NUMBER = {2},
     PAGES = {427--454},
      ISSN = {0008-414X,1496-4279},
   MRCLASS = {11R42 (11L40 11M06 11M26 11M50 11R16)},
  MRNUMBER = {4081698},
MRREVIEWER = {Sandro\ Bettin},
       DOI = {10.4153/s0008414x1900021x},
       URL = {https://doi-org.proxy2.library.illinois.edu/10.4153/s0008414x1900021x},
}

@incollection {Gold79,
    AUTHOR = {Goldfeld, D.},
     TITLE = {Conjectures on elliptic curves over quadratic fields},
 BOOKTITLE = {Number theory, {C}arbondale 1979 ({P}roc. {S}outhern {I}llinois {C}onf., {S}outhern {I}llinois {U}niv., {C}arbondale, {I}ll., 1979)},
    SERIES = {Lecture Notes in Math.},
    VOLUME = {751},
     PAGES = {108--118},
 PUBLISHER = {Springer, Berlin},
      YEAR = {1979},
      ISBN = {3-540-09559-4},
   MRCLASS = {12A70 (14K07)},
  MRNUMBER = {564926},
MRREVIEWER = {Kenneth\ Kramer},
}

@article {BurTia26,
    AUTHOR = {Burungale, A. and Tian, Y.},
     TITLE = {A rank zero {$p$}-converse to a theorem of {G}ross--{Z}agier, {K}olyvagin and {R}ubin},
   JOURNAL = {Ann. of Math. (2)},
  FJOURNAL = {Annals of Mathematics. Second Series},
    VOLUME = {203},
      YEAR = {2026},
    NUMBER = {1},
     PAGES = {1--13},
      ISSN = {0003-486X,1939-8980},
   MRCLASS = {11G40 (11G15 11R23)},
  MRNUMBER = {5008551},
       DOI = {10.4007/annals.2026.203.1.1},
       URL = {https://doi-org.proxy2.library.illinois.edu/10.4007/annals.2026.203.1.1},
}

@article {Smith26II,
    AUTHOR = {Smith, A.},
     TITLE = {The distribution of {$\ell^\infty$}-{S}elmer groups in degree {$\ell$} twist families {II}},
   JOURNAL = {J. Amer. Math. Soc.},
  FJOURNAL = {Journal of the American Mathematical Society},
    VOLUME = {39},
      YEAR = {2026},
    NUMBER = {2},
     PAGES = {453--514},
      ISSN = {0894-0347,1088-6834},
   MRCLASS = {11R34 (11G05 11G10 11R29)},
  MRNUMBER = {4999533},
       DOI = {10.1090/jams/1063},
       URL = {https://doi-org.proxy2.library.illinois.edu/10.1090/jams/1063},
}

@article {Smith26I,
    AUTHOR = {Smith, A.},
     TITLE = {The distribution of {$\ell^\infty$}-{S}elmer groups in degree {$\ell$} twist families {I}},
   JOURNAL = {J. Amer. Math. Soc.},
  FJOURNAL = {Journal of the American Mathematical Society},
    VOLUME = {39},
      YEAR = {2026},
    NUMBER = {1},
     PAGES = {1--72},
      ISSN = {0894-0347,1088-6834},
   MRCLASS = {11R34 (11G05 11L40 11R29 11R45)},
  MRNUMBER = {4969355},
       DOI = {10.1090/jams/1062},
       URL = {https://doi-org.proxy2.library.illinois.edu/10.1090/jams/1062},
}

@article{Smith25,
      title={The {B}irch and {S}winnerton-{D}yer conjecture implies {G}oldfeld's conjecture}, 
      author={A. Smith},
      year={2025},
      journal={arXiv preprint},
}

@article{ABS24,
      title={Integers expressible as the sum of two rational cubes}, 
      author={L. Alp{\"o}ge and M. Bhargava and A. Shnidman},
      year={2024},
      journal={arXiv preprint},
}

@article {Rubin91,
    AUTHOR = {Rubin, K.},
     TITLE = {The ``main conjectures'' of {I}wasawa theory for imaginary quadratic fields},
   JOURNAL = {Invent. Math.},
  FJOURNAL = {Inventiones Mathematicae},
    VOLUME = {103},
      YEAR = {1991},
    NUMBER = {1},
     PAGES = {25--68},
      ISSN = {0020-9910,1432-1297},
   MRCLASS = {11R23 (11G05 11G40)},
  MRNUMBER = {1079839},
MRREVIEWER = {M.\ A.\ Kenku},
       DOI = {10.1007/BF01239508},
       URL = {https://doi-org.proxy2.library.illinois.edu/10.1007/BF01239508},
}

@article {CW76,
    AUTHOR = {Coates, J. and Wiles, A.},
     TITLE = {On the conjecture of {B}irch and {S}winnerton-{D}yer},
   JOURNAL = {Invent. Math.},
  FJOURNAL = {Inventiones Mathematicae},
    VOLUME = {39},
      YEAR = {1977},
    NUMBER = {3},
     PAGES = {223--251},
      ISSN = {0020-9910,1432-1297},
   MRCLASS = {14G10 (10D15 14G25)},
  MRNUMBER = {463176},
MRREVIEWER = {Kenneth\ A.\ Ribet},
       DOI = {10.1007/BF01402975},
       URL = {https://doi-org.proxy2.library.illinois.edu/10.1007/BF01402975},
}

@incollection {Sound23,
    AUTHOR = {Soundararajan, K.},
     TITLE = {The distribution of values of zeta and {$L$}-functions},
 BOOKTITLE = {I{CM}---{I}nternational {C}ongress of {M}athematicians. {V}ol. 2. {P}lenary lectures},
     PAGES = {1260--1310},
 PUBLISHER = {EMS Press, Berlin},
      YEAR = {2023},
      ISBN = {978-3-98547-060-0; 978-3-98547-560-5; 978-3-98547-058-7},
   MRCLASS = {11M06 (11-02 11M26 11M50 60F05 60F10)},
  MRNUMBER = {4680281},
MRREVIEWER = {Anurag\ Sahay},
}

@article {DFL25,
      title={Nonvanishing of ${L}$-functions associated to fixed order characters over function fields}, 
      author={C. David and A. Florea and M. Lalin},
      date={2025},
      journal = {arXiv preprint},
}

@article{DDDS24,
      title={Non-vanishing for cubic {H}ecke ${L}$-functions}, 
      author={David, C.~ and de Faveri, A.~ and Dunn, A.~ and Stucky, J.~},
      year={2024},
      journal={arXiv preprint}, 
}

@article {BSD,
    AUTHOR = {Birch, B. J. and Swinnerton-Dyer, H. P. F.},
     TITLE = {Notes on elliptic curves. {II}},
   JOURNAL = {J. Reine Angew. Math.},
  FJOURNAL = {Journal f\"ur die Reine und Angewandte Mathematik. [Crelle's Journal]},
    VOLUME = {218},
      YEAR = {1965},
     PAGES = {79--108},
      ISSN = {0075-4102,1435-5345},
   MRCLASS = {14.48 (10.12)},
  MRNUMBER = {179168},
MRREVIEWER = {D.\ J.\ Lewis},
       DOI = {10.1515/crll.1965.218.79},
       URL = {https://doi-org.proxy2.library.illinois.edu/10.1515/crll.1965.218.79},
}

@article {DFL21,
    AUTHOR = {David, C. and Florea, A. and Lalin, M.},
     TITLE = {Nonvanishing for cubic {$L$}-functions},
   JOURNAL = {Forum Math. Sigma},
  FJOURNAL = {Forum of Mathematics. Sigma},
    VOLUME = {9},
      YEAR = {2021},
     PAGES = {Paper No. e69, 58 pp.},
      ISSN = {2050-5094},
   MRCLASS = {11R59 (11M06 11M38 11R16 11R58)},
  MRNUMBER = {4323990},
MRREVIEWER = {Jos\'e\ Alejandro\ Lara Rodr\'iguez},
       DOI = {10.1017/fms.2021.62},
       URL = {https://doi-org.proxy2.library.illinois.edu/10.1017/fms.2021.62},
}

@article {BF18,
    AUTHOR = {Bui, H. M. and Florea, A.},
     TITLE = {Zeros of quadratic {D}irichlet {$L$}-functions in the hyperelliptic ensemble},
   JOURNAL = {Trans. Amer. Math. Soc.},
  FJOURNAL = {Transactions of the American Mathematical Society},
    VOLUME = {370},
      YEAR = {2018},
    NUMBER = {11},
     PAGES = {8013--8045},
      ISSN = {0002-9947,1088-6850},
   MRCLASS = {11M38 (11M06 11M50)},
  MRNUMBER = {3852456},
MRREVIEWER = {Kohji\ Matsumoto},
       DOI = {10.1090/tran/7317},
       URL = {https://doi-org.proxy2.library.illinois.edu/10.1090/tran/7317},
}

@article {Suz93,
    AUTHOR = {Suzuki, T.},
     TITLE = {On the biquadratic theta series},
   JOURNAL = {J. Reine Angew. Math.},
  FJOURNAL = {Journal f\"ur die Reine und Angewandte Mathematik. [Crelle's Journal]},
    VOLUME = {438},
      YEAR = {1993},
     PAGES = {31--85}
}

@article {Art78,
    AUTHOR = {Arthaud, N.},
     TITLE = {On {B}irch and {S}winnerton-{D}yer's conjecture for elliptic curves with complex multiplication. {I}},
   JOURNAL = {Compositio Math.},
  FJOURNAL = {Compositio Mathematica},
    VOLUME = {37},
      YEAR = {1978},
    NUMBER = {2},
     PAGES = {209--232}
}

@article {GZ86,
    AUTHOR = {Gross, B. and Zagier, D.},
     TITLE = {Heegner points and derivatives of {$L$}-series},
   JOURNAL = {Invent. Math.},
  FJOURNAL = {Inventiones Mathematicae},
    VOLUME = {84},
      YEAR = {1986},
    NUMBER = {2},
     PAGES = {225--320}
}

@incollection {Kol90,
    AUTHOR = {Kolyvagin, V.},
     TITLE = {Euler systems},
 BOOKTITLE = {The {G}rothendieck {F}estschrift, {V}ol.\ {II}},
    SERIES = {Progr. Math.},
    VOLUME = {87},
     PAGES = {435--483},
 PUBLISHER = {Birkh\"auser Boston, Boston, MA},
      YEAR = {1990}
}

@article {Rub87,
    AUTHOR = {Rubin, K.},
     TITLE = {Tate-{S}hafarevich groups and {$L$}-functions of elliptic curves with complex multiplication},
   JOURNAL = {Invent. Math.},
  FJOURNAL = {Inventiones Mathematicae},
    VOLUME = {89},
      YEAR = {1987},
    NUMBER = {3},
     PAGES = {527--559}
}

@article {Luo04,
    AUTHOR = {Luo, Wenzhi},
     TITLE = {On {H}ecke {$L$}-series associated with cubic characters},
   JOURNAL = {Compos. Math.},
  FJOURNAL = {Compositio Mathematica},
    VOLUME = {140},
      YEAR = {2004},
    NUMBER = {5},
     PAGES = {1191--1196}
}

@incollection {Del80,
    AUTHOR = {Deligne, P.},
     TITLE = {Sommes de {G}auss cubiques et rev\^etements de {${\rm SL}(2)$}\ [d'apr\`es {S}. {J}. {P}atterson]},
 BOOKTITLE = {S\'eminaire {B}ourbaki (1978/79)},
    SERIES = {Lecture Notes in Math.},
    VOLUME = {770, Exp. No. 539},
     PAGES = {244--277},
 PUBLISHER = {Springer, Berlin},
      YEAR = {1980}
}

@article{KS24,
      title={Sums of rational cubes and the {$3$}-{S}elmer group}, 
      author={P. Koymans and A. Smith},
      year={2024},
      journal={arXiv preprint},
}

@article {KL19,
    AUTHOR = {Kriz, D. and Li, C.},
     TITLE = {Goldfeld's conjecture and congruences between {H}eegner
              points},
   JOURNAL = {Forum Math. Sigma},
  FJOURNAL = {Forum of Mathematics. Sigma},
    VOLUME = {7},
      YEAR = {2019},
     PAGES = {Paper No. e15, 80 pp.}
}

@article {BES20,
    AUTHOR = {Bhargava, M. and Elkies, N. and Shnidman, A.},
     TITLE = {The average size of the 3-isogeny {S}elmer groups of elliptic curves {$y^2=x^3+k$}},
   JOURNAL = {J. Lond. Math. Soc. (2)},
  FJOURNAL = {Journal of the London Mathematical Society. Second Series},
    VOLUME = {101},
      YEAR = {2020},
    NUMBER = {1},
     PAGES = {299--327}
}

@article {BKLS19,
    AUTHOR = {Bhargava, M. and Klagsbrun, Z. and Lemke Oliver, R. and Shnidman, A.},
     TITLE = {3-isogeny {S}elmer groups and ranks of abelian varieties in quadratic twist families over a number field},
   JOURNAL = {Duke Math. J.},
  FJOURNAL = {Duke Mathematical Journal},
    VOLUME = {168},
      YEAR = {2019},
    NUMBER = {15},
     PAGES = {2951--2989}
}

@article {Lie94,
    AUTHOR = {Lieman, D.},
     TITLE = {Nonvanishing of {$L$}-series associated to cubic twists of elliptic curves},
   JOURNAL = {Ann. of Math. (2)},
  FJOURNAL = {Annals of Mathematics. Second Series},
    VOLUME = {140},
      YEAR = {1994},
    NUMBER = {1},
     PAGES = {81--108}
}

@article {MM91,
    AUTHOR = {Murty, M. R. and Murty, V. K.},
     TITLE = {Mean values of derivatives of modular {$L$}-series},
   JOURNAL = {Ann. of Math. (2)},
  FJOURNAL = {Annals of Mathematics. Second Series},
    VOLUME = {133},
      YEAR = {1991},
    NUMBER = {3},
     PAGES = {447--475}
}

@article {BFH90,
    AUTHOR = {Bump, D. and Friedberg, S. and Hoffstein, J.},
     TITLE = {Nonvanishing theorems for {$L$}-functions of modular forms and their derivatives},
   JOURNAL = {Invent. Math.},
  FJOURNAL = {Inventiones Mathematicae},
    VOLUME = {102},
      YEAR = {1990},
    NUMBER = {3},
     PAGES = {543--618}
}

@article {Kol88,
    AUTHOR = {Kolyvagin, V.},
     TITLE = {Finiteness of {$E({\bf Q})$} and {$\Sha(E,{\bf Q})$} for a subclass of {W}eil curves},
   JOURNAL = {Izv. Akad. Nauk SSSR Ser. Mat.},
  FJOURNAL = {Izvestiya Akademii Nauk SSSR. Seriya Matematicheskaya},
    VOLUME = {52},
      YEAR = {1988},
    NUMBER = {3},
     PAGES = {522--540, 670--671}
}

@article {DT05,
    AUTHOR = {Diaconu, A. and Tian, Y.},
     TITLE = {Twisted {F}ermat curves over totally real fields},
   JOURNAL = {Ann. of Math. (2)},
  FJOURNAL = {Annals of Mathematics. Second Series},
    VOLUME = {162},
      YEAR = {2005},
    NUMBER = {3},
     PAGES = {1353--1376}
}

@article {Iwa90,
    AUTHOR = {Iwaniec, H.},
     TITLE = {On the order of vanishing of modular {$L$}-functions at the critical point},
   JOURNAL = {S\'em. Th\'eor. Nombres Bordeaux (2)},
  FJOURNAL = {S\'eminaire de Th\'eorie des Nombres de Bordeaux. S\'erie 2},
    VOLUME = {2},
      YEAR = {1990},
    NUMBER = {2},
     PAGES = {365--376}
}

@article {Naj10,
    AUTHOR = {Najman, F.},
     TITLE = {Complete classification of torsion of elliptic curves over quadratic cyclotomic fields},
   JOURNAL = {J. Number Theory},
  FJOURNAL = {Journal of Number Theory},
    VOLUME = {130},
      YEAR = {2010},
    NUMBER = {9},
     PAGES = {1964--1968}
}

\end{document}